\numberwithin{equation}{section}
\newtheorem{theorem}{Theorem}[subsection]
\newtheorem{definition}{Definition}[subsection]
\newtheorem{remark}[theorem]{Remark}
\newtheorem{example*}{Example\textsuperscript{*}}
\newtheorem{proposition*}{Proposition\textsuperscript{*}}
\newtheorem{corollary}[theorem]{Corollary}
\newtheorem{corollary*}{Corollary\textsuperscript{*}}
\newtheorem{proposition}[theorem]{Proposition}
\newtheorem{lemma}[theorem]{Lemma}
\newtheorem{condition}[theorem]{Condition}
\def\star{\textsuperscript{*} }
\def\Limes#1#2 {\lim\limits_{#1\rightarrow #2}}
\renewcommand{\Re}{\operatorname{Re}}
\def\eps{\epsilon}
\def\R{\mathbb{R}}
\def\T{\mathbb{T}}
\def\N{\mathbb{N}}
\def\XXint#1#2#3{{\setbox0=\hbox{$#1{#2#3}{\int}$ }
\vcenter{\hbox{$#2#3$ }}\kern-.59\wd0}}
\DeclareMathOperator{\dist}{dist}
\DeclareMathOperator{\curl}{curl}
\def\norm#1{\left\lVert #1 \right\rVert}
\def\scalar#1#2{\langle #1,#2 \rangle}
\def\de{\partial}
\renewcommand{\div}{\operatorname{div}}
\def\dx{\,\mathrm{d}x}
\def\dr{\mathrm{d}r}
\def\dz{\mathrm{d}z}
\def\dy{\mathrm{d}y}
\def\dt{\mathrm{d}t}
\def\ds{\mathrm{d}s}
\def\dr{\mathrm{d}r}
\newcommand{\mres}{\mathbin{\vrule height 1.6ex depth 0pt width
0.13ex\vrule height 0.13ex depth 0pt width 1.3ex}}
\def\eq{\,=\,}
\DeclareFontFamily{U}{mathx}{}
\DeclareFontShape{U}{mathx}{m}{n}{<-> mathx10}{}
\DeclareSymbolFont{mathx}{U}{mathx}{m}{n}
\DeclareMathAccent{\widehat}{0}{mathx}{"70}
\DeclareMathAccent{\widecheck}{0}{mathx}{"71}
\def\chek#1{\widecheck{#1}}
\title{A model for the approximation of vortex rings by almost rigid bodies}
\author{David Meyer}
\affil{Institut f\"ur Analysis und Numerik, Universit\"at M\"unster, M\"unster, Germany

dmeyer2@uni-muenster.de}
\date{\vspace{-6ex}}
\begin{document}

\maketitle

\renewcommand{\thefootnote}{\fnsymbol{footnote}} 
\footnotetext{\today

MSC: 35Q31, 35Q70}

\begin{abstract}
We consider a model that approximates vortex rings in the axisymmetric 3D Euler equation by the movement of almost rigid bodies described by Newtonian mechanics. We assume that the bodies have a circular cross-section and that the fluid is irrotational and interacts with the bodies through the pressure exerted at the boundary. We show that this kind of system can be described through an ODE in the positions of the bodies and that in the limit, where the bodies shrink to massless filaments, the system converges to an ODE system similar to the point vortex system. In particular, we can show that in a suitable set-up, the bodies perform a leapfrogging motion.
\end{abstract}

\section{Introduction and main results}
Consider the three-dimensional Euler equation \begin{align}
&\de_t u+(u\cdot\nabla)u+\nabla p\eq 0\\
&\div u\eq 0.
\end{align}

We are interested in axisymmetric solutions with no swirl, that is $u$ of the form \begin{align}
u\eq(u_r(r,z,t),u_z(r,z,t),0)\label{no swirl}
\end{align}
in standard axisymmetric coordinates $(r,z,\theta)$, defined through $x=(r\cos\theta,r\sin\theta,z)$ and $r>0$, $\theta\in [0,2\pi)$. The vorticity then takes the form \begin{align}
\omega\eq\curl u\eq (\de_zu_r-\de_ru_z)e_\theta
\end{align}

(note that the sign is different compared to the two-dimensional vorticity).

A vortex ring is an axisymmetric solution where the vorticity is concentrated on some torus. These occur in real life for instance as smoke or bubble rings.

In axisymmetric coordinates, the equation for the vorticity reads as \begin{align}
&\de_t\left(\frac{\omega}{r}\right)+u\cdot\nabla\left(\frac{\omega}{r}\right)\eq 0\\
&\div(ru)\eq 0\\
&\curl u\eq \omega.
\end{align}

The velocity can be recovered from the vorticity through the Biot-Savart law: \begin{align}
u(x)\eq \int \frac{y-x}{4\pi|y-x|^3}\times\omega(y)\dy.
\end{align}

Asymptotically, for rings of inner radius $\eps$, with center around $(R,Z)$ and total vorticity $\Gamma$, one has in axisymmetric coordinates \begin{align}
u(x)\,\approx\, -\frac{\Gamma\left(x-\binom{R}{Z}\right)^\perp}{2\pi\left|x-\binom{R}{Z}\right|^2}+\frac{\Gamma\left|\log \left|x-\binom{R}{Z}\right|\right|}{4\pi R}e_z+O(1)\label{asymptotic velocity}
\end{align}

(here ``$\vphantom{a}^\perp$'' is defined in axisymmetric coordinates as the linear map with $e_r^\perp=e_z$ and $e_z^\perp=-e_r$).

As the relative vorticity $\frac{\omega}{r}$ is transported, one would expect that asymptotically, only the second term changes the position of a ring, while the main interaction of two different rings is given through the first term. Hence in the asymptotic limit, one would expect that multiple rings of inner radius $\eps$ with centers at $q_i=\binom{R_i}{Z_i}$ and strengths $\Gamma_i$ are well approximated through the system \begin{align}\label{point vortex system}
\binom{R_i}{Z_i}'\eq\sum_{j\neq i}\frac{\Gamma_j(q_j-q_i)^\perp}{2\pi|q_i-q_j|^2}+\frac{\Gamma_i|\log\eps|}{4\pi R_i}e_Z.
\end{align}

Note that in this system, the second term goes to $\infty$, while the first one is bounded, unless the pairwise distances go to $0$.

The rigorous justification of this system in such a regime where the interaction between different rings and the self-interaction of each ring are of the same order is currently an open problem. We will derive this system in two regimes in a simpler model. 

Roughly speaking, our model consists of replacing the vortex rings with ``almost rigid'' toroidal bodies with a circular crosssection, immersed in an axisymmetric fluid with no swirl and no vorticity, which is described by the Euler equations. However, we do allow for non-zero circulations around the bodies. These circulations will take the role of the vorticity in the limit.
This model is simpler than the full Euler equations, as the crosssections are fixed and the possible instability of the rings is not an issue. The system does however still feature the critical interaction between the rings.

To get a nontrivial motion of the bodies, one needs to allow them to change their shape because otherwise, they can only move in one direction, which does not allow for any nontrivial dynamics. For this, we allow the bodies to change the ratio between their outer and inner radius. We assume that the bodies interact with the fluid only through the pressure exerted at the boundary. We will describe this model in more detail in Sections \ref{StateSpace} and \ref{Main results}.\\

In Section \ref{Section2} we will then be able to describe the fluid velocity through potentials and streamfunctions, which are completely determined by the positions and velocities of the bodies. As a result, we can reduce the system to a second order ODE, with coefficients determined by the potentials and streamfunctions, for the positions of the bodies. This lets us show that our system is well-posed.

The proof of the convergence of the system for shrinking bodies then requires analyzing the limit of this system. For this, we study the asymptotics of the potentials and the streamfunctions in Sections \ref{Section3} and \ref{Section4}. We will be able to show that these converge to the corresponding two-dimensional functions in the zero radius limit and that the interaction of the different parts of the stream functions produces the same terms as the Biot-Savart law in the limit.\\

In Section \ref{Section5}, we will study the convergence in the zero radius limit of the ODE, which is quite intricate, as the equation degenerates due to the vanishing mass. In order to still get estimates, we use a modulated version of the kinetic energy of the system, whose evolution only depends on the degenerating terms, which allows one to obtain uniform estimates on the velocity and to pass to the limit.\\

In a two-dimensional setting, the similar convergence of fluid-body systems with shrinking bodies to point vortex systems in a bounded domain has been studied in \cite{GlassMunnierSueur1} and \cite{GlassSueur3}, while further work on fluid-body systems has been done e.g.\ in \cite{GlassMunnierSueur2}, \cite{HouotMunnier4}, \cite{Munnier5} and \cite{glass2015uniqueness}, see also the references therein. The simpler problem of a stationary shrinking obstacle has been studied e.g.\ in \cite{iftimie2003two,he2019small,iftimie2009incompressible}.

Somewhat similar problems for filaments immersed in 3D Stokes flow have been considered e.g.\ in \cite{mori2020theoretical,MR4373666}.

The stability of vortex rings and their approximation by the System \eqref{point vortex system} has e.g.\ been studied in \cite{ButtaMarchioro,butta2022vanishing,benedetto2000motion}. Special solutions which behave like the System \eqref{point vortex system} have been constructed in \cite{Davila2022leapfrogging} and in \cite{JerrardSmets} for the similar Gross-Pitiaevskii equation. Non-axisymmetric vortex filaments have been considered e.g.\ in \cite{jerrard2017vortex}.

\subsection{The model}\label{StateSpace}

 We use axisymmetric coordinates $(r,z)$ and denote the right halfplane by $\mathbb{H}$. 
Fix some $k\in \N_{>0}$ and numbers $v_1,\dots, v_k>0$, which we interpret as the volumes of the bodies and which should remain constant along the evolution. We set \begin{align}
\rho_i\,:=\,\sqrt{\frac{v_i}{\pi R_i}}\qquad B_i(R_i,Z_i)\,:=\,B_{\rho_i}((R_i,Z_i))\subset\mathbb{H} \text{ for $i\eq1,\dots,k$.}\label{def rho}
\end{align}

With respect to the measure $r\dr\dz$, which corresponds to the three-dimensional volume, the $B_i$ then have the fixed volume $v_i=\pi\rho_i^2R_i$. See also the figure below.

Formally, we describe the configuration of the bodies through the manifold $M\subset (\R^2)^k$ of all $(R_1,Z_1)\dots (R_k,Z_k)$ such that the bodies $B_i$ all have positive distance from each other and from $\de\mathbb{H}$. 

For $q\in M$, we shall write $q_i$ for the $i$-th component and $q_{R_i}$ and $q_{Z_i}$ for the two components of $q_i$. We shall also write $B_i(q)$ for clarity instead of $B_i$ sometimes. Let $n$ denote the outer normal of $\bigcup_i\de B_i$.

\begin{figure}
  \centering
  \includegraphics[scale=0.3]{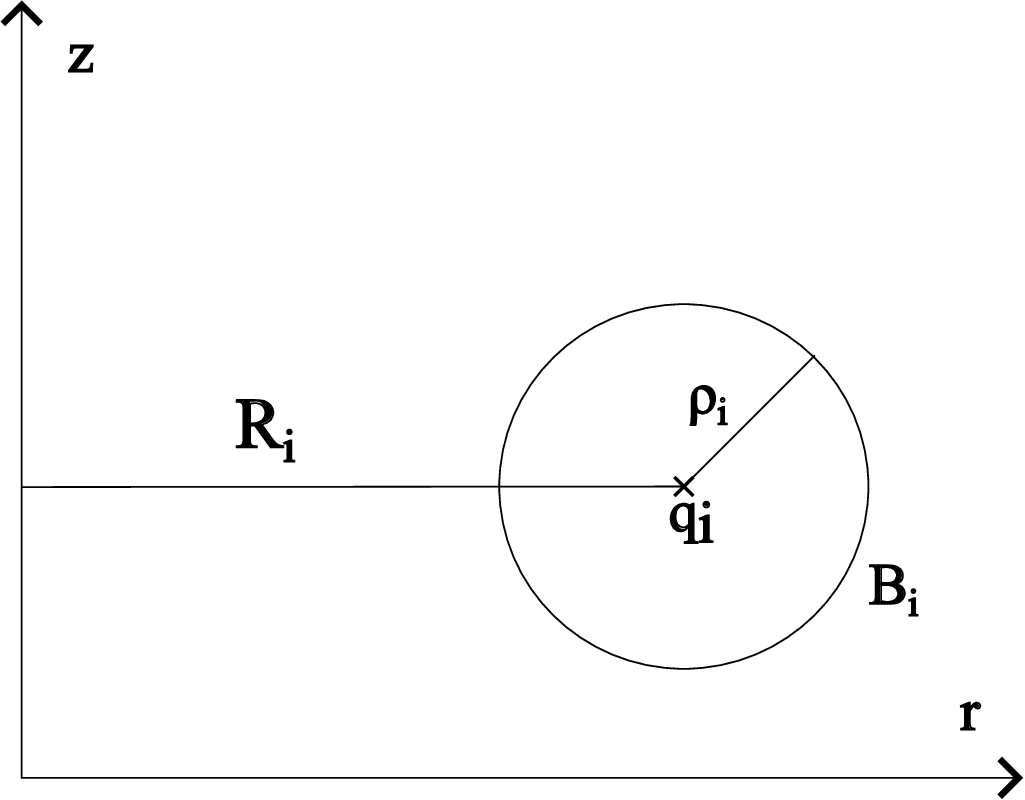}
\end{figure}

In this setup, there is a natural correspondence between tangent vectors and normal velocities on the $\de B_i$, as every $C^1$-curve in $M$ corresponds to a continuous movement of each $B_i$.

For a tangent vector $t^*$, let $t_{R_1}^*,t_{Z_1}^*,t_{R_2}^*,\dots$ denote its components. 
We say that a tangent vector is associated to $B_i$ if only its $R_i$- and $Z_i$-component are non-zero and write $T_{q_i}M$ for the subspace of those tangent vectors. 

Then for a $C^1$-curve $q$, the normal velocity is given by \begin{align}
u(\dot{q})\,:=\,\dot{q}_{Z_i}n\cdot e_Z+\dot{q}_{R_i}n\cdot e_R-\frac{\dot{q}_{R_i}\rho_i}{2R_i}\text{ on $\de B_i$.}\label{normal velo}
\end{align}

Here $e_R$ and $e_Z$ are the unit vectors and the last summand is due to the fact that if the outer radius $q_{R_i}$ changes, then the inner radius must also change because the volume is fixed.

Let $\mathcal{F}:=\mathbb{H}\backslash \bigcup_i B_i$ denote the (time-dependent) domain of the fluid. 

Let $L_R^2$ and $H_R^1$ denote the $L^2$ resp. $H^1$ spaces with respect to the measure $r\dr\dz$.

We assume that:

\begin{condition}In $\mathcal{F}(t)$ the fluid fulfills the axisymmetric Euler equations with zero vorticity and no swirl for $t\in [0,\infty)$, i.e.\ \begin{align}
\de_t u+(u\cdot\nabla)u+\nabla p\eq 0\label{Euler1}\\
\div(ru)\eq 0\label{Euler2}\\
\curl(u)\eq 0\label{Euler3}\\
u_r\eq 0 \text{ \normalfont{on} $\de \mathbb{H}$.}\label{Euler4}
\end{align} 

here the $\div$ and $\curl$ are taken with respect to the variables $(r,z)$ and $u$ is $\R^2$-valued.
\end{condition}

These equations are equivalent to the usual Euler equation for $u$ of the form \eqref{no swirl} with no vorticity after going back to three-dimensional coordinates.

We define \begin{align}
\gamma_i\,:=\,\int_{\de B_i}u\cdot\tau\dx,
\end{align}

where $\tau=n^\perp$. This is a conserved quantity by Kelvin's circulation law. For technical reasons, we will assume: \begin{condition}
None of the $\gamma_i$'s is $0$.
\end{condition}

\begin{condition}\phantomsection\label{strongsol}We assume that this solution is strong in the sense that $u,\nabla u,\de_t u\in L_R^2\cap C^1(\mathcal{F})$ and that $q$ is $C^2$ in time. In particular this should hold for the initial data.\label{strongsol}\end{condition}
We assume that the normal velocity of the boundary of each $B_i$ matches the velocity of $u$ in the corresponding direction: \begin{condition} For all $i$ we have \begin{align}
u\cdot n\eq u(\dot{q}) \text{ on $\de B_i$}\label{Euler5},
\end{align}

where we use the identification between tangent vectors and normal velocities mentioned above.\end{condition}

For simplicity we will assume that all bodies and the fluid have the constant density $1$, though all the arguments still work for different densities.

\begin{condition} We assume that in the $z$-direction, the momentum is (formally) preserved, which yields the condition \begin{align}
v_i\ddot{q}_{{Z}_i}\eq-\int_{\de B_i}rpn\cdot e_Z\dx\label{eq z}
\end{align}

for all $i$.\end{condition}

It remains to derive a condition for the interaction of the fluid and the solids in the $r$-component. As the solids can change their shape, we make the Ansatz of prescribing an interior velocity field and assuming that the kinetic energy of each $B_i$ only changes through the force exerted by the pressure at the boundary.

We associate to each normal velocity $t_i^*$ associated to $B_i$ such an interior velocity field $u_{i,int}(t_i^*)\in H^1(B_i)$ with \begin{align}
&\div ru_{i,int}\eq0 \text{ in $B_i$}\label{Int1}\\
&\curl u_{i,int}\eq0 \text{ in $B_i$}\label{Int2}\\
&u_{i,int}\cdot n\eq u(t_i^*)\text{ on $\de B_i$.}\label{Int3}
\end{align}

Existence and uniqueness can be obtained by standard elliptic theory \cite{evans2022partial}[Chapter 6], as $u_{i,int}(t_i^*)$ can be written as $\nabla \phi$ with $\div(r\nabla \phi)=0$ by the assumption of curl-freeness. Note that this is linear in $t_i^*$, and that \begin{align}\label{Ztrivial}u_{i,int}(t_{Z_i}^*)=t_{Z_i}^*e_Z.\end{align}

We can use this to associate a quadratic form on $T_{q_i}M$ by \begin{align}
(t_i^*)^TE_{q_i}t_i^+\,:=\,\int_{B_i}r\scalar{u_{i,int}(t_i^*)}{u_{i,int}(t_i^+)}\dx, \label{def E}
\end{align}

for tangent vectors $t_i^*,t_i^+$, associated to $B_i$. Clearly, this is symmetric and positive definite. Because of the explicit form \eqref{Ztrivial}, we know that \begin{align}
(e_Z)_i^TE_{q_i}(e_Z)_i\eq v_i,
\end{align}

where $(e_Z)_i\in (\R^2)^k$ is the vector with $e_Z$ in the $i$-th component. We set \begin{align}
(e_R)_i^TE_{q_i}(e_R)_i\,=:\, f_i(q_{R_i})\label{def f},
\end{align}

here the function $f_i$ depends on $v_i$ and $(e_R)_i$ is the vector with $e_R$ in the $i$-th component.

In summary, the quadratic form can be written in components as \begin{align}
(t_i^*)^T\begin{pmatrix} f_i(q_{R_i}) & 0\\ 0 & v_i\end{pmatrix}(t_i^+),\label{def f 2}
\end{align}

as one can see from the explicit formula \eqref{Ztrivial} and the fact that $u_{i,int}(t_{R_i}^*)$ must be asymmetric in $z$ in the second component and hence $t_{R_i}^*$ and $t_{Z_i}^*$ are orthogonal to each other with respect to $E_{q_i}$.

We define the kinetic energy of $B_i$ as \begin{align}\label{intEnergy}
\mathcal{E}_{B_i}\,:=\,\frac{1}{2}\dot{q}_i^TE_{q_i}\dot{q}_i.
\end{align}

We assume that the kinetic energy of each $B_i$ only changes through the force exerted at the boundary, that is \begin{align}
\mathcal{E}_{B_i}'\eq-\int_{\de B_i} rpu(\dot{q}_i)\cdot n\dx.\label{energyBalance}
\end{align}

After using the Decomposition \eqref{def f 2} and subtracting the Condition \eqref{eq z}, one obtains that \begin{align}
f_i(q_{R_i})\ddot{q}_{R_i}\dot{q}_{R_i}+\frac{1}{2}\de_{q_{R_i}}f_i(q_{R_i})(\dot{q}_{R_i})^3\eq -\int_{\de B_i}r p\dot{q}_{R_i}\left(n\cdot e_R -\frac{\rho_i}{2R_i}\right)\label{eq r}\dx.
\end{align}

We make the extra assumption that one can divide out $\dot{q}_{R_i}$, which is equivalent to saying that whenever $\dot{q}_{R_i}=0$ and the force at the boundary is nonzero, then $\ddot{q}_{R_i}$ is not zero, which rules out bodies with fixed $R_i$-coordinate. This gives the final equation: \begin{condition} 
For all $i$ we have
\begin{align}\label{eq R}
f_i(q_{R_i})\ddot{q}_{R_i}+\frac{1}{2}\de_{q_{R_i}}f_i(q_{R_i})(\dot{q}_{R_i})^2\eq -\int_{\de B_i}r p\left(n\cdot e_R -\frac{\rho_i}{2R_i}\right)\dx.\end{align}
\end{condition}

We can also write \eqref{eq z} and \eqref{eq R} as a single equation \begin{align}\label{2ndDeri}
(t_i^*)^TE_{q_i}\ddot{q}_i+\frac{1}{2}t_i^*(\de_{\dot{q}_i} E_{q_i}\cdot \dot{q}_i)\dot{q}_i\eq-\int_{\de B_i} rpu(t_i^*)\dx,
\end{align}

where $t_i^*$ is an arbitrary tangent vector associated to $B_i$.

\subsection{Main results}\label{Main results}

Our first main result is wellposedness: \begin{theorem}[Informal]
For every initial datum $q(0),$ $\dot{q}(0)$ the system detailed in the previous section has a unique solution up to some time $T>0$. If $T<\infty$, then $q$ blows up at $T$ in the sense that some of the bodies either collide with each other, the boundary or escape to $\infty$.

Furthermore, the system preserves energy.
\end{theorem}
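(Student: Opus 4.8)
The plan is to reformulate the coupled fluid–body problem as a finite-dimensional ODE on the configuration manifold $M$ and then invoke Picard–Lindelöf. First I would use Section \ref{Section2} (the potential/streamfunction decomposition) to express the fluid velocity $u$, given the positions $q$ and velocities $\dot q$, as a sum of elementary solutions of the linear elliptic problems \eqref{Euler2}–\eqref{Euler4} with boundary data \eqref{normal velo}, plus the contributions of the fixed circulations $\gamma_i$. Concretely, $u = \sum_i (\dot q_{R_i}\,u^{R_i} + \dot q_{Z_i}\,u^{Z_i}) + \sum_i \gamma_i\, u^{\gamma_i}$, where each $u^{R_i}, u^{Z_i}, u^{\gamma_i}$ depends only on $q$ (not on $\dot q$) and is smooth in $q$ by elliptic regularity and smooth dependence of the domain $\mathcal F(q)$ on $q$ (the boundaries $\de B_i$ move smoothly, and one can pull back to a fixed reference domain). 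From Bernoulli's law, $p = -\de_t\varphi - \tfrac12|u|^2 + c(t)$ where $\varphi$ is the velocity potential; crucially $\de_t\varphi$ is affine in $\ddot q$ with coefficients smooth in $(q,\dot q)$, so the right-hand sides of \eqref{eq z} and \eqref{eq R}, i.e.\ the boundary integrals $-\int_{\de B_i} rp\,(\cdots)\dx$, become affine in $\ddot q$ with coefficients depending smoothly on $(q,\dot q)$.

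Next I would collect \eqref{2ndDeri} over all $i$ into the form $\mathcal M(q)\,\ddot q + \mathcal N(q,\dot q) = \mathcal K(q)\,\ddot q + \mathcal G(q,\dot q)$, i.e.\ $\bigl(\mathcal M(q)-\mathcal K(q)\bigr)\ddot q = \mathcal G(q,\dot q)-\mathcal N(q,\dot q)$, where $\mathcal M(q) = \diag(E_{q_i})$ is the body mass matrix and $\mathcal K(q)$ is the added-mass contribution coming from the $\de_t\varphi$ term in the pressure. The key structural point is that $\mathcal M(q)-\mathcal K(q)$ is symmetric positive definite: $\mathcal M$ is positive definite by \eqref{def E}, and $-\mathcal K$ is the Hessian (in $\dot q$) of the fluid kinetic energy $\tfrac12\int_{\mathcal F}r|u|^2\dx$, hence positive semidefinite; one should check the added-mass operator is bounded and that the total effective mass is uniformly invertible on compact subsets of $M$. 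Inverting gives $\ddot q = \Phi(q,\dot q)$ with $\Phi$ locally Lipschitz (indeed smooth) on $TM$, and Picard–Lindelöf yields a unique maximal $C^2$ solution on $[0,T)$. The blow-up alternative is the standard one: if $T<\infty$ and $q$ stays in a compact subset $K\Subset M$ with $\dot q$ bounded, the solution extends past $T$; since $M$ fails to be compact exactly when bodies touch each other, touch $\de\mathbb H$, or escape to infinity, and $\dot q$ boundedness will follow from energy conservation (see below), $T<\infty$ forces one of those degeneracies.

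For energy conservation I would define the total energy $\mathcal E = \sum_i \mathcal E_{B_i} + \tfrac12\int_{\mathcal F} r|u|^2\dx$, differentiate in time, and show $\mathcal E' = 0$. Summing the energy balance \eqref{energyBalance} gives $\sum_i \mathcal E_{B_i}' = -\sum_i\int_{\de B_i} rp\,u(\dot q_i)\cdot n\dx = -\int_{\de\mathcal F} rp\,(u\cdot n)\dx$ using the matching condition \eqref{Euler5} (the $\de\mathbb H$ part vanishes by \eqref{Euler4}). For the fluid part, one computes $\tfrac{\dd}{\dt}\tfrac12\int_{\mathcal F} r|u|^2\dx$, accounting both for the Euler equation \eqref{Euler1}–\eqref{Euler3} and for the time-dependence of the domain $\mathcal F(t)$ via a Reynolds transport / boundary term with normal speed $u\cdot n$; the interior terms combine (after integrating by parts, using $\div(ru)=0$ and $\curl u=0$) into a boundary flux $\int_{\de\mathcal F} rp\,(u\cdot n)\dx + (\text{kinetic flux terms that cancel against the transport term})$. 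Adding the two contributions, the pressure boundary terms cancel and $\mathcal E' = 0$. The circulations $\gamma_i$ enter only through $u$ and are constant by Kelvin's law, so they cause no extra terms.

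The main obstacle I expect is the regularity and uniform-invertibility analysis underlying the reduction: establishing that the elementary velocity fields $u^{R_i}, u^{Z_i}, u^{\gamma_i}$ and the potentials $\varphi$ depend smoothly (and with locally uniform bounds) on $q$ — which requires a careful change of variables flattening the moving boundaries $\de B_i$ and elliptic estimates in the $r\dr\dz$-weighted spaces $L^2_R, H^1_R$ (with the coordinate singularity at $\de\mathbb H$, i.e.\ $r=0$, handled, though here the bodies stay away from $r=0$) — and then showing $\mathcal M(q)-\mathcal K(q)$ is invertible with locally bounded inverse. Once that functional-analytic groundwork (Section \ref{Section2}) is in place, the ODE well-posedness, the blow-up alternative, and energy conservation are essentially bookkeeping with the divergence theorem and Grönwall's inequality.
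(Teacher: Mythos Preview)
Your proposal is correct and follows essentially the same route as the paper: decompose $u$ into potential and circulation parts (the paper's $u_1+u_2$ in \eqref{u1u2}), reduce \eqref{2ndDeri} to the second-order ODE \eqref{MainODE} with effective mass $E+\mathcal M$ (your $\mathcal M-\mathcal K$), invoke Picard--Lindel\"of via positive definiteness of this mass, and obtain energy conservation and the blow-up alternative exactly as in Lemma~\ref{Lem:Energy} and Theorem~\ref{Thm:exist}. One minor imprecision: since the $\gamma_i\neq 0$ there is no single-valued velocity potential $\varphi$ for all of $u$, so your Bernoulli formula should be read as $\nabla p=-\de_t u-\tfrac12\nabla|u|^2$ with $\de_t u$ a genuine gradient because $\de_t\gamma_i=0$ (the paper handles this via Lemma~\ref{rot pre} and the explicit splitting), but this does not affect the substance of your argument.
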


The more precise statements can be found in Corollary \ref{Cor:Exist}, Lemma \ref{Lem:Energy} and Thm.\ \ref{Thm:exist}.

For the zero-radius limit, we shall first introduce some notation. We will use a rescaling parameter $\eps$ and denote the manifold of configurations associated to the bodies with the ``volumes'' $v_1\eps^2,\dots, v_k\eps^2$ by $\tilde{M}_\eps$ (recall that the ``volumes'' were defined in \eqref{def rho}). We still denote the inner radii with $\rho_1,\dots\rho_k$. We write $\tilde{\rho}_i$ for the unrescaled radii $\frac{\rho_i}{\eps}$. We fix some $\binom{R_0}{Z_0}$ with $R_0>0$. There are two different regimes that one can consider:

\subsubsection{The first regime}

The first one is also considered in \cite{JerrardSmets} and \cite{Davila2022leapfrogging}. We set all $\gamma_i$'s to be equal to one and set the centers to be \begin{align}\label{def regime1}
q_i\eq\left(R_0+\frac{\tilde{q}_{R_i}}{\sqrt{|\log\eps|}},Z_0+\frac{\tilde{q}_{Z_i}}{\sqrt{|\log\eps|}}\right),
\end{align}

where the rescaled positions $\tilde{q}(0):=(\tilde{q}_{R_1}(0),\tilde{q}_{Z_1}(0),\dots)$, should be independent of $\eps$.

We will further rescale time by a factor $|\log\eps|$ and work with the rescaled positions $\tilde{q}_i:=(\tilde{q}_{R_i},\tilde{q}_{Z_i})$.

Formally, the vorticity hidden in the rotation is $-1$ for each body (where the minus comes from opposite sign of the axisymmetric vorticity).

Hence, in this regime, the main part of the self-induced velocity (in rescaled time and space) of a vortex ring, that is $\frac{-1}{4\pi R_0}|\log\eps|^\frac{1}{2}e_Z$, is the same for all rings, hence we may neglect this part. The next order part is of the form $\frac{\tilde{q}_{R_i}}{4\pi R_0^2}e_Z$ in rescaled time and space by Taylor's theorem.

The velocity induced by the $i$-th ring on the $j$-th ring is of the form $\frac{1}{2\pi}\frac{(q_j-q_i)^\perp}{|q_i-q_j|^2}$.

We hence expect that in the limit $\eps\rightarrow 0$, the velocities $\tilde{q}_i$'s should solve the system \begin{align}\label{PointSystem1}
\tilde{q}_i'\eq\frac{1}{2\pi}\sum_{j\neq i}\frac{(q_i-q_j)^\perp}{|q_i-q_j|^2}+\frac{\tilde{q}_{R_i}}{4\pi R_0^2}e_Z\end{align}

up to the subtracted term $-\frac{1}{4\pi R_0}|\log\eps|^\frac{1}{2}e_Z$.

\begin{theorem}\phantomsection\label{Limit1}
Assume that the solution of \eqref{PointSystem1} exist until time $T$ (in the sense that no components of the solution go to $\infty$ and the distance between the different components stays positively bounded from below). 
Assume that the shifted initial velocity $\tilde{q}_i'+\frac{|\log\eps|^\frac{1}{2}}{4\pi R_0^2}e_Z$ is bounded uniformly in $\eps$. 

Then $\tilde{q}+t\frac{1}{4\pi R_0}|\log\eps|^\frac{1}{2}e_Z$ converges to the solution of \eqref{PointSystem1} weakly\star in $W_{loc}^{1,\infty}([0,T))$ in rescaled time.
\end{theorem}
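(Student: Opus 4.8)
The plan is to pass to the limit in the ODE system \eqref{2ndDeri} using the modulated-energy method announced in the introduction. First I would set up the rescaled equations: substituting the ansatz \eqref{def regime1} and the time rescaling by $|\log\eps|$ into \eqref{eq z} and \eqref{eq R}, the body equations become, in rescaled variables, a singularly perturbed second-order system of the schematic form $\eps^2 f_i(\cdot)\tilde q_i'' + (\text{lower-order quadratic}) = (\text{pressure force})$, where the pressure force is recovered from the potentials and streamfunctions via the Biot–Savart-type asymptotics established in Sections \ref{Section3} and \ref{Section4}. The key structural point, to be extracted from those sections, is that the leading pressure force on the $i$-th body splits into a self-interaction part, whose dominant $|\log\eps|^{1/2}$-contribution is exactly $-\frac{1}{4\pi R_0}|\log\eps|^{1/2}e_Z$ (common to all bodies because all $\gamma_i=1$ and the $R_i$ are $R_0+o(1)$), a next-order self-interaction term $\frac{\tilde q_{R_i}}{4\pi R_0^2}e_Z$ coming from Taylor expansion in the shift, and a mutual-interaction part converging to $\frac{1}{2\pi}\sum_{j\ne i}(q_i-q_j)^\perp/|q_i-q_j|^2$, plus error terms that are $o(1)$ uniformly on compact subsets of $[0,T)$ as long as the bodies stay separated and bounded.

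Next I would introduce the modulated kinetic energy
\begin{align*}
\mathcal{H}_\eps(t)\,:=\,\tfrac12\sum_i (\tilde q_i' - w_i)^T \widetilde E_{q_i}(\tilde q_i' - w_i),
\end{align*}
where $\widetilde E_{q_i}$ is the rescaled mass form (so that its $Z$-block is $v_i$ and its $R$-block is $f_i$ up to $\eps^2$-scaling) and $w_i$ is the ``reference velocity'' $\frac{1}{2\pi}\sum_{j\ne i}(q_i-q_j)^\perp/|q_i-q_j|^2 + \frac{\tilde q_{R_i}}{4\pi R_0^2}e_Z - \frac{|\log\eps|^{1/2}}{4\pi R_0}e_Z$, evaluated along the true solution $\tilde q$. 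Differentiating $\mathcal{H}_\eps$ and using \eqref{2ndDeri}, the dangerous $|\log\eps|^{1/2}$-terms and the leading pressure force cancel by construction (this is the whole point of the modulation), and what remains is controlled by $\mathcal{H}_\eps$ itself plus an $o(1)$ source: one gets a Grönwall inequality $\mathcal{H}_\eps' \le C(\mathcal{H}_\eps + \eta_\eps)$ with $\eta_\eps\to 0$, valid on any $[0,T'] \subset [0,T)$ on which the configuration stays in a fixed compact subset of $\widetilde M_\eps$. Since the hypothesis on the shifted initial velocity gives $\mathcal{H}_\eps(0)\to 0$, Grönwall yields $\mathcal{H}_\eps \to 0$ uniformly on $[0,T']$, hence $\tilde q_i' - w_i \to 0$ in $L^\infty$, which after reinserting the definition of $w_i$ is precisely weak\star $W^{1,\infty}_{loc}$ convergence of $\tilde q + t\frac{1}{4\pi R_0}|\log\eps|^{1/2}e_Z$ to the solution of \eqref{PointSystem1}. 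A continuation/bootstrap argument, standard once the a priori bound is in hand, extends $T'$ up to $T$ as long as the limiting solution exists.

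The main obstacle I expect is the bookkeeping of the error terms in the energy identity — in particular showing that the terms involving $\de_{\dot q}\widetilde E \cdot \dot q$ and the $\eps^2 f_i \tilde q_i''$ contribution, as well as the difference between the true pressure force and its leading asymptotic model, can all be absorbed either into $C\mathcal{H}_\eps$ or into the vanishing source $\eta_\eps$, uniformly in $\eps$. This requires the asymptotic expansions of the potentials/streamfunctions from Sections \ref{Section3}–\ref{Section4} to come with quantitative remainder estimates that are uniform on compact configurations, and some care is needed because $\tilde q_i''$ is a priori only controlled through the equation itself, so one has to close the estimate without differentiating $\mathcal{H}_\eps$ more than once. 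A secondary technical point is that $w_i$ depends on $\eps$ through the $|\log\eps|^{1/2}$-shift, so one must check that $w_i' $ (which enters $\mathcal{H}_\eps'$ through $\frac{d}{dt}(\tilde q_i' - w_i)$) does not itself produce a non-vanishing contribution — but since the shift is constant in $t$, $w_i'$ only involves the bounded geometric terms and their $t$-derivatives along $\tilde q$, which are controlled once $\mathcal{H}_\eps$ is.
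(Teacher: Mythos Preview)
Your overall strategy --- a modulated kinetic energy plus Gr\"onwall --- is indeed the paper's approach, and your identification of the reference velocity $w_i$ is essentially correct. However, two points in your execution do not go through as written.

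First, a minor one: your energy $\mathcal{H}_\eps$ uses only the interior mass form $\widetilde E_{q_i}$, but the ODE \eqref{MainODE} carries the full inertia $E+\mathcal{M}$, where $\mathcal{M}$ is the added mass of the fluid. If you differentiate $\mathcal{H}_\eps$ and substitute the equation, you are left with an uncontrolled $\mathcal{M}\tilde q''$ term. The fix is simply to modulate with $E+\mathcal{M}$; both are $\simeq\eps^2$ by Lemmata \ref{InteriorField} and \ref{conv Inertia}, and then the second derivative is absorbed cleanly. Relatedly, the paper modulates by the \emph{exact} quantity $A^{-1}G/|\log\eps|^{1/2}$ rather than its limit $w$; this makes the dominant right-hand side $A(\tilde q'+A^{-1}G/|\log\eps|^{1/2})$ vanish identically by antisymmetry of $A$, and the remaining Christoffel contribution (your ``lower-order quadratic'') cancels via the algebraic identity $\langle\Gamma,\dot q,p\rangle\cdot p=\tfrac12(\nabla_{\dot q}\mathcal{M})p\cdot p$. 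You should name these two cancellations explicitly --- they are what makes the energy identity close.

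Second, and more seriously: your conclusion ``$\mathcal{H}_\eps\to 0$ hence $\tilde q_i'-w_i\to 0$ in $L^\infty$'' does not follow. Since $E+\mathcal{M}\simeq\eps^2$, the modulated energy scales as $\eps^2|\tilde q'-w|^2$; the hypothesis on the initial shifted velocity gives only $\mathcal{H}_\eps(0)\lesssim\eps^2$ (not $o(\eps^2)$), and the source terms in the Gr\"onwall inequality are likewise $\lesssim\eps^2$. Thus Gr\"onwall yields $\mathcal{H}_\eps(s)\lesssim\eps^2 e^{Cs}$, i.e.\ the shifted velocity is \emph{bounded}, not convergent. This is exactly why the theorem only asserts weak\textsuperscript{$\ast$} convergence in $W^{1,\infty}_{loc}$. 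The missing step is: once $\tilde q_\eps'$ (shifted) is bounded, $\tilde q_\eps''$ is bounded in $W^{-1,\infty}$, so the entire inertial left-hand side of \eqref{MainODE} --- which carries an $\eps^2|\log\eps|^\ell$ prefactor --- tends to zero in $W^{-1,\infty}$. The equation then forces $A\tilde q_\eps'+G/|\log\eps|^{1/2}\rightharpoonup 0$ weak\textsuperscript{$\ast$} in $L^\infty$, and since $A^{-1}$ and $G/|\log\eps|^{1/2}$ converge strongly (Corollary \ref{conv J2}), one deduces the claimed weak\textsuperscript{$\ast$} limit. Your bootstrap/continuation argument at the end is fine once this is in place.
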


This ODE system has been studied e.g.\ in \cite{JerrardSmets}, where the existence of periodic solutions for two rings has been shown. Such periodic solutions correspond to a ``leapfrogging'' motion of the rings, which was predicted already by Helmholtz in his famous work \cite{helmholtz1858integrale,helmholtz1867lxiii}.

\subsubsection{The second regime}

We can also consider the regime where the self-induced motion and the motion induced by other rings is of the same order. For this we set \begin{align}\label{def regime2}
q_i\eq \left(R_0+\frac{\tilde{q}_{R_i}}{|\log\eps|},Z_0+\frac{\tilde{q}_{Z_i}}{|\log\eps|}\right).
\end{align}

Here the rescaled initial position $\tilde{q}(0)$ should again be independent of $\eps$.

We will further rescale time by a factor $|\log\eps|^2$ and will again work with the rescaled positions $\tilde{q}$.

In this regime, all expected velocities are of order $1$ in rescaled time and space.

We hence expect that in the limit $\eps\rightarrow 0$ and in the rescaled time, the $\tilde{q}$ should solve the system \begin{align}\label{PointSystem2}
\tilde{q}_i'\eq\frac{1}{2\pi}\sum_{j\neq i}\gamma_j\frac{(\tilde{q}_i-\tilde{q}_j)^\perp}{|\tilde{q}_i-\tilde{q}_j|^2}-\frac{\gamma_i}{4\pi R_0}e_Z\end{align}

This system was studied in \cite{MarchioroNegrini2} and one can show that if all $\gamma_i$'s have the same sign, then solutions cannot blow up in finite time \cite{MarchioroNegrini2}[Thm.\ 2.1].

\begin{theorem}\phantomsection\label{Limit2}
Assume that the solution of \eqref{PointSystem2} exist until some time $T$ in the same sense as in the previous theorem. Further assume that the initial velocities $\tilde{q}_i'(0)$ (in the rescaled time) are bounded uniformly in $\eps$. 

Then $\tilde{q}$ converges to the solution of \eqref{PointSystem2} weakly\star in $W_{loc}^{1,\infty}([0,T))$ in rescaled time.
\end{theorem}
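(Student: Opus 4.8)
The plan is to run the modulated-energy argument announced for Theorem~\ref{Limit1}; only the powers of $|\log\eps|$ in the bookkeeping change. Starting from the reduced second-order system \eqref{2ndDeri}, I would insert the ansatz \eqref{def regime2} together with the time rescaling by $|\log\eps|^2$, and rewrite the boundary pressure integrals on the right-hand side through the potentials and stream functions. Feeding in the asymptotics established in Sections~\ref{Section3} and~\ref{Section4} (and their $q$-derivatives), the equation for the rescaled $\tilde q$ takes the schematic form
\begin{align}
m^\eps_i\,\ddot{\tilde q}_i + \bigl(\text{terms of lower order in }\dot{\tilde q}\bigr)\eq F_i(\tilde q) + r^\eps_i,
\end{align}
where $m^\eps_i$ is the rescaled effective mass form of $B_i$, which is $o(1)$ as $\eps\to 0$ because the volumes are $v_i\eps^2$; $F_i(\tilde q)$ is precisely the right-hand side of \eqref{PointSystem2}, with the interaction part produced by the mutual stream functions (which converge to the planar Biot--Savart kernel, Section~\ref{Section4}) and the term $-\frac{\gamma_i}{4\pi R_0}e_Z$ produced by the leading behaviour of the self stream function together with $q_{R_i}\to R_0$; and $r^\eps_i\to 0$ once an a priori bound on $\dot{\tilde q}$ is available.

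Because $m^\eps_i=o(1)$, the velocities cannot be controlled from the ODE directly, so I would introduce, in rescaled time, the modulated energy
\begin{align}
\mathcal{M}_\eps(t)\,:=\,\mathcal{E}^\eps_{\mathrm{tot}} - \mathcal{E}^\eps_{\mathrm{exp}}\bigl(\tilde q(t)\bigr),
\end{align}
where $\mathcal{E}^\eps_{\mathrm{tot}}$ is the total (conserved, by Lemma~\ref{Lem:Energy}) kinetic energy of fluid and bodies and $\mathcal{E}^\eps_{\mathrm{exp}}(\tilde q)$ is the kinetic energy of the ``expected'' configuration in which $B_i$ at position $\tilde q_i$ moves with the velocity dictated by \eqref{PointSystem2}; one arranges $\mathcal{E}^\eps_{\mathrm{exp}}$ so that $\mathcal{M}_\eps$ equals, up to genuinely small corrections, a positive weighted quadratic form in the velocity errors $\dot{\tilde q}_i - F_i(\tilde q)$. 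Differentiating $\mathcal{M}_\eps$ along the flow and using the energy balance \eqref{energyBalance}, the equations of motion \eqref{2ndDeri}, and the Section~\ref{Section3}--\ref{Section4} asymptotics, the logarithmically large and the $O(1)$ contributions should cancel, leaving $\frac{\dd}{\dd t}\mathcal{M}_\eps \le C(\mathcal{M}_\eps+1)+o(1)$ on any time interval along which the $B_i$ stay at mutual distance bounded below and inside a fixed compact subset of $\mathbb{H}$. Since the rescaled initial velocities are bounded uniformly in $\eps$, $\mathcal{M}_\eps(0)\le C$, so Grönwall yields $\mathcal{M}_\eps\le C$, hence a uniform $L^\infty$ bound on $\dot{\tilde q}$, on such intervals.

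The uniform $W_{loc}^{1,\infty}$ bound then gives a subsequence with $\tilde q\rightharpoonup\bar q$ weakly-$*$ in $W_{loc}^{1,\infty}$ and $\tilde q\to\bar q$ locally uniformly. Passing to the limit in the time-integrated form of \eqref{2ndDeri} — where the terms carrying $m^\eps_i$ vanish and the pressure and stream-function terms converge by Sections~\ref{Section3}--\ref{Section4} — shows that $\bar q$ solves \eqref{PointSystem2} with the prescribed initial position. By Cauchy--Lipschitz uniqueness for \eqref{PointSystem2} (valid as long as the components stay finite and mutually separated, which by hypothesis holds up to $T$), $\bar q$ is the unique such solution, so the whole family converges and not merely a subsequence. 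A continuation argument closes the loop: the set of $t\in[0,T)$ for which, for all sufficiently small $\eps$, the bodies remain in a fixed compact subset of $\mathbb{H}$ with mutual distances bounded below (so that the estimates above apply) is nonempty, open and relatively closed, hence all of $[0,T)$.

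\textbf{Main obstacle.} The heart of the matter is the Grönwall step: one must verify that $\frac{\dd}{\dd t}\mathcal{M}_\eps$ contains \emph{only} terms that are either quadratic in the velocity error (hence absorbed by $\mathcal{M}_\eps$) or $o(1)$, i.e.\ that the modulation $\mathcal{E}^\eps_{\mathrm{exp}}$ is chosen so as to cancel exactly both the logarithmically large self-interaction and the $O(1)$ forcing. This is why the delicate expansions of the potentials, the stream functions, their $q$-derivatives and the boundary pressure in Sections~\ref{Section3} and~\ref{Section4} are needed, and it is here that the moving inner radii $\rho_i=\sqrt{v_i\eps^2/(\pi q_{R_i})}$ and the interior fields $u_{i,\mathrm{int}}$ must be tracked carefully. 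A structural subtlety worth flagging is that $\mathcal{M}_\eps$ controls the velocity error only after multiplication by the vanishing mass, so $\dot{\tilde q}$ may oscillate about $F(\tilde q)$ and is recovered only in an averaged sense — which is precisely why the conclusion is stated as weak-$*$, rather than strong, $W_{loc}^{1,\infty}$ convergence.
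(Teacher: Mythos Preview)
Your high-level plan (modulated energy $\Rightarrow$ Gr\"onwall $\Rightarrow$ uniform $W^{1,\infty}$ bound $\Rightarrow$ compactness $\Rightarrow$ identification of the limit via uniqueness, closed by a continuation argument) matches the paper's, and your remark that only weak-$*$ convergence of $\tilde q'$ is to be expected is exactly right. However, the specific modulated energy you propose and the cancellation mechanism you invoke are not the ones the paper uses, and this difference matters.

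The paper does \emph{not} take ``conserved total kinetic energy minus an expected energy'' and hope for cancellations coming from the energy balance \eqref{energyBalance}. Instead it works directly with the reduced ODE \eqref{MainODE} in rescaled variables,
\[
|\log\eps|^3\Bigl((E+\mathcal M)\tilde q_\eps'' + \tfrac12(\nabla_{\tilde q}E\cdot\tilde q_\eps')\tilde q_\eps' + |\log\eps|^{-1}\langle\Gamma,\tilde q_\eps',\tilde q_\eps'\rangle\Bigr) \;=\; G + |\log\eps|\,A\,\tilde q_\eps',
\]
and tests it against the \emph{shifted velocity} $p:=\tilde q_\eps'+\frac{A^{-1}G}{|\log\eps|}$. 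The modulated energy is then simply
\[
\tfrac12\,p^T\bigl(E(q)+\mathcal M(q)\bigr)p,
\]
a direct quadratic form in the velocity error with the (vanishing) effective inertia as weight. The decisive cancellation is purely algebraic: $A$ is \emph{antisymmetric} (Proposition~\ref{conv A}), so $p^T A p=0$ and the entire right-hand side $G+|\log\eps|A\tilde q_\eps'=|\log\eps|A p$ drops out upon testing. A second algebraic identity, $\langle\Gamma,\tilde q_\eps',p\rangle\cdot p=\tfrac12(\nabla_{\tilde q}\mathcal M\cdot\tilde q_\eps')p\cdot p$, kills the remaining $O(1)$ Christoffel contribution (the term you call ``$O(1)$ forcing''), and what survives is controlled by the Section~\ref{Section3}--\ref{Section4} bounds on $E,\mathcal M,\Gamma,A,G$ and their $\tilde q$-derivatives, giving $\frac{\dd}{\dd s}(p^T(E+\mathcal M)p)\lesssim \eps^2+p^T(E+\mathcal M)p$.

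Your formulation via $\mathcal E_{\mathrm{tot}}-\mathcal E_{\mathrm{exp}}$ could perhaps be massaged into this (since $\mathcal E_{\mathrm{tot}}=\tfrac12\dot q^T(E+\mathcal M)\dot q-\tfrac12\sum\gamma_i\gamma_j C_{ij}$ and $\nabla_q C$ produces $G$ by Lemma~\ref{Lem:EnergyDerivative}), but as stated it obscures the two structural facts that actually make the Gr\"onwall step close: the antisymmetry of $A$ and the Christoffel identity. Neither appears in your proposal, and without them you have no mechanism to remove the dangerous $|\log\eps|A\tilde q_\eps'$ and $\langle\Gamma,\tilde q_\eps',\tilde q_\eps'\rangle$ terms from $\frac{\dd}{\dd t}\mathcal M_\eps$; invoking ``the logarithmically large and the $O(1)$ contributions should cancel'' is precisely the step that needs these identities.
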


\textbf{General notation}: We will use the notation $\tilde{q}$ for the rescaled positions in both regimes, as most estimates work completely similarly for both regimes. We also denote the manifold of $\tilde{q}$'s for which the corresponding $q$ is in $\tilde{M}_\eps$ by $M_\eps$.

We write $A\lesssim B$ if there is a constant $C>0$ such that $A\leq CB$, where the constant $C$ is allowed to depend on the number $k$ and on $q$ resp.\ $\tilde{q}$ and on which of the regimes we are in, but not on any other quantities. 

Similarly, we write $\ell$ for irrelevant (finite) exponents, which are allowed to depend on which regime we are in, but not on any other quantities and are allowed to change their value from line to line.

If $\Omega\subset \mathbb{H}$ we write $L_0^2(\Omega)$ for the space of all functions $f\in L^2(\Omega)$ such that for every connected component $\Omega_i$ of $\Omega$ we have $\int_{\Omega_i}f\dx=0$. 

If $S\subset \mathbb{H}$, we write $S^{\R^3}$ for its figure of revolution in $\R^3$.

\section{Wellposedness of the system}\label{Section2}
We will follow the approach in \cite{GlassMunnierSueur2} to show that our system can be reduced to an ODE in $q$ only. The main additional difficulty is that we are in an unbounded domain and hence need decay estimates to justify partial integrations.

\subsection{Representation of $u$}\label{repU}
In this subsection, we show that $u$ can be recovered from a potential and a streamfunction. 

\begin{definition}
 For $t^*\in T_{q_i}M$, let $\phi_{i,t^*}=\phi_{i,t^*}(t)$ be defined as the unique solution of the Neumann problem\begin{align}
&\div(r\nabla\phi_{i,t^*})\eq0 \text{ \normalfont{in} $\mathcal{F}(t)$}\\
&\de_n\phi_{i,t^*}\eq u(t^*) \text{ \normalfont{on} $\de B_i$}\\
&\de_n\phi_{i,t^*}\eq 0 \text{ \normalfont{on} $\de B_j$  \normalfont{for} $j\neq i$  \normalfont{and on} $\de\mathbb{H}$}\\
&\phi_{i,t^*}\in \dot{H}_R^1.\\
&\phi_{i,t^*}\rightarrow 0\text{ at $\infty$}
\end{align}
\end{definition}

We first need to check that this is well-defined.

\begin{lemma}\phantomsection\label{ExistencePhi}
Let $b\in L^2(\bigcup_i \de B_i)$ be such that $\int_{\bigcup_i\de B_i}rb\dx=0$, then the system \begin{align}
&\div(r\nabla\phi)\eq0 \text{ \normalfont{in} $\mathcal{F}(t)$}\\
&\de_n\phi_{}\eq b \text{ \normalfont{on} $\bigcup_i \de B_i$}\\
&\de_n\phi_{}\eq 0 \text{ \normalfont{on} $\de\mathbb{H}$}\\
&\phi_{}\in \dot{H}_R^1.\\
&\phi_{}\rightarrow 0\text{ at $\infty$}
\end{align}

has a unique solution. Furthermore \begin{align}\label{decayphi}
|\nabla^m\phi(x)|\,\lesssim\,\frac{\norm{b}_{H^{m}}}{1+|x|^{2+m}} \text{ \normalfont{for all} $m\in\N_{\geq 0}$}\end{align}

where the implicit constant is bounded locally uniformly in $q$.

\end{lemma}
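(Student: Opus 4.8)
The plan is to solve the problem variationally in the homogeneous space $\dot H^1_R(\mathcal{F})$ and then upgrade to the claimed pointwise decay estimate by a combination of elliptic regularity interior to the fluid domain and an explicit analysis of the behaviour near infinity. First I would set up the weak formulation: for $\psi \in \dot H^1_R(\mathcal{F})$ one asks that $\int_{\mathcal F} r \nabla\phi\cdot\nabla\psi \dx = \int_{\bigcup_i \de B_i} r b\,\psi \dx$. The bilinear form is coercive on $\dot H^1_R$ (this is exactly where the weight $r$ is harmless, since we are bounded away from $\de\mathbb H$ near the bodies and $r$ is comparable to $|x|$ at infinity, so the weighted Dirichlet form controls the weighted gradient), and the right-hand side is a bounded linear functional by the trace inequality on $\bigcup_i\de B_i$ together with the compatibility condition $\int r b = 0$, which guarantees the functional annihilates the constants — the kernel of the seminorm. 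Lax–Milgram on the quotient by constants then gives existence and uniqueness of $\phi \in \dot H^1_R$ with $\phi \to 0$ at infinity (the decay at infinity being part of the definition of the completion). Interior regularity $\phi \in C^\infty(\mathcal F)$ away from the boundary and up to $\de B_i$ (which is smooth) follows from standard elliptic theory for $\div(r\nabla\cdot)$, a uniformly elliptic operator with smooth coefficients on compact subsets of $\mathbb H$.

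The substance of the lemma is the decay rate $|\nabla^m\phi(x)| \lesssim \|b\|_{H^m}\,(1+|x|)^{-(2+m)}$. Here the cleanest route is to compare $\phi$ with the fundamental solution of $\div(r\nabla\cdot)$. Recall that in three dimensions the axisymmetric Laplacian acting on functions independent of $\theta$ is $\frac1r\div(r\nabla\cdot)$ after identifying $(r,z)$-functions with their figures of revolution; thus $\phi$ corresponds to a harmonic function $\Phi$ on the exterior domain $\mathcal F^{\R^3} \subset \R^3$ with Neumann data on the tori $\de B_i^{\R^3}$, and the condition $\int_{\bigcup \de B_i} r b = 0$ is precisely the statement that the total flux through the torus boundaries vanishes. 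A harmonic function in an exterior domain of $\R^3$ with vanishing net flux and decaying to zero behaves like a dipole: $\Phi(x) = O(|x|^{-2})$ and $\nabla^m\Phi(x) = O(|x|^{-(2+m)})$. I would make this quantitative by writing $\Phi$ via the single-layer potential $\int_{\bigcup \de B_i^{\R^3}} \frac{\sigma(y)}{4\pi|x-y|}\,dS(y)$ with a density $\sigma$ controlled by $\|b\|_{L^2}$ (invertibility of the single-layer operator, or equivalently using the already-constructed solution to read off $\sigma = \de_n\Phi|_+ - \de_n\Phi|_-$), using the flux condition $\int\sigma = 0$ to kill the leading $|x|^{-1}$ term in the multipole expansion, and then differentiating the kernel: each derivative of $\frac{1}{|x-y|}$ gains a power of $|x-y|^{-1}$, and for $|x|$ large and $y$ in the fixed compact set $\bigcup \de B_i^{\R^3}$ one has $|x-y| \gtrsim |x|$, giving the stated bounds. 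The dependence $\|b\|_{H^m}$ rather than $\|b\|_{L^2}$ on the right enters because $\nabla^m\phi$ on the bodies (needed to control the layer density for the higher-order estimates, or alternatively via elliptic estimates near $\de B_i$ in terms of the Neumann data) costs $m$ derivatives of $b$.

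The main obstacle, and the place requiring care, is the interface between the weighted-space functional analysis near the axis $\de\mathbb H$ and the clean $\R^3$ potential theory at infinity: one must check that the weak solution in $\dot H^1_R(\mathcal F)$ genuinely corresponds to the exterior harmonic function $\Phi$ in $\R^3$ with the asserted boundary conditions — in particular that no spurious behaviour is hidden near $r=0$ — and that the decay is \emph{locally uniform in $q$}, i.e.\ the implicit constants depend only on how close the bodies are to each other and to the axis, not on their absolute position. I would handle this by noting that, since the bodies stay at positive distance from $\de\mathbb H$, a neighbourhood of the axis lies in the fluid domain, $\phi$ is harmless there by interior regularity, and on the complement $r \simeq |x|$ so the weighted estimates and the Euclidean estimates are equivalent up to constants depending only on $\dist(\bigcup B_i, \de\mathbb H)$ and $\diam(\bigcup B_i)$. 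A compactness/continuity argument in $q$, or explicit tracking of constants through the layer-potential bounds, then yields the local uniformity. The rest — differentiating under the integral sign, estimating the multipole remainder — is routine.
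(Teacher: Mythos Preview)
Your proposal is correct and the decay argument is essentially identical to the paper's: both pass to the three-dimensional harmonic function $\Phi$ on $\mathcal F^{\R^3}$, represent it as a single-layer potential on $\bigcup_i\de B_i^{\R^3}$ (the paper phrases this as extending $\Phi$ inside the tori by solving Dirichlet and observing that the distributional Laplacian is the jump $[\de_n\Phi]\,\mathcal H^2\mres\bigcup\de B_i^{\R^3}$, then convolving with the Newtonian potential), and use the zero-flux condition $\int rb=0$ to kill the $|x|^{-1}$ term in the multipole expansion.

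The one genuine difference is in the existence step. You run Lax--Milgram directly in the weighted space $\dot H^1_R(\mathcal F)$ and then worry about the axis $\de\mathbb H$ as a separate issue (your ``main obstacle''). The paper instead passes to three dimensions \emph{first}, solving the standard exterior Neumann problem for $\Delta$ in $\R^3\setminus\bigcup_j B_j^{\R^3}$ (citing Amrouche for existence in $\dot H^1(\R^3\setminus\text{compact})$), and then observes that axisymmetry of $\Phi$ automatically forces $\de_r\phi=0$ on $\de\mathbb H$. This is cleaner: the axis is not a boundary in $\R^3$, so no special handling is needed, and the weighted Hardy/trace inequalities you would need to make your direct approach rigorous are subsumed in the standard three-dimensional Sobolev theory. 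Your route works too, but the paper's ordering---go to $\R^3$ before doing any analysis---is the shortcut that dissolves your ``main obstacle'' entirely.
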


This implies the welldefinedness of $\phi_{i,t^*}$ and that the Estimate \eqref{decayphi} holds locally uniformly in $q$ for $\phi_{i,t^*}$, as only can directly check that $\int_{\bigcup_i\de B_i} ru(t^*)\dx=0$.

\begin{proof}
We go back to three-dimensional coordinates and set $\phi^{\R^3}(r,z,0)=\phi(r,z)$, where $\phi^{\R^3}$ is axisymmetric. Then $\phi$ solves the system above iff $\phi^{\R^3}$ solves the corresponding Neumann problem for $\Delta$ in $\R^3\backslash \bigcup_j B_j^{\R^3}$. By standard techniques (see e.g. \cite{Amrouche}), we obtain a unique solution $\phi^{\R^3}\in \dot{H}^{1}(\R^3\backslash \bigcup B_j^{\R^3})$.
We furthermore obtain from this that $\de_n\phi=0$ on $\de\mathbb{H}$. \\

The decay rate then directly follows from the Lemma below.
\end{proof}

\begin{lemma}\phantomsection\label{decay1}
\item[a)] Let $\zeta\in \dot{H}^1(\mathcal{F}^{\R^3})$ be axisymmetric and such that $\Delta\zeta=0$ in $\mathcal{F}^{\R^3}$  and $\zeta\rightarrow 0$ at $\infty$. Then it holds that \begin{align}
\left|\nabla^m\zeta(x)\right|\,\lesssim\,\frac{\norm{\de_n\zeta}_{H^{m}(\de\mathcal{F}^{\R^3})}}{1+|x|^{1+|m|}}\quad \forall m\in\N_{\geq 0}.
\end{align}

The implicit constant is bounded locally uniformly in $q$.

\item[b)] Let $\zeta$ be as in a). If additionally \begin{align}\int_{\bigcup_j \de B_j^{\R^3}}\de_n\zeta\dx\eq0,\end{align}
then \begin{align}
\left|\nabla^m\zeta(x)\right|\,\lesssim\,\frac{\norm{\de_n\zeta}_{H^{m}(\de\mathcal{F}^{\R^3})}}{1+|x|^{2+|m|}}\quad \forall m\in\N_{\geq 0},
\end{align}

where the implicit constant is controlled as in a).
\end{lemma}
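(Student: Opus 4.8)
The plan is to prove both parts simultaneously by reducing them to standard elliptic decay estimates for harmonic functions in exterior domains of $\R^3$, using a multipole-type expansion. First I would fix $R_0$ large enough that $\bigcup_j B_j^{\R^3}$ is contained in the ball $B_{R_0/2}(0)$, and work on the exterior region $\{|x|>R_0\}$, which is an annular shell of a true exterior domain; the bound inside the compact region $\{|x|\le R_0\}\cap\mathcal{F}^{\R^3}$ is then trivial (the left-hand side is bounded by a local Schauder/Caccioppoli estimate, and $1+|x|^{1+|m|}$ is comparable to a constant there). So the content is entirely about the behaviour at infinity.

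For the decay at infinity I would invoke the representation of a harmonic function $\zeta$ on an exterior domain of $\R^3$ with $\zeta\to0$ at infinity, together with the hypothesis $\zeta\in\dot H^1$: by the classical theory for the exterior Neumann problem (the same reference \cite{Amrouche} used just above, or \cite{Amrouche} together with a cutoff and the fundamental solution $\frac{1}{4\pi|x|}$), $\zeta$ admits an expansion $\zeta(x)=\frac{c}{4\pi|x|}+O(|x|^{-2})$ with $c=-\int_{\de\mathcal{F}^{\R^3}}\de_n\zeta$ (the sign coming from the outward normal of the fluid domain pointing into the obstacles), and correspondingly $\nabla^m\zeta(x)=O(|x|^{-1-m})$ in general, with the $O(|x|^{-2-m})$ improvement precisely when $c=0$. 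Concretely, I would write $\zeta$ on $\{|x|>R_0\}$ as the single-layer-type potential of its Neumann data plus a harmonic function on the full exterior of $B_{R_0}(0)$ that is $O(|x|^{-2})$ by orthogonality to constants, and then differentiate the explicit kernels. The constant in front of these estimates is controlled by $\norm{\zeta}_{\dot H^1}$ and by $\norm{\de_n\zeta}_{H^m(\de\mathcal{F}^{\R^3})}$; to get the stated form with only $\norm{\de_n\zeta}_{H^m}$ on the right, I would use the a priori bound $\norm{\zeta}_{\dot H^1(\mathcal{F}^{\R^3})}\lesssim\norm{\de_n\zeta}_{H^{1/2}(\de\mathcal{F}^{\R^3})}\lesssim\norm{\de_n\zeta}_{H^m}$ from the variational solution of the Neumann problem, the implicit constant depending only on the geometry, hence locally uniformly in $q$.

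The derivative bounds for $m\ge1$ then follow by interior elliptic estimates applied on balls $B_{|x|/4}(x)$: since $\zeta$ is harmonic there, $|\nabla^m\zeta(x)|\lesssim |x|^{-m}\sup_{B_{|x|/2}(x)}|\zeta|\lesssim |x|^{-m}\cdot|x|^{-1}\norm{\de_n\zeta}_{H^0}$ in case a), and similarly with the extra power in case b). Strictly, to land the $H^m$-norm of the Neumann data rather than a lower norm, I would instead differentiate the layer-potential representation directly: each derivative either hits the kernel (gaining a power of $|x|^{-1}$) or is moved onto the boundary data by the symmetry of the kernel, producing tangential derivatives controlled by $\norm{\de_n\zeta}_{H^m(\de\mathcal{F}^{\R^3})}$; normal derivatives on the boundary are converted to tangential ones using that $\zeta$ is harmonic. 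The local uniformity in $q$ is inherited from the fact that all constants depend only on the $C^2$-geometry of $\de\mathcal{F}^{\R^3}$, which varies continuously with $q\in M$.

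The main obstacle I anticipate is bookkeeping the passage from the crude estimate (with $\norm{\zeta}_{\dot H^1}$ or $\norm{\de_n\zeta}_{H^{1/2}}$ on the right) to the clean statement with exactly $\norm{\de_n\zeta}_{H^m}$, while keeping the decay rate sharp and the constants geometry-only; this is where the layer-potential representation, rather than a soft compactness argument, seems necessary, and where one must be careful that moving derivatives onto the boundary really only costs tangential derivatives of the data. The genuinely new point compared with the bounded-domain literature is simply that one needs this decay at all, and the proof that the two improvement regimes (with and without the zero-flux condition) correspond exactly to the vanishing of the $\frac{1}{4\pi|x|}$ monopole term.
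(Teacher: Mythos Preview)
Your approach is correct and follows the same broad strategy as the paper (represent $\zeta$ via the Newtonian potential of a boundary density, then differentiate the kernel), but the paper's execution is cleaner in one specific respect that dissolves the ``main obstacle'' you flag. Rather than working on $\{|x|>R_0\}$ and invoking abstract exterior-Neumann/multipole theory, the paper extends $\zeta$ into each $B_j^{\R^3}$ by solving the \emph{Dirichlet} problem there. The extended function is then defined on all of $\R^3$, and its distributional Laplacian is the surface measure $[\de_n\zeta]\,\mathcal{H}^2\mres\bigcup_j\de B_j^{\R^3}$, where $[\cdot]$ is the jump of the normal derivative. One checks that $\zeta$ equals the convolution of this measure with $\frac{-1}{4\pi|\cdot|}$ (the difference is a harmonic tempered distribution vanishing at infinity, hence zero). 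The decay of $\nabla^m\zeta$ for $\dist(x,\bigcup_j B_j^{\R^3})\ge 1$ then follows immediately by differentiating the kernel under the integral, using only that $[\de_n\zeta]$ is a finite measure; part~b) is the observation that $\int[\de_n\zeta]=0$ under the zero-flux hypothesis, which kills the monopole and gains one power.

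This sidesteps your bookkeeping concern entirely: for the far field one only needs $\norm{[\de_n\zeta]}_{L^1}\lesssim\norm{\de_n\zeta}_{L^2}$ (via elliptic regularity for the interior Dirichlet problem), not any higher norm of the data. The $H^m$ on the right-hand side of the statement is used \emph{only} for the near-boundary region, where it enters through standard up-to-the-boundary elliptic estimates. So there is no need to ``move derivatives onto the boundary data'' or integrate by parts on the layer potential; you just differentiate $\frac{1}{|x-y|}$. Your version would work too, but the interior-Dirichlet trick makes the argument a couple of lines.
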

\begin{proof}
If we extend $\zeta$ to $\R^3$ by solving the Dirichlet problem for $\zeta|_{\de B_j^{\R^3}}$ on each $B_j^{\R^3}$, then the (distributional) Laplacian of this extension is of the form $[\de_n\zeta]\mathcal{H}^2\mres{ \bigcup \de B_j^{\R^3}}$ where $[\cdot]$ denotes the jump across the boundary, as a direct calculation shows. By elliptic regularity (cf. \cite{grisvard2011elliptic}[Chapter 2]), this is a finite measure. 

We claim that we can recover this extension by convoluting the distributional Laplacian with the Newtonian potential, indeed for any $f\in C_c^\infty(\R^3)$, we have \begin{align}
\int_{\R^3}\int_{\bigcup_i\de B_i^{\R^3}}\frac{-[\de_n\zeta]}{4\pi|x-y|}\Delta f(x)\,\text{d}\mathcal{H}^2(y)\dx\eq\int_{\bigcup_i\de B_i^{\R^3}}[\de_n\zeta]f(y)\,\text{d}\mathcal{H}^2(y)\eq \int_{\R^3}\zeta \Delta f\dx,
\end{align}

where in the second step, we used that the Newtonian potential is the inverse Laplacian. Hence the difference $[\de_n\zeta]\mathcal{H}^2\mres(\bigcup_i\de B_i^{\R^3})*\frac{-1}{4\pi|x|}-\zeta$ is a harmonic tempered distribution, i.e.\ a polynomial.

Now \begin{align}
[\de_n\zeta]\mathcal{H}^2\mres(\bigcup_i\de B_i^{\R^3})*\frac{-1}{4\pi|x|}-\zeta\,\rightarrow\, 0,
\end{align} 

by the assumption on $\zeta$ and because $[\de_n\zeta]\mathcal{H}^2\mres(\bigcup_i\de B_i^{\R^3})$ is a finite measure, so this difference is zero, which shows the claim.

We hence obtain that $|\nabla^m\zeta(x)|\lesssim\frac{\norm{\de_n\zeta}_{L^2(\de\mathcal{F}^{\R^3})}}{1+|x|^{1+m}}$ for all $m\geq0$ and for $\dist(x,\bigcup B_i^{\R^3})\geq 1$. For $x$ close to the $B_j$'s the estimate follows from elliptic regularity theory. This shows the estimate in part a), part b) works exactly the same way, except that the integral of $[\de_n\zeta]$ vanishes by the assumption and partial integration, which gives one order more of decay.\\

To see that these bounds are locally uniform in $q$, we need uniform estimates on $\norm{[\de_n\zeta]}_{L^1(\bigcup_i\de B_i)}$, note that for this we only need up-to-the-boundary estimates for a neighborhood of each $B_i$, locally uniform in $q$ and a locally uniform $L^2$ estimate. 
By going back to axisymmetric coordinates, one obtains the former, as the geometries of these neighborhoods (in axisymmetric coordinates) only change by rescaling with a bounded factor. 
On the other hand, one can obtain from the energy equality (which is justified by the decay estimates we have already proven) in axisymmetric coordinates that \begin{align}\label{energy eq}
\int_{\mathcal{F}} r|\nabla\zeta|^2\dx\eq -\int_{\bigcup_i \de B_i}r\zeta\de_n \zeta\dx\gtrsim \norm{\zeta}_{H^1(\bigcup_i (B_i+B_1(0)\backslash B_i))}^2.
\end{align}

Here we used the (three-dimensional) Sobolev inequality to control the $L^2$-norm on the right-hand side. The constant of the Sobolev inequality is locally uniform in $q$, as one can e.g.\ see by using a diffeomorphism between the different instances of $\mathcal{F}$.

Now we can use the trace inequality from $H^1(B_i+B_1(0)\backslash B_i)$ to $L^2(\de B_i)$ in \eqref{energy eq}, whose constant is also locally uniform in $q$, as the geometry only changes through rescaling by a bounded factor. This implies the desired locally uniform estimate \begin{align}
\norm{\de_n\zeta}_{L^2(\bigcup_i\de B_i)}\,\gtrsim \,\norm{\zeta}_{L^2(\bigcup_i (B_i+B_1(0)\backslash B_i))}.
\end{align}

\end{proof}

\begingroup
\allowdisplaybreaks

The argument for the existence of the stream function is more complicated, as there is no easy algebraic relation between the three-dimensional stream function and the axisymmetric stream function.

\begin{definition}\phantomsection\label{def psi}
Let $\psi_{i}=\psi_i(t)$ be the solution of the elliptic equation \begin{align}
&\div\left(\frac{1}{r}\nabla\psi_{i}\right)\eq0 \text{ in $\mathcal{F}(t)$}\\
&\psi_{i}|_{\de B_j} \text{ is constant } \forall j\\
&\int_{\de B_j}\frac{1}{r}\de_{n}\psi_{i}\dx\eq\delta_{ij}\label{Flow}\\
&\psi_i|_{\{r=0\}}\eq0\\
&\lim_{r\rightarrow 0}\frac{1}{r}\de_z\psi_i\eq 0
\end{align}

We refer to the constant boundary values on $\de B_j$ as $C_{ij}$.
\end{definition}

\endgroup

\begin{lemma}\phantomsection\label{ExistencePsi}
Such a $\psi_i$ always exists and is unique under the constraint $\frac{1}{\sqrt{r}}\nabla\psi_i\in L^2$. Furthermore $\frac{1}{r}\nabla\psi_i$ is continuous at $r=0$ and we have the estimate \begin{align}
\left|\nabla^m\left(\frac{1}{r}\nabla\psi_i(x)\right)\right|\,\lesssim\,&\frac{1}{1+|x|^{2+m}}\quad \forall m\in\N_{\geq 0}.
\end{align}
The implicit constant in the estimate is locally uniformly bounded in $q$.
\end{lemma}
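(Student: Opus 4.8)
\textbf{Proof proposal for Lemma \ref{ExistencePsi}.}

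The plan is to reduce the degenerate elliptic equation $\div(\frac1r\nabla\psi_i)=0$ to a standard (non-degenerate) problem by passing to a higher-dimensional auxiliary space, in the spirit of the Grad--Shafranov / axisymmetric stream function correspondence. Specifically, the operator $\div(\frac1r\nabla\cdot)$ in the $(r,z)$ halfplane is, up to the weight, the radial part of the Laplacian on $\R^5$: if one writes $\psi_i(r,z)=r^2\,\chi(r,z)$ and interprets $r$ as the radial variable of $\R^4$ (so that points are $(y,z)$ with $y\in\R^4$, $|y|=r$), then a computation shows $\div(\frac1r\nabla\psi_i)=0$ becomes $\Delta_{\R^5}\chi=0$ away from $\{r=0\}$, with the boundary condition $\psi_i|_{\{r=0\}}=0$ being automatically encoded and the condition $\lim_{r\to0}\frac1r\partial_z\psi_i=0$ corresponding to smoothness of $\chi$ across the axis. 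First I would set this correspondence up carefully, checking that the energy space $\{\frac{1}{\sqrt r}\nabla\psi_i\in L^2\}$ transforms to $\dot H^1$ of the five-dimensional exterior domain $\R^5\setminus\bigcup_j B_j^{\R^5}$ (here $B_j^{\R^5}$ is the solid of revolution of $B_j$ under the $SO(4)$-action, a smooth compact body not touching the axis).

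With this dictionary in hand, the flux conditions \eqref{Flow} and the requirement that $\psi_i$ be constant on each $\partial B_j$ become exactly the defining conditions of a harmonic function in the exterior of finitely many smooth obstacles in $\R^5$ that is (i) locally constant on the boundary, (ii) has prescribed fluxes $\delta_{ij}$ through the obstacles, and (iii) decays at infinity. Next I would invoke the standard theory for such exterior harmonic problems (the same Hilbert-space / Lax--Milgram argument on $\dot H^1$ of the exterior domain used for Lemma \ref{ExistencePhi}, via \cite{Amrouche}): the solution space of harmonic functions with locally constant boundary trace and decay at infinity is $k$-dimensional, parametrized by the boundary constants, and the flux map sending boundary constants to the vector of fluxes is an isomorphism (its matrix is the symmetric positive-definite capacity matrix of the configuration). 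This yields existence and uniqueness of $\psi_i$ and identifies the constants $C_{ij}$. Translating back to axisymmetric coordinates gives $\psi_i$ with the stated properties; the continuity of $\frac1r\nabla\psi_i$ at $r=0$ follows because $\chi$ is smooth across the axis and $\frac1r\nabla\psi_i$ is built from $\chi$ and its first derivatives, which extend continuously.

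For the decay estimate, I would argue exactly as in Lemma \ref{decay1}: extend $\chi$ harmonically (via the Dirichlet problem) inside each $B_j^{\R^5}$, so that its distributional Laplacian is $[\partial_n\chi]\,\mathcal H^4\mres \bigcup_j\partial B_j^{\R^5}$, a finite measure; convolve with the Newtonian potential of $\R^5$ (which decays like $|x|^{-3}$), and use that the difference from $\chi$ is a harmonic tempered distribution vanishing at infinity, hence zero. This gives $|\nabla^m\chi(x)|\lesssim (1+|x|)^{-3-m}$ for $|x|$ large, and one extra order of decay — i.e.\ $(1+|x|)^{-4-m}$ — precisely when the total boundary flux $\sum_j\int_{\partial B_j^{\R^5}}\partial_n\chi=0$. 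The key point is that this vanishing of the net flux does hold for the quantity that controls $\frac1r\nabla\psi_i$: one has $\frac1r\partial_r\psi_i = 2\chi + r\partial_r\chi$ and $\frac1r\partial_z\psi_i=r\partial_z\chi$ away from the axis, and combining the $|x|^{-3-m}$ bound on $\chi$ with the factor $r\le|x|$ already yields the claimed $|x|^{-2-m}$ bound on $\frac1r\nabla\psi_i$ for large $|x|$; near the bodies and near the axis it follows from elliptic regularity and the smoothness of $\chi$ across the axis. Local uniformity in $q$ is obtained exactly as in the proof of Lemma \ref{decay1}, using a diffeomorphism between the different exterior domains to get uniform Sobolev and trace constants and a uniform $L^1$ bound on $[\partial_n\chi]$ from the energy identity.

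\textbf{Main obstacle.} The delicate point is the behavior at the axis $\{r=0\}$: the equation is genuinely degenerate there, the natural energy space is weighted, and one must verify that the ``right'' boundary conditions at $r=0$ ($\psi_i=0$ and $\frac1r\partial_z\psi_i\to0$) are exactly the ones selected by the $\R^5$-reduction — equivalently, that finite weighted energy forces $\psi_i$ to vanish to second order at the axis so that $\chi=\psi_i/r^2$ is a bona fide $\dot H^1(\R^5)$ function, and conversely. Getting this equivalence of function spaces right, and confirming that no spurious solutions (e.g.\ with logarithmic or $r^{-2}$-type behavior at the axis) are admitted, is where the real care is needed; everything else is a routine transcription of the potential-theoretic arguments already used for $\phi$.
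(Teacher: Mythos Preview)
Your approach is correct and genuinely different from the paper's. Both proofs lift the degenerate equation to a non-degenerate Laplacian in higher dimensions, but the paper goes to $\R^3$ via the azimuthal vector potential: it writes $u_{2,i}=\frac{1}{r}\nabla^\perp\psi_i$, makes the ansatz $u_{2,i}=\curl(\tilde\Psi_i e_\theta)$, and solves $\Delta_{\R^3}(\tilde\Psi_i e_\theta)=0$ with $r\tilde\Psi_i$ constant on the boundaries; $\psi_i$ is then recovered from $u_{2,i}$ by a path-integral construction. Your $\R^5$ route via $\chi=\psi_i/r^2$ is more direct---no detour through the velocity field---and in fact yields a sharper decay ($|x|^{-3-m}$ rather than $|x|^{-2-m}$), since the Newtonian potential in $\R^5$ already decays like $|x|^{-3}$. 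The paper's approach buys a closer tie to the physical interpretation (the $\R^3$ stream function of an irrotational divergence-free field) and reuses Lemma~\ref{decay1} verbatim.

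One point to correct: after the substitution, $\chi$ is \emph{not} locally constant on $\partial B_j^{\R^5}$; rather $\chi|_{\partial B_j^{\R^5}}=C_{ij}/r^2$, which varies along each boundary component. So the ``standard capacity matrix'' picture (harmonic, constant on components, prescribed flux) does not apply verbatim. You still get a $k$-dimensional affine family of solutions parametrized by the $C_{ij}$, and you still need to show the linear map from $(C_{ij})_j$ to the fluxes $\int_{\partial B_j}\frac{1}{r}\partial_n\psi_i\,\dx$ is invertible; the paper does this by a maximum-principle argument on $\psi_i$ (if all fluxes vanish and $C_{i1}$ is maximal, then $\partial_n\psi_i\le0$ on $\partial B_1$ forces it to be identically zero there, and extending by the constant contradicts unique continuation). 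That argument transfers to your setup unchanged, since it is phrased in terms of $\psi_i$ itself. With this adjustment your proof goes through.
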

\begin{proof}
We first show that an auxiliary function $u_{2,i}$ can be constructed by going back to three-dimensional coordinates and later show that one can recover $\psi_i$ from it.\\

Step 1.  We define $u_{2,i}$ on $\mathcal{F}^{\R^3}$ as the axisymmetric vector field with no azimuthal component which solves (in three-dimensional variables) the system \begin{align} \label{DivCurl}
&\div\, u_{2,i}\eq0\\
& \curl\, u_{2,i}\eq0\\
& u_{2,i}\cdot n\eq0\text{ on $\de B_j^{\R^3}$ for all $j$}\\
& u_{2,i}\in L^2(\mathcal{F}^{\R^3}).\end{align}

We make the Ansatz $\curl \Psi_i=u_{2,i}$ for a purely azimuthal field $\Psi_i=\tilde{\Psi}_ie_\theta$, where $e_\theta$ is the unit vector in the $\theta$-direction.

This gives the equations \begin{align}
(u_{2,i})_r\eq-\de_z\tilde{\Psi}_i,\quad (u_{2,i})_z\eq\de_r\tilde{\Psi}_i+\frac{1}{r}\de_r \tilde{\Psi}_i,
\end{align}

using that the last term can be rewritten as $\frac{1}{r}\de_r(r\Psi_i)$, this  turns the System \eqref{DivCurl} into the system \begin{align}
&\Delta \Psi_i\eq0\text{ in $\mathcal{F}^{\R^3}$}\label{BigPhi1}\\
&r\tilde{\Psi}_i|_{\de B_j^{\R^3}}\text{ is constant for all $j$.}\label{BigPhi2}
\end{align}

For each fixed set of constant boundary values $(\tilde{C}_{ij})_{j=1,\dots,k}$ for $r\tilde{\Psi}_i$, this system has a unique solution $\Psi_i(\tilde{C}_{ij})\in H^1$ by standard techniques. Also, because $\Psi_i$ is purely azimuthal, it must vanish at $r=0$ and hence it holds that \begin{align}\label{u at 0}
(u_{2,i})_r\eq0\end{align}

 at $r=0$.\\

Step 2. To show uniqueness of $u_{2,i}$ for given $(\tilde{C}_{ij})$, we note that we can recover a $\Psi_i$ fulling the system \eqref{BigPhi1},\eqref{BigPhi2} from $u_{2,i}$.

 Indeed we may extend $u_{2,i}$ to the full space by zero as $\overline{u}_{2,i}$, which preserves the divergence-freeness, as the distributional divergence on the boundary equals $[\overline{u}_{2,i}\cdot n]=0$.
 
  It is well known that on the full space, every divergence-free field can be written as a curl, indeed we have $-\curl\Delta^{-1}\curl g=g$ for every divergence-free $g$, as a straightforward calculation shows. 
  
Since we also have $\curl \overline{u}_{2,i}\in H^{-1}$ and the fundamental solution of the Laplacian maps $H^{-1}$ to $H^{1}$, we see that the field obtained this way lies in $H^1$.
 
Hence two different solutions $u_{2,i}$ for the same boundary values would give rise to two different $\Psi_i$'s in $H^1$ with the same boundary values, which is impossible.\\

Step 3. Now we can view $u_{2,i}$ as a function in $(r,z)$ again, it fulfills $\div(ru_{2,i})=0$ (with the two-dimensional divergence). Then we can find a $\psi_i(\tilde{C}_{ij})$ such that \begin{align}
u_{2,i}\eq\frac{1}{r}\nabla^\perp\psi_i,
\end{align} because $ru_{2,i}^\perp$ is curl-free. This can be done with the usual path integral construction, it is easy to check that the condition $u_{2,i}\cdot n=0$ ensures that even paths which are not homotopy equivalent yield the same values. One can then check by direct calculation that $\div(\frac{1}{r}\nabla\psi_i(\tilde{C}_{ij}))=0$ holds and by the boundary condition for $u_{2,i}$, we see that $\psi_i(\tilde{C}_{ij})$ must be constant on each $\de B_j$. Furthermore, we have that $u_{2,i}$ is continuous at $r=0$ by elliptic regularity and hence $\frac{1}{r}\nabla\psi_i(\tilde{C}_{ij})$ is continuous at $r=0$ and $\frac{1}{r}\de_z\psi_i=0$ at $r=0$ by \eqref{u at 0}.

This $\psi_i(\tilde{C}_{ij})$ is unique up to an additive constant under the condition $\frac{1}{\sqrt{r}}\nabla\psi_i(\tilde{C}_{ij})\in L^2$ (here one gets an additional factor $r$ from the coordinate change), as one can recover the unique $u_{2,i}$ from it. \\

Next, we argue that we uniquely can pick the boundary values $(\tilde{C}_{ij})$ such that the Condition \eqref{Flow} holds. It suffices to show that the linear map that sends the boundary values $(\tilde{C}_{ij})$ to the integrals $\int_{\de B_j}\frac{1}{r}\de_{n}\psi_i(\tilde{C}_{ij})\dx$ is invertible for each fixed $i$. 

First note that the $\tilde{C}_{ij}$'s are also the boundary values of $\psi_i$ (up to an additive constant) because we have that $ru_{2,i}=\nabla^\perp r\tilde{\Psi}_i$ (in $(r,z)$-coordinates) and hence by applying the fundamental theorem of calculus, we see that $(r\tilde{\Psi})(x)-(r\tilde{\Psi})(y)=\psi_i(x)-\psi_i(y)$ (in axisymmetric coordinates).

Assume there is a nonzero vector of $\tilde{C}_{ij}$'s such that all integrals vanish. Without loss of generality, we may assume that $\tilde{C}_{i1}$ is the biggest one of the $\tilde{C}_{ij}$'s. Then the normal derivative of $\psi_i$ on $\de B_1$ must be nonpositive by the maximum principle and hence must be zero everywhere on $\de B_1$. Since the tangential derivative also vanishes, the constant extension of $\psi_i(\tilde{C}_{ij})$ to $B_1$ still fulfills $\div(\frac{1}{r}(\psi_i(\tilde{C}_{ij}))=0$. But this extension is then a locally, but not globally constant solution of an elliptic equation, which is a contradiction.\\

We have $\de_z\psi_i=0\cdot u_{2,i}=0$ at $r=0$ because $u_{2,i}$ is continuous by elliptic regularity. We may choose the additive factor that we have leftover such that $\psi_i=0$ at $r=0$ holds.

The uniqueness of $\psi_i$ follows from the uniqueness of the $\psi_i(\tilde{C}_{ij})$.\\

Step 4. By elliptic regularity, it is easy to see that the $C_{ij}$ are locally uniformly bounded in $q$, and hence $\norm{\de_n\Psi_i(C_{ij})}_{H^m}$ is locally uniformly bounded in $q$ by elliptic regularity theory.

By Lemma \ref{decay1} a), we see that \begin{align}|\nabla^m\Psi_i(x)|\,\lesssim\, \frac{1}{1+|x|^{1+m}},\end{align} for all $m\in \N_{\geq 0}$ which implies the decay estimate. 

\end{proof}

It is known that the system \begin{align}
&\div\left(\frac{1}{r}\nabla f\right)\eq g\text{ in $\mathbb{H}$}\\
&f\eq\de_nf\eq 0\text{ on $\de\mathbb{H}$}
\end{align}

has a fundamental solution $K$ such that \begin{align}\label{definition K}
f(y)\eq \int_{\mathbb{H}}g(x)K(x,y)\dx
\end{align}

is the unique solution under suitable decay assumptions on $f$ for e.g.\ $g\in C_c^\infty(\mathbb{H})$, see e.g. \cite{GallaySverak}[Section 2].

\begin{lemma}\phantomsection\label{rep psi}\begin{itemize}
\item[a)] Let $\div(\frac{1}{r}\nabla\zeta)=0$ in $\mathcal{F}$ with $\frac{1}{\sqrt{r}}\nabla\zeta\in L^2(\mathcal{F})$, assume that $\frac{1}{r}\nabla\zeta$ is continuous for $r\rightarrow 0$ and that $\zeta|_{r=0}=0$. Furthermore, assume that $\zeta|_{\de\mathcal{F}}$ is sufficiently smooth, then there is a constant $C$, depending on $\zeta$ such that \begin{align}
|\zeta(x)|\,\leq\, \frac{C}{1+|x|}.
\end{align}
In particular this holds for $\psi_i$.
\item[b)] $\psi_i$ can be represented as \begin{align}
\psi_i(y)\eq\int_{\bigcup_i\de B_i}\frac{1}{r}K(x,y)\de_n\psi_i\dx.
\end{align}
\end{itemize}
\end{lemma}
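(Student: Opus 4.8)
The plan is to obtain part~a) by passing to three dimensions and invoking Lemma~\ref{decay1}~b), and then to feed part~a) into part~b). For part~a) I would associate to $\zeta$ the purely azimuthal axisymmetric field $\Psi:=\frac{\zeta}{r}e_\theta$ on $\mathcal{F}^{\R^3}$, exactly as in the proof of Lemma~\ref{ExistencePsi}; the same computation that took \eqref{DivCurl} to \eqref{BigPhi1} gives $\Delta\Psi=0$ in $\mathcal{F}^{\R^3}$, and $\frac{1}{\sqrt r}\nabla\zeta\in L^2(\mathcal{F})$ is equivalent to $\curl\Psi\in L^2(\mathcal{F}^{\R^3})$. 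Repeating Steps~2--3 of that proof — extend $\curl\Psi$ across each $\de B_j^{\R^3}$ by the div--curl problem inside the body (solvable because $\int_{\de B_j^{\R^3}}\curl\Psi\cdot n\,\dx$ is, up to a factor, $\int_{\de B_j}\de_\tau\zeta\,\dx=0$) and recover a vector potential through the Newtonian potential — yields a harmonic extension of $\Psi$ into the bodies belonging to $\dot H^1(\R^3)$, so in particular $\Psi\in L^6$ and $\Psi\to0$ at infinity. The key new observation is that, $\Psi$ being purely azimuthal, so is $\de_n\Psi$ on each $\de B_j^{\R^3}$, whence $\int_{\bigcup_j\de B_j^{\R^3}}\de_n\Psi\,\dx=0$ for free (because $\int_0^{2\pi}e_\theta\,\dd\theta=0$); this is precisely the extra hypothesis of Lemma~\ref{decay1}~b), which then gives $|\Psi(x)|\lesssim(1+|x|)^{-2}$. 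Since $|\zeta(x)|=r|\Psi(x)|\le|x|\,|\Psi(x)|$, this yields $|\zeta(x)|\lesssim(1+|x|)^{-1}$ for large $|x|$, and continuity up to $\de\mathcal{F}$ and the axis handles bounded $x$. All hypotheses hold for $\psi_i$ by Definition~\ref{def psi} and Lemma~\ref{ExistencePsi}.

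For part~b) I would extend $\psi_i$ to $\mathbb{H}$ by the constant $C_{ij}$ inside each $B_j$, obtaining $\bar\psi_i$, which still has $\frac{1}{\sqrt r}\nabla\bar\psi_i\in L^2$ and, by part~a), satisfies $|\bar\psi_i(x)|\lesssim(1+|x|)^{-1}$. Testing with $\varphi\in C_c^\infty(\mathbb{H})$ and integrating by parts in $\mathcal{F}$ (using $\div(\frac1r\nabla(\mathrm{const}))=0$ inside the bodies, and that $\varphi$ vanishes near $\{r=0\}$ so no axis term appears) shows that, as distributions on $\mathbb{H}$,
\[
\div\!\Big(\tfrac1r\nabla\bar\psi_i\Big)\eq\sum_j\tfrac1r\,\de_n\psi_i\;\mathcal{H}^1\mres\de B_j\,=:\,\mu ,
\]
which is a finite compactly supported measure since $\de_n\psi_i\in L^2(\de B_j)$ by elliptic regularity and the $\de B_j$ avoid the axis. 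Moreover $\bar\psi_i$ satisfies the boundary conditions at $\de\mathbb{H}$ built into $K$ — $\bar\psi_i|_{r=0}=0$ and $\de_n\bar\psi_i|_{r=0}=0$, using that $\psi_i=O(r^2)$ at the axis (from the smoothness of the associated three-dimensional field in Lemma~\ref{ExistencePsi}) — and has the decay placing it in the admissible class for \eqref{definition K}. Choosing $g_\delta\in C_c^\infty(\mathbb{H})$ with $g_\delta\to\mu$ in the sense of measures, the functions $y\mapsto\int g_\delta(x)K(x,y)\dx$ solve $\div(\frac1r\nabla\cdot)=g_\delta$ in the class of \eqref{definition K} and converge to $\int K(x,y)\,\dd\mu(x)$, so by the uniqueness in \eqref{definition K} this limit equals $\bar\psi_i(y)$; restricting to $y\in\mathcal{F}$ gives the claim.

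The routine parts are the integrations by parts and the elliptic regularity. The genuinely delicate points are: in part~a), establishing $\Psi\in\dot H^1$ and $\Psi\to0$ without circularity — which forces one to reuse the Newtonian-potential construction of Lemma~\ref{ExistencePsi} instead of arguing from $\frac1{\sqrt r}\nabla\zeta\in L^2$ directly — together with the axis regularity of $\Psi$ needed for the Dirichlet extension into the bodies; and in part~b), upgrading the representation \eqref{definition K} from $g\in C_c^\infty(\mathbb{H})$ to the finite measure $\mu$, which requires continuity of the solution operator attached to $K$ (hence the bounds on $K$ from \cite{GallaySverak}) and a uniqueness statement in the weighted class adapted to the degeneracy of $\div(\frac1r\nabla\cdot)$ at $\de\mathbb{H}$, the decay from part~a) being exactly what pins $\bar\psi_i$ down inside that class.
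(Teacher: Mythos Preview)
Your approach is genuinely different from the paper's, and it is worth recording where the two diverge.

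The paper treats a) and b) simultaneously: it extends $\zeta$ into each $B_i$ by solving the Dirichlet problem for $\div(\frac1r\nabla\cdot)$, sets $g(y)=\int\frac1r K(x,y)[\de_n\zeta]\dx$, and shows directly that $\zeta=g$. The identification is obtained not through an approximation/uniqueness scheme as you propose, but by observing that the three-dimensional field $h=(\frac1r\nabla(\zeta-g))^\perp$ is divergence-free and curl-free on all of $\R^3$, hence $h=\curl H$ with $H$ harmonic and tempered, so $H$ (and therefore $\zeta-g$) is a polynomial; the constraints $\zeta-g=0$ on the axis and $\frac{1}{\sqrt r}\nabla\zeta\in L^2$ then force $\zeta-g\equiv0$. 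Decay a) is then read off from the known decay of $K$, and b) for $\psi_i$ follows because the Dirichlet extension of $\psi_i$ is constant, so $[\de_n\psi_i]=\de_n\psi_i$.

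Your route for a)---passing to the purely azimuthal $\Psi=\frac{\zeta}{r}e_\theta$ and noting that $\int_{\de\mathcal{F}^{\R^3}}\de_n\Psi=0$ holds automatically because $\int_0^{2\pi}e_\theta\,\dd\theta=0$, which unlocks the faster decay of Lemma~\ref{decay1}~b)---is an elegant observation the paper does not make. Two points deserve care, however. First, Lemma~\ref{decay1} is stated for \emph{scalar axisymmetric} $\zeta$, whereas the Cartesian components of $\Psi$ have the form $\tilde\Psi(r,z)\sin\theta$ and are not axisymmetric; the proof of Lemma~\ref{decay1} does not actually use axisymmetry in the decay step, but you should say so explicitly. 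Second, the Newtonian-potential construction you borrow from Lemma~\ref{ExistencePsi} produces a field $\tilde\Psi\in\dot H^1$ with $\curl\tilde\Psi=\curl\Psi$; one still has to argue $\tilde\Psi=\Psi$ in $\mathcal{F}$ (a purely azimuthal, curl-free, axisymmetric field is $\frac{c}{r}e_\theta$, and $c=0$ follows from axis regularity), which you gloss over.

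For b), your approximation argument requires a uniqueness statement for $\div(\frac1r\nabla\cdot)=\mu$ with measure data in a weighted decay class; the paper never states such a result, and it is exactly what the paper's polynomial argument circumvents. This is the softest part of your proposal. By contrast, the paper's route avoids both the componentwise application of Lemma~\ref{decay1} and any density argument, at the price of the somewhat ad hoc polynomial step.
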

\begin{proof} 
a) and b) We claim that if we extend $\zeta$ to $\mathbb{H}$ by solving the Dirichlet problem for $\div(\frac{1}{r}\nabla\cdot)$ with boundary values $\zeta$ in each $B_i$, then it holds that \begin{align}
\zeta\eq \int_{\bigcup_i\de B_i}\frac{1}{r}K(x,y)[\de_n\zeta]\dx.
\end{align}

This directly shows b) because for $\psi_i$ the extension to each $B_i$ is constant. It also shows a) by the fact that $[\de_n\zeta]\mathcal{H}^1\mres \de B_i$ is a finite measure by elliptic regularity and  the fact that the fundamental solution $K(x,y)$ decays like $\frac{1}{1+|y|}$ at $\infty$ locally uniformly in $x$ (see \cite{GallaySverak}[Lemma 2.1 ii)]).

The same argument as in the proof of Lemma \ref{decay1} shows that \begin{align}
g(y)\,:=\,\int_{\de\mathcal{F}}\frac{1}{r}K(x,y)[\de_n\zeta]\dx\end{align} fulfills $\div(\frac{1}{r}\nabla (g-\zeta))=0$. Furthermore by the aforementioned decay estimates for $K$ we see that $|g(x)|\lesssim \frac{1}{|1+|x|}$, that $g=0$ at $r=0$ and that $\frac{1}{r}\nabla g$ is continuous at $0$. By elliptic regularity, it holds that $g-\zeta$ is smooth in the interior of $\mathbb{H}$.

Now let $h(r,z,\theta)=(\frac{1}{r}\nabla(\zeta-g)(r,z))^\perp$ (in axisymmetric coordinates). This function is divergence-free (with respect to three-dimensional variables) as a direct calculation shows for $r>0$, at $r=0$ it is also divergence-free by continuity.

Hence there is an $H$ with $\curl H=h$, where the gradient is taken with respect to three-dimensional variables. Then it holds that $\Delta H=0$ and $H$ is a tempered distribution and hence is a polynomial. 

Hence we know that $\zeta-g$ is a polynomial as well, however, we have that $\zeta-g=0$ at $r=0$, hence it is also a polynomial in $r$ only. However, $g\rightarrow 0$ for $r\rightarrow \infty$ and hence if $\zeta-g$ does not vanish, then $\zeta$ would have to grow at least linearly in $r$. However for all $a$ and large enough $R$ we would then have \begin{align}
1\,\lesssim\,\frac{1}{R}\left|\zeta(R,a)-\zeta\left(\frac{R}{2},a\right)\right|\,\lesssim\, \frac{1}{R}\int_\frac{R}{2}^R|\de_r\zeta(s,a)|\ds\,\lesssim\, \left(\int_\frac{R}{2}^R\frac{1}{s}|\de_r\zeta(s,a)|^2\ds\right)^\frac{1}{2}.
\end{align}

By taking a square root and using Fubini, we obtain that $\frac{1}{\sqrt{r}}\nabla\zeta\notin L^2$, which is a contradiction.

\end{proof}

\begin{lemma}\phantomsection\label{rot pre}
The Euler equation \eqref{Euler1} holds if \eqref{Euler2} and \eqref{Euler3} hold and the circulations $\gamma_i=\int_{\de B_i}\tau\cdot u\dx$ are conserved in time. 
\end{lemma}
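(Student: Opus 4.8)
The statement is really an existence claim for the pressure: under \eqref{Euler2}, \eqref{Euler3} and conservation of the $\gamma_i$, there is a single-valued function $p$ on $\mathcal F(t)$ with $\de_t u+(u\cdot\grad)u=-\grad p$. So the plan is to show that $\de_t u+(u\cdot\grad)u$ is a gradient. Two ingredients do this. For the convective term I would use the rotational-form identity $(u\cdot\grad)u=\tfrac12\grad|u|^2+(\curl u)\,u^\perp$, valid pointwise for a planar field in the coordinates $(r,z)$ (a direct expansion of the two components, using the paper's $\perp$); since $\curl u=0$ by \eqref{Euler3}, this reduces to $\grad\!\big(\tfrac12|u|^2\big)$, which is manifestly a gradient. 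Hence it suffices to prove that $\de_t u=\grad\pi$ for some function $\pi$ on $\mathcal F(t)$, and then take $p:=-\pi-\tfrac12|u|^2$ (which, using the decay estimates of Lemmas \ref{decay1}--\ref{rep psi}, can be normalised to vanish at infinity, though for this lemma mere existence of $p$ suffices).

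To get $\de_t u=\grad\pi$: by Condition \ref{strongsol}, $\de_t u$ is $C^1$ on $\mathcal F(t)$ and, commuting $\de_t$ with the spatial derivatives, $\curl(\de_t u)=\de_t(\curl u)=0$, so $\de_t u$ is a closed $1$-form on $\mathcal F(t)$. Since $\mathcal F(t)$ is the (simply connected) half-plane $\mathbb H$ with $k$ disjoint closed disks removed, $H^1_{dR}(\mathcal F(t))$ is generated by loops around the $B_i$, so it remains to check that every period $\oint_{\de B_i}\de_t u\cdot\tau\,dx$ vanishes. Here the conservation hypothesis enters. Fix $t_0$ and, for each $i$, choose a smooth loop $C_i\subset\mathcal F(t_0)$ homotopic to $\de B_i(t_0)$ and at positive distance from $\de\mathcal F(t_0)$; then $C_i\subset\mathcal F(t)$ for $t$ near $t_0$, and $C_i$ together with $\de B_i(t)$ bounds an annulus on which $\curl u=0$, so Stokes' theorem gives $\oint_{C_i}u(\cdot,t)\cdot\tau\,dx=\gamma_i(t)$. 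Because $C_i$ is fixed in time, differentiating under the integral sign yields $\oint_{C_i}\de_t u\cdot\tau\,dx=\tfrac{d}{dt}\gamma_i=0$, and homotopy invariance of the period of the closed form $\de_t u$ (again via $\curl(\de_t u)=0$) transfers this back to $\de B_i$. Thus all periods vanish, $\de_t u$ is exact, and the lemma follows. (The divergence condition \eqref{Euler2} is part of the standing framework but is not needed for this implication.)

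The only genuine subtlety I anticipate is the bookkeeping around the moving boundary: one cannot directly differentiate the circulation along the moving curve $\de B_i(t)$, which is precisely why the auxiliary fixed loops $C_i$ are introduced, and one then uses $\curl u=0$ twice — once to identify $\oint_{C_i}u\cdot\tau$ with $\gamma_i$, and once to identify the period of $\de_t u$ along $C_i$ with that along $\de B_i$. Everything else — the componentwise identity for $(u\cdot\grad)u$, commuting $\de_t$ with spatial derivatives, the de Rham description of $H^1(\mathcal F(t))$, and the normalisation of $p$ at infinity — is routine.
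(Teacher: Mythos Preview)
Your proof is correct, but it follows a different route from the paper's. The paper treats the full combination $\de_t u+(u\cdot\nabla)u$ as a single closed $1$-form and verifies its periods vanish via a Kelvin-type computation: for any loop $\Gamma$ it computes
\[
\de_s\int_{\Phi_s(\Gamma)}u\cdot\tau_{\Phi_s(\Gamma)}\,dx\Big|_{s=0}=\int_\Gamma\bigl(\de_t u+(u\cdot\nabla)u\bigr)\cdot\tau_\Gamma\,dx,
\]
where $\Phi_s$ is the flow of $u$, and then identifies the left side with $\frac{d}{dt}\gamma_i=0$. Your argument instead exploits $\curl u=0$ to split the problem: the convective term is outright the gradient $\tfrac12\nabla|u|^2$, so only $\de_t u$ needs its periods checked, and you do this with fixed auxiliary loops $C_i$ to sidestep the moving boundary.

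Your route is slightly more elementary here --- it avoids invoking the flow map and the transport computation --- but it leans hard on the irrotationality hypothesis \eqref{Euler3}; the paper's Kelvin argument is the one that generalises to the case $\curl u\neq 0$ (with vorticity merely transported), which is why it is the more standard formulation even though both are valid in the present setting. Your remark that \eqref{Euler2} is not actually used for this implication is correct for both proofs.
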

In the two-dimensional setting this statement is well-known, see e.g.\ \cite{filho2007vortex}.

\begin{proof}
Indeed the vorticity equation always holds and therefore we have $\curl(\de_t u+(u\cdot \nabla)u)=0,$ however not every curl-free field has to be a gradient in $\mathcal{F}$, as the domain has holes. Using the usual path integral construction, it is easy to see that it is a gradient if \begin{align}
\int_\Gamma (\de_tu+(u\cdot \nabla)u)\cdot \tau_\Gamma\dx\eq0\end{align} along every closed, non-selfintersecting path $\Gamma\subset \mathcal{F}$ with normalized tangent $\tau_\Gamma$, which has winding number $1$ with respect to exactly one $B_i$ and $0$ with respect to all others. If $\Phi_t$ is the flow induced by $u$, then by direct calculation one sees that \begin{align}
\de_s\int_{\Phi_s(\Gamma)}u\cdot\tau_{\Phi_s(\Gamma)}\dx\big|_{s=0}\eq\int_\Gamma (\de_tu+(u\cdot \nabla)u))\cdot \tau_\Gamma\dx.
\end{align}

On the other hand we see that for such $\Gamma$ it holds \begin{align}
\int_\Gamma u\cdot\tau_\Gamma\dx\eq\int_{\de B_i} u\cdot\tau\dx,
\end{align}

which is constant by assumption, and hence we see that there is a $p$ such that $\de_t u+(u\cdot\nabla)u=-\nabla p$.
\end{proof}

\begin{proposition}
The function  \begin{align}\label{u1u2}
u(t)\eq\sum_{i=1}^k\left(\nabla\phi_{i,\dot{q}_i}+\gamma_i\frac{1}{r}\nabla^\perp\psi_i\right)\,=:\,u_1+u_2
\end{align}

is a solution to the axisymmetric Euler equations \eqref{Euler1}-\eqref{Euler5}. 
\end{proposition}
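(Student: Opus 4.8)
The plan is to verify the five conditions \eqref{Euler1}--\eqref{Euler5} one by one for the candidate field $u=u_1+u_2$, exploiting the fact that the hard analytic work (existence, uniqueness, and decay of the building blocks $\phi_{i,\dot q_i}$ and $\psi_i$) has already been carried out in Lemmas \ref{ExistencePhi}, \ref{decay1}, \ref{ExistencePsi} and \ref{rep psi}. First I would check \eqref{Euler2} and \eqref{Euler3}: by definition each $\nabla\phi_{i,\dot q_i}$ satisfies $\div(r\nabla\phi_{i,\dot q_i})=0$ and is a gradient, hence curl-free; and each $\frac1r\nabla^\perp\psi_i$ satisfies $\div(r\cdot\frac1r\nabla^\perp\psi_i)=\div(\nabla^\perp\psi_i)=0$ automatically, while $\curl(\frac1r\nabla^\perp\psi_i)=\div(\frac1r\nabla\psi_i)=0$ in $\mathcal F$ by Definition \ref{def psi}. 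The continuity of $\frac1r\nabla\psi_i$ at $r=0$ together with the boundary conditions $\psi_i|_{\{r=0\}}=0$, $\lim_{r\to0}\frac1r\partial_z\psi_i=0$ from Lemma \ref{ExistencePsi} shows that $u_2$ extends continuously across the axis and has vanishing $r$-component there, so \eqref{Euler4} holds; for $u_1$, \eqref{Euler4} follows since $\partial_n\phi_{i,\dot q_i}=0$ on $\partial\mathbb H$.

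Next I would verify the boundary-matching condition \eqref{Euler5}. On $\partial B_j$ we compute $u\cdot n=\sum_i\partial_n\phi_{i,\dot q_i}+\sum_i\gamma_i\frac1r\partial_n^\perp\psi_i$. The streamfunction part contributes nothing to the normal component: since $\psi_i$ is constant on $\partial B_j$, its tangential derivative vanishes, and $\frac1r\nabla^\perp\psi_i\cdot n=-\frac1r\nabla\psi_i\cdot\tau=-\frac1r\partial_\tau\psi_i=0$ there. The potential part gives $\sum_i\partial_n\phi_{i,\dot q_i}=\partial_n\phi_{j,\dot q_j}=u(\dot q_j)$ on $\partial B_j$, because $\phi_{i,\dot q_i}$ has zero Neumann data on $\partial B_j$ for $i\ne j$. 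Thus $u\cdot n=u(\dot q_j)$ on $\partial B_j$, which is exactly \eqref{Euler5} under the identification \eqref{normal velo}. One should also record that the circulation of $u$ around $\partial B_j$ equals $\gamma_j$: the $\phi$-part is single-valued hence contributes no circulation, while $\int_{\partial B_j}\frac1r\nabla^\perp\psi_i\cdot\tau\dx=\int_{\partial B_j}\frac1r\partial_n\psi_i\dx=\delta_{ij}$ by \eqref{Flow}, so the total circulation is $\gamma_j$ as desired, and it is conserved because the $\gamma_j$ are constants of the construction.

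Finally, \eqref{Euler1} is obtained via Lemma \ref{rot pre}: since $u$ satisfies \eqref{Euler2}, \eqref{Euler3} and has the conserved circulations $\gamma_j$, there exists a pressure $p$ with $\partial_t u+(u\cdot\nabla)u=-\nabla p$ in $\mathcal F$. The remaining point is a regularity/decay check — that $u$, $\nabla u$, $\partial_t u$ lie in $L^2_R\cap C^1(\mathcal F)$ so that Lemma \ref{rot pre} and the pressure recovery are legitimate — and this is where the main subtlety lies: one needs the decay estimates \eqref{decayphi} and the bound in Lemma \ref{ExistencePsi} to see the field is in $L^2_R$ on the unbounded domain, and one needs that $\phi_{i,\dot q_i}$ and $\psi_i$ depend smoothly (in time) on $q$ and $\dot q$ — the former through the $C^2$-in-time assumption on $q$ (Condition \ref{strongsol}) and elliptic regularity with respect to parameters, so that $\partial_t u$ makes sense and satisfies the same kind of decay. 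I expect this last step, justifying that the parameter dependence is smooth with uniform-in-space decay and hence that all partial integrations used implicitly (e.g.\ in identifying the circulation and in applying Lemma \ref{rot pre}) are valid on the unbounded $\mathcal F$, to be the main obstacle; everything else is a direct unwinding of the definitions.
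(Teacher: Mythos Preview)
Your proposal is correct and follows essentially the same route as the paper: verify $\div(ru)=0$ and $\curl u=0$ from the definitions, check the normal boundary data and axis condition for $u_1$ and $u_2$ separately, compute that the circulations equal the prescribed $\gamma_j$, and then invoke Lemma~\ref{rot pre} to recover the pressure. The only difference is that you discuss the smooth time-dependence and decay needed for $\partial_t u$ at this stage, whereas the paper defers that justification to the subsequent Lemmata~\ref{SmoothnessInq}, \ref{PsiDeri} and the Remark following them; for the proposition itself the paper simply records the five displayed identities and appeals to Lemma~\ref{rot pre}.
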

\begin{proof}
A direct calculation reveals that $u$ is $\curl$-free and fulfills $\div(ru)=0$ and hence $u$ fulfills \eqref{Euler2} and \eqref{Euler3} in $\mathcal{F}$. We further observe that \begin{align}
&n\cdot\sum_{i=1}^k\nabla\phi_{i,\dot{q}_i}\eq u(\dot{q}) \text{ on $\bigcup_j\de B_j$}\\
&n\cdot\frac{1}{r}\nabla^\perp\psi_i\eq 0 \text{ on $\bigcup_j\de B_j$}\\
&\int_{\de B_j}\nabla\phi_{i,\dot{q}_i}\cdot\tau\dx\eq \int_{\de B_j}\de_\tau\phi_{i,\dot{q}_i}\dx\eq0\\
&\int_{\de B_j}\frac{1}{r}\nabla^\perp\psi_{i}\cdot\tau\dx\eq \int_{\de B_j}\frac{1}{r}\de_n\psi_{i}\dx\eq\delta_{ij},\\
&\frac{1}{r}\de_z\psi_i\eq \de_r\phi_{i,\dot{q}_i}\eq 0\text{ on $\de \mathbb{H}$}
\end{align}

where $\tau=n^\perp$.

Hence this $u$ has the prescribed circulations and boundary velocities, which shows the statement by Lemma \ref{rot pre}. 
\end{proof}

We shall refer to both the $\phi_{i,t^*}$'s and their sum as potentials of $u$. We shall refer to both the $\psi_i$'s and their weighted sum as streamfunctions of $u$.

\begin{remark}\phantomsection\label{uunique}
This $u$ is uniquely determined (in $L_R^2$) by $q,\,\dot{q}$ and the $\gamma_i$'s. Indeed if there would be two such $u$'s, then their difference would give rise to a nonzero streamfunction with zero circulation (by the same argument as in the proof of Lemma \ref{ExistencePsi}, Step 3), which is impossible by the uniqueness of the $\psi_i$'s.
\end{remark}

\subsubsection{Representation of $\de_t u$}
We will need to show that the potential and stream function are differentiable in $q$ to be able to represent $\de_tu$. The differentiability of solutions to elliptic equations with respect to changes of the underlying domain is a classical topic and we refer the reader to \cite{sokolowski1992introduction} for further reading.

\begin{lemma}\phantomsection\label{SmoothnessInq}
\begin{itemize}
\item[a)] The function $\phi_{i,\bullet}$ is smooth as a map from the tangent bundle $TM$ to $H_R^1$ (here differentiability can be understood in both the $L_{loc}^2$-sense and the pointwise sense). 
\item[b)] The derivatives $\de_q \phi_{i,t^*},\de_q^2\phi_{i,t^*}$ lie in $H_R^1\cap C^\infty(\mathcal{F})$, furthermore their $H_{R}^1$-norm is bounded locally uniformly in $q$ and $t^*$.
\item[c)] $\div(r\nabla\de_q \phi_{i,t^*})=0$.
\item[d)] \begin{align}
|\nabla^m\de_q\phi_{i,t^*}(x)|\,\lesssim\,\frac{1}{1+|x|^{2+m}}\quad \forall m\in\N_{\geq 0}.
\end{align}
here the implicit constant is bounded locally uniformly in $q$ and $t^*$.

\end{itemize}
\end{lemma}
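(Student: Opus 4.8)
The plan is to reduce everything to a statement about shape derivatives of the Neumann problem solved by $\phi_{i,t^*}$, and then to quote the decay lemmas already established. First I would set up the differentiability (part a) by the classical Hadamard/transport-of-domain method: fix a reference configuration $q_0$, and for $q$ near $q_0$ pick a family of diffeomorphisms $\Theta_q:\mathcal{F}(q_0)\to\mathcal{F}(q)$ depending smoothly on $q$ and equal to the identity away from a neighborhood of the $B_j$'s (this is possible because in axisymmetric coordinates the balls just translate, so one can take $\Theta_q$ affine near each $B_j$). Pulling the Neumann problem back through $\Theta_q$ turns the fixed-domain $\dot H^1_R$-coercive weak formulation into one with $q$-dependent (and smoothly $q$-dependent) coefficients and a smoothly $q$-dependent right-hand side; since the bilinear form stays uniformly coercive on $\dot H^1_R(\mathcal{F}(q_0))$, the implicit function theorem in Banach spaces gives that $q\mapsto \phi_{i,t^*}\circ\Theta_q$ is smooth into $\dot H^1_R$, hence $\phi_{i,\bullet}$ is smooth as a map $TM\to H^1_R$ (linearity in $t^*$ is already noted). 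The pointwise/$C^\infty(\mathcal{F})$ smoothness and the local-in-$q$ bounds in part b) then follow from interior and up-to-the-boundary elliptic regularity applied to $\phi_{i,t^*}$ and to the difference quotients, exactly as in the cited references \cite{sokolowski1992introduction}.

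Next, for part c), differentiate the PDE $\div(r\nabla\phi_{i,t^*})=0$ in $q$. Working on the fixed domain via $\Theta_q$ and then transporting back, one finds that $\de_q\phi_{i,t^*}$ satisfies $\div(r\nabla \de_q\phi_{i,t^*})=0$ in $\mathcal{F}$: the shape derivative of a harmonic-type function is again a solution of the same homogeneous equation in the interior (the domain perturbation only produces boundary terms, since the operator's coefficient $r$ does not itself depend on $q$). The boundary conditions get differentiated too — $\de_q\phi_{i,t^*}$ solves an inhomogeneous Neumann problem whose data is the $q$-derivative of $u(t^*)$ together with curvature/transport corrections coming from the moving boundary (the classical Hadamard formula $\de_n\de_q\phi = \de_q(\text{data}) - \de_q(\text{boundary})\cdot(\text{stuff})$), but the key structural point for us is just the interior equation and the fact that the Neumann data still lies in $H^m(\bigcup_j\de B_j)$ with norm locally bounded in $q,t^*$, and — crucially — still has zero weighted average $\int_{\bigcup_j\de B_j} r\,\de_n\de_q\phi_{i,t^*}\dx = 0$. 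This last fact follows by differentiating the compatibility identity $\int_{\bigcup_j\de B_j} r\,\de_n\phi_{i,t^*}\dx=0$ (equivalently: $\de_q\phi_{i,t^*}\in\dot H^1_R$ automatically forces it, since such functions on $\mathcal{F}^{\R^3}$ carry no net flux through the obstacle boundary).

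Finally, part d) is immediate from Lemma \ref{ExistencePhi} (equivalently Lemma \ref{decay1} b)) applied to $\zeta = \de_q\phi_{i,t^*}$: we have just argued that $\de_q\phi_{i,t^*}$, viewed back in three-dimensional coordinates, is an axisymmetric harmonic function in $\mathcal{F}^{\R^3}$, lies in $\dot H^1$, tends to $0$ at infinity (this needs a short argument — e.g.\ that the difference quotients $(\phi_{i,t^*}(q+he)-\phi_{i,t^*}(q))/h$ are bounded in $\dot H^1_R$ and converge in $L^2_{loc}$, so the limit inherits the decay uniformly, or alternatively that the Hadamard-formula Neumann data is supported on $\bigcup\de B_j$ so the representation-via-Newtonian-potential argument of Lemma \ref{decay1} applies verbatim), and has boundary flux zero; hence it decays like $1+|x|^{-(2+m)}$ in all derivatives with constant locally uniform in $q,t^*$, which is exactly the claimed estimate (the one extra order of decay over the generic $1+|x|^{-(1+m)}$ coming precisely from the zero-flux condition).

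The main obstacle I expect is not the interior PDE or the decay — those are bookkeeping on top of the earlier lemmas — but rather making the shape-derivative machinery clean in this weighted, unbounded setting: one must check that the pulled-back bilinear forms $\int r(q)\,A(q)\nabla v\cdot\nabla w$ stay uniformly coercive and smooth in $q$ on the fixed space $\dot H^1_R(\mathcal{F}(q_0))$ (the weight $r$ is bounded above and below on the relevant region, so this is fine, but it has to be said), and that the Neumann data genuinely depends smoothly on $q$ including the geometric correction term $-\tfrac{\dot q_{R_i}\rho_i}{2R_i}$ in \eqref{normal velo}, which is a smooth function of $q$. Once coercivity and smoothness of the data are in hand, the implicit function theorem does all the work and the rest is elliptic regularity plus a citation to the decay lemmas.
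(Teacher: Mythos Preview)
Your approach is essentially the paper's: pull back to a fixed domain via smooth diffeomorphisms, apply the implicit function theorem to the weak formulation, then use elliptic regularity and the decay lemma. The paper carries this out in three-dimensional coordinates (working with $\phi^{\R^3}$ in the space $\dot H^1\cap L^6$), which slightly streamlines the coercivity check, but the substance is the same.

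There is one genuine gap in your zero-flux argument for part d). The parenthetical claim that ``$\de_q\phi_{i,t^*}\in\dot H^1_R$ automatically forces'' zero flux is false: in three dimensions the Newtonian potential $|x|^{-1}$ lies in $\dot H^1$ of any exterior domain yet has nonzero boundary flux, so membership in $\dot H^1$ alone gives nothing. Your primary argument, ``differentiating the compatibility identity $\int_{\bigcup_j\de B_j} r\,\de_n\phi_{i,t^*}\dx=0$'', is morally right but incomplete as stated, because the boundary itself moves with $q$ and the Reynolds-type correction terms do not obviously vanish. The paper handles this cleanly by introducing a fixed auxiliary surface $\de B_j'$ enclosing $B_j^{\R^3}$: since $\de_q\phi^{\R^3}$ is harmonic one can move the flux integral to $\de B_j'$ by the divergence theorem, and on this \emph{fixed} surface one may legitimately pull $\de_q$ outside the integral, obtaining $\de_q\int_{\de B_j'}\de_n\phi^{\R^3}\dx=\de_q\,0=0$. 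This is the missing step you should supply.
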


\begin{proof}
We can identify the tangent space at every point with $\R^2$ and $\phi_{i,\bullet}$ is linear in $t^*$, hence it suffices to show smoothness in $q$ for a fixed $t^*$.\\

Step 1. We first want to apply the implicit function theorem to obtain that a derivative of $\phi_{i,t^*}$ with respect to $q$ exists. Fix some $q^0$. We use the three-dimensional $\phi^{\R^3}=\phi_{i,t^*}(r,z)$ (in axisymmetrical coordinates) again. We set \begin{align}V\,:=\, (\dot{H}^1\cap L^6)(\R^3\backslash \bigcup_j B_j^{\R^3}(q^0)),\end{align} 

and equip this with the standard inner product of $\dot{H}^1$. By the Sobolev embedding this is a Hilbert space.

In order to fit different configurations of the bodies into one space, we introduce $C^\infty$ diffeomorphisms $\Xi:\R^3\backslash\bigcup_j B_j(q^0)\rightarrow\R^3\backslash \bigcup_j B_j(q^1)$ which map each $\de B_j(q^0)$  to $\de B_j(q^1)$. We can assume that the family $\Xi$ is smooth in the parameter $q^1$ since the $B_i$'s are. We may also assume that $\Xi$ is the identity outside a large ball depending on $q$, but bounded locally uniformly in $q$. 

Then $\phi^{\R^3}$ is harmonic on $\R^3\backslash\bigcup_j B_j(q^1)$ with Neumann boundary values $u(t^*,q^1)$ iff the function $\hat{\phi}:=\phi\circ \Xi$ fulfills \begin{align}\label{PullbackEq}
\int_{\mathcal{F}(q^0)^{\R^3}}\scalar{\nabla\hat{\phi}(\text{D}\Xi)^{-1}}{\nabla\eta(\text{D}\Xi)^{-1}}|\det \text{D}\Xi|\dx\eq-\int_{\bigcup \de B_j(q^0)^{\R^3}}\frac{\rho_j(q^1)}{\rho_j(q^0)}u(t^*,q^1)\eta\dx,
\end{align}
for all $\eta\in V$, where we have written the inner radius $\rho_j$ as a function of $q$. We may interpret the difference of the left- and right-hand side as a map $\mathcal{G}:M\times V\rightarrow V^*$. 

Since $\Xi$ is smooth in $q$ and compactly supported, we obtain that this map is Fr\'echet-smooth. Furthermore we have that \begin{align}D_V\mathcal{G}(q^0,\hat{\phi})\cdot\delta\phi\eq\int_{\mathcal{F}(q^0)^{\R^3}} \scalar{\nabla\delta\phi}{\nabla\cdot}\dx.\end{align} This is an isomorphism by the Riesz representation theorem. Hence we see that $\hat{\phi}$ is Fr\'echet-smooth by the implicit function theorem.

This implies that a function $\de_q\phi^{\R^3}$ exists in $V$ by the smoothness of $\Xi$ in $q$. Similarly, higher derivatives must exist. This shows a).\\

Step 2. Clearly, $\de_q \phi^{\R^3}$ must be harmonic and hence smooth away from the boundary. To see smoothness up to the boundary we differentiate \eqref{PullbackEq} with respect to $q$ at $q^0$ and obtain that \begin{align}\label{EquationDeri}
&\int_{\mathcal{F}(q^0)^{\R^3}}\scalar{\nabla\de_q\hat{\phi}(\text{D}\Xi)^{-1}}{\nabla\eta(\text{D}\Xi)^{-1}}|\det D\Xi|\dx+\notag\\
&\int_{\mathcal{F}(q^0)^{\R^3}}\scalar{\nabla\hat{\phi}\de_q((\text{D}\Xi)^{-1})}{\nabla\eta(\text{D}\Xi)^{-1}}|\det \text{D}\Xi|\dx+\\
&\int_{\mathcal{F}(q^0)^{\R^3}}\scalar{\nabla\hat{\phi}(\text{D}\Xi)^{-1}}{\de_q\left(\nabla\eta(\text{D}\Xi)^{-1}|\det \text{D}\Xi|\right)}\dx\eq-\int_{\bigcup \de B_j(q^0) }\scalar{\de_q\left(\frac{\rho_j(q)}{\rho_j(q^0)}u(t^*,q^0)\right)}{\eta}\dx\notag,
\end{align}

for all $\eta\in V$, the differentiation of this equation is justified by the differentiability of $\hat{\phi}$ and by $\Xi$ being compactly supported and smooth in $q$.

This is an elliptic equation for $\de_q\hat{\phi}$ with Neumann boundary conditions and a smooth and compactly supported term given by the second and third summand on the left hand side. Hence $\de_q\hat{\phi}$ is smooth up the boundary. The same argument can be used to show regularity of higher derivatives in $q$. Again this also shows that $\de_q\phi^{\R^3}$ (and hence also $\phi_{i,t^*}$) is smooth up to the boundary and the same is true for higher derivatives in $q$.

By the pointwise smoothness that follows from this, it is obvious that c) holds.\\

Step 3. To obtain the decay estimate for the derivative, we note that it is enough to show these estimates for $\de_q\phi^{\R^3}$. Clearly, it holds that $\Delta\de_q\phi^{\R^3}=0$.\\

We again employ Lemma \ref{decay1}, by e.g.\ going back to $\hat{\phi}$ and using Equation \eqref{EquationDeri}, it is easy to see that the boundary values are locally uniformly controlled by $q$ and $t^*$. To see that the integral over the Neumann boundary values of $\de_q\phi^{\R^3}$ is $0$, we introduce some compact $B_j'$ with smooth boundary, in which $B_j^{\R^3}$ is compactly contained and which intersects no other $B_{j'}^{\R^3}$. Then we rewrite them as \begin{align}
\int_{\de B_j^{\R^3}}\de_n\de_q\phi^{\R^3}\dx\eq -\int_{\de B_j'}\de_n\de_q\phi^{\R^3}\dx\eq-\de_q\int_{\de B_j'}\de_n\phi^{\R^3}\dx\eq0.
\end{align}

Here pulling out the derivative is justified by the regularity of $\de_q\phi^{\R^3}$.

In particular, the decay estimate also implies that the derivative is in $H^1$.

\end{proof}

\begin{lemma}
The functions $f_i$ are smooth in $R_i$, in particular, $E_{q_i}$ is smooth with respect to $q$. 
\end{lemma}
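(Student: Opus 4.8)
The plan is to reduce the statement to the smoothness of the interior potentials $u_{i,int}$ with respect to the single parameter $R_i$, and then apply exactly the same domain-perturbation / implicit-function-theorem machinery that was used in Lemma \ref{SmoothnessInq}. Recall from \eqref{def E} and \eqref{def f} that $f_i(R_i) = (e_R)_i^T E_{q_i}(e_R)_i = \int_{B_i} r|u_{i,int}((e_R)_i)|^2\dx$, where by \eqref{Int1}--\eqref{Int3} the field $u_{i,int}((e_R)_i) = \nabla\phi$ solves the Neumann problem $\div(r\nabla\phi)=0$ in $B_i(R_i)$ with $\de_n\phi = u((e_R)_i) = n\cdot e_R - \frac{\rho_i}{2R_i}$ on $\de B_i(R_i)$. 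Both the domain $B_i(R_i)=B_{\rho_i}((R_i,Z_i))$ with $\rho_i=\sqrt{v_i/(\pi R_i)}$ and the boundary datum depend smoothly on $R_i$, and nothing else in the problem depends on $R_i$, so it suffices to show $\phi$ depends smoothly on $R_i$ in a space on which the energy functional $\phi\mapsto\int_{B_i}r|\nabla\phi|^2\dx$ is a smooth function.

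First I would set up the reference configuration: fix $R_i^0$, work in the fixed domain $B_i(R_i^0)$, and introduce a smooth family of diffeomorphisms $\Xi = \Xi_{R_i}: B_i(R_i^0)\to B_i(R_i)$ (e.g.\ the obvious translation-plus-dilation, which is affine and hence trivially smooth in $R_i$). Going back to three-dimensional coordinates as in Lemma \ref{ExistencePhi} and pulling back via $\Xi$, the Neumann problem becomes a weak equation of exactly the form \eqref{PullbackEq}, now posed on the fixed bounded domain $B_i(R_i^0)^{\R^3}$ (well, its axisymmetric realization with weight $r$): find $\hat\phi\in \dot H^1_R(B_i(R_i^0))$ (modulo constants) with
\begin{align*}
\int_{B_i(R_i^0)} r\,\scalar{\nabla\hat\phi (D\Xi)^{-1}}{\nabla\eta (D\Xi)^{-1}}\,|\det D\Xi|\dx \eq -\int_{\de B_i(R_i^0)} r\,J_{\de}\,\widehat{u((e_R)_i)}\,\eta\dx
\end{align*}
for all test functions $\eta$, where $J_\de$ is the boundary Jacobian factor. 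The left side defines, via its difference with the right side, a map $\mathcal{G}: (R_i^0-\delta,R_i^0+\delta)\times \dot H^1_R \to (\dot H^1_R)^*$ which is smooth in $R_i$ because $\Xi$, $|\det D\Xi|$, $J_\de$ and the datum $n\cdot e_R - \rho_i(R_i)/(2R_i)$ are all smooth in $R_i$; its partial $V$-derivative is the fixed coercive bilinear form $\int_{B_i(R_i^0)} r\scalar{\nabla\,\cdot}{\nabla\,\cdot}\dx$, an isomorphism by Lax--Milgram / Riesz on the quotient space. The implicit function theorem then gives that $R_i\mapsto\hat\phi$ is smooth into $\dot H^1_R$.

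Finally, changing variables back, $f_i(R_i) = \int_{B_i(R_i^0)} r\,\scalar{\nabla\hat\phi(D\Xi)^{-1}}{\nabla\hat\phi(D\Xi)^{-1}}\,|\det D\Xi|\dx$, which is the composition of the smooth map $R_i\mapsto(\hat\phi,\Xi)$ with a jointly continuous (indeed smooth: polynomial in $(D\Xi)^{-1},\det D\Xi$, bilinear and bounded in $\nabla\hat\phi$) functional; hence $f_i\in C^\infty$. Smoothness of $E_{q_i}$ with respect to $q$ is then immediate from \eqref{def f 2}: the matrix is $\diag(f_i(q_{R_i}), v_i)$ with $v_i$ constant, so each entry is a smooth function of $q_{R_i}$. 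The main obstacle — and it is mild — is bookkeeping the weight $r$ and the inner-radius constraint $\rho_i=\sqrt{v_i/(\pi R_i)}$ correctly through the pullback so that the boundary datum and the Jacobians retain their manifest smoothness in $R_i$; once that is in place the argument is a verbatim repeat of Lemma \ref{SmoothnessInq} in the far simpler setting of a single fixed bounded domain, so no decay estimates or three-dimensional Liouville arguments are needed.
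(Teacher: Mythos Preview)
Your proposal is correct and follows exactly the approach the paper indicates: pull back to a fixed reference domain via a smooth family of diffeomorphisms and apply the implicit function theorem as in Lemma~\ref{SmoothnessInq}. The paper's own proof is a single sentence to this effect, so you have simply written out the details that the paper leaves implicit.
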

\begin{proof}This can be shown as in the previous Lemma by using a similar smooth family of diffeomorphisms.\end{proof}

\begin{lemma}\phantomsection\label{PsiDeri}
\begin{itemize}
\item[a)] The derivative of $\psi_i$ with respect to $q$ exists and is smooth up to the boundary (here the derivative can e.g.\ be taken as a classical pointwise derivative or in the $L_{loc}^2$ sense).
\item[b)] We have that $\frac{1}{r}\nabla\de_q\psi_i,\,\frac{1}{r}\nabla\de_q^2\psi_i\in L_R^2\cap C^\infty$. Furthermore the $L^2$-norm of these derivatives is bounded locally uniformly in $q$.
\item[c)] It holds that \begin{align}
\left|\de_q\psi_i(x)\right|\,\lesssim\,\frac{1}{1+|x|}
\end{align}

and \begin{align}
\left|\nabla^m\de_q\frac{1}{r}\nabla^\perp\psi_i(x)\right|\,\lesssim\,\frac{1}{1+|x|^{2+m}}\quad \forall m\in\N_{\geq0}.
\end{align}
In the second estimate the implicit constant is locally uniformly bounded in $q$.
\item[d)] The $C_{ij}$'s are differentiable with respect to $q$.
\end{itemize}
\end{lemma}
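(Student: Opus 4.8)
The plan is to mirror the proof of Lemma \ref{SmoothnessInq} as closely as possible, since the two statements are structurally identical; the only genuinely new feature is that $\psi_i$ is not directly the pullback of a three-dimensional harmonic function, so I must work through the auxiliary field $u_{2,i}$ (equivalently the azimuthal potential $\Psi_i$ from the proof of Lemma \ref{ExistencePsi}) rather than with $\psi_i$ itself. First I would fix a base configuration $q^0$, introduce the same smooth, compactly supported family of diffeomorphisms $\Xi$ between $\R^3\backslash\bigcup_j B_j^{\R^3}(q^0)$ and $\R^3\backslash\bigcup_j B_j^{\R^3}(q^1)$ used there, and recast the defining system \eqref{BigPhi1}--\eqref{BigPhi2} for $\Psi_i$, together with the $k$ scalar constraints \eqref{Flow}, as the vanishing of a Fr\'echet-smooth map $\mathcal{G}:M\times(V\times\R^k)\to V^*\times\R^k$, where $V$ is the same Sobolev space as before (for the pulled-back $\hat\Psi_i$) and the $\R^k$ factors carry the unknown boundary constants $\tilde C_{ij}$ and the constraints $\int_{\de B_j}\frac1r\de_n\psi_i\dx-\delta_{ij}$. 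The key point is that the linearization $D_{(V\times\R^k)}\mathcal{G}$ at the solution is an isomorphism: on the $V$-block it is the Dirichlet form for $\Delta$ (Riesz representation), and on the $\R^k$-block it is exactly the linear map ``boundary constants $\mapsto$ flux integrals'' that was shown to be invertible in Step~3 of the proof of Lemma \ref{ExistencePsi}. The implicit function theorem then gives part a) for $\hat\Psi_i$ and hence for $\Psi_i$, $u_{2,i}$, and finally $\psi_i$ via the path-integral reconstruction $u_{2,i}=\frac1r\nabla^\perp\psi_i$; part d) is an immediate byproduct since the $C_{ij}$ are (up to an additive constant) the $\tilde C_{ij}$, which are among the variables solved for.

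Next I would establish smoothness up to the boundary, part b). Differentiating the pulled-back weak formulation in $q$ at $q^0$ gives, just as in \eqref{EquationDeri}, an elliptic Neumann-type problem for $\de_q\hat\Psi_i$ whose right-hand side and lower-order coefficients are smooth and compactly supported (the $q$-dependence entering only through $\Xi$, $\det D\Xi$, and the rescaling factors $\rho_j(q)/\rho_j(q^0)$), so elliptic regularity up to the boundary applies; the same bootstrap handles $\de_q^2\hat\Psi_i$. Translating back: $\de_q\Psi_i,\de_q^2\Psi_i$ are smooth up to the boundary, hence so are $\de_q u_{2,i}=\curl\de_q\Psi_i$ and therefore $\de_q\bigl(\frac1r\nabla\psi_i\bigr)$, giving the $C^\infty$ and local-uniform-$L^2$ claims (the $L^2$ bound following from the decay estimate proven in the last step plus interior elliptic estimates). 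Here one should also note $\frac1r\nabla\de_q\psi_i = \de_q\bigl(\frac1r\nabla\psi_i\bigr)$ since $r$ does not depend on $q$, so the regularity statements for the two coincide.

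For part c), the decay estimates, I would again route through Lemma \ref{decay1} a) applied to the harmonic field $\de_q\Psi_i$ (which is harmonic because $\Psi_i$ is, for each fixed $q$, and differentiation commutes with $\Delta$ away from the boundary): its Neumann boundary data on $\bigcup_j\de B_j^{\R^3}$ are controlled locally uniformly in $q$ by the elliptic estimates from the previous step, so $|\nabla^m\de_q\Psi_i(x)|\lesssim (1+|x|)^{-1-m}$. Since $u_{2,i}=\curl\Psi_i$ and $\de_q u_{2,i}=\curl\de_q\Psi_i$, this yields $\bigl|\nabla^m\de_q\bigl(\frac1r\nabla^\perp\psi_i\bigr)(x)\bigr|\lesssim(1+|x|)^{-2-m}$, which is the second estimate; the first, $|\de_q\psi_i(x)|\lesssim(1+|x|)^{-1}$, then follows exactly as in Lemma \ref{rep psi} a), by writing $\de_q\psi_i$ via the fundamental solution $K$ against the (finite, by elliptic regularity) jump measure of $\de_n\de_q\psi_i$ across $\bigcup_i\de B_i$ together with the constant boundary-value contributions $\de_q C_{ij}$, and using the decay of $K$ from \cite{GallaySverak}; one must check that the integral of the Neumann data of $\de_q\psi_i$ over each $\de B_i$ is zero, which holds because differentiating \eqref{Flow} in $q$ gives $\de_q\int_{\de B_j}\frac1r\de_n\psi_i\dx=\de_q\delta_{ij}=0$.

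The main obstacle I anticipate is purely bookkeeping rather than conceptual: correctly setting up the combined map $\mathcal{G}$ so that the extra finite-dimensional constraints and unknowns ($\tilde C_{ij}$ versus the flux conditions) are genuinely a well-posed square system with invertible linearization — i.e., importing the invertibility argument from Step~3 of Lemma \ref{ExistencePsi} into the implicit-function-theorem framework — and, relatedly, keeping straight that $\psi_i$ must be recovered from $u_{2,i}$ by path integration, so that every regularity and decay statement about $\psi_i$ has to be deduced from the corresponding statement about $\Psi_i$ (or $u_{2,i}$) rather than proven directly. Once that correspondence is in place, each individual step is a routine repetition of the arguments already carried out for $\phi_{i,t^*}$ in Lemma \ref{SmoothnessInq} and for $\psi_i$ in Lemmas \ref{ExistencePsi} and \ref{rep psi}.
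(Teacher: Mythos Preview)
Your proposal is correct and follows essentially the same approach as the paper: routing through the three-dimensional azimuthal potential $\Psi_i$, applying the implicit function theorem (the paper does this in two steps---first for $\Psi_i$ with fixed boundary constants, then for the finite-dimensional flux map---whereas you bundle them into a single map $\mathcal{G}$, but this is only an organizational difference), and then deducing the decay estimates from Lemma~\ref{decay1} for $\de_q\Psi_i$ and Lemma~\ref{rep psi} for $\de_q\psi_i$. The obstacle you flag is exactly the one the paper handles, and your plan for it is sound.
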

\begin{proof}
a) and d) The argument uses a similar technique as the existence proof for Lemma \ref{ExistencePsi}. First, we again consider the three-dimensional $\Psi_i$'s as in said proof, for fixed boundary values $(\tilde{C}_{ij})$, they have arbitrarily many derivatives in $q$, which are smooth up to the boundary by the same argument as in the previous proof and the derivatives are in $\dot{H}^1\cap L^6$. This also shows that for fixed $(\tilde{C}_{ij})$ there is a (smooth) derivative of $u_{2,i}$ and $\psi_{i}(\tilde{C}_{ij})$. 

It remains to argue that the $C_{ij}$'s are differentiable. To see this note that the linear map from the $(\tilde{C}_{ij})$ to the integrals $\int_{\de B_l}\frac{1}{r}\de_n\psi_i(\tilde{C}_{ij})\dx$, which was used to show existence of the $C_{ij}$'s, is differentiable in $q$ as well. Indeed we may again introduce some compact $B_l'$, which compactly contains $B_l$ and intersects no other $B_{j}$. Then we have \begin{align}
\int_{\de B_l'}\frac{1}{r}\de_n\de_q\psi_i(\tilde{C}_{ij})\dx\eq\de_q\int_{\de B_l'}\frac{1}{r}\de_n\psi_i(\tilde{C}_{ij})\dx\eq\de_q \int_{\de B_l}\frac{1}{r}\de_n\psi_i(\tilde{C}_{ij})\dx,
\end{align}

which shows differentiability. \\

As the $\dot{H}^1$-norm of $\Psi_i$ corresponds to the $L^2$-norm of $\frac{1}{r}\nabla^\perp\psi_i(\tilde{C}_{ij})$, we see the boundedness statement b).

 The decay of $\de_q\psi_i$ again follows from the fact that the derivative fulfills $\div(\frac{1}{r}\nabla\de_q\psi_i)=0$ and is smooth up to the boundary by using Lemma \ref{rep psi}.

The decay of $\de_q \frac{1}{r}\nabla^\perp\psi_i$ follows from the fact that the derivative of the three-dimensional stream function is harmonic and smooth up to the boundary as in the previous proof by Lemma \ref{decay1}, and can be controlled locally uniformly in $q$.
\end{proof}

\begin{remark}
Note that if $q$ is $C^2$ in time and $u$ is a solution of the Euler equations \eqref{Euler1}-\eqref{Euler5}, then we must have \begin{align}\label{eqDu}
\de_t u\eq \sum_{i=1}^k\de_q \phi_{i,\dot{q}}\cdot\dot{q}+\phi_{i,\ddot{q}_i}+\gamma_i\frac{1}{r}\nabla^\perp\de_q\psi_i\cdot \dot{q}.
\end{align}

Indeed this follows from the fact that $u$ is uniquely determined through $q,\dot{q}$ and the $\gamma_i$'s (Remark \ref{uunique}). In particular, $u$ has the regularity required in Condition \ref{strongsol}.

\end{remark}

\begin{lemma}\phantomsection\label{Decayp}
Assume that $u$ solves the Euler equations \eqref{Euler1}-\eqref{Euler5} with pressure $p$.
Then we have that \begin{align}|\nabla p(x)|\,\lesssim\,\frac{1}{1+|x|^2},\end{align} and there is a constant $C$ which may be choosen as $0$ such that \begin{align}|p(x)-C|\,\lesssim\, \frac{1}{1+|x|}.\end{align}
The implicit constants in these estimates are bounded locally uniformly in $q,\dot{q},\ddot{q}$.
\end{lemma}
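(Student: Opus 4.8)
The plan is to use the explicit representation of $u$ from Proposition \eqref{u1u2} together with the decay estimates already established for the potentials $\phi_{i,\dot q_i}$ (Lemma \ref{ExistencePhi}), the stream functions $\psi_i$ (Lemma \ref{ExistencePsi}), and their $q$-derivatives (Lemmas \ref{SmoothnessInq} and \ref{PsiDeri}). From the Euler equation \eqref{Euler1} we have $\nabla p = -\de_t u - (u\cdot\nabla)u$, so it suffices to control the decay of $\de_t u$ and of $(u\cdot\nabla)u$. For the convective term, Lemmas \ref{ExistencePhi} and \ref{ExistencePsi} give $|u(x)|\lesssim (1+|x|)^{-2}$ and $|\nabla u(x)|\lesssim (1+|x|)^{-3}$ (the potentials decay like $|x|^{-2}$ because $\int_{\bigcup \de B_i} r\,u(\dot q)\dx = 0$, and $\frac1r\nabla^\perp\psi_i$ decays like $|x|^{-2}$), hence $|(u\cdot\nabla)u(x)|\lesssim (1+|x|)^{-5}$, which is far better than needed and is locally uniform in $q,\dot q$. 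For $\de_t u$, I use the formula \eqref{eqDu}: each summand is either $\nabla\phi_{i,\ddot q_i}$, or $\de_q\phi_{i,\dot q}\cdot\dot q$, or $\gamma_i\frac1r\nabla^\perp\de_q\psi_i\cdot\dot q$; Lemma \ref{ExistencePhi} gives $|\nabla\phi_{i,\ddot q_i}(x)|\lesssim (1+|x|)^{-2}$ (locally uniform in $q,\ddot q$ via the dependence of $\|u(\ddot q_i)\|_{H^m}$ on $q,\ddot q$), Lemma \ref{SmoothnessInq}(d) gives $|\nabla\de_q\phi_{i,\dot q}(x)|\lesssim (1+|x|)^{-2}$, and Lemma \ref{PsiDeri}(c) gives $|\de_q\tfrac1r\nabla^\perp\psi_i(x)|\lesssim(1+|x|)^{-2}$. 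Summing, $|\de_t u(x)|\lesssim (1+|x|)^{-2}$ with the constant locally uniform in $q,\dot q,\ddot q$. This yields the claimed bound $|\nabla p(x)|\lesssim (1+|x|)^{-2}$.

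For the second estimate, the idea is to integrate $\nabla p$ from infinity. Since $|\nabla p(x)|\lesssim (1+|x|)^{-2}$ and the fluid domain $\mathcal F$ is connected and ``path-connected at infinity'' in the quantitative sense that any two points can be joined by a path of length comparable to their distance once they are outside a fixed large ball, one can define $C := \lim_{|x|\to\infty} p(x)$ and show the limit exists and is independent of direction: for $|x|,|y|\ge \tfrac12|x|$ large, $|p(x)-p(y)|\le \int_\gamma |\nabla p|\,\lesssim \int_{|x|/2}^\infty s^{-2}\,\dd s \lesssim |x|^{-1}$, so $(p(x))$ is Cauchy along any exhaustion and the limiting value does not depend on the ray. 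Replacing $p$ by $p-C$ (the pressure is only determined up to an additive constant anyway, so we may take $C=0$) we get $|p(x)-C|\lesssim |x|^{-1}$ for large $|x|$; for $x$ in a bounded region this is immediate from continuity of $p$ (Condition \ref{strongsol}, together with the fact that $p$ is recovered from $u,\de_tu$ by the path-integral construction in Lemma \ref{rot pre}, hence inherits their regularity). That gives $|p(x)-C|\lesssim (1+|x|)^{-1}$ globally, with constants locally uniform in $q,\dot q,\ddot q$ since all the ingredient estimates were.

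The main obstacle I anticipate is not the decay bookkeeping — that is routine given the earlier lemmas — but making the passage from ``$|\nabla p|$ decays'' to ``$p$ has a limit at infinity with a rate'' rigorous and genuinely uniform in $q$: one must be careful that the fluid domain $\mathcal F$, while unbounded and connected, allows joining far-away points by paths whose length is controlled by their mutual distance \emph{with a constant independent of} (a local range of) $q$, so that the line-integral estimate $|p(x)-p(y)|\lesssim \int |\nabla p|$ holds with a locally uniform constant. This is true here because outside a large ball (of radius bounded locally uniformly in $q$) the domain $\mathcal F$ coincides with $\{r>0\}$ minus nothing, i.e.\ is a fixed half-space, so straight-line or two-segment paths suffice; I would state this explicitly. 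A secondary point to check is that the additive constant in the pressure — which enters \eqref{eq z} and \eqref{eq R} only through $\int_{\de B_i} rp\,(\cdots)\dx$ with the $(\cdots)$ factor having a specific structure — may indeed be normalized to $0$ without affecting the dynamics; but since $\nabla p$ is what appears in \eqref{Euler1} and the constant drops out of the relevant boundary integrals (or is simply a choice of gauge), this is harmless, and the statement already says $C$ "may be chosen as $0$".
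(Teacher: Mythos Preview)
Your proposal is correct and follows essentially the same route as the paper: bound $|\nabla p|$ by combining the decay of $(u\cdot\nabla)u$ (from Lemmas \ref{ExistencePhi}, \ref{ExistencePsi}) with the decay of $\de_t u$ (from \eqref{eqDu} and Lemmas \ref{ExistencePhi}, \ref{SmoothnessInq}, \ref{PsiDeri}), then integrate $\nabla p$ from infinity to extract the limiting constant and the $|x|^{-1}$ rate. The paper's argument for the existence of the limit is slightly more concrete (oscillation on large circles tends to zero, plus convergence along a fixed radial ray), but this is exactly your path-integral idea specialized to convenient paths.
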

\begin{proof}
By the construction of $u$ in \eqref{u1u2} and the decay estimates in the Lemmata \ref{ExistencePhi} and \ref{ExistencePsi} we have that \begin{align}
|(u\cdot\nabla)u(x)|\,\lesssim\, \frac{1}{1+|x|^5}.
\end{align} 

By Equation \eqref{eqDu} and the Lemmata \ref{ExistencePhi},\ref{SmoothnessInq} and \ref{PsiDeri}, we see that \begin{align}
|\de_tu(x)|\,\lesssim\,\frac{1}{1+|x|^2}.
\end{align}

Hence $|\nabla p(x)|\lesssim \frac{1}{1+|x|^2}$ and the estimate is locally uniform in $q,\dot{q},\ddot{q}$, because the estimates for $u$ and its derivatives are.

Since $\int_{\de B_R(0)\cap \mathbb{H}}|\nabla p|\dx\rightarrow 0$ for $R\rightarrow\infty$ we have \begin{align}
\max_{x\in \de B_R(0)} p(x)-\min_{x\in \de B_R(0)} p(x)\,\rightarrow\, 0.
\end{align}

Furthermore $\int_1^\infty |\nabla p(x,a)|\dx<\infty$ for all $a$ for which no $B_i$ intersects this line, hence we obtain that $p$ converges to some finite value at infinity, which then gives the decay statement for $p$ by the fundamental theorem of calculus.

\end{proof}

\subsection{Derivation of an ODE for the system}
We reduce the motion of the bodies $B_i$ to an ODE whose coefficients depend on the $\psi$'s and $\phi$'s. This will also yield existence and uniqueness of solutions to the system.
In two-dimensional bounded domains, a very similar calculation can be found e.g.\ in \cite{GlassMunnierSueur2}.

We first introduce some additional terminology.

We set $\phi(t^*)=\sum \phi_{i,t_i^*}$ if $t^*=t_1^*+\dots+ t_k^*$. Furthermore we set $\psi=\sum_i\gamma_i\psi_i$.

\begin{definition}\phantomsection\label{main def}
Let $t^*=t_1^*+\dots+ t_k^*$; $s^*=s_1^*+\dots+ s_k^*$ and $w^*=w_1^*+\dots+w_k^*$ for $t_i^*,s_i^*$ and $w_i^*$ associated to $B_i$. We define  \begin{align}
&G_i(q,\gamma)\cdot t_i^*\eq\int_{\de B_i} \frac{1}{2r}\left((\de_n\psi)^2\de_n\phi_{i,t_i^*}\right)\dx\\
&(\mathcal{M}_{ij}(q)t_i^*)\cdot s_j^*\eq\int_{\mathcal{F}}r\nabla\phi_{i,t_i^*}\nabla \phi_{j,s_j^*}\dx\\
&\langle \Gamma(q),t^*,s^*\rangle\cdot w^*\eq \frac{1}{2}\sum_{ij}\Bigl( \left(\left(\de_q\mathcal{M}_{ij}\cdot s^*\right)t^*\right)\cdot w^*+\left(\left(\de_q\mathcal{M}_{ij}\cdot t^*\right)s^*\right)\cdot w^*\nonumber\\
&-\left(\left(\de_q\mathcal{M}_{ij}\cdot w^*\right)s^*\right)\cdot t^*\Bigr)\\
&(A(q,\gamma)t^*)\cdot s^*\eq\sum_i\int_{\de B_i}\Bigl(-\de_\tau\phi(s^*)\de_n\phi(t^*)+\de_\tau\phi(t^*)\de_n\phi(s^*)\Bigr)\de_n\psi\dx,
\end{align}

where in the definition of $\Gamma$, the inner dot product refers to the derivative in that direction.

Furthermore, $\mathcal{M}$ shall be the matrix made up of the blocks $\mathcal{M}_{ij}$ and $E$ shall be the diagonal matrix made up of the blocks $E_{q_i}$. Let $G\in (\R^2)^k\simeq \R^{2k}$ be the vector with the entries $G_1,\dots G_k$.

\end{definition}

\begin{theorem}
The system detailed in Section \ref{StateSpace} is equivalent to the system of ODEs given by \begin{align}\label{MainODE}
&E(q)\ddot{q}+\frac{1}{2}\dot{q}(\de_{q}E(q)\cdot\dot{q})+\mathcal{M}(q)\ddot{q}+\langle\Gamma(q),\dot{q},\dot{q}\rangle \\
&\eq G(q,\gamma)+(A(q,\gamma)\dot{q})\notag.
\end{align}

\end{theorem}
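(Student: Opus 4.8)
The strategy is to start from the energy-balance formulation of the system, plug in the explicit representation $u=u_1+u_2=\sum_i(\nabla\phi_{i,\dot q_i}+\gamma_i\tfrac1r\nabla^\perp\psi_i)$ from Proposition (the one right before Remark \ref{uunique}), and compute the boundary integral $-\int_{\de B_i}rp\,u(t_i^*)\cdot n\dx$ appearing on the right of \eqref{2ndDeri}. The pressure is eliminated via the Euler equation $\nabla p=-\de_t u-(u\cdot\nabla)u$, so that $-\int_{\de B_i}rp\,u(t_i^*)\cdot n\dx$ becomes, after an integration by parts justified by the decay Lemmata \ref{ExistencePhi}, \ref{ExistencePsi}, \ref{SmoothnessInq}, \ref{PsiDeri} and \ref{Decayp}, an integral over $\mathcal F$ of $r\big(\de_t u+(u\cdot\nabla)u\big)\cdot\nabla\phi_{i,t_i^*}$ plus possibly boundary contributions that vanish at infinity. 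Here one uses that $u(t_i^*)\cdot n=\de_n\phi_{i,t_i^*}$ on $\de B_i$ and $=0$ on the other boundary components, so $\nabla\phi_{i,t_i^*}$ is the right test field and $\div(r\nabla\phi_{i,t_i^*})=0$ makes the partial integration clean.

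**Key steps.** First I would substitute $\de_t u$ using \eqref{eqDu}, which expresses it through $\phi_{i,\ddot q_i}$, $\de_q\phi_{i,\dot q}\cdot\dot q$ and $\gamma_i\tfrac1r\nabla^\perp\de_q\psi_i\cdot\dot q$. Integrating $r\,\de_t u\cdot\nabla\phi_{i,t_i^*}$ over $\mathcal F$: the $\phi_{j,\ddot q_j}$ terms produce exactly $(\mathcal M(q)\ddot q)$ tested against $t^*$ (by definition of $\mathcal M_{ij}$ as $\int_{\mathcal F}r\nabla\phi_{i,\cdot}\nabla\phi_{j,\cdot}$); the $\de_q\phi$ and $\de_q\psi$ terms, together with the analogous contributions coming from $(u\cdot\nabla)u=\tfrac12\nabla|u|^2-u\times\curl u=\tfrac12\nabla|u|^2$ (curl-free!), need to be reorganized. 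Writing $(u\cdot\nabla)u=\tfrac12\nabla|u|^2$ and integrating by parts against $r\nabla\phi_{i,t_i^*}$ turns it into a boundary integral $\tfrac12\int_{\de B_i}r|u|^2\de_n\phi_{i,t_i^*}\dx$; expanding $|u|^2=|\nabla\phi|^2+2\gamma\,\tfrac1r\nabla\phi\cdot\nabla^\perp\psi+\gamma^2\tfrac1{r^2}|\nabla\psi|^2$ on $\de B_i$ and using that on $\de B_i$ the potential's tangential/normal split and $\de_n\psi$, $\de_\tau\psi$ combine, one recognizes: the pure $\psi^2$ piece as $G_i(q,\gamma)\cdot t_i^*$ (note $\de_\tau\psi=0$ since $\psi$ is constant on $\de B_i$, so $\tfrac1{r^2}|\nabla\psi|^2=\tfrac1{r^2}(\de_n\psi)^2$ there), the cross term as part of $A(q,\gamma)$, and the $|\nabla\phi|^2$ piece as a derivative-of-$\mathcal M$ term. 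The $\de_q\phi$-part of $\de_t u$ combined with a second chunk of the $|u|^2$ boundary term and the symmetrization in Definition \ref{main def} is exactly what produces $\langle\Gamma(q),\dot q,\dot q\rangle$: this is the standard "geodesic" identity $\de_q\mathcal M\cdot\dot q$ tested twice against $\dot q$, reshuffled via the Christoffel-symbol combination in the definition of $\Gamma$. The $\de_q\psi$ term of $\de_t u$, paired against $\nabla\phi_{i,t_i^*}$ and integrated by parts, yields the remaining half of $A(q,\gamma)\dot q$.

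**Bookkeeping and the main obstacle.** I would then collect all terms: the left side of \eqref{2ndDeri} already contributes $(t_i^*)^TE_{q_i}\ddot q_i+\tfrac12 t_i^*(\de_{\dot q_i}E_{q_i}\cdot\dot q_i)\dot q_i$, and summing over a basis of tangent vectors $t_i^*$ (equivalently, choosing $t^*$ arbitrary) assembles these into $E(q)\ddot q+\tfrac12\dot q(\de_q E(q)\cdot\dot q)$; the $\mathcal M$-piece gives $\mathcal M(q)\ddot q$; the Christoffel reshuffling gives $\langle\Gamma(q),\dot q,\dot q\rangle$; and the right side is $G(q,\gamma)+A(q,\gamma)\dot q$. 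For the converse direction, one checks that a solution of \eqref{MainODE} defines $u$ by \eqref{u1u2}, hence a pressure $p$ by solving $\nabla p=-\de_t u-(u\cdot\nabla)u$ (solvable because the right side is curl-free, as $\curl\de_t u=\de_t\curl u=0$ and $\curl(u\cdot\nabla)u=0$ by Lemma \ref{rot pre}'s argument), and then \eqref{MainODE} is exactly the statement that \eqref{2ndDeri} holds, which is equivalent to Conditions \eqref{eq z} and \eqref{eq R} together. The main obstacle is the algebraic reorganization: carefully tracking which boundary and interior integrals produce $G$, $A$ and $\Gamma$, in particular justifying every integration by parts against the decay rates (so that no term at infinity survives — this is where the unbounded domain, flagged at the start of Section \ref{Section2}, costs real work), and verifying that the somewhat unusual antisymmetric combination in the definition of $\langle\Gamma(q),t^*,s^*\rangle\cdot w^*$ is precisely what emerges. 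I expect this to be a long but essentially mechanical computation once the decay estimates are in hand.
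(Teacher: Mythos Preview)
Your proposal is correct and follows essentially the same route as the paper: start from \eqref{2ndDeri}, convert the pressure boundary integral to a volume integral via partial integration and the Euler equation (using $(u\cdot\nabla)u=\tfrac12\nabla|u|^2$ since $\curl u=0$), then split $u=u_1+u_2$ and identify the $|u_2|^2$ piece as $G$, the $\de_t u_2+\nabla(u_1\cdot u_2)$ piece as $A\dot q$, and the $\de_t u_1+\tfrac12\nabla|u_1|^2$ piece as $\mathcal M\ddot q+\langle\Gamma,\dot q,\dot q\rangle$.

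The one point where the paper proceeds slightly differently is the derivation of the Christoffel term. Rather than a direct algebraic reorganization of $\int_{\mathcal F}r\,\de_q\nabla\phi\cdot\nabla\phi_{i,t_i^*}$ plus the $|u_1|^2$ boundary term, the paper introduces the kinetic-energy functional $\mathcal E_{u_1}=\tfrac12\int_{\mathcal F}r|u_1|^2\dx=\tfrac12\mathcal M(q)\dot q\cdot\dot q$ and shows (via a lemma borrowed from Munnier) that $(\de_t\de_{\dot q}-\de_q)\mathcal E_{u_1}\cdot t_i^*$ equals both $\int_{\mathcal F}r\,u_i^*\cdot(\de_t u_1+\tfrac12\nabla|u_1|^2)\dx$ and, by direct computation of the Euler--Lagrange expression, $\mathcal M\ddot q\cdot t_i^*+\langle\Gamma,\dot q,\dot q\rangle\cdot t_i^*$. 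This Lagrangian detour packages the symmetrization in the definition of $\Gamma$ more cleanly than a bare reshuffling would, but your direct approach would work as well. One small subtlety you did not mention: in the $A$ computation, $\de_t u_2=\tfrac1r\nabla^\perp\de_t\psi$ and the boundary values of $\de_t\psi$ involve a term $\gamma_j\de_q C_{jl}\cdot\dot q$, which is constant on each $\de B_l$ and hence drops out after integration against $\de_\tau\phi_{i,t_i^*}$; this is needed to get the clean antisymmetric form of $A$.
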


\begin{remark}
The equation can be interpreted as the geodesic equation for the metric given by $\mathcal{M}+E$, with  extra terms due to the calculation on the right hand side. The matrix $\mathcal{M}$ describes the ``added inertia'', which encodes the fact that to accelerate one of the bodies, one also has to accelerate the surrounding fluid.

\end{remark}

\begin{proof}
We argued in Remark \ref{uunique} that $u$ is uniquely determined by $q,\dot{q}$, hence it suffices to show that the family of Equations in \eqref{2ndDeri} is equivalent to this system. Let $t_i^*$ be an arbitary tangent vector associated to $B_i$.
We set $u_i^*=\nabla(\phi_{i,t_i^*})$.

 Then by Equation \ref{2ndDeri} it holds that \begin{align}
(t_i^*)^TE_{q_i}\ddot{q}_i+\frac{1}{2}\dot{q}_i^T(\de_{q_i}E_{q_i}\cdot\dot{q}_i)t_i^*\eq -\int_{\de B_i} rpu_i^*\cdot n\dx.
\end{align}

By the equation for $p$, partial integration and the identity $u\nabla u=\frac{1}{2}\nabla|u|^2+u\cdot \curl u$ it follows that  \begin{align}
&(t_i^*)^TE_{q_i}\ddot{q}_i+\frac{1}{2}\dot{q}_i^T(\de_{q_i}E_{q_i}\cdot\dot{q}_i)t_i^*\eq\int_{\mathcal{F}}r\nabla p\cdot u_i^*\dx\\
&\eq-\int _{\mathcal{F}}r\left(\de_t(u_1+u_2)+\frac{1}{2}\nabla|u_1+u_2|^2\right)\cdot u_i^*\dx.\notag
\end{align}

It follows from the decay estimates in the Lemmata \ref{ExistencePhi}, \ref{ExistencePsi} and \ref{Decayp} that there are no boundary terms from $\infty$ in this partial integration.

We now split this into the different contributions and use the proposition below to obtain the equation in the theorem, tested against $t_i^*$. Since $t_i^*$ was arbitrary this implies the statement.\end{proof} \begin{proposition}\phantomsection\label{Coefficients}
\begin{itemize}
\item[a)] We have \begin{align}
-\frac{1}{2}\int_\mathcal{F} r\nabla|u_2|^2\cdot u_i^*\dx\eq G_i(q,\gamma)\cdot t_i^*.
\end{align}
\item[b)] It holds that \begin{align}
-\int_\mathcal{F} r\left(\de_tu_2+\nabla(u_1\cdot u_2)\right)\cdot u_i^*\dx\eq (A(q,\gamma)\dot{q})\cdot t_i^*.
\end{align}
\item[c)] We have that \begin{align}
\int_\mathcal{F} r\left(\de_t u_1+\frac{1}{2}\nabla|u_1|^2\right)\cdot u_i^*\dx\eq \ddot{q}^T\mathcal{M}(q)t_i^*+\scalar{\Gamma(q),\dot{q}}{\dot{q}}\cdot t_i^*.
\end{align}

\end{itemize}

\end{proposition}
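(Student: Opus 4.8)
The plan is to prove the three identities in Proposition \ref{Coefficients} by a sequence of integrations by parts in $\mathcal{F}$, using the elliptic equations satisfied by $\phi_{i,t^*}$ and $\psi_i$ together with the decay estimates of Lemmata \ref{ExistencePhi}, \ref{ExistencePsi}, \ref{SmoothnessInq} and \ref{PsiDeri} to discard all boundary terms at infinity. Throughout, I will freely use that $u_1=\nabla\phi(\dot q)$ is curl-free, that $u_2=\frac{1}{r}\nabla^\perp\psi$ satisfies $\div(ru_2)=0$ with $u_2\cdot n=0$ on each $\de B_j$ and on $\de\mathbb{H}$, that $u_i^*=\nabla\phi_{i,t_i^*}$ satisfies $\div(ru_i^*)=0$ in $\mathcal{F}$ with $\de_n\phi_{i,t_i^*}=u(t_i^*)$ on $\de B_i$ and $0$ on the other boundary components, and the Definition \ref{main def} of $G_i,\mathcal{M}_{ij},\Gamma,A$.

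For part a), I integrate by parts moving the gradient off $|u_2|^2$: since $\div(ru_i^*)=0$ in $\mathcal{F}$ and $u_i^*\cdot n$ vanishes on $\de B_j$ for $j\neq i$ and on $\de\mathbb{H}$ and decays at infinity, only the boundary integral over $\de B_i$ survives, giving $-\frac12\int_{\de B_i}r|u_2|^2\, u_i^*\cdot n\dx$. On $\de B_i$ one has $u_2\cdot n=0$, so $|u_2|^2=(u_2\cdot\tau)^2=(\frac1r\de_n\psi)^2$, and $u_i^*\cdot n=\de_n\phi_{i,t_i^*}$, which reproduces exactly $G_i(q,\gamma)\cdot t_i^*=\int_{\de B_i}\frac{1}{2r}(\de_n\psi)^2\de_n\phi_{i,t_i^*}\dx$. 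For part b), the term $\nabla(u_1\cdot u_2)$ is handled the same way, leaving a boundary integral over $\de B_i$ of $-r(u_1\cdot u_2)\de_n\phi_{i,t_i^*}$; writing $u_1\cdot u_2$ on $\de B_j$ (where $u_2\cdot n=0$) as $(\de_\tau\phi(\dot q))(\frac1r\de_n\psi)$ and combining with the $\de_t u_2$ term (which, using \eqref{eqDu}, $\de_t\psi_i=\de_q\psi_i\cdot\dot q$ and an integration by parts exploiting $\div(ru_i^*)=0$, also collapses to a boundary integral over the $\de B_j$'s) should assemble the two pieces of $(A(q,\gamma)\dot q)\cdot t_i^*$; the antisymmetric structure $-\de_\tau\phi(s^*)\de_n\phi(t^*)+\de_\tau\phi(t^*)\de_n\phi(s^*)$ will emerge from pairing the boundary contribution of $\de_t u_2$ against $u_i^*$ versus the contribution of $\nabla(u_1\cdot u_2)$.

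For part c), which involves only the potential flows, I expand $\de_t u_1=\sum_j(\de_q\nabla\phi_{j,\dot q_j}\cdot\dot q+\nabla\phi_{j,\ddot q_j})$ from \eqref{eqDu}. The terms $\int_{\mathcal{F}}r\nabla\phi_{j,\ddot q_j}\cdot u_i^*\dx$ are exactly $(\mathcal{M}_{ji}\ddot q_j)\cdot t_i^*$ by definition of $\mathcal{M}$, giving $\ddot q^T\mathcal{M}(q)t_i^*$. The remaining terms, $\int_{\mathcal{F}}r(\de_q\nabla\phi(\dot q)\cdot\dot q)\cdot u_i^*\dx+\frac12\int_{\mathcal{F}}r\nabla|u_1|^2\cdot u_i^*\dx$, must be shown to equal $\scalar{\Gamma(q),\dot q}{\dot q}\cdot t_i^*$. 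Here I differentiate the identity $(\mathcal{M}_{j\ell}(q)\dot q_j)\cdot\dot q_\ell=\int_{\mathcal{F}(q)}r\nabla\phi_{j,\dot q_j}\cdot\nabla\phi_{\ell,\dot q_\ell}\dx$ with respect to $q$; the domain-derivative produces a boundary term which, together with $\de_q\phi$ terms, must be matched against $\frac12\int r\nabla|u_1|^2\cdot u_i^*$ after integrating the latter by parts (using $\div(ru_i^*)=0$ again to reduce it to a boundary integral over $\de B_i$). The bookkeeping of the three symmetrized $\de_q\mathcal{M}$ terms in the definition of $\Gamma$ is precisely designed so that these contributions combine correctly — this Hadamard-type shape-derivative computation, keeping careful track of which index sits in which slot and of the moving-domain boundary terms, is the main obstacle. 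The decay estimates guarantee all interchanges of $\de_q$ with the integrals and all discardings of boundary terms at infinity are legitimate, as already recorded locally uniformly in $q,\dot q,\ddot q$.
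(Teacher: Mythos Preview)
Your treatment of a) is exactly the paper's argument. For b) your outline is also the paper's, but you are missing the one nontrivial step: after writing $\de_t u_2=\tfrac1r\nabla^\perp\de_t\psi$ and integrating by parts (the relevant fact is $\div(r\,\de_t u_2)=0$, not $\div(ru_i^*)=0$, though both lead to the same boundary expression after an extra integration by parts along $\de B_l$), you need the boundary value of $\de_t\psi$ on $\de B_l$. The paper gets this by differentiating the identity $C_{jl}(q)=\psi_j(x_q)$ along a point $x_q\in\de B_l$ moving with normal speed $u_1\cdot n$, which yields $\de_t\psi=\sum_j\gamma_j\de_qC_{jl}\cdot\dot q-(u_1\cdot n)\de_n\psi$ on $\de B_l$; the constant $\de_qC_{jl}\cdot\dot q$ then integrates to zero against $\de_\tau\phi_{i,t_i^*}$, leaving exactly the second piece of $A$. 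Without this identity your sketch does not close.

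For c) you take a genuinely different route from the paper. You propose to expand $\de_t u_1$ via \eqref{eqDu}, peel off the $\ddot q$-terms as $\mathcal{M}\ddot q$, and then match the rest to $\Gamma$ by a direct Hadamard shape-derivative of $\mathcal{M}_{ij}(q)=\int_{\mathcal{F}(q)}r\nabla\phi_{j,\dot q_j}\cdot\nabla\phi_{i,\dot q_i}$, keeping track of the Reynolds boundary term and the $\de_q\phi$ contributions. This can be made to work, but the bookkeeping you flag as ``the main obstacle'' is real. The paper instead bypasses it with a Lagrangian trick (following \cite{Munnier5}): it shows, via Lemma~\ref{deriForm}, the abstract identity $(\de_t\de_{\dot q}-\de_q)\mathcal{E}_{u_1}\cdot t_i^*=\int_{\mathcal{F}}r\,u_i^*\cdot(\de_t u_1+\tfrac12\nabla|u_1|^2)\dx$ for $\mathcal{E}_{u_1}=\tfrac12\int_{\mathcal{F}}r|u_1|^2\dx=\tfrac12\dot q^T\mathcal{M}(q)\dot q$, and then reads off $\mathcal{M}\ddot q+\Gamma$ as the Euler--Lagrange expression of this finite-dimensional Lagrangian. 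The advantage of the paper's approach is that the symmetrization in the definition of $\Gamma$ comes for free from the Euler--Lagrange computation, whereas in your direct approach you have to produce it by hand from the three pieces $\de_q\mathcal{M}\cdot\dot q$, the Reynolds boundary term, and the $\tfrac12\nabla|u_1|^2$ boundary term.
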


\begin{proof}
a) Using that both $u_2$ and $u_i^*$ decay like $\frac{1}{|x|^2}$ by Lemma \ref{ExistencePhi} and Lemma \ref{ExistencePsi} we may partially integrate the left-hand side to obtain equality with \begin{align}
\int_{\de \mathcal{F}}\frac{1}{2}r|u_2|^2\de_n \phi_{i,t^*}\dx.
\end{align}

To see that this equals the definition of $G$ we note that $\de_n \phi_{i,t_i^*}$ vanishes on every boundary except $\de B_i$ and that $|u_2|=\frac{1}{r}|\nabla^\perp\sum_j \gamma_j\psi_j|=\frac{1}{r}|\de_n \sum_j \gamma_j\psi_j|$ since the tangential derivative of the $\psi$'s vanishes.\\

b) We have that \begin{align}
-\int_\mathcal{F} r\nabla(u_1\cdot u_2)u_i^*\dx\eq\int_{\de \mathcal{F}}r(u_1\cdot u_2)(u_i^*\cdot n)\dx,
\end{align}

(this partial integration is justified by the decay estimates from the Lemmata \ref{ExistencePhi} and \ref{ExistencePsi}) which by the construction of $u$ in \eqref{u1u2} equals \begin{align}
\sum_{l}\int_{\de B_i}r\left(\frac{1}{r}\de_n\sum_j\gamma_j\psi_j\right)(\de_\tau \phi_{l,\dot{q}_l})(\de_n \phi_{i,t_i^*})\dx,
\end{align}

because $u_2$ has no normal component on the boundary.\\

It holds that $\de_tu_2=\frac{1}{r}\nabla^\perp\de_t\psi$. We have that $\div(\frac{1}{r}\nabla\de_t\psi)=0$ and $\de_t\psi$ has the boundary values \begin{align}
\de_t\psi\eq\sum_j\gamma_j\de_{q} C_{jl}\cdot\dot{q}-(u_1\cdot n)\de_n\psi\quad\text{on $\de B_l$}
\end{align} as one can see by differentiating the identity $C_{jl}(q)=\psi_j(x_q)(q)$ where $x_q$ is some fixed point on $\de B_l$ whose derivative in $t$ equals $u_1\cdot n$. 

Then a partial integration, which can again be justified by the decay estimates in the Lemmata \ref{ExistencePhi} and \ref{PsiDeri}, reveals that \begin{align}
-\int_{\mathcal{F}}ru_i^*\cdot\de_tu_2\dx\eq\sum_l\sum_j\int_{\de B_l} \de_\tau\phi_{i,t_i^*}\left(\gamma_j\de_{q} C_{jl}\cdot\dot{q}-(u_1\cdot n)\de_n\psi\right)\dx
\end{align}

The first summand vanishes because $\de_q C_{jl}$ is a constant on each $\de B_l$ and this proves b).\\

c) We introduce an energy functional for the potential part of the fluid velocity:\begin{align}
\mathcal{E}_{u_1}\,:=\,\frac{1}{2}\int_{\mathcal{F}} r|u_1|^2\dx.
\end{align}

Following the approach in \cite{Munnier5}, we will show that \begin{align}\label{Claim1}
(\de_t\de_{\dot{q}}-\de_{q})\mathcal{E}_{u_1}\cdot t_i^*\,=\,\int_{\mathcal{F}}u_i^*\cdot(\de_tu_1+\frac{1}{2}\nabla|u_1|^2)\dx.
\end{align}

To prove this claim we shall use the following Lemma which can be proven exactly as in \cite{Munnier5}[Lemma 5.1]: \begin{lemma}\phantomsection\label{deriForm}
For $\eta\in \dot{H}_R^1$ let \begin{align}&\Lambda(\eta)\eq\int_{\mathcal{F}}r\scalar{\nabla\phi(\dot{q})}{\nabla \eta} \dx\end{align}

Then it holds that \begin{align}
(\de_t\de_{\dot{q}}-\de_q)(\Lambda)\eq0.
\end{align}\end{lemma}

We now note that \begin{align}
\mathcal{E}_{u_1}\eq\frac{1}{2}\Lambda(\phi(\dot{q}))
\end{align}

and that \begin{align}
\de_{\dot{q}}\mathcal{E}_{u_1}\cdot t_i^*\eq\frac{1}{2}\bigl((\de_{\dot{q}}\Lambda)(\phi(\dot{q}))\cdot t_i^*+\Lambda(\phi(t_i^*))\bigr).
\end{align}

Because $\phi(t_i^*)$ also equals $\de_{\dot{q}}\phi(\dot{q})\cdot t_i^*$, we see that \begin{align}\label{eq 5}
\de_{\dot{q}}\mathcal{E}_{u_1}\cdot t_i^*\eq(\de_{\dot{q}}\Lambda)(\phi(\dot{q}))\cdot t_i^*.
\end{align}

 Hence we obtain that \begin{align}
&(\de_t\de_{\dot{q}}-\de_{q})\mathcal{E}_{u_1}\cdot t_i^*\eq\\
&\left(\de_t\de_{\dot{q}}\cdot t_i^*-\de_q\cdot t_i^*\right)\Lambda(\phi)
+(\de_{\dot{q}}\Lambda)(\de_t\phi(\dot{q}))\cdot t_i^*
+\frac{1}{2}(\de_q\Lambda)(\phi(\dot{q}))\cdot t_i^*
-\frac{1}{2}\Lambda(\phi^\dagger),
\end{align}

where $\phi^\dagger=\de_q\phi(\dot{q})\cdot t_i^*$ and we made use of \eqref{eq 5}.

By Lemma \ref{deriForm}, the first term is $0$. By definition, the second term equals \begin{align}
\de_{\dot{q}}\Lambda(\de_t\phi(\dot{q}))\cdot t_i^*\eq\int_{\mathcal{F}}r\de_tu_1\nabla \phi(t_i^*)\dx.
\end{align}

By Reynolds transport theorem (whose usage is justified by Lemma \ref{SmoothnessInq} b)) we have that twice the third term equals \begin{align}
(\de_q\Lambda\cdot t_i^*)(\phi)\eq-\int_{\de B_i}r|u_1|^2u_i^*\cdot n\dx+\Lambda(\phi^\dagger).
\end{align}

 This yields the claim \eqref{Claim1} after another partial integration.\\

Now we use that $\mathcal{E}_{u_1}=\frac{1}{2}\mathcal{M}(q)\dot{q}\cdot\dot{q}$, which follows directly from the definition of $\mathcal{M}$. Then we may compute the Euler-Lagrange equation of this as \begin{align}
(\de_t\de_{\dot{q}}-\de_{q})\mathcal{E}_{u_1}\cdot t_i^*\eq \mathcal{M}(q)\ddot{q}\cdot t_i^*+((\de_q\mathcal{M}(q)\cdot \dot{q})\dot{q})\cdot t_i^*-\frac{1}{2}((\de_q\mathcal{M}(q)\cdot t_i^*)\dot{q})\cdot\dot{q}.
\end{align} 

The last two summands equal the Christoffel symbol $\Gamma$ as one can directly see by writing them out in components.

\end{proof}

\subsubsection{Uniqueness and Existence}
In this subsection, we show that the system is actually well-posed and that energy conservation will imply that solutions exist for all times if $q$ does not blow up.

\begin{lemma}
The coefficients $\mathcal{M},G,A, \Gamma$ are all continuously differentiable in $q$.
\end{lemma}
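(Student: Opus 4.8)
The plan is to establish the continuous differentiability of each coefficient by reducing it to the differentiability statements already proven for the potentials $\phi_{i,t^*}$ and the streamfunctions $\psi_i$ in Lemmas \ref{SmoothnessInq} and \ref{PsiDeri}, together with the decay estimates that make all the relevant integrals (and their differentiation under the integral sign) legitimate. The overarching strategy is: each of $\mathcal{M}$, $G$, $A$, $\Gamma$ is built out of integrals over $\mathcal{F}$ or over the fixed-topology boundaries $\partial B_i$ of products of $\nabla\phi_{i,t^*}$, $\tfrac1r\nabla\psi_i$, their traces, and (for $\Gamma$) one $q$-derivative of $\mathcal{M}$; since the $q$-dependence enters both through the integrands and through the moving domain, I will handle the domain motion via the same diffeomorphism pullback $\Xi$ used in the proof of Lemma \ref{SmoothnessInq}, turning every integral into one over a fixed domain $\mathcal{F}(q^0)$ with a smooth-in-$q$ Jacobian factor, and then differentiate under the integral sign.

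Concretely, I would proceed coefficient by coefficient. For $\mathcal{M}_{ij}$: pulling back by $\Xi$ writes $(\mathcal{M}_{ij}t_i^*)\cdot s_j^* = \int_{\mathcal{F}(q^0)}\langle \nabla\hat\phi_{i,t_i^*}(D\Xi)^{-1},\nabla\hat\phi_{j,s_j^*}(D\Xi)^{-1}\rangle |\det D\Xi|\, r\circ\Xi\,\dx$; the integrand is $C^1$ in $q$ by Lemma \ref{SmoothnessInq}(a),(b) (smoothness of $\phi_{i,\bullet}$ in $q$ as an $H^1_R$-valued and pointwise map) and smoothness of $\Xi$, and the decay estimates in Lemma \ref{SmoothnessInq}(d) give a dominating integrable bound uniform for $q$ near $q^0$, so dominated convergence justifies $\partial_q$ under the integral and continuity of the result. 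For $G_i$ and $A$: these are boundary integrals over $\partial B_i$ of products of traces of $\tfrac1r\nabla^\perp\psi$ (equivalently $\tfrac1r\partial_n\psi$), $\partial_n\phi_{i,t_i^*}$, and $\partial_\tau\phi(\cdot)$; again pull back so the integration is over $\partial B_i(q^0)$, use the up-to-the-boundary smoothness in $q$ of $\phi_{i,t^*}$ (Lemma \ref{SmoothnessInq}(b)) and of $\psi_i$ (Lemma \ref{PsiDeri}(a),(b)), plus smoothness of the $C_{ij}$ if they appear, and differentiate under the (now $q$-independent-domain) integral; the trace estimates are locally uniform in $q$, giving continuity of the derivative. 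For $\Gamma$: it is literally a symmetrized combination of $\partial_q\mathcal{M}_{ij}$, so once $\mathcal{M}$ is shown to be $C^1$ I need that $\partial_q\mathcal{M}$ is itself continuous; this follows from the same pullback computation carried one order further, using the second-derivative bounds $\partial_q^2\phi_{i,t^*}\in H^1_R\cap C^\infty(\mathcal{F})$ with locally uniform norms from Lemma \ref{SmoothnessInq}(b) and the decay estimate in Lemma \ref{SmoothnessInq}(d) for $\partial_q\phi$ to dominate.

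The main obstacle, as usual in shape-derivative arguments, is bookkeeping the moving domain rather than any deep analytic fact: one must be careful that when the bodies move, the boundary integrals defining $G$ and $A$ are over a set that both deforms and whose parametrization changes, and that the decay at infinity genuinely makes the differentiation under the integral sign on $\mathcal{F}$ (an unbounded domain) valid. Both points are already resolved by the tools in hand — the $\Xi$-pullback makes all domains fixed (with $\Xi$ equal to the identity outside a ball bounded locally uniformly in $q$, so no contribution from infinity in the $q$-derivative), and the uniform decay estimates $|\nabla^m\phi_{i,t^*}|\lesssim (1+|x|)^{-2-m}$, $|\nabla^m(\tfrac1r\nabla\psi_i)|\lesssim(1+|x|)^{-2-m}$ and their $\partial_q$-analogues, all locally uniform in $q$, supply the dominating functions. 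I therefore expect the proof to consist of: (i) pull back each coefficient to a fixed domain via $\Xi$; (ii) invoke the $C^\infty$-in-$q$ (up to the boundary) statements of Lemmas \ref{SmoothnessInq} and \ref{PsiDeri} for the pulled-back potentials, streamfunctions, and the $C_{ij}$; (iii) differentiate under the integral sign, justified by the locally uniform decay/trace estimates; (iv) read off continuity of the derivatives. The only genuinely new verification is that the particular algebraic combinations — e.g.\ replacing $|u_2|$ on $\partial B_i$ by $\tfrac1r|\partial_n\psi|$ — are smooth functions of the smooth ingredients, which is immediate since $\partial_n\psi$ has the locally uniform regularity from Lemma \ref{PsiDeri}(b) and is bounded away from pathologies on the compact boundaries $\partial B_i$.
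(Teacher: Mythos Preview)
Your proposal is correct and follows exactly the approach the paper indicates: the paper's proof simply states that one uses the definitions of $\mathcal{M},G,A,\Gamma$ together with Lemmas \ref{SmoothnessInq} and \ref{PsiDeri} and leaves the details to the reader. Your outline supplies precisely those details (pullback by $\Xi$, differentiation under the integral sign justified by the locally uniform decay and regularity estimates), so there is nothing to add or correct.
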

\begin{proof}
One can use the definition of all these terms and the Lemmata \ref{SmoothnessInq} and \ref{PsiDeri} to obtain that they are smooth in $q$. We leave the details to the reader.
\end{proof}

\begin{corollary}
\phantomsection\label{Cor:Exist} For every initial datum $q,\dot{q}$, there is some $T>0$ such that the System \eqref{MainODE} and hence also the system introduced in Section \ref{StateSpace} has a unique solution up to time $T$, which is $C^2$ in $q$. 
\end{corollary}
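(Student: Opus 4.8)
The plan is to recognize Corollary~\ref{Cor:Exist} as essentially a consequence of the Picard--Lindel\"of theorem applied to the ODE~\eqref{MainODE}, once we know the coefficients are regular enough and the leading matrix is invertible. First I would rewrite \eqref{MainODE} in the form
\begin{align}
(E(q)+\mathcal{M}(q))\ddot{q}\eq G(q,\gamma)+A(q,\gamma)\dot{q}-\tfrac{1}{2}\dot{q}(\de_q E(q)\cdot\dot{q})-\langle\Gamma(q),\dot{q},\dot{q}\rangle,
\end{align}
so that the right-hand side is a map $M\times\R^{2k}\to\R^{2k}$ that is continuous, and $C^1$ in $q$ and polynomial (hence smooth) in $\dot{q}$, by the preceding Lemma together with Lemmata~\ref{SmoothnessInq} and~\ref{PsiDeri}. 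The key point to check is that $E(q)+\mathcal{M}(q)$ is invertible for every $q\in M$: $E_{q_i}$ is symmetric positive definite by \eqref{def E} and the remark following it, and $\mathcal{M}(q)$ is positive semidefinite since $t^{*T}\mathcal{M}(q)t^*=\int_{\mathcal{F}}r|\nabla\phi(t^*)|^2\dx\ge 0$ (with $\phi(t^*)\in\dot H_R^1$ well-defined by Lemma~\ref{ExistencePhi}). Hence the sum is positive definite, thus invertible, and its inverse depends continuously (indeed $C^1$) on $q$.

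Next I would invoke the standard existence-uniqueness theorem for ODEs: writing $x=(q,\dot q)$, the system becomes $\dot x=F(x)$ with $F$ continuous on the open set $\{(q,\dot q): q\in M\}\subset\R^{4k}$ and locally Lipschitz in $x$ (since $(E+\mathcal{M})^{-1}$ is $C^1$ in $q$ and the remaining data are $C^1$ in $q$ and smooth in $\dot q$). Picard--Lindel\"of then gives, for each initial datum $(q(0),\dot q(0))$ with $q(0)\in M$, a unique maximal solution on some interval $[0,T)$. Bootstrapping regularity: since $F$ is $C^1$, the solution is $C^2$ in time, which matches Condition~\ref{strongsol} and closes the loop with the earlier derivation. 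Finally, equivalence with the system of Section~\ref{StateSpace} is exactly the content of the preceding Theorem (the reduction of \eqref{2ndDeri} to \eqref{MainODE}), so a solution of \eqref{MainODE} yields, via the representation $u=u_1+u_2$ from \eqref{u1u2}, a genuine solution of the fluid-body system, and conversely.

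The main obstacle is really just the invertibility and regularity of $E(q)+\mathcal{M}(q)$ as a function of $q$: one must be sure that the abstract smoothness statements from Lemmata~\ref{SmoothnessInq}(b) and~\ref{PsiDeri}(b), which control $\de_q\phi_{i,t^*}$ and the derivatives of the stream functions in $H_R^1$ with locally uniform bounds, really do translate into continuous differentiability of the finite-dimensional coefficient $\mathcal{M}_{ij}(q)=\int_{\mathcal{F}(q)}r\nabla\phi_{i,\bullet}\nabla\phi_{j,\bullet}\dx$ in $q$; this requires differentiating an integral over the moving domain $\mathcal{F}(q)$, which one handles exactly as in the proof of Lemma~\ref{SmoothnessInq} by pulling back along the diffeomorphisms $\Xi$ so that the domain of integration is fixed. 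Everything else is a direct application of the ODE existence theorem, so I would keep that part brief and refer to the earlier lemmata for the regularity input.
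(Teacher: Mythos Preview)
Your proposal is correct and follows essentially the same route as the paper: invoke the preceding lemma for $C^1$ regularity of the coefficients, observe that $E(q)+\mathcal{M}(q)$ is positive definite (hence invertible), and apply Picard--Lindel\"of. The paper's version is terser and notes that $\mathcal{M}$ is in fact positive \emph{definite} (not just semidefinite), but your weaker observation already suffices since $E$ alone is positive definite.
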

\begin{proof}
By the lemma above and Picard-Lindelöf, we have local existence and uniqueness if the matrix $\mathcal{M}+E$ is invertible, which follows from the fact that both $\mathcal{M}$ and $E$ are positive definite by definition.
\end{proof}

The total energy of the system is conserved: \begin{lemma}\phantomsection\label{Lem:Energy}
The kinetic energy \begin{align}\int_{\mathcal{F}}\frac{1}{2}r|u|^2\dx+\sum_i\mathcal{E}_{B_i}\end{align} 
($\mathcal{E}$ was defined in \eqref{intEnergy}) is constant in time.
\end{lemma}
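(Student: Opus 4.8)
The plan is to compute the time derivative of the total kinetic energy and show it vanishes, by combining the energy balance assumed for each body with a matching identity for the fluid part, so that the boundary terms cancel. Concretely, I would write the total energy as $\mathcal{E}(t) = \frac{1}{2}\int_{\mathcal{F}(t)} r|u|^2\dx + \sum_i \mathcal{E}_{B_i}$ and differentiate term by term. For the body terms, Condition \eqref{energyBalance} gives immediately
\begin{align*}
\sum_i \mathcal{E}_{B_i}' \eq -\sum_i \int_{\de B_i} r p\, u(\dot{q}_i)\cdot n \dx \eq -\sum_i \int_{\de B_i} r p\, (u\cdot n)\dx,
\end{align*}
using the matching Condition \eqref{Euler5}. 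So it suffices to show $\frac{\dd}{\dt}\frac{1}{2}\int_{\mathcal{F}} r|u|^2\dx = \sum_i \int_{\de B_i} rp\,(u\cdot n)\dx$.

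First I would handle the fluid term via a Reynolds transport theorem argument: since the domain $\mathcal{F}(t)$ moves with boundary normal velocity $u(\dot q)=u\cdot n$ on each $\de B_i$ (and the outer boundary $\de\mathbb{H}$ is fixed, with $u_r=0$ there), differentiating gives
\begin{align*}
\frac{\dd}{\dt}\frac{1}{2}\int_{\mathcal{F}} r|u|^2\dx \eq \int_{\mathcal{F}} r\, u\cdot \de_t u \dx + \frac{1}{2}\sum_i \int_{\de B_i} r|u|^2 (u\cdot n)\dx.
\end{align*}
Then I would substitute the Euler equation $\de_t u = -(u\cdot\nabla)u - \nabla p$ and use the identity $(u\cdot\nabla)u = \frac{1}{2}\nabla|u|^2 + u\times\curl u = \frac{1}{2}\nabla|u|^2$ (since $\curl u = 0$), giving $\de_t u = -\frac{1}{2}\nabla|u|^2 - \nabla p$, i.e.\ $\de_t u = -\nabla(\frac{1}{2}|u|^2 + p)$. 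Hence $\int_{\mathcal{F}} r\,u\cdot\de_t u \dx = -\int_{\mathcal{F}} r\, u\cdot\nabla(\tfrac12|u|^2+p)\dx$, and since $\div(ru)=0$ this equals $-\int_{\de\mathcal{F}} r(\tfrac12|u|^2+p)(u\cdot n)\dx$ by integration by parts. On $\de\mathbb{H}$ the normal component $u\cdot n = \pm u_r = 0$ by \eqref{Euler4}, and there is no boundary term at infinity by the decay estimates in Lemmata \ref{ExistencePhi}, \ref{ExistencePsi} and \ref{Decayp} (the integrand decays like $|x|^{-5}$ while the sphere grows like $|x|^2$). So only the $\de B_i$ contribute: $-\sum_i\int_{\de B_i} r(\tfrac12|u|^2 + p)(u\cdot n)\dx$. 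Adding the $\frac12|u|^2$ boundary term from Reynolds transport, the $\frac12|u|^2$ contributions cancel and we are left with $\frac{\dd}{\dt}\frac12\int_{\mathcal{F}} r|u|^2\dx = -\sum_i\int_{\de B_i} rp\,(u\cdot n)\dx$, which is exactly the negative of $\sum_i\mathcal{E}_{B_i}'$. Therefore $\mathcal{E}'(t)=0$.

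The main obstacle I anticipate is rigorously justifying the Reynolds transport theorem and the integration by parts on the unbounded, time-dependent domain $\mathcal{F}(t)$: one needs that $u$, $\nabla u$, $\de_t u$ have enough regularity (Condition \ref{strongsol} and the representation \eqref{eqDu}) and decay (Lemmata \ref{ExistencePhi}, \ref{ExistencePsi}, \ref{Decayp}) to control boundary terms at infinity, and that the domain motion is smooth enough — which follows from $q\in C^2$ and the smooth dependence of $\mathcal{F}$ on $q$ used throughout Section \ref{Section2}. A clean way to organize this is to exhaust $\mathcal{F}$ by $\mathcal{F}\cap B_N(0)$, perform all the classical manipulations on the bounded truncation, and pass to the limit $N\to\infty$ using the $|x|^{-5}$ (resp.\ $|x|^{-1}$ for $p$, which against $|x|^{-2}$ decay of $u\cdot\nabla$-type terms still goes to zero) decay to kill the sphere-at-$N$ terms. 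Everything else is bookkeeping with the curl-free identity and the boundary conditions \eqref{Euler4}, \eqref{Euler5}.
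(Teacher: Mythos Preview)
Your approach is exactly the paper's: Reynolds transport on the fluid energy, substitute the Euler equation with the curl-free identity $(u\cdot\nabla)u=\tfrac12\nabla|u|^2$, integrate by parts using $\div(ru)=0$, and cancel against the body energy balance \eqref{energyBalance}; the justification via the decay Lemmata \ref{ExistencePhi}, \ref{ExistencePsi}, \ref{Decayp} and the $L_R^2$-differentiability in Lemmata \ref{SmoothnessInq}, \ref{PsiDeri} is also what the paper invokes.

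There is, however, a sign slip you should fix. The outward normal of $\mathcal{F}$ on $\de B_i$ is $-n$ (with $n$ the outer normal of $B_i$), so the Reynolds boundary term carries a \emph{minus} sign,
\[
\frac{\dd}{\dt}\frac12\int_{\mathcal{F}} r|u|^2\dx \eq \int_{\mathcal{F}} r\,u\cdot\de_t u\dx \;-\; \sum_i\int_{\de B_i}\tfrac12 r|u|^2\,(u\cdot n)\dx,
\]
and correspondingly the divergence theorem gives
\[
-\int_{\mathcal{F}}\div\!\bigl(ru(\tfrac12|u|^2+p)\bigr)\dx \eq +\sum_i\int_{\de B_i} r(\tfrac12|u|^2+p)(u\cdot n)\dx.
\]
With these signs the $\tfrac12|u|^2$ terms cancel and one obtains $\frac{\dd}{\dt}\tfrac12\int_{\mathcal{F}} r|u|^2\dx = +\sum_i\int_{\de B_i} rp\,(u\cdot n)\dx$, which is the goal you correctly stated at the outset. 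As written, your computation yields the opposite sign and the final ``which is exactly the negative of $\sum_i\mathcal{E}_{B_i}'$'' does not match your own formula for $\sum_i\mathcal{E}_{B_i}'$; once the normals are sorted out everything closes.
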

\begin{proof}
By Reynolds we have that \begin{align}
&\frac{\text{d}}{\text{d}t}\int_{\mathcal{F}}\frac{1}{2}r|u|^2\dx\eq\int_{\mathcal{F}}ru\cdot\de_t u\dx-\int_{\de\mathcal{F}}\frac{1}{2}r|u|^2u\cdot n\dx.
\end{align}

Here differentiating under the integral sign is justified by the $L_R^2$-differentiability from the Lemmata \ref{SmoothnessInq} and \ref{PsiDeri}. The first integral also equals \begin{align}
\int_{\mathcal{F}}ru\cdot\de_t u\dx\eq\int_{\mathcal{F}}ru\cdot(-u\cdot\nabla u-\nabla p)\dx\eq\int_{\mathcal{F}}-\frac{1}{2}\div(ru|u|^2)-\div(rpu)\dx.
\end{align}

Applying Gauss to this and adding the second integral from the first equation we obtain the statement, as we have by Equation \eqref{energyBalance} \begin{align}
\int_{\de B_i}rpu(\dot{q}_i)\dx\eq-\frac{\text{d}}{\text{d}t}\mathcal{E}_{B_i}.
\end{align}\end{proof}

 \begin{lemma}\phantomsection\label{EnergySplitting}
The kinetic energy of the fluid decomposes into the energies $\int_{\mathcal{F}} \frac{1}{2}r(|u_1|^2+|u_2|^2)\dx$.
\end{lemma}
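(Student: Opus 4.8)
The claim is that the fluid kinetic energy splits, i.e.\ that $\int_{\mathcal{F}}r|u|^2\dx=\int_{\mathcal{F}}r|u_1|^2\dx+\int_{\mathcal{F}}r|u_2|^2\dx$, which amounts to showing that the cross term vanishes:
\begin{align}
\int_{\mathcal{F}}r\,\scalar{u_1}{u_2}\dx\eq 0.
\end{align}
Since $u_1=\sum_i\nabla\phi_{i,\dot q_i}=\nabla\phi(\dot q)$ and $u_2=\sum_i\gamma_i\frac{1}{r}\nabla^\perp\psi_i$, the plan is to integrate by parts, moving the gradient off $\phi(\dot q)$. The key algebraic identity is $r\,\scalar{\nabla\phi}{\tfrac1r\nabla^\perp\psi}=\scalar{\nabla\phi}{\nabla^\perp\psi}=\div(\phi\,\nabla^\perp\psi)$, where the last equality uses that $\div(\nabla^\perp\psi)=0$ (the $\tfrac1r$ and the Jacobian $r\dr\dz$ cancel, so no weight survives). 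So the integrand is a pure divergence and Gauss' theorem gives a sum of boundary terms over $\de\mathcal{F}=\bigcup_i\de B_i\cup\de\mathbb{H}$ plus a contribution at $\infty$.

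First I would check the boundary term at $\infty$ vanishes: by Lemma \ref{ExistencePhi}, $|\nabla\phi_{i,\dot q_i}(x)|\lesssim(1+|x|)^{-2}$, and $\phi\to 0$; combined with $|\tfrac1r\nabla^\perp\psi_i(x)|\lesssim(1+|x|)^{-2}$ from Lemma \ref{ExistencePsi}, the flux $\int_{\de B_\mathrm{R}(0)\cap\mathbb{H}}\phi\,\nabla^\perp\psi\cdot n$ decays and vanishes as $\mathrm{R}\to\infty$, so the partial integration is legitimate. On $\de\mathbb{H}$ (i.e.\ $r=0$), $\psi_i=0$ there by Definition \ref{def psi}, so $\phi\,\nabla^\perp\psi\cdot n$ has no contribution (alternatively $n=e_R$ and one reads off the vanishing from the boundary conditions $\de_r\phi=0$ and $\tfrac1r\de_z\psi=0$ there). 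This leaves the boundary terms on each $\de B_j$:
\begin{align}
\int_{\mathcal{F}}r\scalar{u_1}{u_2}\dx\eq -\sum_j\int_{\de B_j}\phi(\dot q)\,\nabla^\perp\psi\cdot n\dx\eq -\sum_j\int_{\de B_j}\phi(\dot q)\,\de_\tau\psi\dx,
\end{align}
since $\nabla^\perp\psi\cdot n=-\nabla\psi\cdot n^\perp=-\de_\tau\psi$ (up to the sign convention fixed by $\tau=n^\perp$). But $\psi=\sum_i\gamma_i\psi_i$ is constant on each $\de B_j$ by Definition \ref{def psi}, hence $\de_\tau\psi=0$ on $\de B_j$, and every boundary term vanishes. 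Therefore the cross term is zero and the energy splits as claimed.

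The only real subtlety, and the step I would be most careful about, is justifying the integration by parts on the unbounded domain $\mathcal{F}$ — i.e.\ ruling out a boundary contribution from infinity — and this is exactly what the decay estimates from Lemmata \ref{ExistencePhi} and \ref{ExistencePsi} are for; once those are in hand the rest is the short computation above, driven entirely by the facts that $\psi$ is locally constant on $\bigcup_j\de B_j$ and vanishes on $\{r=0\}$. An alternative, essentially equivalent, route is to note $u_1=\nabla\phi$ is a gradient while $u_2=\tfrac1r\nabla^\perp\psi$ satisfies $\div(ru_2)=0$ with $u_2\cdot n=0$ on $\bigcup_j\de B_j$, so the $L^2_R$-orthogonality $\int_{\mathcal{F}}r\scalar{\nabla\phi}{u_2}\dx=\int_{\de\mathcal{F}}r\phi\,(u_2\cdot n)\dx=0$ follows from the same partial integration — but one still needs the decay at $\infty$ to run this.
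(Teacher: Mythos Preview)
Your proof is correct and follows essentially the same approach as the paper: write the cross term as $\int_{\mathcal{F}}\scalar{\nabla\phi}{\nabla^\perp\psi}\dx$, integrate by parts (justified by the decay Lemmata \ref{ExistencePhi} and \ref{ExistencePsi}), and kill the boundary terms using that $\psi$ is constant on each $\de B_j$. The only cosmetic difference is that the paper shifts the tangential derivative onto $\phi$, obtaining $\sum_{i,j}\gamma_i C_{ij}\int_{\de B_j}\de_\tau\phi\dx=0$, whereas you shift it onto $\psi$ and use $\de_\tau\psi=0$ directly; these are the two sides of the same integration by parts.
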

\begin{proof}
We have that \begin{align}
\int_{\mathcal{F}}ru_1\cdot u_2\dx\eq\sum_i\gamma_i\int_{\mathcal{F}}r\nabla\phi\frac{1}{r}\nabla^\perp\psi_i\dx\eq\sum_{i,j}\gamma_i\int_{\de B_j}\de_\tau\phi C_{ij}\dx\eq 0
\end{align}

where we abbreviated the potential of $u_1$ with $\phi$.
\end{proof}

\begin{theorem}\phantomsection\label{Thm:exist}
Solutions of the system exist until $q$ leaves any compact set, i.e.\ until either some of the bodies collide with each other or the boundary or escape to infinity.
\end{theorem}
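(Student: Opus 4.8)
The plan is to combine the local existence result (Corollary \ref{Cor:Exist}) with the conservation of total energy (Lemma \ref{Lem:Energy}) in the standard continuation argument for ODEs: a solution can be continued as long as the right-hand side of the ODE \eqref{MainODE} stays in a region where it is Lipschitz with controlled constant, i.e.\ as long as $q$ stays in a compact subset $\mathcal{K}$ of $M$. So suppose, for contradiction, that the maximal time of existence is $T<\infty$ but $q$ does \emph{not} leave every compact set, meaning there is a compact $\mathcal{K}\subset M$ and a sequence $t_n\uparrow T$ with $q(t_n)\in\mathcal{K}$. I want to upgrade this to: $q(t)$ stays in a (slightly larger) fixed compact set for \emph{all} $t<T$, which together with a bound on $\dot q$ contradicts maximality via Picard–Lindel\"of restarted at a time close to $T$.

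First I would use energy conservation to bound the velocities. By Lemma \ref{Lem:Energy} and Lemma \ref{EnergySplitting}, the quantity $\int_{\mathcal F}\tfrac12 r(|u_1|^2+|u_2|^2)\dx+\sum_i\mathcal{E}_{B_i}$ is constant in time; each term is nonnegative. The $u_2$-energy $\int_{\mathcal F}\tfrac12 r|u_2|^2\dx$ depends only on $q$ and the fixed $\gamma_i$'s (it is $\tfrac12\sum_{ij}\gamma_i\gamma_j$ times the Dirichlet-type pairing of the $\psi$'s), hence is bounded on $\mathcal{K}$; likewise $\int_{\mathcal F}\tfrac12 r|u_1|^2\dx=\tfrac12\mathcal{M}(q)\dot q\cdot\dot q$ and $\sum_i\mathcal{E}_{B_i}=\tfrac12\dot q^T E_q\dot q$. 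Since $\mathcal{M}(q)+E(q)$ is positive definite and continuous on the compact $\mathcal{K}$ (Corollary \ref{Cor:Exist} and the smoothness lemmas), it is uniformly coercive there, so from the constant total energy we get $|\dot q(t_n)|\le C(\mathcal{K})$ for all $n$. The key point is that this bounds $|\dot q|$ \emph{whenever $q\in\mathcal{K}$}, not just along the sequence.

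Next I would rule out the possibility that $q$ wanders out of $\mathcal{K}$ and back between the $t_n$'s. Fix a compact $\mathcal{K}'\subset M$ whose interior contains $\mathcal{K}$. On $\mathcal{K}'$ the coefficients $\mathcal M,E,G,A,\Gamma$ are bounded and $(\mathcal M+E)^{-1}$ is bounded, so $|\ddot q|\le C(\mathcal{K}')(1+|\dot q|^2)$ there; combined with the energy bound (valid on all of $\mathcal{K}'$ by the same coercivity argument, after possibly enlarging the energy constant continuously) we get a uniform bound $|\dot q|\le C'$ and $|\ddot q|\le C''$ on the portion of the trajectory lying in $\mathcal{K}'$. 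Hence there is a fixed time-step $\delta>0$ such that whenever $q(s)\in\mathcal{K}$ the trajectory cannot exit $\mathcal{K}'$ on $[s,s+\delta]$. Choosing $t_n>T-\delta$ gives $q(t)\in\mathcal{K}'$ for all $t\in[t_n,T)$, with $|\dot q(t)|\le C'$ there; so $\lim_{t\to T}(q(t),\dot q(t))$ exists and lies in $\mathcal{K}'\times\{|\dot q|\le C'\}$. Applying Corollary \ref{Cor:Exist} at this limiting datum extends the solution past $T$, contradicting maximality. Therefore $T<\infty$ forces $q$ to leave every compact subset of $M$, i.e.\ some bodies collide with each other or with $\partial\mathbb H$, or escape to infinity.

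The main obstacle is the middle step: converting ``$q(t_n)\in\mathcal{K}$ along a sequence'' into ``$q$ confined to a fixed compact set on a whole interval near $T$''. The energy bound only directly controls $\dot q$ at times when $q$ is already in a known compact set, so one has to argue carefully (via the $\delta$-step / continuity-method argument above, using that $\mathcal{K}$ sits strictly inside $\mathcal{K}'$) that the trajectory cannot make an excursion toward the boundary of $M$ and return; once $\dot q$ is uniformly bounded on the relevant interval this is routine, but setting it up without circularity is the delicate part. The remaining ingredients — coercivity of $\mathcal M+E$, boundedness of the ODE coefficients on compacts, and the Picard–Lindel\"of restart — are all immediate from the lemmas already established.
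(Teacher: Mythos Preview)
Your argument is correct and matches the paper's approach: energy conservation plus coercivity of the kinetic form on compact sets bounds $|\dot q|$, and then the standard ODE continuation applies (the paper uses just $\mathcal{M}$, noting $\nabla\phi(t^*)\neq 0$ for $t^*\neq 0$, whereas you use $\mathcal{M}+E$; either works). One simplification you overlook: the $u_2$-energy is nonnegative, so you can simply discard it rather than bound it on $\mathcal K$, giving $\tfrac12(\mathcal M+E)\dot q\cdot\dot q\le\text{(total energy)}$ for \emph{all} $t$; then $|\dot q|$ is uniformly bounded whenever $q\in\mathcal K'$ with a constant independent of the trajectory's history, and your ``main obstacle'' reduces to the one-line distance-over-speed estimate you already wrote down.
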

\begin{proof}
By the energy conservation, we see that $\int r|u|^2\dx$ is bounded uniformly in time and hence by Lemma \ref{EnergySplitting} we also have that $\int r|u_1|^2\dx$ is bounded uniformly in time. Now note that there is no $q$ such that for some $t^*\neq 0$ it holds that $\nabla\phi(t^*)=0$. Hence by the continuity of the coefficients we have on compact sets \begin{align}\int r|u_1|^2\dx\,\gtrsim\, |\dot{q}|^2.\end{align} This implies that the only way the solution can blow up is if $q$ leaves any compact set.
\end{proof}

\section{Convergence of the potential part of the velocity}\label{Section3}

In this section, we will consider the limit of the potential velocity and of the interior field in order to compute the limit of the coefficients of the equation. 

We will show that all relevant main quantities converge to the corresponding two-dimensional quantities for a single body, which can be explicitly written down, and that the error is an order $\eps|\log\eps|^\ell$ smaller. Furthermore, we will show that quantities that only exist for multiple bodies are even smaller. We will also show that derivatives with respect to $q$ are an order $\eps|\log\eps|^\ell$ smaller as well. In Subsections \ref{subsec3.3} and \ref{subsec4.2} we will see that $\mathcal{M}$ and $A$ converge to the corresponding two-dimensional quantities for a single body and that $\Gamma$ and $\de_q A$ are neglectable.\\

We omit the indices of $B,C,q, R, Z, u_{int},$ etc.\ when dealing with only a single body. We identify the tangent space of $\mathcal{M}$ with $(\R^2)^k$ via the map $t^*\rightarrow (t_{R_1}^*,t_{Z_1}^*,\dots)$.

\subsection{The interior field}

For the kinetic energy of each body, we only need to consider a single body as the definition of $E_{q_i}$ (see \eqref{def E}) only depends on $B_i$. Therefore we drop the indices in this subsection.

We write $f_\eps$ for the function $f_i$, defined with the rescaling parameter $\eps$.

\begin{lemma}\phantomsection\label{InteriorField}
Consider the energy function $f_\eps$ defined in \eqref{def f}:\begin{itemize}
\item[a)]We have\begin{align}
\left|f_\eps(R)-\pi R\tilde{\rho}^2\eps^2\right|\,\lesssim\, \eps^3,
\end{align}

where the implicit constant depends locally uniformly on $R$.

\item[b)] $f_\eps(R)$ is lipschitz in $R$ with constant $\lesssim \eps^3$, locally uniformly in $R$.

\end{itemize}
\end{lemma}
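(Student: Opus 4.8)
Here is my plan for proving Lemma \ref{InteriorField}.

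\textbf{Setup.} Recall from \eqref{def f} that $f_\eps(R) = (e_R)^T E_q (e_R) = \int_B r |u_{int}(e_R)|^2 \dx$, where $u_{int}(e_R) = \nabla\phi$ solves $\div(r\nabla\phi) = 0$ in $B = B_\rho((R,Z))$ with Neumann data $\de_n\phi = u(e_R) = n\cdot e_R - \frac{\rho}{2R}$ on $\de B$ (using \eqref{normal velo} with $\dot q_{R} = 1$, $\dot q_Z = 0$), and $\rho = \rho_\eps = \tilde\rho\eps\sqrt{R}$ after the rescaling (so $\rho = \sqrt{v\eps^2/(\pi R)}$, i.e.\ $\tilde\rho = \sqrt{v/\pi}$ in the notation here). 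The plan is to compare this with the explicit two-dimensional (i.e.\ constant weight) model problem: the field on the disk $B_\rho(0)$ with $r$ replaced by the constant $R$. For the flat problem $\Delta\phi_0 = 0$ in $B_\rho(0)$ with $\de_n\phi_0 = n\cdot e_R$, the solution is the linear function $\phi_0(x) = x_1$ (first coordinate), whose gradient is the constant $e_R$, giving flat energy $\int_{B_\rho(0)} R |e_R|^2 = R\pi\rho^2 = \pi R \tilde\rho^2\eps^2$. That is exactly the claimed leading term, so part a) amounts to controlling the corrections coming from (i) the variable weight $r = R + (x_1 - R)$ versus the constant $R$, and (ii) the extra constant $-\frac{\rho}{2R}$ in the boundary data.

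\textbf{Part a).} First I would rescale $B_\rho((R,Z))$ to the unit disk $B_1(0)$ by $x = (R,Z) + \rho y$, turning the weight $r$ into $R + \rho y_1 = R(1 + \frac{\rho}{R} y_1)$, which is a perturbation of the constant $R$ of relative size $O(\rho/R) = O(\eps)$. On the unit disk the equation becomes $\div((1 + \frac{\rho}{R}y_1)\nabla_y\phi) = 0$ with Neumann data $n\cdot e_R - \frac{\rho}{2R}$; note $\int_{\de B_1(0)} (1 + \frac{\rho}{R}y_1)(n\cdot e_R - \frac{\rho}{2R})\,\dd\mathcal{H}^1 = 0$ is exactly the solvability condition, which fixes the $-\frac{\rho}{2R}$ term. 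Writing $\phi = y_1 + \frac{\rho}{R}\phi^{(1)} + \dots$ and expanding in the small parameter $\delta := \rho/R \lesssim \eps$, standard elliptic estimates (the operators are uniformly elliptic with smooth coefficients on a fixed domain, so their inverses are bounded uniformly for small $\delta$, locally uniformly in $R$) give $\|\nabla_y\phi - e_R\|_{L^2(B_1(0))} \lesssim \delta \lesssim \eps$. Then
\begin{align}
f_\eps(R) = \rho^2 \int_{B_1(0)} (R + \rho y_1)|\nabla_y\phi|^2 \dy = \rho^2\left(R\pi + O(\delta)\right) = \pi R\rho^2 + O(\rho^2 \delta) = \pi R\tilde\rho^2\eps^2 + O(\eps^3),
\end{align}
where $\rho^2 = \tilde\rho^2\eps^2 R$ and $\rho^2\delta = \rho^3/R \lesssim \eps^3$, with the implicit constants depending only locally uniformly on $R$ (through $\rho = \tilde\rho\eps\sqrt R$ and the ellipticity constants). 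This gives a).

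\textbf{Part b).} For Lipschitz dependence, note $f_\eps(R) - \pi R\tilde\rho^2\eps^2$ is the quantity to differentiate (since $R \mapsto \pi R\tilde\rho^2\eps^2$ is itself Lipschitz with the $\eps^2$-size constant, which is even better than $\eps^3$ only for $\eps$ not too small — actually I should just bound $\de_R f_\eps$ directly and observe the main term contributes $\pi\tilde\rho^2\eps^2$, while I claim the remainder contributes $O(\eps^3)$). I would differentiate the rescaled identity with respect to $R$: both the coefficient $1 + \frac{\rho(R)}{R}y_1$ (with $\rho(R) = \tilde\rho\eps\sqrt R$, so $\frac{\rho(R)}{R} = \tilde\rho\eps R^{-1/2}$ has $R$-derivative of size $\eps$) and the boundary data depend smoothly on $R$ with $R$-derivatives of relative size $O(\eps)$, and the prefactor $\rho^2 R = \tilde\rho^2\eps^2 R^2$... wait, $\rho^2 = \tilde\rho^2\eps^2 R$, so $f_\eps = \rho^2 \int (R + \rho y_1)|\nabla\phi|^2$; differentiating in $R$, the explicitly $R$-dependent prefactors produce the $\pi\tilde\rho^2\eps^2$ main term plus $O(\eps^2\cdot\delta) = O(\eps^3)$, and $\de_R\nabla_y\phi$ is controlled in $L^2$ by $O(\delta) = O(\eps)$ via differentiating the elliptic equation (same uniform-ellipticity argument), contributing $\rho^2\cdot O(\delta) = O(\eps^3)$ to $\de_R f_\eps$. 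Since part b) only asks for a Lipschitz constant $\lesssim\eps^3$... this would actually give $\de_R f_\eps = \pi\tilde\rho^2\eps^2 + O(\eps^3)$, which is size $\eps^2$, not $\eps^3$ — so I suspect the intended claim is about the Lipschitz constant of the \emph{remainder} $f_\eps(R) - \pi R\tilde\rho^2\eps^2$, and the above computation shows precisely that its $R$-derivative is $O(\eps^3)$. I would state and prove it that way.

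\textbf{Main obstacle.} The routine part is the elliptic perturbation estimate; the only real subtlety is bookkeeping the powers of $\eps$ correctly through the rescaling (keeping straight that $\rho \sim \eps$, $\rho^2 \sim \eps^2$, and the relative corrections are another $\eps$), and making sure the "locally uniform in $R$" claims are honest — i.e.\ that the ellipticity constants and elliptic-regularity constants on the fixed unit disk depend only on $\delta = \rho/R$ staying bounded and on $R$ in a compact set, which is immediate since the rescaled problem lives on a fixed domain with coefficients smooth in $(y, R, \eps)$. I would also need to double-check the exact form of the normal-velocity data from \eqref{normal velo} to confirm the solvability condition and the structure of the first-order corrector, but this is a short computation.
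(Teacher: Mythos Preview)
Your overall strategy --- rescale to a fixed disk, compare the interior potential to the linear function $y_1$, and use a perturbation estimate for the elliptic operator with weight $1+\frac{\rho}{R}y_1$ --- is essentially the same as the paper's (the paper carries out the comparison on the unrescaled domain $B$ and tracks the scaling through the trace constant, but the content is identical).

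However, you have miscomputed the $R$-dependence of $\rho$ and $\tilde\rho$, and this is the source of your confusion about part b). From \eqref{def rho} with volumes $v\eps^2$ one has
\[
\rho=\sqrt{\frac{v\eps^2}{\pi R}}=\frac{\eps}{\sqrt{R}}\sqrt{\frac{v}{\pi}},\qquad \tilde\rho:=\frac{\rho}{\eps}=\sqrt{\frac{v}{\pi R}},
\]
so $\rho\propto R^{-1/2}$ (not $R^{+1/2}$ as you wrote), and in particular the leading term
\[
\pi R\,\tilde\rho^2\eps^2=\pi R\cdot\frac{v}{\pi R}\cdot\eps^2=v\eps^2
\]
is \emph{constant in $R$}. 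Hence the paper's statement b), that $f_\eps$ itself is Lipschitz in $R$ with constant $\lesssim\eps^3$, is exactly what your computation gives: the main term contributes zero to $\partial_R f_\eps$, and your perturbative argument (differentiate the rescaled equation in $R$, bound $\partial_R\nabla_y\phi$ in $L^2$ by $O(\eps)$ via uniform ellipticity, and track the prefactors) yields $|\partial_R f_\eps|\lesssim\eps^3$ directly. There is no need to reinterpret the claim as being about a remainder. Once you fix this bookkeeping, your proof goes through and matches the paper's.
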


In particular this implies that we have $|E_{q_i}|\approx \eps^2$ and  $|\nabla_q E_{q_i}|\lesssim \eps^3$.

\begin{proof}
a) We compare the potential of $u_{int}$ with the one of the constant speed movement.

Set $\phi_1(x)=r$, which solves the Neumann problem $\Delta \phi_1=0$, $\de_n\phi_1=e_R\cdot n$. 

Similarly, $u_{int}(e_R)$ can by definition (see \eqref{Int1}-\eqref{Int3}) be written as $\nabla \phi_2$, where $\div(r\nabla\phi_2)=0$ and $\de_n\phi_2=u(e_R)$. Testing these equations with $\phi_1-\phi_2$ we obtain that \begin{align}
&\int_{B} \scalar{\nabla \phi_1}{\nabla(\phi_1-\phi_2)}\dx\eq\int_{\de B} e_R\cdot n(\phi_1-\phi_2)\dx\\
&\int_{B}r\scalar{\nabla \phi_2}{\nabla(\phi_1-\phi_2)}\dx\eq\int_{\de B}r\left(e_R\cdot n-\frac{\rho}{2R}\right)(\phi_1-\phi_2)\dx.
\end{align}

We multiply the first equation with $R$ and subtract the second  from it, this yields that \begin{align}
&\int_{B} r\scalar{\nabla(\phi_1-\phi_2)}{\nabla(\phi_1-\phi_2)}+(R-r)\scalar{\nabla \phi_1}{\nabla(\phi_1-\phi_2)}\dx\eq\\
&\int_{\de B} \left(Re_R\cdot n-r\left(e_R\cdot n-\frac{\rho}{2R}\right)\right)(\phi_1-\phi_2)\dx.\notag
\end{align}

Note that we may add a constant to $\phi_1-\phi_2$ in the last integral because the other factor is mean-free.

Applying the Cauchy-Schwarz inequality we obtain that \begin{align}\label{L2bound1}
&\int_{B}r\left|\nabla(\phi_1-\phi_2)\right|^2\dx\leq \rho\norm{\nabla\phi_1}_{L^2(B)}\norm{\nabla(\phi_1-\phi_2)}_{L^2(B)}\\
&+\norm{Re_R\cdot n-r\left(e_R\cdot n-\frac{\rho}{2R}\right)}_{L^2(\de B)}\norm{\phi_1-\phi_2}_{L^2(\de B)/constants}.\notag
\end{align}

The last factor can be estimated by $c_{trace}\norm{\nabla(\phi_1-\phi_2)}_{L^2(B)}$, where $c_{trace}$ is the operator norm of the trace from $\dot{H}^1$ to $L^2(\de B)/constants$. By scaling one can see that this constant is $\lesssim \eps^\frac12$.\\

This gives us an upper bound on the right-hand side of \eqref{L2bound1} of  \begin{align}\label{L2bound2}
\rho\norm{\nabla(\phi_1-\phi_2)}_{L^2}\norm{\nabla\phi_1}_{L^2}+c_{trace}\norm{\nabla(\phi_1-\phi_2)}_{L^2(B)}\norm{\rho+\frac{\rho}{2R}}_{L^2(\de B)}\,\lesssim\, \eps^2\norm{\nabla(\phi_1-\phi_2)}_{L^2}.
\end{align}

Together with the observation that \begin{align}
\int_{B}r|\nabla(\phi_1-\phi_2)|^2\dx\,\lesssim\, \norm{\nabla(\phi_1-\phi_2)}_{L^2}^2
\end{align}

we obtain from \eqref{L2bound1} and \eqref{L2bound2} that \begin{align}
\norm{\nabla(\phi_1-\phi_2)}_{L^2}\,\lesssim\, \eps^2.
\end{align}

Now by definition \begin{align}
|f_\eps-\pi R\tilde{\rho}^2\eps^2|\eq \left|\int_B r\left(|\nabla\phi_2|^2-|\nabla\phi_1|^2\right)\dx\right|\,\lesssim\, \norm{\nabla(\phi_1-\phi_2)}_{L^2}\left(\norm{\nabla\phi_1}_{L^2}+\norm{\nabla\phi_2}_{L^2}\right)\,\lesssim\, \eps^3.
\end{align}
\vphantom{a}

b) We first estimate the derivative of the potential of $u_{int}$ with respect to $R$ and then use this to estimate the Lipschitz constant. We fix some $q^0=(Z^0,R^0)$ with inner radius $\rho^0$ and use the family of maps \begin{align}\Xi_q(x)\,:=\,\frac{\rho^0}{\rho}\left(x-\binom{R}{Z}\right)+\binom{R^0}{Z^0},\end{align}

which map $B(q)$ to $B(q^0)$.

Let $\phi^{q}$ be defined by $\nabla \phi^q=u_{int}^{q}(e_R)$, where the $q$ in the superscript denotes the $q$-dependence and we use the identification between the tangent space and $\R^2$ mentioned above. Let \begin{align}
\hat{\phi}\,:=\,\frac{\rho^0}{\rho}\phi^{q}\circ \Xi_q^{-1}.
\end{align}

Then a direct calculation shows that this fulfills the system \begin{align}
&\div\left(\frac{R^0}{R}\left(R+\frac{\rho}{\rho^0}\left(r-R^0\right)\right)\nabla\hat{\phi}\right)\eq 0 \text{ in $B(q^0)$}\\
&\de_n \hat{\phi}\eq e_R\cdot n-\frac{\rho}{2R} \text{ on $\de B(q^0)$}\label{Compeq2}.
\end{align}

Using e.g.\ the implicit function as in the proof of Lemma \ref{SmoothnessInq}, one can easily see that one can differentiate the solution of this equation in $R$ (w.r.t.\ the $H^1$-norm) and that the derivative fulfills the system \begin{align}
&\div\left(\frac{R^0}{R}\left(R+\frac{\rho}{\rho^0}\left(r-R^0\right)\right)\nabla\de_{R_1}\hat{\phi}\right)+\div\left(\de_{R_1}\left(\frac{R^0}{R}\left(R+\frac{\rho}{\rho^0}\left(r-R^0\right)\right)\right)\nabla\hat{\phi}\right)\eq0\text{ in $B(q^0)$}\label{Compeq1}\\
&\de_n \de_{R_1}\hat{\phi}\eq\de_{R_1}\left(e_R\cdot n-\frac{\rho}{2R}\right)\text{ on $\de B(q^0)$}\label{Compeq2},
\end{align}

here we write  $\de_{R_1}\hat{\phi}$ for the derivative with respect to the parameter $R=R_1$ in order to prevent confusion with the spatial derivative in the $R$-direction.

Now one can easily check that \begin{align}
&\left|\de_{R_1}\left(\frac{R^0}{R}\left(R+\frac{\rho}{\rho^0}\left(r-R^0\right)\right)\right)\right|\,\lesssim\, \eps\label{Deriest1}\\
&\norm{\nabla\hat{\phi}}_{L^2}\,\lesssim\, \eps\label{Deriest2}\\
&\left|\de_{R_1}\left(e_R\cdot n-\frac{\rho}{2R}\right)\right|\,\lesssim\, \eps.\label{Deriest3}
\end{align}

We can now test the equations \eqref{Compeq1},\eqref{Compeq2} with $\de_{R_1}\hat{\phi}$ and obtain after using the Cauchy-Schwarz inequality similarly as in part a) and the bounds \eqref{Deriest1}-\eqref{Deriest3} that \begin{align}
\int_{B(q^0)}\frac{R^0}{R}\left(R+\frac{\rho}{\rho^0}(r-R^0)\right)|\nabla\de_{R_1}\hat{\phi}|^2\dx\,\lesssim \,\eps^2\norm{\nabla \hat{\phi}}_{L^2}^2+\eps^{\frac{3}{2}}\norm{\de_{R_1}\hat{\phi}}_{L^2(\de B(q^0))/constants}\label{Compeq3},
\end{align} 

where we again used the mean-freeness of the boundary values to take the $L^2$-norm modulo constants.

Clearly, the prefactor in the integral on the left-hand side is $\simeq 1$. Again the operator norm of the trace operator from $\dot{H}^1$ to $L^2(\de B(q^0))/constants$ is $\simeq \eps^{\frac{1}{2}}$ by scaling, hence we obtain from \eqref{Compeq3} that \begin{align}
\norm{\nabla \de_{R_1}\hat{\phi}}_{L^2}\,\lesssim\, \eps^2
\end{align}

and this bound is locally uniform in $R$.\\

By definition it holds that \begin{align}
f_\eps(R)\eq\int_{B(q)} r|\nabla \phi^q|^2\dx\eq \int_{B(q^0)}\left(\frac{\rho}{\rho^0}\right)^2\left(R+\frac{\rho}{\rho_0}(r-R^0)\right) |\nabla\hat{\phi}|^2\dx.
\end{align}

The prefactor in the second integral is differentiable in $R$ with a derivative $\lesssim \eps$. Now we can differentiate the right-hand side under the integral by the $H^1$-differentiability of $\hat{\phi}$ and obtain from the product rule that \begin{align}
|\de_Rf_\eps(R)|\,\lesssim\, \eps^2\norm{\nabla\hat{\phi}}_{L^2}+\eps\norm{\nabla\hat{\phi}}_{L^2}^2\,\lesssim\, \eps^3.
\end{align}

This is locally uniform in $R$ as all the used estimates are.

\end{proof}

\subsection{The potential part of the velocity}
We show that the boundary values of the potential converge to the boundary values of the corresponding ``two-dimensional'' potential.

\subsubsection{The case of a single body}
\begin{definition}\phantomsection\label{def flat func}
Let $t^*,q\in \R^2$. Let $\rho>0$. Let $n$ denote the outer normal vector of  $\de B_\rho(q)$. We define a  ``two-dimensional'' potential \begin{align}
\chek{\phi}_{t^*}\eq\chek{\phi}_{t^*}(q,\rho)\,:=\,-\rho^2\frac{t^*\cdot e_1(x-q)+t^*\cdot e_2(y-q)}{(x-q)^2+(y-q)^2}.
\end{align}

One can check that this is the solution of \begin{align}
&\Delta \chek{\phi}_{t^*}\eq0\text{ in $\mathbb{R}^2\backslash B_\rho(q)$}\label{Pot1}\\
&\de_n\chek{\phi}_{t^*}\eq t^*\cdot n \text{ on $\de B_\rho(q)$}\label{Pot2}\\
&\chek{\phi}_{t^*}\rightarrow 0 \text{ at $\infty$}\label{Pot3}
\end{align}

(uniqueness of this can be found e.g.\ in \cite{Amrouche}[Thm.\ 3.1]).
\end{definition}

In order to estimate the potentials for $\eps\rightarrow 0$, we first use only a single body and again drop the indices.

Fix some $q$ and $t^*$, where we again make use of the identification of the tangent space with $\R^2$ as in the previous subsection. Furthermore, fix some $\tilde{\rho}>0$ such that $\eps\tilde{\rho}=\rho$.

We will prove a more general statement for arbitrary normal velocities, which will be useful later to estimate derivatives with respect to $q$.

It will simplify the argument to rescale everything by a factor of $\eps$. Therefore we let $\grave{B}:=B_{\tilde{\rho}}(\frac{1}{\eps}q)$ and first prove our estimates around this rescaled body.

\begin{proposition}\phantomsection\label{PotentialGeneral}
Let $b_1,b_2$ be smooth functions on $\de \grave{B}$. Further, assume that \begin{align}
\int_{\de \grave{B}}rb_1\dx\eq\int_{\de \grave{B}}b_2\dx\eq0.
\end{align}

Let $\chek{\phi}\in \dot{H}^1$ and $\grave{\phi}\in \dot{H}_R^1$ be the solutions of \begin{align}
&\Delta\chek{\phi}\eq0\text{ in $\R^2\backslash \grave{B}$}\label{def chek 1}\\
&\de_n\chek{\phi}\eq b_2 \text{ on $\de \grave{B}$}\label{def chek 2}\\
&\chek{\phi}\,\rightarrow\, 0 \text{ at $\infty$}
\end{align}

and \begin{align}
&\div(r\nabla\grave{\phi})\eq0\text{ in $\mathbb{H}\backslash \grave{B}$}\\
&\de_n\grave{\phi}\eq b_1 \text{ on $\de \grave{B}$}\\
&\de_n\grave{\phi}\eq0 \text{ on $\de\mathbb{H}$}\\
&\grave{\phi}\,\rightarrow\, 0 \text{ at $\infty$}.
\end{align}

Then for all $m\in \N_{>0}$ it holds that \begin{equation}\begin{aligned}
&\norm{\nabla^m(\grave{\phi}-\chek{\phi})}_{L^2(\de \grave{B})}\\
&\lesssim_m\, \sqrt{|\log\eps|}\left(\eps\left(\norm{b_2}_{H^{m-1}(\de \grave{B})}+\norm{b_1}_{H^{m-1}(\de\grave{B})}\right)+\norm{b_1-b_2}_{H^{m-1}(\de\grave{B})}\right).
\end{aligned}\end{equation}
The implicit constant in these estimates is bounded locally uniform in $q$.
\end{proposition}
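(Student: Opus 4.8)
The natural strategy is an energy/variational comparison: both $\grave\phi$ and $\chek\phi$ solve elliptic Neumann problems of the same type on the exterior of $\grave B$, differing only (i) in the weight ($r$ versus $1$) and (ii) in the Neumann data ($b_1$ versus $b_2$). Since $\grave B$ lives at distance $\sim R_0/\eps$ from $\{r=0\}$, on $\grave B$ and a fixed-size neighborhood of it we have $r = R_0/\eps + O(1)$, so the weight $r$ is, after dividing by $R_0/\eps$, a small perturbation of the constant $1$: $\tfrac{\eps}{R_0}r = 1 + O(\eps)$. This is the source of the factor $\eps$ in the estimate, while the factor $\|b_1-b_2\|$ obviously comes from the difference in boundary data. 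The $\sqrt{|\log\eps|}$ is the price of working in an unbounded two-dimensional-type domain, where the exterior Neumann problem for $\Delta$ has a logarithmically growing fundamental solution and the natural seminorm controls the field only up to this loss near the body.

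**Key steps.** First I would reduce to the case $m=1$, i.e. to an $L^2(\de\grave B)$ bound on $\nabla(\grave\phi-\chek\phi)$, and get higher $m$ by elliptic regularity up to the boundary (the geometry of $\grave B$ together with a fixed neighborhood only changes by a bounded rescaling, as used repeatedly in Lemma~\ref{decay1} and Lemma~\ref{InteriorField}, so the constants are uniform). Second, write the weak formulations: $\chek\phi$ solves $\int \nabla\chek\phi\cdot\nabla\eta = \int_{\de\grave B} b_2\eta$ and $\grave\phi$ solves $\int r\nabla\grave\phi\cdot\nabla\eta = \int_{\de\grave B} r b_1\eta$. Multiply the first by $R_0/\eps$ and subtract, or rather rescale $\chek\phi$ by the constant $R_0/\eps$; the difference $w:=\grave\phi-\chek\phi$ then satisfies a weak equation of the form
\begin{align*}
\int r\nabla w\cdot\nabla\eta\eq \int \bigl(\tfrac{R_0}{\eps}-r\bigr)\nabla\chek\phi\cdot\nabla\eta\dx+\int_{\de\grave B}\bigl(r b_1-\tfrac{R_0}{\eps}b_2\bigr)\eta\dx.
\end{align*}
Third, test with $\eta=w$. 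The left side controls $\|\tfrac1{\sqrt r}\nabla w\|_{L^2}^2 \sim \tfrac{\eps}{R_0}\|\nabla w\|_{L^2}^2$ (weight is $\simeq R_0/\eps$ everywhere in $\mathbb H\setminus\grave B$). On the right, the first term is bounded by $\|(\tfrac{R_0}{\eps}-r)\nabla\chek\phi\|_{L^2}\|\nabla w\|_{L^2}$; since $|r-R_0/\eps|\lesssim 1$ we must control $\|\nabla\chek\phi\|_{L^2(\mathbb H)}$, and here the $2$D Neumann problem gives $\|\nabla\chek\phi\|_{L^2}^2 \lesssim |\log\eps|\,\|b_2\|_{L^2(\de\grave B)}^2$ — this is exactly where the log enters (the $L^2$ energy of $\chek\phi$ over the halfplane, truncated by the fact that $\chek\phi$ decays only like $|x|^{-1}$ so the energy integral is $\int^{R_0/\eps} r^{-1}\dr \sim |\log\eps|$ times the decaying part). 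The boundary term: write $rb_1-\tfrac{R_0}{\eps}b_2 = r(b_1-b_2)+(r-\tfrac{R_0}{\eps})b_2$, use a trace inequality $\|w\|_{L^2(\de\grave B)/\text{const}}\lesssim \|\nabla w\|_{L^2}$ (scaling gives the constant), and note $\int_{\de\grave B}(rb_1-\tfrac{R_0}{\eps}b_2)\dx = \int r b_1 - \tfrac{R_0}{\eps}\int b_2 = 0$ by the hypotheses, so we may subtract the mean of $w$ freely. Collecting, after dividing by $\tfrac{\eps}{R_0}\|\nabla w\|_{L^2}$ one gets $\|\nabla w\|_{L^2} \lesssim \sqrt{|\log\eps|}\bigl(\eps\|b_2\|_{L^2}+\eps\|b_1\|_{L^2}+\|b_1-b_2\|_{L^2}\bigr)$ (the $\eps\|b_2\|$ from the weight-difference term after absorbing the $\tfrac{R_0}{\eps}$ normalization and the log, the $\eps\|b_1\|+\|b_1-b_2\|$ from the boundary term). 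Then trace/elliptic regularity converts the $L^2(\mathbb H)$-gradient bound into the claimed $L^2(\de\grave B)$-bound on $\nabla^m w$ with $H^{m-1}$ norms of the data.

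**Main obstacle.** The delicate point is the sharp $\sqrt{|\log\eps|}$ bookkeeping: one must verify that $\|\nabla\chek\phi\|_{L^2(\mathbb H\setminus\grave B)}$ really is $\lesssim\sqrt{|\log\eps|}\|b_2\|$ and not worse — this needs the explicit decay $|\nabla\chek\phi(x)|\lesssim \tilde\rho^2 \|b_2\| /|x|^2$ of the exterior $2$D potential near the body together with the fact that $\grave B$ sits at distance $\sim 1/\eps$ from $\de\mathbb H$, so that the region where the weight degenerates ($r$ small) is far from $\grave B$ where $\nabla\chek\phi$ is already tiny; the log comes only from the annulus $\tilde\rho \lesssim |x-\tfrac1\eps q| \lesssim 1/\eps$. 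A secondary subtlety is making all trace and Poincaré-type constants genuinely uniform in $\eps$ (not just in $q$): this follows because in the rescaled picture $\grave B$ has $\eps$-independent size, so the only $\eps$-dependence left is the explicit weight $r$, which is handled by the $1+O(\eps)$ expansion above. Everything else (differentiating the equations in $q$ for the higher-$m$/derivative applications, splitting $rb_1-\tfrac{R_0}\eps b_2$, invoking Lemma~\ref{decay1}) is routine.
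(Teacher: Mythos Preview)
Your overall strategy---subtract the weak formulations, test with the difference, then upgrade to higher $m$ by elliptic regularity near $\de\grave B$---is exactly the paper's approach. However, several of your intermediate claims are false, and in particular the source of the $\sqrt{|\log\eps|}$ is misidentified.

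First, you assert $|r - R_0/\eps| \lesssim 1$ on $\mathbb H\setminus\grave B$. This holds only in a unit neighborhood of $\grave B$; globally $|r-R_0/\eps|$ is unbounded. Likewise the claim $r\simeq R_0/\eps$ ``everywhere in $\mathbb H\setminus\grave B$'' is false (indeed $r\to 0$ at $\de\mathbb H$ and $r\to\infty$ at infinity), so $\|\sqrt r\,\nabla w\|_{L^2}^2$ does \emph{not} control $\tfrac{R_0}{\eps}\|\nabla w\|_{L^2}^2$. Second, $\|\nabla\chek\phi\|_{L^2(\R^2\setminus\grave B)}\lesssim\|b_2\|_{L^2}$ with \emph{no} logarithm: $\nabla\chek\phi$ decays like $|x|^{-2}$, not $|x|^{-1}$, so its $L^2$ norm is finite. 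The logarithm does not come from this energy. It arises instead from the weighted quantity
\[
\int_{\mathbb H\setminus\grave B}\frac{(r-R/\eps)^2}{r}\,|\nabla\chek\phi|^2\,\dx,
\]
which is what appears when one applies Cauchy--Schwarz in the form $\sqrt r\,\nabla w\cdot\frac{r-R/\eps}{\sqrt r}\nabla\chek\phi$: the \emph{growing} factor $(r-R/\eps)^2/r$ combined with the quartic decay of $|\nabla\chek\phi|^2$ yields an integrand $\sim |x|^{-2}$ (centered at $\grave B$) over the annulus $1\lesssim |x|\lesssim 1/\eps$, and this is what produces $\eps|\log\eps|$. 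The paper's proof splits this integral into three radial regions to make the computation explicit.

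Third, you ignore the domain mismatch: $\chek\phi$ lives on $\R^2\setminus\grave B$ while $\grave\phi$ lives on $\mathbb H\setminus\grave B$, so $w=\grave\phi-\chek\phi$ is not an admissible test function for either weak formulation without modification. The paper introduces cutoffs $\eta_l$ supported in a shifted halfplane and in a large ball, tests with $\eta_l w$, and then controls the resulting commutator term (an integral over the strip near $\de\mathbb H$ and over $\de B_l$) using the pointwise decay estimates of Lemmata \ref{basic est} and \ref{Lem:Decay1}; this contributes a further $O(\eps)$ term.

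None of these gaps is fatal---the paper's proof shows how to execute your strategy correctly---but as written your argument does not close, and the two places you point to for the logarithm are both wrong.
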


\begin{remark}\begin{itemize}
\item[a)]Existence and uniqueness of $\grave{\phi}$ follows by Lemma \ref{ExistencePhi} and existence and uniqueness of $\chek{\phi}$ are shown in \cite{Amrouche}[Thm.\ 3.1].
\item[b)] The author strongly believes that the factor $\sqrt{|\log\eps|}$ is an artifact of the proof and can be removed by estimating $\norm{\Delta(\chek{\phi}-\grave{\phi})}_{L^2}$ instead of $\norm{\sqrt{r}\nabla(\chek{\phi}-\grave{\phi})}_{L^2}$ in the proof, which would require more effort.
\end{itemize}
\end{remark}

\begin{corollary}\phantomsection\label{Potential1}
For all $m\in \N_{>0}$ it holds that \begin{align}
\norm{\nabla^m(\phi_{t^*}-\chek{\phi}_{t^*})}_{L^2(\de B)}\,\lesssim_m\, \eps^{\frac{5}{2}-m}\sqrt{|\log\eps|}|t^*|.
\end{align}
These implicit constant is bounded locally uniformly in $q$.
\end{corollary}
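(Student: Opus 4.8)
The plan is to deduce this from Proposition \ref{PotentialGeneral} by choosing the right boundary data and tracking the scaling in $\eps$. First I would set $b_1 = b_2 = u(t^*)\cdot n$ restricted to... wait, actually $\grave\phi$ should correspond to $\phi_{t^*}$ and $\chek\phi$ to $\chek\phi_{t^*}$, so the natural choice is $b_1 = u(t^*)|_{\de B}$ (the axisymmetric normal velocity from \eqref{normal velo}, including the $-\tfrac{\dot q_{R}\rho}{2R}$ correction) and $b_2 = t^*\cdot n$ (the two-dimensional normal velocity from \eqref{Pot2}), all expressed on the rescaled body $\grave B = B_{\tilde\rho}(q/\eps)$ after rescaling space by $1/\eps$. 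Under this rescaling $\phi_{t^*}(x)$ becomes (up to an $\eps$-factor) a function $\grave\phi$ on $\mathbb{H}\setminus\grave B$ solving the rescaled Neumann problem, and similarly $\chek\phi_{t^*}$ becomes $\chek\phi$; I would record carefully how $\|\nabla^m(\phi_{t^*}-\chek\phi_{t^*})\|_{L^2(\de B)}$ relates to $\|\nabla^m(\grave\phi-\chek\phi)\|_{L^2(\de\grave B)}$ — each spatial derivative contributes a factor $\eps^{-1}$ and the surface measure on $\de B$ versus $\de\grave B$ contributes $\eps^{1/2}$, and the potentials themselves scale linearly.

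The next step is to estimate the three norms appearing on the right-hand side of Proposition \ref{PotentialGeneral} for this choice of data. Since $b_2 = t^*\cdot n$ is linear and $\grave B$ has radius $\tilde\rho = O(1)$, we have $\|b_2\|_{H^{m-1}(\de\grave B)}\lesssim |t^*|$ with constants locally uniform in $q$. For $b_1$, the extra term $-\tfrac{\dot q_R\rho}{2R} = -\tfrac{\eps\tilde\rho\, t^*_R}{2R}$ is an $O(\eps)$ constant on $\de B$, so after rescaling $b_1 = t^*\cdot n - \tfrac{\eps\tilde\rho t^*_R}{2R}$ on $\de\grave B$; hence $\|b_1\|_{H^{m-1}}\lesssim |t^*|$ and, crucially, $\|b_1 - b_2\|_{H^{m-1}(\de\grave B)} = \tfrac{\eps\tilde\rho|t^*_R|}{2R}\cdot\|1\|_{H^{m-1}(\de\grave B)}\lesssim \eps|t^*|$. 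Plugging these into the Proposition gives $\|\nabla^m(\grave\phi-\chek\phi)\|_{L^2(\de\grave B)}\lesssim \sqrt{|\log\eps|}\,(\eps|t^*| + \eps|t^*|) \lesssim \eps\sqrt{|\log\eps|}\,|t^*|$.

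Finally I would undo the rescaling: combining the scaling relation (an overall factor $\eps^{1/2}$ from the change of surface measure, $\eps^{-m}$ from the $m$ derivatives, and $\eps$ from the linear scaling of the potential) with the bound $\eps\sqrt{|\log\eps|}|t^*|$ for the rescaled difference yields $\|\nabla^m(\phi_{t^*}-\chek\phi_{t^*})\|_{L^2(\de B)}\lesssim \eps^{1/2}\cdot\eps^{-m}\cdot\eps\cdot\eps\sqrt{|\log\eps|}\,|t^*| = \eps^{5/2-m}\sqrt{|\log\eps|}\,|t^*|$, which is exactly the claim; the local uniformity in $q$ is inherited from the corresponding statement in the Proposition. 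The only genuinely delicate point is bookkeeping the powers of $\eps$ in the rescaling — in particular making sure the definitions of $\chek\phi_{t^*}(q,\rho)$ and of $\grave\phi$ are matched correctly so that the rescaled $\phi_{t^*}$ really is the $\grave\phi$ of the Proposition with the stated data — but there is no new analytic difficulty beyond what the Proposition already provides.
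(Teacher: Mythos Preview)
Your proposal is correct and follows essentially the same route as the paper's proof: apply Proposition~\ref{PotentialGeneral} with the natural rescaled boundary data and track the powers of $\eps$ under the $x\mapsto \eps x$ rescaling. The only cosmetic difference is that the paper absorbs the extra factor of $\eps$ into the boundary data (setting $b_1=\eps\,u(t^*)(\eps\cdot)$, $b_2=\eps\,t^*\cdot n$, so that $\grave\phi=\phi_{t^*}(\eps\cdot)$ with no additional scaling of the potential), whereas you keep $b_1,b_2$ of order $|t^*|$ and account for that $\eps$ through the ``linear scaling of the potential''; both bookkeepings yield the same $\eps^{5/2-m}\sqrt{|\log\eps|}\,|t^*|$.
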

\begin{proof}[Proof of the Corollary]
Observe that if we set $b_1=\eps u(t^*)(\eps\cdot)$ and $b_2=\eps t^*\cdot n$, then it holds that \begin{align}
\phi_{t^*}(\eps\cdot)\eq\grave{\phi}\text{ and } \chek{\phi}_{t^*}(\eps\cdot)\eq \chek{\phi},
\end{align}

because these fulfill the same elliptic equation. One easily sees that \begin{align}
\norm{b_1}_{H^m(\de B)}\,\lesssim \, \eps|t^*|,\quad \norm{b_2}_{H^m(\de B)}\,\lesssim \, \eps|t^*|,\quad \norm{b_1-b_2}_{H^m(\de B)}\,\lesssim \, \eps^2|t^*|.
\end{align}

The statement then follows from applying the proposition and rescaling.\end{proof}

Our strategy to prove the proposition is to again apply $L^2$ estimates as in the previous section, as the coefficients are similar close to $\grave{B}$, together with decay estimates for the far away behavior.

\begin{lemma}\phantomsection\label{basic est}
Let $b_2$ and $\chek{\phi}$ be as in the Proposition, then for all $m\in \N_{\geq 0}$ it holds that \begin{itemize}
\item[a)] \begin{align}
\norm{\nabla^m\chek{\phi}}_{L^2(\de \grave{B})}\,\lesssim_m\,\norm{b_2}_{{H}^{\max(0,m-1)}}.
\end{align}

\item[b)]
\begin{align}
|\nabla^m\chek{\phi}|(x)\,\lesssim_m\, \frac{\norm{b_2}_{L^2}}{\dist(x,\grave{B})^{1+m}}.
\end{align}

\item[c)] \begin{align}
\norm{\chek{\phi}}_{\dot{H}^m(\grave{B}+B_1(0)\backslash \grave{B})}\,\lesssim_m \, \norm{b_2}_{H^{\max(0,m-1)}}.\label{L2 est flat}
\end{align}
\end{itemize}
Here all the implicit constants are bounded locally uniformly in $q$.
\end{lemma}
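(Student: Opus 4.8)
\textbf{Proof plan for Lemma \ref{basic est}.}

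The plan is to exploit the fact that $\chek{\phi}$ is an \emph{explicit} harmonic function: since $\grave{B}=B_{\tilde\rho}(\tfrac1\eps q)$ is a ball and $b_2$ is mean-free, we can expand $\chek{\phi}$ in the basis of decaying harmonics $|x-\tfrac1\eps q|^{-k}\binom{\cos k\theta}{\sin k\theta}$ (equivalently, $\chek{\phi}$ is the Poisson-type integral of $b_2$ against the exterior Neumann kernel of the disc). The mean-freeness assumption kills the $k=0$ mode, so the slowest-decaying term is $k=1$, which decays like $|x|^{-1}$; this is exactly what gives part b). Concretely, I would first prove b): using the explicit Neumann kernel $N(x,y)$ for the exterior of the disc (which decays like $\dist(x,\grave B)^{-1}$ in $x$ and whose $m$-th gradient decays like $\dist(x,\grave B)^{-1-m}$), write $\chek\phi(x)=\int_{\de\grave B}N(x,y)b_2(y)\,\dd\mathcal H^1(y)$ and differentiate under the integral, bounding $\norm{b_2}_{L^1(\de\grave B)}\lesssim\norm{b_2}_{L^2(\de\grave B)}$ since $\de\grave B$ has bounded length (uniformly in $q$ locally, as $\tilde\rho$ is fixed and the geometry only translates).

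For a), I would argue by elliptic regularity up to the boundary. The case $m=0$ and $m=1$ are the energy estimate: testing \eqref{def chek 1}--\eqref{def chek 2} with $\chek\phi$ itself (legitimate by the decay from b)) gives $\norm{\nabla\chek\phi}_{L^2(\R^2\backslash\grave B)}^2=\int_{\de\grave B}b_2\chek\phi\,\dd\mathcal H^1$, and combining with the trace inequality $\norm{\chek\phi}_{L^2(\de\grave B)/\text{const}}\lesssim\norm{\nabla\chek\phi}_{L^2}$ (whose constant is scale-invariant, hence uniform since $\tilde\rho$ is fixed) yields $\norm{\nabla\chek\phi}_{L^2}\lesssim\norm{b_2}_{L^2(\de\grave B)}$, and then the trace of $\chek\phi$ and of $\nabla\chek\phi$ on $\de\grave B$ are controlled by $\norm{\chek\phi}_{H^1(\grave B+B_1(0)\backslash\grave B)}$ via the trace theorem. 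For general $m$, standard up-to-the-boundary Schauder/$H^s$ estimates for the Neumann problem on the annular region $\grave B+B_1(0)\backslash\grave B$ give $\norm{\chek\phi}_{H^{m+1/2}(\de\grave B)}\lesssim\norm{b_2}_{H^{m-1}(\de\grave B)}+\norm{\chek\phi}_{H^1(\text{annulus})}$, and the last term is absorbed by the $m=0$ estimate; the trace inequality $\norm{\nabla^m\chek\phi}_{L^2(\de\grave B)}\lesssim\norm{\chek\phi}_{H^{m+1/2}(\de\grave B)}$ finishes it. Part c) is then immediate from the same interior-plus-boundary elliptic estimate on the fixed annulus $\grave B+B_1(0)\backslash\grave B$, again absorbing the lower-order term via a).

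The only mildly delicate point — and the one I would be most careful about — is making sure all constants are genuinely uniform in $q$ locally: this works precisely because after the $\eps$-rescaling the body $\grave B$ has a \emph{fixed} radius $\tilde\rho$ and $\tfrac1\eps q$ only translates, so the geometry of the annulus $\grave B+B_1(0)\backslash\grave B$ is literally constant up to translation, and all elliptic/trace constants are translation-invariant; the weight $r$ appearing implicitly through $\dot H_R^1$ plays no role here because $\chek\phi$ solves the \emph{unweighted} Laplace equation. (The comparison with the true weighted $\grave\phi$ is the content of the Proposition, not of this Lemma.) I do not expect any real obstacle: everything reduces to the explicit disc kernel and textbook elliptic estimates on a fixed domain.
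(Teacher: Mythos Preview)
Your proposal is correct and follows essentially the same route as the paper: energy estimate by testing with $\chek\phi$ (justified by a preliminary decay bound), then trace and up-to-the-boundary elliptic regularity on the fixed annulus for a) and c), and a kernel representation for the pointwise decay in b). The only cosmetic difference is that for b) you invoke the explicit exterior Neumann kernel (equivalently, the Fourier expansion on the circle), whereas the paper reuses the layer-potential argument of Lemma~\ref{decay1} (extend harmonically inside, write $\chek\phi$ as the two-dimensional Newtonian potential of the mean-free jump $[\de_n\chek\phi]\,\mathcal H^1\mres\de\grave B$); one minor caveat is that $\tilde\rho=\sqrt{v/(\pi R)}$ is not literally fixed but varies with $R$, so the annulus geometry is only locally uniformly controlled rather than translation-invariant---this does not affect your argument.
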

\begin{proof} All three statements are well-known, we sketch the proof here for the convenience of the reader.

One can first repeat the argument in the proof of Lemma \ref{decay1} to show that $\phi$ and $\nabla\phi$ must decay like $|x|^{-1}$ resp.\ $|x|^{-2}$. 

Then by testing the PDE \eqref{def chek 1},\eqref{def chek 2} with $\chek{\phi}$ and partially integrating, we see that \begin{align}
\int_{\de \grave{B}}b_2\chek{\phi}\dx\eq -\norm{\chek{\phi}}_{\dot{H}^1(\R^2\backslash \grave{B})}^2,
\end{align}

where the partial integration is justified by the decay of $\phi$.
 Now $b_2$ is mean-free, so by using the trace in $\grave{B}+B_1(0)\backslash \grave{B}$, we see that \begin{align}
\norm{\chek{\phi}}_{\dot{H}^1(\grave{B}+B_1(0)\backslash \grave{B})}\,\lesssim\, \norm{b_2}_{L^2(\de \grave{B})}.
\end{align}

By using elliptic regularity estimates in $\grave{B}+B_1(0)\backslash \grave{B}$, we see that for $m>0$ we have \begin{align}
\norm{\nabla^m\chek{\phi}}_{L^2(\de \grave{B})}\,\lesssim\, \norm{b_2}_{H^{m-1}(\de \grave{B})}.
\end{align}

This lets us control $\norm{\nabla \chek{\phi}}_{L^2(\de\grave{B})}$ and we can repeat the argument in the proof of Lemma \ref{decay1} to show b), where we get one order of decay less from using the two-dimensional Newtonian potential.

c) follows for $m>0$ similarly by using elliptic regularity. The estimates for $m=0$ in a) and c) follows by using the estimate b) for $\dist(x,\de\grave{B})\geq 1$ and combining the estimate on the gradient in c) with e.g.\ the Poincare inequality.

\end{proof}

\begin{remark}
In particular, by rescaling, we see that we have \begin{align}
 \norm{\de_\tau\chek{\phi}_{t^*}}_{L^2(\de B)}\simeq |t^*|\eps^{\frac{1}{2}}
\end{align}

and the same estimate holds for $\phi_{i,t^*}$ by the Proposition.
\end{remark}

\begin{lemma}\phantomsection\label{Lem:Decay1}
We have the following estimates:\begin{itemize}
\item[a)] \begin{align}
\norm{\nabla^m\grave{\phi}}_{L^2(\de\grave{B})}\,\lesssim_m\, \norm{b_1}_{H^{\max(0,m-1)}}
\end{align}
for all $m\geq 0$.
\item[b)] It holds that \begin{align}
|\nabla^m\grave{\phi}|(x)\,\lesssim_m\, \min\left(\frac{\norm{b_1}_{H^{m}}}{1+\dist(x,\grave{B})^{1+m}},\,\frac{\norm{b_1}_{H^{m}}}{\eps(1+\dist(x,\grave{B})^{2+m})}\right),\label{ decay est b}
\end{align}
for all $m\in \N_{\geq 0}$.

\item[c)] For $\dist(x,\grave{B})\geq 1$ and $m\in\N_{\geq 0}$ it holds that \begin{align}
|\nabla^m\grave{\phi}|(x)\,\lesssim_m\, \min\left(\frac{\norm{b_1}_{L^2}}{\dist(x,\grave{B})^{1+m}},\,\frac{\norm{b_1}_{L^2}}{\eps\dist(x,\grave{B})^{2+m}}\right).
\end{align}
\end{itemize}
The implicit constant in these estimates is bounded locally uniformly in $R$.
\end{lemma}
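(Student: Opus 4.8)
The strategy mirrors the proof of Lemma \ref{decay1}, but now carried out in axisymmetric coordinates with the weighted operator $\div(r\nabla\cdot)$, and keeping track of the $\eps$-dependence. First I would record the basic energy identity: testing the equation $\div(r\nabla\grave\phi)=0$ against $\grave\phi$ and integrating by parts (the boundary term at infinity vanishes by the crude $|x|^{-1}$ decay that one first establishes exactly as in the proof of Lemma \ref{ExistencePhi}/\ref{decay1}) gives
\begin{align}
\int_{\mathbb{H}\backslash\grave B} r|\nabla\grave\phi|^2\dx \eq -\int_{\de\grave B} r\,b_1\,\grave\phi\dx.
\end{align}
Since $\int_{\de\grave B} r b_1\dx=0$ we may subtract the mean of $\grave\phi$ over $\de\grave B$, apply Cauchy--Schwarz and the trace inequality on the fixed-geometry collar $\grave B+B_1(0)\backslash\grave B$, and absorb, obtaining $\norm{\grave\phi}_{\dot H^1(\grave B+B_1(0)\backslash\grave B)}\lesssim\norm{b_1}_{L^2(\de\grave B)}$; note the prefactor $r\simeq R/\eps$ cancels between the two sides because it appears on both, so no $\eps$ enters here. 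Elliptic regularity on the collar (whose geometry, in these coordinates, is independent of $\eps$ up to the harmless smooth coefficient $r$) then upgrades this to the higher-order trace bound in part a), i.e. $\norm{\nabla^m\grave\phi}_{L^2(\de\grave B)}\lesssim\norm{b_1}_{H^{\max(0,m-1)}}$, and to the interior bound $\norm{\grave\phi}_{\dot H^m(\grave B+B_1(0)\backslash\grave B)}\lesssim\norm{b_1}_{H^{\max(0,m-1)}}$, which is what is needed to start the far-field argument.

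For the decay estimates b) and c), I would pass to three-dimensional coordinates and use the representation from the proof of Lemma \ref{decay1}: extending $\grave\phi^{\R^3}$ into the (rescaled) body by solving the Dirichlet problem, its distributional Laplacian is $[\de_n\grave\phi]\mathcal H^2\mres\de(\grave B^{\R^3})$, a finite measure, and convolution with the Newtonian potential $-1/(4\pi|x|)$ recovers $\grave\phi^{\R^3}$ (the difference is a harmonic tempered distribution vanishing at infinity, hence zero). The two bounds in the ``$\min$'' then come from two different ways of estimating this convolution. The first, $\norm{b_1}_{H^m}/(1+\dist(x,\grave B)^{1+m})$, is the generic single-layer bound: $\|[\de_n\grave\phi]\|_{L^1(\de\grave B^{\R^3})}\lesssim \|b_1\|_{L^2(\de\grave B)}$ by Cauchy--Schwarz on a surface of area $\lesssim 1$, and differentiating the kernel $m$ times gives the $\dist^{-1-m}$ factor; the up-to-the-boundary/near-field part and the $1+$ normalization follow from part a) plus interior elliptic estimates. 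The second bound, with the extra $1/\eps$ and one more power of $\dist$, is the key quantitative point: going back to the \emph{axisymmetric} picture, the correct weight is $r\,b_1$, and $\int_{\de\grave B} r b_1\dx=0$; when one re-expresses the three-dimensional surface integral of $[\de_n\grave\phi]$ against the kernel, the axisymmetric mean-free condition lets one subtract the value of the (differentiated) kernel at the center of $\grave B$, gaining one order of decay by the mean value theorem, exactly as in part b) of Lemma \ref{decay1}. The factor $1/\eps$ then appears because the three-dimensional density $[\de_n\grave\phi^{\R^3}]$ carries a weight that, relative to the axisymmetric density $r\,b_1$, differs by the factor $r\simeq R/\eps$ on $\de\grave B$ (equivalently: the rescaled body sits at distance $\sim 1/\eps$ from the axis, so $r^{-1}\sim\eps$ is what multiplies the cancellation gain). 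Part c) is then just part b) restricted to $\dist(x,\grave B)\ge1$, where the ``$1+$'' in the denominators is irrelevant and $\norm{b_1}_{H^m}$ may be replaced by $\norm{b_1}_{L^2}$ since the relevant bound only used the $L^1$, hence $L^2$, norm of the layer density.

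For local uniformity in $R$ I would note, as in Lemma \ref{decay1}, that the collar geometry and the trace and elliptic-regularity constants depend on $R$ only through the smooth coefficient $r=R+(\text{bounded})$ and the fixed-size neighborhood of $\grave B$, all of which vary continuously in $R$; hence all constants are bounded on compact $R$-ranges. The main obstacle, and the only genuinely new point compared with Lemma \ref{decay1}, is bookkeeping the $\eps$-powers correctly in the second alternative of b): one must be careful that the mean-free cancellation is with respect to the weight $r$ (not Lebesgue measure), that the body is at distance $\sim 1/\eps$ from the symmetry axis so that $|r|\simeq 1/\eps$ on $\de\grave B$, and that this is exactly what converts ``one more order of decay'' into ``one more order of decay times $1/\eps$''; everything else is a routine adaptation of the earlier lemma.
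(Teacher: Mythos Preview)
Your treatment of part a) via the energy identity, trace inequality and elliptic regularity on the unit collar is correct and matches the paper. The gap is in your argument for the \emph{first} alternative in parts b) and c). You write that this is ``the generic single-layer bound: $\|[\de_n\grave\phi]\|_{L^1(\de\grave B^{\R^3})}\lesssim \|b_1\|_{L^2(\de\grave B)}$ by Cauchy--Schwarz on a surface of area $\lesssim 1$''. But the three-dimensional surface $\de\grave B^{\R^3}$ is a torus of major radius $R/\eps$ and minor radius $\tilde\rho\sim 1$, hence has area $\sim 1/\eps$, not $\sim 1$. Consequently the 3D layer density satisfies $\|[\de_n\grave\phi]\|_{L^1(\de\grave B^{\R^3})}\sim (1/\eps)\|b_1\|_{L^2(\de\grave B)}$, and the naive single-layer estimate only gives $|\grave\phi(x)|\lesssim (1/\eps)\|b_1\|_{L^2}/\dist(x,\grave B)^{1+m}$, which is too weak by a full factor of $1/\eps$.

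The paper closes this gap by an angular sector decomposition that you have not anticipated: for $1\le \dist(x,\grave B)\le 1/\eps$ one splits the torus $\de\grave B^{\R^3}$ into roughly $1/\eps$ sectors $T_i$, each of angular width $\sim\eps$ and hence of diameter $\lesssim 1$. The crucial observation is that, by rotational symmetry of the density $[\de_n\grave\phi]$, each sector carries a mean-zero portion of the measure (its integral is a fixed angular fraction of the vanishing total). One then applies the mean-value cancellation on each $T_i$ separately, with $\|[\de_n\grave\phi]\|_{L^1(T_i)}\lesssim\|b_1\|_{L^2}$ and diameter $\lesssim 1$, to get a contribution $\lesssim \|b_1\|_{L^2}/(|i|^2+\dist(x,\grave B)^2)$, and sums in $i$ to recover $\|b_1\|_{L^2}/\dist(x,\grave B)$. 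Your explanation of the factor $1/\eps$ in the second alternative is also not quite right: it is not a ``weight relative to $rb_1$'' but simply the diameter of the torus entering the global mean-value estimate; however one still has to contend with the $L^1$ norm being $\sim 1/\eps$ as well, and the sector argument is again what makes the bookkeeping come out correctly. Without this decomposition your proof does not close.
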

\begin{proof}
a) follows from the same argument as the previous Lemma, where we again use the decay estimate from b) or c) for the case $m=0$.

b) and c) Our strategy is to quantify the argument of Lemma \ref{decay1}.

If we extend $\grave{\phi}$ to $\grave{B}$ by solving the Dirichlet problem with boundary data $\grave{\phi}$, then by a) and elliptic regularity \begin{align}
\norm{[\de_n\grave{\phi}]}_{\mathcal{M}(\de \grave{B})}\,\lesssim\, \norm{b_1}_{L^2},\label{est jump 1}
\end{align}

where $[\cdot]$ denotes the jump across the boundary. We can then proceed as in the proof of Lemma \ref{decay1} and set $\phi^{\R^3}(R,\theta,Z)=\grave{\phi}(R,Z)$, where $(R,\theta,Z)$ are axisymmetric coordinates in $\R^3$, then as argued there it holds that \begin{align}
\phi^{\R^3}\eq\frac{-1}{4\pi|\cdot|}*[\de_n\grave{\phi}]\mathcal{H}^2\mres\de\grave{B}^{\R^3}.\label{repSol}
\end{align}

This gives the second estimate in c) for $m=0$, since $[\de_n \grave{\phi}]$ is mean-free, where the factor $\frac{1}{\eps}$ comes from the fact that $\de\grave{B}$ is of size $\approx\frac{1}{\eps}$.

 For the first estimate in c) for $m=0$ we may restrict ourselves to the case $1\leq \dist(x,\grave{B})\leq\frac{1}{\eps}$, as otherwise it follows from the second one. We use three-dimensional axisymmetric coordinates $(R,\theta,Z)$ again and fix some $x$. Then we split $\grave{B}^{\R^3}$ into parts $S,T_{-n},\dots T_n$ where we take $T_{-n}\dots T_n$ as the sets \begin{align}
T_i\,:=\,\grave{B}^{\R^3}\cap\left\{(R',\theta',Z')|\theta'-\theta_x\in \left[\frac{\pi}{6n}\left(i-\frac{1}{2}\right),\frac{\pi}{6n}\left(i+\frac{1}{2}\right)\right)\right\},
\end{align}

where $\theta_x$ denotes the azimuthal angle of $x$ and the difference is taken modulo $2\pi$.
If we set $n=\lfloor\frac{1}{\eps}\rfloor$, then each such piece has diameter $\lesssim 1$.
 We set $S:=\grave{B}^{\R^3}\backslash\bigcup_i T_i$. By the estimate \eqref{est jump 1}, the mean-freeness and the fact that the distance between $x$ and $S$ is $\gtrsim \frac{1}{\eps}+\dist(x,\grave{B})$ we have  \begin{align}
 \left|\left(\frac{1}{4\pi|\cdot|}*[\de_n\grave{\phi}]\mathcal{H}^2\mres\de S\right)(x)\right|\,\lesssim\ \frac{\norm{b_1}_{L^2}}{\eps\left(\frac{1}{\eps^2}+\dist(x,\grave{B})^2\right)}.\end{align}

For every $i$ we have \begin{align}
\left|\left(\frac{1}{4\pi|\cdot|}*[\de_n\grave{\phi}]\mathcal{H}^2\mres\de T_i\right)(x)\right|\,\lesssim\,\frac{\norm{b_1}_{L^2}}{1+|i|^2+\dist(x,\hat{B})^2},
\end{align}

where we exploited the facts that due to the rotational symmetry, the integral of the boundary values over each $T_i$ is $0$ and that $\dist(x,T_i)\gtrsim |i|+\dist(x,\grave{B})$. Summing up and using \eqref{repSol} gives \begin{align}
|\grave{\phi}(x)|\,\lesssim\, \frac{\norm{b_1}_{L^2}}{\eps\left(\frac{1}{\eps^2}+\dist(x,\grave{B})^2\right)}+\sum_{|i|\leq\frac{1}{\eps}}\frac{\norm{b_1}_{L^2}}{1+|i|^2+\dist(x,\grave{B})^2}\,\lesssim\, \frac{\norm{b_1}_{L^2}}{\dist(x,\grave{B})},
\end{align}

where we estimated the sum with the integral and used the assumption $\dist(x,\grave{B})\leq \frac{1}{\eps}$. 

This shows c) if $m=0$.

For $m>0$ the estimates in c) follow from making the same argument with the derivatives of the fundamental solution. 

We set \begin{align}\label{DefinitionD}
D\,:=\,(\grave{B}+B_1(0))\backslash \grave{B}.
\end{align}

In $D$ the estimates in b) for $m>0$ follow from using elliptic regularity and a) to obtain that \begin{align}
\norm{\nabla^m\grave{\phi}}_{L^\infty(D)}\,\lesssim\, \norm{\nabla^m\grave{\phi}}_{H^\frac{4}{3}(D)}\,\lesssim\, \norm{b_1}_{H^{m}}.
\end{align}

The case $\dist(x,\grave{B})<1$ and $m=0$ follows from this by using the estimate for $\dist(x,\grave{B})=1$ and the fundamental theorem of calculus.

\end{proof}

\begin{proof}[Proof of Proposition \ref{PotentialGeneral}]

By subtracting both PDEs and rearranging we obtain that \begin{align}\label{Elliptic1} &\int_{\mathbb{H}\backslash\grave{B}}  r\scalar{\nabla\grave{\phi}-\nabla\chek{\phi}}{\nabla \xi}+\left(r-\frac{R}{\eps}\right)\scalar{\nabla\chek{\phi}}{ \nabla\xi}\dx\eq-\int_{\de\grave{B}}  \xi\left(rb_1-\frac{R}{\eps}b_2\right)\dx,
\end{align}

for $\xi\in H^1$ compactly supported in $\mathbb{H}\backslash\grave{B}$.\\

In order to be able to use both $\chek{\phi}$ and $\grave{\phi}$ as test functions for each others equation even though one is defined on the half-space and one on the full space, we introduce smooth cutoff functions $\eta_l$, supported in $(\mathbb{H}+\frac{R}{2\eps}e_R)\cap B_{l+1}(0)$, which equal $1$ in \linebreak$(\mathbb{H}+(\frac{R}{2\eps}+1)e_R)\cap B_l(0)$ and whose derivatives have absolute value $\leq 2$ everywhere.

By testing with $\eta_l(\grave{\phi}-\chek{\phi})$ we obtain that for large enough $l$ it holds that \begin{equation}\begin{aligned} &\int_{\mathbb{H}\backslash\grave{B}} \eta_l r|\nabla\grave{\phi}-\nabla\chek{\phi}|^2+\eta_l \left(\frac{R}{\eps}-r\right)\nabla\chek{\phi}\cdot \nabla\left(\grave{\phi}-\chek{\phi}\right)+\nabla\eta_l\cdot\left(r\nabla\grave{\phi}-\frac{R}{\eps}\nabla\chek{\phi}\right)\left(\grave{\phi}-\chek{\phi}\right) \dx\\
&\eq-\int_{\de\grave{B}}  \left(\grave{\phi}-\chek{\phi}\right)\left(rb_1-\frac{R}{\eps}b_2\right)\dx.
\end{aligned}\end{equation}

By rearranging and using the Cauchy-Schwarz inequality, we obtain the inequality \begin{equation}\begin{aligned}
&\norm{\sqrt{r\eta_l}\nabla\left(\grave{\phi}-\chek{\phi}\right)}_{L^2}^2\,\leq\, \norm{\sqrt{\eta_l}\frac{|r-\frac{R}{\eps}|}{\sqrt{r}}\nabla\chek{\phi}}_{L^2}\norm{\sqrt{r\eta_l}\nabla\left(\grave{\phi}-\chek{\phi}\right)}_{L^2}+\\
&2\int_{\left([\frac{R}{2\eps},\frac{R}{2\eps}+1]\times \R\right)\cup B_{l+1}\left(0\right)\backslash B_l\left(0\right)}\left(\left|\grave{\phi}\right|+\left|\chek{\phi}\right|\right)\left(\left|r\nabla\grave{\phi}\right|+\left|\frac{R}{\eps}\nabla\chek{\phi}\right|\right)\dx\,+\\
&2R^{-\frac{1}{2}}\eps^\frac{1}{2}c_{trace}\norm{\sqrt{r\eta_l}\nabla\left(\grave{\phi}-\chek{\phi}\right)}_{L^2}\norm{rb_1-\frac{R}{\eps}b_2}_{L^2\left(\de \grave{B}\right)}\\
&=:\, (I+III)\cdot \norm{\sqrt{r\eta_l}\nabla\left(\grave{\phi}-\chek{\phi}\right)}_{L^2}+II
\end{aligned}\end{equation}

where $c_{trace}$ denotes the operator norm of the trace from $\dot{H}^{1}(\grave{B}+B_1(0)\backslash \grave{B})$ to $L^2(\de \grave{B})/constants$, which is $\lesssim 1$ and we have estimated $r^{-\frac{1}{2}}$ with $2R^{-\frac{1}{2}}\eps^{\frac{1}{2}}$ in the last summand. Here $I,II$ and $III$ stand for the factors in the first, second, and third lines of the right-hand side.\\

Using Lemma \ref{basic est}, we estimate the first term as  \begin{align}\label{term I}
&I\,\lesssim\, \left(\int_{\mathbb{H}\backslash\grave{B}}\eta_l\frac{1}{1+\dist(x,\grave{B})^4}\frac{(r-\frac{R}{\eps})^2}{r}\dx\right)^\frac{1}{2}\norm{b_2}_{L^2}\end{align}

We split into the regions $ r\in [\frac{R}{2\eps},\frac{R}{\eps}-1]\cup [\frac{R}{\eps}+1,\frac{3R}{2\eps}],r\in [ \frac{R}{\eps}-1,\frac{R}{\eps}+1]$ and $r\geq \frac{3R}{2\eps}$, for other $r$ the integrand is $0$. This gives that \eqref{term I} is

\begin{align}
&\lesssim\,\left(\int_{[1,\frac{R}{2\eps}]\times\R}\frac{1}{|x|^4}\frac{|x_1|^2}{\frac{R}{\eps}}\dx+\int_{1}^\infty\frac{1}{|x|^4}{\frac{\eps}{R}}\dx+\int_{[\frac{R}{\eps},\infty)\times \R}\frac{1}{|x|^4}x_1\dx\right)^\frac{1}{2}\norm{b_2}_{L^2}\\
&\lesssim\, \eps^\frac{1}{2}\sqrt{|\log \eps|}\norm{b_2}_{L^2}.
\end{align}

We use Lemma \ref{Lem:Decay1} and Lemma \ref{basic est} to obtain that for $l>>\frac{1}{\eps^2}$, we have \begin{align}
&II\,\lesssim\, \frac{1}{\eps}\int_{\left([\frac{R}{2\eps},\frac{R}{2\eps}+1]\times \R\right)\cup B_{l+1}(0)\backslash B_l(0)}\frac{1}{\dist(x,\grave{B})}\frac{1}{\dist(x,\grave{B})^2}\dx\norm{b_1}_{L^2}\norm{b_2}_{L^2}\\
&\lesssim\, \frac{1}{\eps}\left(\int_{B_{l+1}(0)\backslash B_l(0)}\frac{1}{|x|^3}\dx+\int_\R\frac{1}{\left(|x|+\frac{1}{\eps}\right)^3}\dx\right)\norm{b_1}_{L^2}\norm{b_2}_{L^2}\\
&\lesssim\, \left(\frac{1}{l\eps}+\eps\right)\norm{b_1}_{L^2}\norm{b_2}_{L^2}\,\lesssim\,\eps\norm{b_1}_{L^2}\norm{b_2}_{L^2}.
\end{align}

The third term can be estimated as \begin{align}
III\,\lesssim\, \eps^\frac{1}{2}\norm{rb_1-\frac{R}{\eps}b_2}_{L^2(\de \grave{B})}\,\lesssim\, \eps^\frac12\left(\norm{b_1}_{L^2(\de\grave{B})}+\frac{1}{\eps}\norm{b_1-b_2}_{L^2(\de\grave{B})}\right).
\end{align}

Hence we obtain that \begin{equation}\begin{aligned}
\norm{\sqrt{r\eta_l}\nabla(\grave{\phi}-\chek{\phi})}_{L^2}^2\,\lesssim\,& \eps^\frac{1}{2}\sqrt{|\log\eps|}\norm{\sqrt{r\eta_l}\nabla(\grave{\phi}-\chek{\phi})}_{L^2}\Bigl(\norm{b_2}_{L^2(\de\grave{B})}+\norm{b_1}_{L^2(\de\grave{B})}+\\
&\frac{1}{\eps}\norm{b_1-b_2}_{L^2(\de\grave{B})}\Bigr)
+\eps\norm{b_1}_{L^2(\de \grave{B})}\norm{b_2}_{L^2(\de\grave{B})}.
\end{aligned}\end{equation}

This implies that \begin{equation}\begin{aligned}
\norm{\sqrt{r\eta_l}\nabla(\grave{\phi}-\chek{\phi})}_{L^2}\,\lesssim\,  &\eps^\frac{1}{2}\sqrt{|\log\eps|}\left(\norm{b_2}_{L^2(\de\grave{B})}+\norm{b_1}_{L^2(\de\grave{B})}+\frac{1}{\eps}\norm{b_1-b_2}_{L^2(\de\grave{B})}\right).\label{L2 est}
\end{aligned}\end{equation}

Now we can apply elliptic regularity estimates around $\de \grave{B}$ to the elliptic equation \eqref{Elliptic1} to obtain that for $m>0$ it holds that \begin{align}
&\norm{\nabla^m(\grave{\phi}-\chek{\phi})}_{L^2(\de\grave{B})}\,\lesssim\, \norm{\frac{(r-\frac{R}{\eps})}{\frac{R}{\eps}}\nabla\chek{\phi}}_{H^{m}(D)}+\eps\norm{rb_1-\frac{R}{\eps}b_2}_{H^{m-1}(\de \grave{B})}+\norm{\nabla(\grave{\phi}-\chek{\phi})}_{L^2(D)}\\
&\lesssim\, \eps\sqrt{|\log\eps|}\left(\norm{b_2}_{H^{m-1}}+\norm{b_1}_{H^{m-1}}+\frac{1}{\eps}\norm{b_1-b_2}_{H^{m-1}}\right),
\end{align}

where the neighborhood $D$ was defined in \eqref{DefinitionD}, for the first term we used that $\norm{\chek{\phi}}_{H^m(D)}\lesssim \norm{b_2}_{H^{m-1}}$ , which follows from \eqref{L2 est flat} and in the last step we used the Estimate \eqref{L2 est} and the fact that $r\approx \eps^{-1}$. \end{proof}

\subsubsection{Multiple bodies}

We remind the reader of the convention of writing $\ell$ for irrelevant exponents.

We will work in the rescaled setting and keep $\tilde{q}$ (defined in \eqref{def regime1} and \eqref{def regime2}) fixed independently of $\eps$. We set $\grave{B}_i=\frac{1}{\eps}B_i$.

We will use the method of reflections to construct the potentials for multiple bodies from the potential of a single body.

Fix some sufficiently smooth normal velocity $b_i$ on $\de \grave{B}_i$ with $\int_{\de \grave{B}_i} rb_i\dx=0$ and let $\grave{\phi}_i^1\in H_R^1(\mathbb{H}\backslash \grave{B}_i)$ solve \begin{align}
&\div(r\nabla\grave{\phi}_i^1)\eq 0\text { in $\mathbb{H}\backslash \grave{B}_i$}\\
&\de_n \grave{\phi}_i^1\eq b_i\text{ on $\de \grave{B}_i$}\\
&\de_n\grave{\phi}_i^1\eq0 \text{  on $\de\mathbb{H}$}
\end{align}

and let $\grave{\phi}_i\in H_R^1(\mathbb{H}\backslash\bigcup_j\grave{B}_j)$ solve \begin{align}
&\div(r\nabla\grave{\phi}_i)\eq 0\text { in $\mathbb{H}\backslash \bigcup_j \grave{B}_j$}\\
&\de_n \grave{\phi}_i\eq b_i\text{ on $\de \grave{B}_i$}\\
&\de_n \grave{\phi}_i\eq0 \text{ on $\de \grave{B}_j$ for $j\neq i$}\\
&\de_n \grave{\phi}_i\eq0 \text{ on $\de\mathbb{H}$}.
\end{align}

We then add corrector functions $\grave{\phi}_{1}^2,\grave{\phi}_{2}^2\dots$ to $\grave{\phi}_i^1$, which for $j\neq i$ fulfill the equations \begin{align}
&\div(r\nabla\grave{\phi}_{j}^2)\eq0 \text{ in $\mathbb{H}\backslash \grave{B}_j$}\\
&\de_n \grave{\phi}_{j}^2\eq-\de_n \grave{\phi}_i^1 \text{ on $\de \grave{B}_j$}\\
&\de_n\grave{\phi}_j^2\eq0 \text{  on $\de\mathbb{H}$}\\
& \grave{\phi}_j^2\in H_R^1.
\end{align}

Existence and uniqueness of these follows from Lemma \ref{ExistencePhi}. We set $\grave{\phi}_i^2=0$.

These correction terms then change the normal trace at all other $\de B_l$. For this new error we can again construct corrector functions $\grave{\phi}_{1}^3,\grave{\phi}_{2}^3\dots$ with normal boundary values $-\sum_{l\neq j} \de_n \grave{\phi}_l^2$ and so on. If the sums of the errors and the corrector functions converge, the limit will be $\grave{\phi}_i$, since it is unique.

\begin{proposition}\phantomsection\label{MultiplePotentials}
This iteration scheme converges for small enough $\eps$ to the solution $\grave{\phi}_i$ in both the regimes \eqref{def regime1} and \eqref{def regime2}, the convergence is in $H_R^1(\R^3)$. Furthermore, for all $m\in \N_{>0}$ we have the following estimates, if $\eps$ is small enough: \begin{itemize}
\item[a)] For $i\neq j$ it holds that \begin{align}
\norm{\nabla^m\grave{\phi}_i}_{L^2(\de \grave{B}_j)}\,\lesssim_m\, \eps^{2}|\log\eps|^{\ell(1+m)}\norm{b_i}_{L^2}
\end{align}

\item[b)] For all $i$ it holds that \begin{align}
\norm{\nabla^m(\grave{\phi}_i^1-\grave{\phi}_i)}_{L^2(\de \grave{B}_i)}\,\lesssim_m\,\eps^{4}|\log\eps|^{\ell(1+m)} \norm{b_i}_{L^2}.
\end{align}

\end{itemize}

The implicit constant in these estimates and the requirements on $\eps$ are locally uniform in $\tilde{q}$ in both regimes \eqref{def regime1} and \eqref{def regime2}. In particular the estimates for the single body case in Proposition \ref{PotentialGeneral} and Lemma \ref{Lem:Decay1} a) are still true.
\end{proposition}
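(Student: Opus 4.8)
The plan is to run the method of reflections quantitatively, tracking for each generation $n$ the Neumann data $g_j^n:=\de_n\grave{\phi}_j^n$ that the corrector $\grave{\phi}_j^n$ carries on the body $\grave{B}_j$ it is centred at, and to show that performing one more reflection shrinks this data by a factor $\approx\eps^2|\log\eps|^{\ell}$. Two facts drive this. First, in both regimes the rescaled bodies are far apart: since the centres $\tfrac1\eps q_l$ lie at mutual distance $\tfrac{|\tilde{q}_l-\tilde{q}_j|}{\eps|\log\eps|^{\ell}}$ (with $\ell=\tfrac12$ resp.\ $1$) while the $\tilde{\rho}_l$ stay bounded, we have $d:=\min_{l\neq j}\dist(\grave{B}_l,\grave{B}_j)\gtrsim(\eps|\log\eps|^{\ell})^{-1}$, together with $\diam\grave{B}_l\lesssim1$ and $\dist(\grave{B}_l,\de\mathbb{H})\gtrsim\eps^{-1}$, all locally uniformly in $\tilde{q}$. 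Second, every corrector has Neumann data that is mean-free with respect to $r\dx$, because $\int_{\de\grave{B}_j}r\,\de_n\grave{\phi}_l^{n-1}\dx=\int_{\grave{B}_j}\div(r\nabla\grave{\phi}_l^{n-1})\dx=0$ whenever $l\neq j$; this also guarantees existence of the correctors via Lemma \ref{ExistencePhi}. Combining these with the sharper, $\eps^{-1}$-improved decay of Lemma \ref{Lem:Decay1}~c), one obtains for $l\neq j$ and every $m\in\N_{\geq0}$ the single-reflection estimate $\norm{\nabla^m\grave{\phi}_l^n}_{L^2(\de\grave{B}_j)}\lesssim_m \tfrac{\norm{g_l^n}_{L^2(\de\grave{B}_l)}}{\eps\,d^{2+m}}\lesssim_m \eps^{1+m}|\log\eps|^{\ell(1+m)}\norm{g_l^n}_{L^2(\de\grave{B}_l)}$ (enlarging the irrelevant exponent when needed).

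Next I would prove convergence of the scheme. Put $G_n:=\max_j\norm{g_j^n}_{L^2(\de\grave{B}_j)}$; the above estimate with $m=1$ gives $G_2\lesssim\eps^2|\log\eps|^{\ell}\norm{b_i}_{L^2}$ and $G_{n+1}\leq (k-1)C\eps^2|\log\eps|^{\ell}G_n$ for $n\geq2$, so once $\eps$ is small enough (depending only on $k$ and $\tilde{q}$) the $G_n$ decay geometrically with $\sum_{n\geq2}G_n\lesssim\eps^2|\log\eps|^{\ell}\norm{b_i}_{L^2}$ and $\sum_{n\geq1}G_n\lesssim\norm{b_i}_{L^2}$. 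A standard Neumann energy estimate — testing the corrector equation with $\grave{\phi}_j^n$ minus a constant, using the trace inequality on $\grave{B}_j+B_1(0)\setminus\grave{B}_j$ (whose geometry only rescales by a bounded factor) and $r\approx\eps^{-1}$ there, exactly as in \eqref{energy eq} — yields $\norm{\grave{\phi}_j^n}_{\dot{H}_R^1}\lesssim\eps^{-1/2}G_n$, so $\sum_{n,j}\grave{\phi}_j^n$ converges absolutely in $\dot{H}_R^1$, and the decay estimates of Lemma \ref{ExistencePhi} upgrade this to convergence in $H_R^1(\R^3)$. By construction the limit has the Neumann data, boundary behaviour at $\de\mathbb{H}$ and decay characterising $\grave{\phi}_i$, so by uniqueness it equals $\grave{\phi}_i$, and one records the expansion $\grave{\phi}_i=\grave{\phi}_i^1+\sum_{n\geq3}\grave{\phi}_i^n+\sum_{n\geq2}\sum_{j\neq i}\grave{\phi}_j^n$.

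For the quantitative bounds a) and b) I would split this expansion, restricted to the relevant boundary, into ``cross'' terms (a corrector centred at $\grave{B}_l$ evaluated on $\de\grave{B}_j$, $l\neq j$) and ``self'' terms ($\grave{\phi}_j^n$ on its own $\de\grave{B}_j$, necessarily $n\geq2$). Cross terms are bounded directly by the single-reflection estimate above, giving $\eps^{1+m}|\log\eps|^{\ell(1+m)}G_n$, whose sum over $n$ is $\lesssim\eps^{1+m}|\log\eps|^{\ell(1+m)}\norm{b_i}_{L^2}$. Self terms are bounded by Lemma \ref{Lem:Decay1}~a), $\norm{\nabla^m\grave{\phi}_j^n}_{L^2(\de\grave{B}_j)}\lesssim_m\norm{g_j^n}_{H^{m-1}(\de\grave{B}_j)}$; here one needs the full $H^{m-1}$-norm of the iterated data, which is controlled because $g_j^n$ is itself a sum of cross terms of generation $n-1$, so $\norm{g_j^n}_{H^{m-1}(\de\grave{B}_j)}\lesssim_m\sum_{l\neq j}\norm{\de_n\grave{\phi}_l^{n-1}}_{H^{m-1}(\de\grave{B}_j)}\lesssim_m\eps^{2}|\log\eps|^{\ell}G_{n-1}$, the top-order derivative $\nabla^m$ with $m\geq1$ producing the dominant $\eps$-power $\eps^{1+1}=\eps^2$. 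Hence on $\de\grave{B}_j$ with $j\neq i$ the self contribution is $\lesssim\eps^2|\log\eps|^{\ell}\sum_{n}G_n\lesssim\eps^2|\log\eps|^{\ell}\norm{b_i}_{L^2}$ and the cross contribution is $\leq\eps^{1+m}|\log\eps|^{\ell(1+m)}\norm{b_i}_{L^2}\leq\eps^2|\log\eps|^{\ell(1+m)}\norm{b_i}_{L^2}$ since $m\geq1$; this is a). For b), the analogous split of $\grave{\phi}_i-\grave{\phi}_i^1=\sum_{n\geq3}\grave{\phi}_i^n+\sum_{n\geq2}\sum_{j\neq i}\grave{\phi}_j^n$ on $\de\grave{B}_i$ involves only $G_n$ with $n\geq2$, so every contribution picks up the extra factor $\sum_{n\geq2}G_n\lesssim\eps^2|\log\eps|^{\ell}\norm{b_i}_{L^2}$, turning $\eps^2$ into $\eps^4$. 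Finally, the assertion that the single-body estimates persist is immediate, since $\grave{\phi}_i^1$ is exactly the single-body potential, to which Proposition \ref{PotentialGeneral} and Lemma \ref{Lem:Decay1}~a) apply verbatim.

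The main obstacle is the power counting in the last step: to reach $\eps^2$ in a) one must simultaneously exploit the mean-free improvement ($\eps^{-1}$) in Lemma \ref{Lem:Decay1}~c), the large separation $d\gtrsim(\eps|\log\eps|^{\ell})^{-1}$, and the restriction $m\geq1$ — a naive application gives only $\eps$ — while for b) one additionally has to notice that every term surviving the expansion past the first generation carries one more reflection, hence the extra $\sum_{n\geq2}G_n$ factor. A minor but essential subtlety along the way is verifying that the improved decay estimates genuinely require mean-freeness of the Neumann data and that this property propagates through all generations (so that the correctors exist at all), and that the $H^{m-1}$- rather than merely $L^2$-norm of the iterated data is under control — which is automatic here because away from the bodies every derivative of a corrector is estimated by Lemma \ref{Lem:Decay1}~c). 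All constants and smallness thresholds stay locally uniform in $\tilde{q}$ in both regimes because the cited single-body estimates are.
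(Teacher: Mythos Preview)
Your proof is correct and follows essentially the same approach as the paper: both run the method of reflections quantitatively, using the large separation $d\gtrsim(\eps|\log\eps|^{\ell})^{-1}$ together with the mean-free decay of Lemma~\ref{Lem:Decay1}~c) to show that each reflection contracts the Neumann data by a factor $\approx\eps^2|\log\eps|^{\ell}$, then sum the resulting geometric series. Your bookkeeping via $G_n=\max_j\norm{g_j^n}_{L^2}$ and the explicit cross/self split is slightly more systematic than the paper's presentation, and your energy-estimate route to $\dot{H}_R^1$-convergence is a clean alternative to the paper's integration of the pointwise decay; one small slip is the phrase ``top-order derivative $\nabla^m$ producing the dominant $\eps$-power $\eps^{1+1}=\eps^2$'' --- it is the \emph{lowest}-order part of $\norm{g_j^n}_{H^{m-1}}$ (the bare normal derivative) that gives the worst power $\eps^2$, but your conclusion is unaffected.
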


\begin{proof}

The rescaled bodies have pairwise distances $\gtrsim\frac{1}{\eps|\log\eps|}$ resp.\  $\gtrsim\frac{1}{\eps\sqrt{|\log\eps|}}$, hence by Lemma \ref{Lem:Decay1} c), for $i\neq j$ we have \begin{align}
\norm{\nabla^m\grave{\phi}_i^1}_{L^{\infty}(\de \grave{B}_j)}\,\lesssim_m\, \eps^{1+m}|\log\eps|^{\ell(1+m)} \norm{b_i}_{L^2},
\end{align}

for every $m\geq 0$.

This lets us estimate the decay of the correctors $\grave{\phi}_j^2$ by the same Lemma, which can again be used to estimate the second-order correctors and so on. Iteratively, we obtain from Lemma \ref{Lem:Decay1} that \begin{align}\label{Normal corrector}
\norm{\nabla^m\grave{\phi}_j^l}_{L^2(\de \grave{B}_j)}\,\lesssim_m\, C^l\eps^{2l-2}|\log\eps|^{\ell (l+m+1)} \norm{b_i}_{L^2}
\end{align}

here $C$ is a numerical factor, depending on $k$, but not on $l$ or $m$, coming from the implicit constant in Lemma \ref{Lem:Decay1} and the fact that we have to sum over $k$ correctors in each step. By integrating over the decay estimate in Lemma \ref{Lem:Decay1} we obtain that \begin{align}\label{L2 corrector}
\norm{\grave{\phi}_j^l}_{H_R^1(\mathbb{H}\backslash\bigcup_j \grave{B}_j)}\,\lesssim\, \frac{|\log\eps|}{\eps}\norm{\de_n\grave{\phi}_j^l}_{H^1(\de \grave{B}_j)}.
\end{align}

Therefore the scheme converges in $H_R^1$ if $\eps$ is small enough.

By Lemma \ref{Lem:Decay1}, we obtain from \eqref{Normal corrector} that\begin{align}
|\nabla^m \grave{\phi}_j^l|\, \lesssim_m\, C^l\eps^{2l+m-1}|\log\eps|^{\ell (m+l+1)}\norm{b_i}_{L^2}\text{ on $\grave{B}_n$ with $n\neq j$}
\end{align}

and for $l\geq2$  we have \begin{align}
\norm{\nabla^m \grave{\phi}_j^l}_{L^2(\de \grave{B}_j)}\,\lesssim_m\, \norm{\de_n \grave{\phi}_j^l}_{H^{m-1}(\de \grave{B}_j)}\,\lesssim_m\, C^l\eps^{2l-2}|\log\eps|^{\ell (m+l+1)}\norm{b_i}_{L^2},
\end{align}

where we made use of Lemma \ref{Lem:Decay1} a).

Hence by summing up, we see that for $j\neq i$ and small enough $\eps$ we have  \begin{align}
&\norm{\nabla^m\grave{\phi}_i}_{L^2(\de \grave{B}_j)}\,\lesssim_m\, \eps^{2}|\log\eps|^{\ell(1+m)}\norm{b_i}_{L^2} \\
&\norm{\nabla^m(\grave{\phi}_i-\phi_i^1)}_{L^2(\de \grave{B}_i)}\,\lesssim_m\, \eps^{4}|\log\eps|^{\ell(1+m)}\norm{b_i}_{L^2}. 
\end{align}

After rescaling back to the original balls, we obtain the statement. 
\end{proof}

\begin{corollary}\phantomsection\label{MultPotSpec}
Fix some $t^*\in T_{q_i}M$ and let $\phi_{i,t^*}^1$ be the potential for $t^*$ if there is only a single body, let $\phi_{i,t^*}$ be the potential for $k$ bodies, in one of the two regimes \eqref{def regime1} or \eqref{def regime2}. Then we have the following bounds for all $m\in \N_{>0}$, with implicit constant bounded locally uniformly in $\tilde{q}$ in both regimes:\begin{itemize}
\item[a)] For $i\neq j$ it holds that \begin{align}
\norm{\nabla^m\phi_{i,t^*}}_{L^2(\de B_j)}\,\lesssim\, \eps^{\frac{7}{2}-m}|\log\eps|^{\ell(1+m)}|t^*|.
\end{align}

\item[b)] For all $i$ it holds that \begin{align}
\norm{\nabla^m(\phi_{i,t^*}^1-\phi_{i,t^*})}_{L^2(\de B_{i})}\,\lesssim\, \eps^{\frac{11}{2}-m}|\log\eps|^{\ell(1+m)}|t^*|.
\end{align}

\end{itemize}
All these estimates hold locally uniformly in $\tilde{q}$. In particular, this implies that the estimates from the single body case in Corollary \ref{MultPotSpec}  still hold for multiple bodies.
\end{corollary}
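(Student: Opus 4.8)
The plan is to reduce Corollary \ref{MultPotSpec} to Proposition \ref{MultiplePotentials} by the same rescaling argument that was used to deduce Corollary \ref{Potential1} from Proposition \ref{PotentialGeneral}, combined with the bookkeeping of the normal-velocity bounds. First I would record, exactly as in the proof of Corollary \ref{Potential1}, that if we set $b_i := \eps\, u(t^*)(\eps\,\cdot)$ on $\de\grave{B}_i$, then $\phi_{i,t^*}(\eps\,\cdot) = \grave{\phi}_i$ and $\phi_{i,t^*}^1(\eps\,\cdot) = \grave{\phi}_i^1$, since these rescaled functions solve the same elliptic systems appearing in Proposition \ref{MultiplePotentials} (the factor $\eps$ in $b_i$ is what makes the Neumann data transform correctly under $x\mapsto \eps x$, and one checks directly that $\int_{\de\grave{B}_i} r b_i\dx = 0$ follows from $\int_{\de B_i} r u(t^*)\dx = 0$). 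One also has, as in Corollary \ref{Potential1}, the bound $\norm{b_i}_{H^m(\de\grave{B}_i)} \lesssim \eps|t^*|$ (with a further gain $\norm{b_i - \eps t^*\cdot n}_{H^m}\lesssim \eps^2|t^*|$ that is not needed here), where the implicit constant is locally uniform in $\tilde q$ since the rescaled geometry is fixed.

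Next I would feed these into Proposition \ref{MultiplePotentials}. For part a), that proposition gives $\norm{\nabla^m\grave{\phi}_i}_{L^2(\de\grave{B}_j)} \lesssim \eps^2|\log\eps|^{\ell(1+m)}\norm{b_i}_{L^2} \lesssim \eps^3|\log\eps|^{\ell(1+m)}|t^*|$ for $i\neq j$. It then remains only to undo the rescaling: a function $g$ on $\de\grave{B}_j$ and its un-rescaled counterpart $\tilde g = g(\tfrac1\eps\,\cdot)$ on $\de B_j$ satisfy $\norm{\nabla^m \tilde g}_{L^2(\de B_j)} = \eps^{m}\eps^{-1/2}\eps^{?}\cdots$ — more precisely, each spatial derivative contributes a factor $\eps$ when passing from $\de\grave{B}_j$ to $\de B_j$, the $L^2(\de\grave{B}_j)$-to-$L^2(\de B_j)$ change of the arc-length measure contributes $\eps^{1/2}$, and $\phi_{i,t^*} = \grave{\phi}_i$ composed with the scaling contributes nothing extra in value but the substitution $b_i = \eps u(t^*)(\eps\cdot)$ already carried one factor of $\eps$ which I accounted for above. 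Collecting these: $\norm{\nabla^m\phi_{i,t^*}}_{L^2(\de B_j)} \eq \eps^{m+1/2}\norm{\nabla^m\grave{\phi}_i}_{L^2(\de\grave{B}_j)} \lesssim \eps^{m+1/2}\cdot\eps^{3}|\log\eps|^{\ell(1+m)}|t^*|$; this is off from the claimed $\eps^{7/2-m}$, so the exponent bookkeeping must be the reverse — a derivative in the $\grave{B}$-variables is $\eps$ times a derivative in the $B$-variables, giving $\eps^{-m}$, and the measure gives $\eps^{1/2}$, so $\norm{\nabla^m\phi_{i,t^*}}_{L^2(\de B_j)}\eq \eps^{-m+1/2}\norm{\nabla^m\grave\phi_i}_{L^2(\de\grave B_j)}$; combined with the extra $\eps$ from $b_i$ this yields precisely $\eps^{1/2-m}\cdot\eps^{3}|\log\eps|^{\ell(1+m)}|t^*| = \eps^{7/2-m}|\log\eps|^{\ell(1+m)}|t^*|$, as claimed. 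Part b) is identical, using instead the bound $\norm{\nabla^m(\grave{\phi}_i^1 - \grave{\phi}_i)}_{L^2(\de\grave{B}_i)} \lesssim \eps^4|\log\eps|^{\ell(1+m)}\norm{b_i}_{L^2}$ from Proposition \ref{MultiplePotentials} b), which after the same scaling and the extra factor $\eps$ from $b_i$ produces $\eps^{1/2-m}\cdot\eps^{5}|\log\eps|^{\ell(1+m)}|t^*| = \eps^{11/2-m}|\log\eps|^{\ell(1+m)}|t^*|$.

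The only genuine content is making sure the scaling exponents are assembled correctly and that the locally-uniform-in-$\tilde q$ nature of every constant is preserved — this is where I expect the bulk of the (still routine) work to be, since one has to track the $\eps$-powers from three independent sources (the change of variables in derivatives, the change in the surface measure, and the $\eps$ baked into the definition of $b_i$) and combine them with the $\eps$-powers already present in Proposition \ref{MultiplePotentials}. Finally I would remark, as stated, that specializing to $j=i$ and comparing with Corollary \ref{Potential1} shows the single-body estimates survive for multiple bodies, since $\phi_{i,t^*}^1$ is exactly the single-body potential and part b) controls its difference from $\phi_{i,t^*}$ by a higher power of $\eps$ than the single-body bound itself. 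This completes the proof.
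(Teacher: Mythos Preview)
Your approach is correct and is exactly the one the paper takes: the paper's proof is a single sentence stating that the result follows by rescaling the potentials as in the proof of Corollary \ref{Potential1} and applying Proposition \ref{MultiplePotentials}, and you have simply filled in the scaling bookkeeping (derivatives contribute $\eps^{-m}$, the surface measure $\eps^{1/2}$, and $\norm{b_i}_{L^2}\lesssim\eps|t^*|$). The only cosmetic issue is the stream-of-consciousness passage where you first write down the wrong exponent and then correct yourself; in a final write-up you should delete that and go straight to $\norm{\nabla^m\phi_{i,t^*}}_{L^2(\de B_j)}=\eps^{1/2-m}\norm{\nabla^m\grave\phi_i}_{L^2(\de\grave B_j)}$.
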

\begin{proof}
This follows directly by rescaling the potentials as in the proof of Corollary \ref{Potential1} and using Proposition \ref{MultiplePotentials}.

\end{proof}

\subsubsection{The derivative with respect to $q$}\label{sec q deri}

In the following, we want to obtain bounds on the derivative of the potential with respect to $q$. 

As we have already shown that these are smooth in $q$ in Lemma \ref{SmoothnessInq}, it is enough to estimate partial derivatives with respect to the $R_i$'s and $Z_i$'s. In order to compare boundary data at different instances of one $B_j$, we fix some $q$ and use the affine maps \begin{align}\label{def c}
c_0(x,s)\eq\frac{\rho_j(R_j+s)}{\rho_j(R_j)}(x-q_j)+q_j+se_R
\end{align}

and \begin{align}\label{def d}
d_0(x,s)\eq x+se_Z
\end{align}

where $\rho_j(\cdot)$ denotes $\rho_j$ as a function $R_j$. As we do not want to move the other bodies, we define $c$ and $d$ as smooth maps which equal $c_0$ and $d_0$ in a neighborhood of $B_j$ and which equal the identity in a neighborhood of the other bodies. For technical reasons, will assume that the neighborhood in which $c$ and $d$ equal the terms in \eqref{def c} resp.\ \eqref{def d} has a size $>>|\log\eps|^{-2}$, which is not restrictive.

We also use the convention of writing $\phi_{i,t^*,R_j}$ resp.\ $\phi_{i,t^*,R_j+s}$ for $\phi_{i,t^*}$, defined for the position $(R_j,Z_j)$ resp.\ $(R_j+s,Z_j)$ and also use the same notation with $Z_j$ for the $Z_j$-derivative.

\begin{proposition}\phantomsection\label{prop deri}
Fix some $t^*\in T_{q_i}M$, then for all $j,l$ we have \begin{align} 
\norm{\de_s \left(\left(\de_{\tau}\phi_{i,t^*,R_j+s}\right)\circ c(\cdot,s)\right)}_{L^2(\de B_l(q))}\,\lesssim\, \eps^{\frac{3}{2}}|\log\eps|^\ell
\end{align} and \begin{align}
\norm{\de_s \left(\left(\de_{\tau}\phi_{i,t^*,Z_j+s}\right)\circ d(\cdot,s)\right)}_{L^2(\de B_l(q))}\,\lesssim\, \eps^{\frac{3}{2}}|\log\eps|^\ell.\end{align}

Here $\tau=n^\perp$ refers to the tangent both on $B_l(q)$ and on the translated body. The implicit constant is bounded locally uniformly in $\tilde{q}$ in both the regimes \eqref{def regime1} and \eqref{def regime2} in a neighborhood of $s=0$.
\end{proposition}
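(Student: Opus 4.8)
The plan is to differentiate the elliptic problem defining $\phi_{i,t^*}$ with respect to the position of $B_j$, pull everything back to a fixed reference domain via the maps $c$ (or $d$), and then apply the single-body and multi-body potential estimates already established. First I would fix $q$ and consider the pullback $\hat{\phi}^s := \phi_{i,t^*,R_j+s}\circ c(\cdot,s)$, which solves an elliptic equation of the form $\div(A_s\nabla\hat{\phi}^s)=0$ on the fixed domain $\mathcal{F}(q)$ with Neumann data that is a smooth function of $s$ (coming from $u(t^*)$ pulled back, together with the shape-change term $-\frac{\rho_i}{2R_i}$ if $i=j$, and zero Neumann data on the other boundaries since $c$ is the identity there). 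The coefficient matrix $A_s$ equals $r\,\Id$ at $s=0$ and depends smoothly on $s$, with $\de_s A_s$ supported in the neighborhood where $c\ne\id$, i.e. on an annular region around $B_j$. Differentiating in $s$ (justified exactly as in Lemma \ref{SmoothnessInq}, Step 2, via the implicit function theorem) shows that $\de_s\hat{\phi}^s$ solves the same elliptic operator with a right-hand side $-\div((\de_sA_s)\nabla\hat{\phi}^s)$ and a Neumann datum $\de_s(\text{pulled-back data})$ on $\de B_j$, and zero Neumann data elsewhere.

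Next I would estimate the two source terms. The Neumann datum on $\de B_j$ for $\de_s\hat{\phi}^s$ has size $\lesssim\eps$ in $L^2(\de B_j)$ — indeed the original data $u(t^*)$ or $t^*\cdot n$ scales like $\eps^{1/2}$ times a bounded factor after accounting for the length of $\de B_j\sim\eps$, and the $s$-derivative of the affine pullback contributes a bounded extra factor (and in the case $i=j$ the term $\de_s(\rho_i/(2R_i))\sim\eps$ as in Lemma \ref{InteriorField}). The bulk term $\div((\de_sA_s)\nabla\hat{\phi}^s)$ is supported in the annulus around $B_j$; there I use the decay/trace estimates for $\phi_{i,t^*}$ (Corollary \ref{Potential1} and Lemma \ref{Lem:Decay1}, or their multi-body versions Corollary \ref{MultPotSpec}) to bound $\|\nabla\hat{\phi}^s\|_{L^2}$ on that annulus — this is where the factor $|\log\eps|^\ell$ enters, since the annulus in the rescaled picture has size $\sim 1/(\eps|\log\eps|^{\ell})$ and integrating the decay rate produces such logarithmic losses. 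An energy estimate (testing the equation for $\de_s\hat{\phi}^s$ against itself, using that $A_s\simeq r\,\Id$ and the trace inequality with constant $\lesssim\eps^{1/2}$ as in the proof of Proposition \ref{PotentialGeneral}) then gives $\|\sqrt{r}\nabla\de_s\hat{\phi}^s\|_{L^2}\lesssim\eps^{?}|\log\eps|^\ell$, and finally elliptic regularity up to $\de B_l(q)$ together with $r\simeq R_l$ there converts this into the claimed $L^2(\de B_l)$ bound on $\de_\tau\de_s\hat{\phi}^s$; note $\de_s(\de_\tau\hat{\phi}^s\circ c)=\de_\tau(\de_s\hat{\phi}^s)$ since $c$ acts on the reference domain after pullback, so tangential derivative and $s$-derivative commute.

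The argument for $d$ and the $Z_j$-derivative is identical, even simpler, since $d_0$ is a pure translation and $\det Dd_0=1$, so the coefficient perturbation $\de_sA_s$ comes only from the $r$-weight changing under the translation and is again supported in the annulus around $B_j$ with size $\lesssim\eps$. The main obstacle I anticipate is bookkeeping the precise power of $\eps$: one must track that the bulk source lives on the far annulus (distance $\gtrsim 1/(\eps|\log\eps|^\ell)$ from $B_j$ in rescaled coordinates), where $|\nabla\phi_{i,t^*}|$ already decays like $\dist^{-2}$ (the mean-zero case of Lemma \ref{Lem:Decay1} b)), so that term is genuinely lower order and the dominant contribution is the $\eps$-size Neumann datum on $\de B_j$ itself, which after the trace estimate $c_{trace}\lesssim\eps^{1/2}$ and elliptic regularity yields the $\eps^{3/2}$ scaling — the extra $\eps^1$ relative to the naive $\eps^{1/2}$ of an undifferentiated tangential trace being exactly the gain from the $s$-derivative hitting an $\eps$-small coefficient/datum. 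Keeping the estimates locally uniform in $\tilde q$ requires, as before, only that the annular geometries change by bounded rescaling, which is why the assumption that the neighborhood where $c,d$ are affine has size $\gg|\log\eps|^{-2}$ is imposed.
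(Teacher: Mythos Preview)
Your scheme works for the easy case $l\neq j$ (this is essentially the paper's ``Case 1'', where the extra $\eps^2$ factor from Proposition~\ref{MultiplePotentials} when passing from $\de B_j$ to $\de B_l$ does the job), but in the hard case $l=j$ there is a genuine gap: the energy/regularity argument you outline for $\de_s\hat\phi^s$ delivers only $\|\de_\tau\de_s\hat\phi^s\|_{L^2(\de B_j)}\lesssim\eps^{1/2}|\log\eps|^\ell$, a full power of $\eps$ short. Two of your size claims are wrong. First, the Neumann datum of $\de_s\hat\phi^s$ on $\de B_j$ is \emph{not} $O(\eps)$ in $L^2$: since $c=c_0$ near $B_j$ with $Dc_0=\frac{\rho_j(R_j+s)}{\rho_j}\Id$, the pulled-back condition is $\de_n\hat\phi^s=\frac{\rho_j(R_j+s)}{\rho_j}\,u(t^*)\circ c$, and differentiating the prefactor gives $-\tfrac{1}{2R_j}u(t^*)\approx -\tfrac{1}{2R_j}t^*\cdot n$, whose $L^2(\de B_j)$-norm is $\sim\eps^{1/2}$. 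Second, the bulk source is \emph{not} concentrated on a far annulus: $\de_sA_s$ is nonzero on the entire set where $c\neq\Id$, and near $\de B_j$ one computes $\de_sA_s\approx 1$ (not small), so the source there is of the same size as $\nabla\phi$.

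In fact $\de_s\hat\phi^s$ is not small at all; to leading order it equals $-\tfrac{1}{2R_j}\phi$. Your commutation claim $\de_s((\de_\tau\phi)\circ c)=\de_\tau(\de_s\hat\phi^s)$ misses precisely the Jacobian term $\tfrac{1}{2R_j}\de_\tau\phi$ coming from $Dc$, and the smallness of the target is a \emph{cancellation} between two $\eps^{1/2}$-sized pieces that the naive energy estimate cannot see. The paper's remedy is to build an auxiliary function $\tilde\phi$ (equivalently $\de_s\bigl(\sqrt{(R_j+s)/R_j}\,\grave\phi_{R_j+s}\circ\grave c\bigr)$, the prefactor chosen so that $\nabla\tilde\phi=\de_s((\nabla\grave\phi)\circ\grave c)$ exactly), and to recognise that $\tilde\phi$ can be written as $\de_s\grave\phi+\tfrac{3}{2\eps}\de_r\grave\phi-\tfrac{1}{2R_j}x\cdot\nabla\grave\phi+\tfrac{1}{2R_j}\grave\phi$. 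With these explicit correctors the elliptic equation for $\tilde\phi$ has \emph{genuinely} small bulk source $\div\bigl((\tfrac{1}{\eps}-\tfrac{r}{R_j})\nabla\grave\phi\bigr)$ and Neumann data $O(\eps^2)$ in rescaled variables (equations \eqref{tilde eq 1}--\eqref{tilde eq 2}); an energy estimate on the intermediate ball $\tilde B$, together with a separately proved improved decay for $\de_s\grave\phi$ (Lemma~\ref{Decay deri}, needed to control the contribution from $\de\tilde B\setminus\de\grave B$), then yields $\|\nabla\tilde\phi\|_{L^2(\de\grave B_j)}\lesssim\eps^2$, i.e.\ $\eps^{3/2}$ after rescaling. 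Without this corrector construction the cancellation is invisible to your method.
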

\begin{proof}
We assume $|t^*|=1$ and omit the indices $i$ and $t^*$. We first show the statement for the derivative in the $R$-direction and explain in the end why the $Z$-derivative works with the same argument. Without loss of generality, we may also assume that $Z_j=0$, as the system is invariant under translation.

We rescale by a factor $\frac{1}{\eps}$ again and use the same notation as in the previous proofs. Clearly, we have $\de_{R_j}\grave{\phi}=\de_{R_j}\phi(\eps\cdot)$, in particular, the derivative exists and it holds that \begin{align}
&\div(r\nabla\de_{R_j}\grave{\phi})\eq0.\label{eq deri}\\
&\de_n\de_{R_j}\grave{\phi}\eq0 \text{ on $\de\grave{B}_l$ for $l\neq j$ and on $\de\mathbb{H}$}.\label{boundary deri 1}
\end{align}

We first want to compute the remaining normal derivative. For this we introduce \begin{align*}\grave{c}(x,s)\eq\frac{1}{\eps}c(\eps x,s).\end{align*}

For $x\in \de\grave{B}_j$ we have \begin{equation}\begin{aligned}\label{s deri 1}
&\de_s\left(\de_n\grave{\phi}_{R_j+s}(\grave{c}(x,s))\right)\big|_{s=0}\eq\eps\delta_{ij}\de_s\left(t^*\cdot n+\frac{\eps}{2(R_j+s)}\sqrt{\frac{\tilde{\rho}_j^2R_j}{R_j+s}}t^*\cdot e_R\right)\bigg|_{s=0}\eq \\
&-\eps^2\delta_{ij}\frac{3\tilde{\rho}_j}{4R_j^2}t^*\cdot e_R.
\end{aligned}\end{equation}

Here the expressions for the normal boundary values follow from Definition \eqref{normal velo} and Assumption \eqref{def rho} after rearranging. This is $\lesssim \eps^2$ in any $H^m$-norm. 

On the other hand, by the product rule, on $\de\grave{B}_j$ we also have \begin{equation}\begin{aligned}\label{s deri 2}
&\de_s\left(\de_n\grave{\phi}_{R_j+s}(\grave{c}(x,s))\right)\big|_{s=0}\\
&\eq\de_s n(\grave{c}(x,s))\big|_{s=0}\cdot\nabla\grave{\phi}_{R_j}(x)+n\cdot\de_{s}\nabla\grave{\phi}_{R_j+s}(x)\big|_{s=0}+n\cdot \nabla^2\grave{\phi}_{R_j}(x)\de_s\grave{c}(x,s)\big|_{s=0}.
\end{aligned}\end{equation}

Here $n(\grave{c}(x,s))$ stands for the normal at $\grave{c}(x,s)$ of the rescaled body centered at $\frac{1}{\eps}(R_j+s)$, which is in fact constant, and hence the first summand drops out. 

We have by definition \begin{align}
\de_s\grave{c}(x,s)\big|_{s=0}\eq\frac{1}{\eps}e_R-\frac{1}{2R_j}(x-q_j).
\end{align}

Combining \eqref{s deri 1} and \eqref{s deri 2}, we see that on $\de\grave{B}_j$ we have \begin{align}
\de_n\de_{s}\grave{\phi}_{R_j+s}\big|_{s=0}\eq - \frac{1}{\eps}\de_n\de_r\grave{\phi}_{R_j}+\frac{\tilde{\rho}_j}{2R_j}\de_n^2\grave{\phi}_{R_j}-\eps^2\delta_{ij}\frac{3\tilde{\rho}_j}{4R_j^2}t^*\cdot e_R.\label{boundary deri 2}
\end{align}

We now estimate the derivatives of the boundary values on  $\de\grave{B}_j$ and on $\de\grave{B}_l$ for $l\neq j$ differently.\\

1. Case: $l\neq j$. Here $\grave{c}$ is the identity on $\de\grave{B}_l$ and hence we only need to estimate $\nabla\de_s\grave{\phi}_{R_j+s}$.

By rescaling we know from Corollary \ref{MultPotSpec} and Lemma \ref{Lem:Decay1} a) that \begin{align}
\norm{\nabla^m\grave{\phi}}_{L^2(\de\grave{B}_j)}\,\lesssim_m\, \eps.
\end{align}

By \eqref{boundary deri 2} it follows that \begin{align}
\norm{\de_n\de_{s}\grave{\phi}_{R_j+s}\big|_{s=0}}_{L^2(\de\grave{B}_j)}\,\lesssim\, 1.\label{bd boundary deri}
\end{align}

Hence we conclude from \eqref{eq deri},\eqref{boundary deri 1},\eqref{bd boundary deri} and by Proposition \ref{MultiplePotentials} that for $m\in \N_{>0}$ it holds\begin{align}
\norm{\nabla^m\de_s\grave{\phi}_{R_j+s}\big|_{s=0}}_{L^2(\de\grave{B}_l)}\,\lesssim_m\, \eps^{2}|\log\eps|^{\ell(m+1)}.
\end{align}

By rescaling back to the original $B$'s, one obtains the statement.\\

2. Case $l=j$.\\

We build an auxiliary function that equals the desired derivative.

Consider $\de_r\grave{\phi}$. It fulfills the PDE \begin{align}
\div(r\nabla\de_r\grave{\phi})\eq\div\left(-\nabla\grave{\phi}+\frac{r\eps}{R_j}\nabla\grave{\phi}\right) \label{r eq}
\end{align}

and has the Neumann boundary values $\de_n\de_r\grave{\phi}$ on $\de\grave{B}_j$.

Consider the function $x\cdot\nabla\grave{\phi}$, a direct calculation shows that  \begin{align}
&\div(r\nabla(x\cdot\nabla\grave{\phi}))\eq0\label{n eq}\\
&\de_n(x\cdot\nabla\grave{\phi})\eq\de_n\grave{\phi}+\frac{R_j}{\eps}\de_n\de_r\grave{\phi}+\tilde{\rho}_j\de_n^2\grave{\phi} \text{ on $\de\grave{B}_j$}.
\end{align}

Here we used the assumption that $Z_j=0$.

Now consider the function \begin{align}\label{def tilde}
\tilde{\phi}\,:=\,\de_{s}\grave{\phi}_{R_j+s}\big|_{s=0}-\frac{1}{2R_j}x\cdot\nabla\grave{\phi}+\frac{1}{2R_j}\grave{\phi}+\frac{3}{2\eps}\de_r\grave{\phi}.
\end{align}

and set \begin{align}
\hat{\phi}_s(x)\eq\sqrt{\frac{R_j+s}{R_j}}\grave{\phi}_{R_j+s}(\grave{c}(s,x)).
\end{align}

Then in a neighborhood of $\grave{B}_j$ it holds that $\de_s\hat{\phi}_s|_{s=0}=\tilde{\phi}$ and hence \begin{align}
\de_s\left((\nabla\grave{\phi}_{R_j+s})(\grave{c}(x,s)\right)\big|_{s=0}\eq \de_s\nabla\hat{\phi}_s(x)\big|_{s=0} \eq\nabla\tilde{\phi}(x).\label{rewritting phi}
\end{align}

Hence to prove the proposition it suffices to estimate $\nabla\tilde{\phi}$.\\

Let us further reduce this to the case in which $k=1$. First consider the case where $i\neq j=l$. Then by \eqref{boundary deri 2} and by rescaling the estimates on $\phi$ in Corollary \ref{MultPotSpec} we have  \begin{align}
 \norm{\de_n\de_s\grave{\phi}_{R_j+s}\big|_{s=0}}_{H^m(\de\grave{B}_j)}\,\lesssim\, \frac{1}{\eps}\norm{\nabla^2\grave{\phi}}_{H^{m}(\de\grave{B}_j)}\,\lesssim_m\,\eps^{2}|\log\eps|^\ell.
 \end{align}
 
Hence we conclude from Proposition \ref{MultiplePotentials} and \eqref{eq deri} and \eqref{boundary deri 1} that \begin{align}
\norm{\de_s\grave{\phi}_{R_j+s}\big|_{s=0}}_{H^m(\de\grave{B}_j)}\,\lesssim_m\, \eps^{2}|\log\eps|^\ell.
\end{align}

 Hence by the definition of $\tilde{\phi}$ (see \eqref{def tilde}) and Corollary \ref{MultPotSpec} we have \begin{align}
 \norm{\nabla\tilde{\phi}}_{H^m(\de\grave{B}_j)}\,\lesssim_m\, \norm{\de_s\grave{\phi}_{R_j+s}\big|_{s=0}}_{H^m(\de\grave{B}_j)}+\frac{1}{\eps}\norm{\nabla^2\grave{\phi}}_{H^m(\de\grave{B}_j)}+\norm{\nabla\grave{\phi}}_{H^m(\de\grave{B}_j)}\,\lesssim_m\,\eps^{2}|\log\eps|^\ell.
\end{align}
 
After rescaling back to the original $B$'s, this shows the statement in the case $i\neq j=l$.\\

Next, consider the case $i=j=l$ and $k>1$, and let $\grave{\phi}^1$ be the rescaled potential for a single body and let $\tilde{\phi}^1$ be the version of $\tilde{\phi}$ from the single body case. As the formulas \eqref{boundary deri 1} and \eqref{boundary deri 2} hold for both $\grave{\phi}$ and $\grave{\phi}^1$, we note that we have \begin{align}
\norm{\de_n\de_{s}(\grave{\phi}_{R_j+s}-\grave{\phi}_{R_j+s}^1)\big|_{s=0}}_{H^m(\de\grave{B}_j)}\,\lesssim\, \frac{1}{\eps}\norm{\nabla^2(\grave{\phi}-\grave{\phi}^1)}_{H^m(\de\grave{B}_j)}\,\lesssim_m \, \eps^{4}|\log\eps|^\ell,
\end{align}

here we used Proposition \ref{MultiplePotentials} b). Hence we conclude that \begin{align}
\norm{\nabla^m\de_{s}(\grave{\phi}_{R_j+s}-\grave{\phi}_{R_j+s}^1)\big|_{s=0}}_{L^2(\de\grave{B}_j)}\,\lesssim_m\,\eps^{4}|\log\eps|^\ell.
\end{align}

By rescaling Corollary \ref{MultPotSpec} b) and the definition of $\tilde{\phi}$  we conclude that \begin{align}
&\norm{\nabla(\tilde{\phi}-\tilde{\phi}^1)}_{H^m(\de\grave{B}_j)}\\
&\lesssim_m\, \norm{\nabla^m\de_{s}(\grave{\phi}_{R_j+s}-\grave{\phi}_{R_j+s}^1)\big|_{s=0}}_{L^2(\de\grave{B}_j)}+ \frac{1}{\eps}\norm{\nabla^2(\grave{\phi}-\grave{\phi}^1)}_{H^m(\de\grave{B}_j)}+\norm{\nabla(\grave{\phi}-\grave{\phi}^1)}_{H^m(\de\grave{B}_j)}\\
&\lesssim_m\, \eps^{4}|\log\eps|^\ell.
\end{align}

This shows the desired estimate once we have proven the case $k=1$ after rescaling back to the original bodies. \\

It remains to deal with the case $k=1$. We drop the indices on the $\grave{B}$'s.

By \eqref{boundary deri 2},\eqref{r eq}-\eqref{def tilde} we have  \begin{align}
&\div(r\tilde{\phi})\eq-\frac{3}{2}\div\left(\left(\frac{1}{\eps}-\frac{r}{R_j}\right)\nabla\grave{\phi}\right)\text{ in a neighborhood of $\grave{B}$}\label{tilde eq 1}\\
&\de_n\tilde{\phi}\eq-\eps^2\frac{3\tilde{\rho}_j}{4R_j^2}t^*\cdot e_R\label{tilde eq 2}.
\end{align}

Set $\tilde{B}:=B_{\frac{1}{\eps|\log\eps|^2}}(0)+\grave{B}\backslash\grave{B}$. By assumption the cutoff in the definition of $\grave{c}$ happens outside of $\tilde{B}$ and hence \eqref{tilde eq 1} holds in all of $\tilde{B}$.

 The crucial observation is that $\de_{R_j}\grave{\phi}$ will decay an order faster than expected.

\begin{lemma}\phantomsection\label{Decay deri}
For $x$ with $\dist(x,\grave{B})\geq 1$ it holds that \begin{align}
|\de_s\phi_{R_j+s}(x)|\,\lesssim\, \frac{\eps+\norm{\nabla\tilde{\phi}}_{L^2(\tilde{B})}}{\eps\dist(x,\grave{B})^2}+\frac{1}{\eps\dist(x,\grave{B})^3}.
\end{align}

and \begin{align}
|\nabla\de_s\tilde{\phi}_{R_j+s}(x)|\,\lesssim\, \frac{\eps+\norm{\nabla\tilde{\phi}}_{L^2(\tilde{B})}}{\eps\dist(x,\grave{B})^3}+\frac{1}{\eps\dist(x,\grave{B})^4}.
\end{align}
The implicit constant is bounded locally uniformly in $\tilde{q}$.

\end{lemma}
\begin{proof}
We only show the first estimate, the second works completely similarly, using the derivative of the fundamental solution.

The proof builds on the idea of the proof of Lemma \ref{decay1}. Recall that in the proof there, we extended the solution to the full space $\R^3$ and used the fundamental solution. We first need to show an estimate on the Neumann boundary values of the solution to the interior problem. As in the proof of Lemma \ref{decay1}, we extend $\grave{\phi}$ to $\mathbb{H}$ by solving the Dirichlet problem for $\div(r\nabla\cdot)$ in $\grave{B}$. We also set \begin{align}
\hat{\phi}_s(x)\,:=\,\sqrt{\frac{R_j+s}{R_j}}\grave{\phi}_{R_j+s}(\grave{c}(x,s))
\end{align}

inside of $\grave{B}$.

We claim that \begin{align}
\norm{\de_s((\de_{n_{int}}\grave{\phi}_{R_j+s})\circ c)}_{L^2(\de\grave{B})}\,\lesssim\, \norm{\tilde{\phi}}_{L^2(\tilde{B})}+\eps^2,\label{inner bd}
\end{align}

here $\de_{n_{int}}$ denotes the normal derivative from the inside.\\

By applying elliptic regularity estimates to the equations \eqref{tilde eq 1}-\eqref{tilde eq 2} and using the identity $\de_s\hat{\phi}_s=\tilde{\phi}$, we see that \begin{equation}\begin{aligned}
&\norm{\nabla\de_s\hat{\phi}_s\big|_{s=0}}_{L^2(\de\grave{B})}\eq\norm{\nabla\tilde{\phi}}_{L^2(\de\grave{B})}\\
&\lesssim\, \norm{\nabla\tilde{\phi}}_{L^2(\tilde{B})}+\norm{\de_n\tilde{\phi}}_{L^2(\de\grave{B})}+\eps\norm{\left(\frac{1}{\eps}-\frac{r}{R_j}\right)\nabla\grave{\phi}}_{H^1(\grave{B}+B_1(0)\backslash\grave{B})}\,\lesssim\,\eps^2+\norm{\nabla\tilde{\phi}}_{L^2(\tilde{B})}.\label{deri dir bd}
\end{aligned}\end{equation}

In $\grave{B}$ we have \begin{align}
\div\left(\frac{\grave{c}(x,s)_R}{R_j+s}\nabla\hat{\phi}_s\right)\eq0,
\end{align}

as one can see from a direct computation, where $\grave{c}(x,s)_R$ denotes the $R$-component of $\grave{c}$. Using for instance the implicit function theorem as in the proof of Lemma \ref{SmoothnessInq}, one can justify that this equation can be differentiated in $s$.

Hence we obtain that $\de_s\hat{\phi}_s(x)$ fulfills the equation \begin{align}
&\div\left(r\nabla\de_s\hat{\phi}_s\right)\eq-\div\left(\de_s\frac{\grave{c}(x,s)_R}{R_j+s}\nabla\grave{\phi}\right).
\end{align}

Hence by elliptic regularity for this equation and \eqref{deri dir bd} we obtain that \begin{equation}\begin{aligned}
&\norm{\de_{n_{int}}\de_s\hat{\phi}_s}_{L^2(\de\grave{B})}\,\lesssim\, \norm{\nabla\de_s\hat{\phi}_s}_{L^2(\de\grave{B})}+\eps\norm{\de_s\frac{\grave{c}(x,s)_R}{R_j+s}\nabla\grave{\phi}}_{H^1(\grave{B})}\\
&\lesssim\, \eps^2+\norm{\nabla\tilde{\phi}}_{L^2(\tilde{B})}+\eps\norm{\de_n\grave{\phi}}_{H^1(\de\grave{B})}\,\lesssim\,\eps^2+\norm{\nabla\tilde{\phi}}_{L^2(\tilde{B})}.
\end{aligned}\end{equation}

The Claim \eqref{inner bd} now follows from the fact that $\nabla\hat{\phi}_s=\left(\nabla\grave{\phi}_{R_j+s}\right)\circ\grave{c}(\cdot,s)$.

Now fix some $y\in \tilde{B}$. Then as argued in the proof of Lemma \ref{decay1} we have \begin{align}\label{deri conv}
\de_{s}\grave{\phi}_{R_j+s}(y)\eq \de_s\left([\de_n\grave{\phi}_{R_j+s}]\mathcal{H}^2\mres\de\grave{B}(R_j+s)^{\R^3}*\frac{-1}{4\pi|\cdot|}\right)(y),
\end{align}

where the convolution is taken with respect to three-dimensional coordinates and $\grave{B}(R_j+s)^{\R^3}$ is the axisymmetric torus corresponding to $\grave{B}$ defined with respect to $R_j+s$ and the factor $r$ in the jump disappears due to the coordinate change. We shall also view the curves $\grave{c}(x,s)$ as curves in $\R^3$, by setting (in axisymmetric coordinates) \begin{align}
\grave{c}(x,s)\eq(\grave{c}(x,s)_R,x_\theta,\grave{c}(x,s)_Z),
\end{align}

where $x_\theta$ is the azimuthal angle of $x$.

Using these curves, the convolution in \eqref{deri conv} can be written as \begin{align}
-\de_s\int_{\de\grave{B}^{\R^3}}[\de_n\grave{\phi}_{R_j+s}](\grave{c}(x,s))\frac{\grave{c}(x,s)_R}{r}\sqrt{\frac{R_j}{R_j+s}}\frac{1}{4\pi|y-\grave{c}(x,s)|}\dx,
\end{align}

here the factor in the middle is the determinant due to the coordinate change. Now we can differentiate under the integral, as all derivatives are smooth by Lemma \ref{SmoothnessInq} and use the product rule. We estimate the derivative of the first three factors and of the last factor separately.

First, we deal with \begin{align}
-\int_{\de\grave{B}^{\R^3}}\de_s\left([\de_n\grave{\phi}_{R_j+s}](\grave{c}(x,s))\frac{\grave{c}(x,s)_R}{r}\sqrt{\frac{R_j}{R_j+s}}\right)\bigg|_{s=0}\frac{1}{4\pi|x-y|}\dx.
\end{align}

Observe that \begin{equation}\begin{aligned}
&\int_{\de\grave{B}^{\R^3}}\de_s\left([\de_n\grave{\phi}_{R_j+s}](\grave{c}(x,s))\frac{\grave{c}(x,s)_R}{r}\sqrt{\frac{R_j}{R_j+s}}\right)\bigg|_{s=0}\dx\\
&=\, \de_s \int_{\de\grave{B}(R_j+s)^{\R^3}}[\de_n\grave{\phi}_{R_j+s}]\dx\eq 0.
\end{aligned}\end{equation}

Furthermore, we have by \eqref{inner bd} and \eqref{s deri 1} \begin{align}
\norm{\de_s\left([\de_n\grave{\phi}_{R_j+s}](\grave{c}(x,s))\right)\big|_{s=0}}_{L^2(\de\grave{B})}\,\lesssim\, \eps^2+\norm{\nabla\tilde{\phi}}_{L^2(\tilde{B})}
\end{align}

and \begin{align}
\left|\de_s\frac{\grave{c}(x,s)_R}{r}\sqrt{\frac{R_j}{R_j+s}}\right|\,\lesssim\, 1.
\end{align}

Since the boundary values of $\grave{\phi}$ are $\lesssim \eps$ in any $H^m$-norm we have \begin{align}
\norm{[\de_n\grave{\phi}]}_{L^2(\de\grave{B})}\,\lesssim\, \norm{\de_n\grave{\phi}}_{L^2(\de\grave{B})}+\norm{\grave{\phi}}_{H^1(\de\grave{B})}\,\lesssim\, \eps.\label{est jump}
\end{align}

Hence we have \begin{align}
\norm{\de_s\left([\de_n\grave{\phi}_{R_j+s}](\grave{c}(x,s))\frac{\grave{c}(x,s)_R}{r}\sqrt{\frac{R_j}{R_j+s}}\right)\bigg|_{s=0}}_{L^2(\de\grave{B})}\,\lesssim\, \eps+\norm{\nabla\tilde{\phi}}_{L^2(\de\grave{B})}.
\end{align}

Now we combine this with the mean-freeness to estimate \begin{align}
\left|\int_{\de\grave{B}^{\R^3}}\de_s\left([\de_n\grave{\phi}_{R_j+s}](\grave{c}(x,s))\frac{\grave{c}(x,s)_R}{r}\sqrt{\frac{R_j}{R_j+s}}\right)\bigg|_{s=0}\frac{1}{4\pi|x-y|}\dx\right|\,\lesssim\, \frac{\eps+\norm{\nabla\tilde{\phi}}_{L^2(\de\grave{B})}}{\eps\dist(y,\grave{B})^2}.
\end{align}

It remains to estimate the second summand, which is  \begin{align}
\int_{\de\grave{B}^{\R^3}}[\de_n\grave{\phi}]\nabla_x\frac{1}{4\pi|y-x|}\cdot \de_s\grave{c}(x,0)\dx.
\end{align}

Note that $[\de_n\grave{\phi}]$ is mean-free and one easily sees that $|\de_s\grave{c}(x,0)|\lesssim \frac{1}{\eps}$ and $|\nabla\de_s\grave{c}(x,0)|\lesssim 1$. Hence by exploiting the mean-freeness of $[\de_n\grave{\phi}]$ and using \eqref{est jump}, we see that \begin{align}
&\left|\int_{\de\grave{B}^{\R^3}}[\de_n\grave{\phi}]\nabla_x\frac{1}{4\pi|y-x|}\cdot \de_s\grave{c}(x,0)\dx\right|\,\lesssim\, \frac{\norm{\de_s\grave{c}(0)}_{\sup}}{\dist(y,\grave{B})^3}+\frac{\norm{\nabla_x\de_s\grave{c}(0)}_{\sup}}{\dist(y,\grave{B})^2}\\
&\lesssim\, \frac{1}{\eps\dist(y,\grave{B})^3}+\frac{1}{\dist(y,\grave{B})^2}.
\end{align}

The statement follows because $\dist(y,\grave{B})<<\eps^{-1}$.

\end{proof}

We continue with the proof of Proposition \ref{prop deri}. By testing the elliptic equation \eqref{tilde eq 1},\eqref{tilde eq 2} with $\tilde{\phi}$ in $\tilde{B}$ and using partial integration and the Cauchy-Schwarz inequality, we obtain that \begin{align}\label{est tilde}
&\norm{\sqrt{r}\nabla\tilde{\phi}}_{L^2(\tilde{B})}^2\,\leq\, \frac{3}{2}\norm{\frac{\left(\frac{1}{\eps}-\frac{r}{R_j}\right)}{\sqrt{r}}\nabla\grave{\phi}}_{L^2(\tilde{B})}\norm{\sqrt{r}\nabla\tilde{\phi}}_{L^2(\tilde{B})}\\
&-\int_{\de \grave{B}}\left(\frac{3}{2}\left(\frac{1}{\eps}-\frac{r}{R_j}\right)\de_n\grave{\phi}+r\de_n\tilde{\phi}\right)\tilde{\phi}\dx+\int_{\de \tilde{B}\backslash\de\grave{B}}\left(\frac{3}{2}\left(\frac{1}{\eps}-\frac{r}{R_j}\right)\de_n\grave{\phi}+r\de_n\tilde{\phi}\right)\tilde{\phi}\dx\notag\\
&:=\,I+II+III,
\end{align}

where the normal $n$ on $\de \tilde{B}\backslash\de\grave{B}$ is taken as the outer normal and $I$, $II$ and $III$ are defined in the obvious way.

Using the decay estimate from Lemma \ref{Lem:Decay1}, we can estimate \begin{align}
&I\,\lesssim\, \eps\left(\int_{B_\frac{1}{\eps|\log\eps|^2}\backslash B_1(0)}\frac{|x_1|^2}{(|x_1|+\frac{1}{\eps})|x|^4}\dx\right)^\frac{1}{2}\norm{\sqrt{r}\nabla\tilde{\phi}}_{L^2(\tilde{B})}\\
&\lesssim\, \eps\left(\int_{1}^{\frac{1}{\eps|\log\eps|^2}}\frac{1}{x(x+\frac{1}{\eps})}\dx\right)^\frac{1}{2}\norm{\sqrt{r}\nabla\tilde{\phi}}_{L^2(\tilde{B})}\,\lesssim\, \eps^\frac{3}{2}|\log\eps|^\ell\norm{\sqrt{r}\nabla\tilde{\phi}}_{L^2(\tilde{B})}.
\end{align}

To estimate the second term, we note that by partial integration we have \begin{equation}\begin{aligned}
&\int_{\de\grave{B}}\left(\frac{3}{2}\left(\frac{1}{\eps}-\frac{r}{R_j}\right)\de_n\grave{\phi}+r\de_n\tilde{\phi}\right)\dx\\
&\eq \lim_{R\rightarrow \infty}\int_{\de \left(\mathbb{H}\cap B_R(0)\right)}\left(\frac{3}{2}\left(\frac{1}{\eps}-\frac{r}{R_j}\right)\de_n\grave{\phi}+r\de_n\tilde{\phi}\right)\dx\eq0.
\end{aligned}\end{equation}

Here the last equality follows from the fact that the integrand is $0$ on $\de\mathbb{H}$ and by using the decay estimates from Lemma \ref{Lem:Decay1} and Lemma \ref{Decay deri} together with the definition of $\tilde{\phi}$.

We can then use this mean-freeness and the explicit form of the normal derivatives to estimate

 \begin{align}
II\,\lesssim\, \norm{\frac{3}{2}\left(\frac{1}{\eps}-\frac{r}{R_j}\right)\de_n\grave{\phi}+r\de_n\tilde{\phi}}_{L^2(\de\grave{B})}\norm{\nabla\tilde{\phi}}_{L^2(\tilde{B})}\,\lesssim\, \eps^{\frac{3}{2}}\norm{\sqrt{r}\nabla\tilde{\phi}}_{L^2(\tilde{B})}.
\end{align}

To estimate the third term, we use Lemma \ref{Lem:Decay1} and Lemma \ref{Decay deri} and obtain that 

 \begin{align}
&III\,\lesssim\,\frac{1}{\eps|\log\eps|^2}\sup_{x\in \de\tilde{B}\backslash\de\grave{B}}|\tilde{\phi}(x)|\left(\frac{1}{\eps}|\nabla\tilde{\phi}(x)|+\frac{1}{\eps}|\nabla\grave{\phi}(x)|\right)\\
&\lesssim\,\frac{1}{\eps} \left(\eps^2+\eps\norm{\nabla\tilde{\phi}}_{L^2(\tilde{B})}\right)\left(\eps^2+\eps\norm{\nabla\tilde{\phi}}_{L^2(\tilde{B})}\right)|\log\eps|^\ell\\
&\lesssim\, \eps^3|\log\eps|^\ell+\eps|\log\eps|^\ell\norm{\nabla\tilde{\phi}}_{L^2(\tilde{B})}.
\end{align}

Putting these back into the estimate \eqref{est tilde}, we obtain \begin{align}
\norm{\sqrt{r}\nabla\tilde{\phi}}_{L^2(\tilde{B})}\,\lesssim\, \eps^{\frac{3}{2}}|\log\eps|^\ell.
\end{align}

Now we can apply elliptic regularity estimates to \eqref{tilde eq 1},\eqref{tilde eq 2} and obtain together with the previous estimates on $\grave{\phi}$ that \begin{align}
\norm{\nabla\tilde{\phi}}_{H^m(\de\grave{B})}\,\lesssim_m\, \eps^2|\log\eps|^\ell.
\end{align}

This finishes the proof of the estimate for the $R_j$-derivative. The $Z$-derivative proceeds in the same way, except that one uses \begin{align}
\tilde{\phi}\eq\de_s\grave{\phi}_{Z_j+s}+\frac{1}{\eps}\de_Z\grave{\phi}; \quad \hat{\phi}_s\eq\grave{\phi}_{Z_j+s}(\grave{d}(s,x)).
\end{align}

\end{proof}

\subsection{The Christoffel Symbol and the added Inertia}\label{subsec3.3}

\begin{lemma}\phantomsection\label{conv Inertia}
If we identify the tangent space $T_qM$ with $\R^{2k}$, then it holds that \begin{align}
\left|\mathcal{M}-\pi\begin{pmatrix}R_1\eps^2\tilde{\rho}_1^2 &0 &\dots\\0& R_1\eps^2\tilde{\rho}_1^2& \dots\\0& 0 & R_2\eps^2\tilde{\rho}_2 &0\\
\dots\end{pmatrix}\right|\,\lesssim\, \eps^3|\log\eps|^\ell,
\end{align}

where $\mathcal{M}$ was defined in \ref{main def}.

Here the implicit constant is bounded locally uniformly in $\tilde{q}$.
\end{lemma}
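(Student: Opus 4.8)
The plan is to reduce the computation of $\mathcal{M}$ to the single-body case and then to the explicit two-dimensional potential $\chek{\phi}_{t^*}$, controlling all errors with the estimates already established. Recall that $(\mathcal{M}_{ij}(q)t_i^*)\cdot s_j^* = \int_{\mathcal{F}} r\nabla\phi_{i,t_i^*}\cdot\nabla\phi_{j,s_j^*}\dx$; integrating by parts (justified by the decay in Lemma \ref{ExistencePhi}) this equals $\int_{\de B_i} r\,\de_n\phi_{i,t_i^*}\,\phi_{j,s_j^*}\dx = \int_{\de B_i} r\,u(t_i^*)\,\phi_{j,s_j^*}\dx$, so only the boundary traces of the potentials on the $\de B_l$ matter. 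First I would handle the off-diagonal blocks $i\neq j$: by Corollary \ref{MultPotSpec} a), $\phi_{j,s_j^*}$ restricted to $\de B_i$ is of size $\eps^{7/2-1}|\log\eps|^\ell|s_j^*| = \eps^{5/2}|\log\eps|^\ell|s_j^*|$ (using $m=1$ plus a trace/Poincaré step to pass from $\nabla$ to the function itself, together with mean-freeness), while $r u(t_i^*)$ on $\de B_i$ has $L^2$-norm $\lesssim \eps^{1/2}|t_i^*|$ over a curve of length $\lesssim\eps$; multiplying and integrating gives a contribution $\lesssim \eps^3|\log\eps|^\ell$, which is absorbed into the error.

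For the diagonal blocks, I would first replace $\phi_{i,t_i^*}$ (the $k$-body potential) by $\phi_{i,t_i^*}^1$ (the single-body potential), which by Corollary \ref{MultPotSpec} b) costs only $\eps^{11/2-1}|\log\eps|^\ell|t_i^*| = \eps^{9/2}|\log\eps|^\ell$ in the relevant trace norm on $\de B_i$, hence $\lesssim\eps^{9/2+1}|\log\eps|^\ell$ after integrating against $ru(t_i^*)$ — negligible. So it suffices to compute $\int_{\de B_i} r u(t_i^*)\phi_{i,s_i^*}^1\dx$. Next I would replace $\phi_{i,t^*}^1$ by the explicit two-dimensional potential $\chek{\phi}_{t^*}$ from Definition \ref{def flat func}, using Corollary \ref{Potential1}: on $\de B_i$, $\|\phi_{t^*}^1-\chek{\phi}_{t^*}\|_{L^2(\de B)}\lesssim \eps^{3/2}\sqrt{|\log\eps|}|t^*|$ (taking $m=1$ and integrating back, again using mean-freeness), which against $r u(s^*)$ (size $\eps^{1/2}|s^*|$ over length $\eps$) gives $\lesssim \eps^3|\log\eps|^\ell$. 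Then I would also replace $ru(t_i^*)$ by its flat analogue $t_i^*\cdot n$ on $\de B_i$ and the weight $r$ by the constant $R_i$; since $|r-R_i|\lesssim\rho_i\lesssim\eps$ on $\de B_i$ and the last summand $\frac{\dot q_{R_i}\rho_i}{2R_i}$ in \eqref{normal velo} is also $\lesssim\eps$, these substitutions again cost only $O(\eps^3|\log\eps|^\ell)$. What remains is the purely two-dimensional quantity $R_i\int_{\de B_{\rho_i}(q_i)} (t_i^*\cdot n)\,\chek{\phi}_{s_i^*}\dx$, which can be evaluated by hand (or by the divergence theorem $=R_i\int_{B_{\rho_i}}\nabla\chek{\phi}_{t_i^*}\cdot\nabla\chek{\phi}_{s_i^*}$ — but this is over the disk, where $\chek\phi$ is not harmonic; cleaner is to use that for the flat exterior potential $\int_{\de B_\rho}(t^*\cdot n)\chek\phi_{s^*} = -\pi\rho^2\, t^*\cdot s^*$ by a direct computation with the explicit formula and orthogonality of the angular modes). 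This yields exactly $-\pi R_i\rho_i^2\, t_i^*\cdot s_i^* = -\pi R_i\tilde\rho_i^2\eps^2\, t_i^*\cdot s_i^*$; tracking the sign through the integration by parts (where a boundary term at $\infty$ vanishes and the sign of $\de_n$ flips because $n$ is the outer normal of $B_i$, i.e.\ inner normal of $\mathcal{F}$) gives the stated positive diagonal $\pi R_i\tilde\rho_i^2\eps^2$.

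Finally I would assemble: $\mathcal{M}$ equals the claimed diagonal matrix up to the accumulated errors, each of which is $\lesssim\eps^3|\log\eps|^\ell$, and all implicit constants are locally uniform in $\tilde q$ because every cited estimate (Lemma \ref{ExistencePhi}, Corollaries \ref{Potential1} and \ref{MultPotSpec}, Proposition \ref{MultiplePotentials}) is. The main obstacle I anticipate is bookkeeping rather than conceptual: one must pass from the $H^m(\de B)$-type estimates on \emph{gradients} of the potential differences (which is what Corollaries \ref{Potential1} and \ref{MultPotSpec} give, with negative powers of $\eps$ when $m=1$) to estimates on the \emph{function} values on $\de B$, and do so in a way that exploits mean-freeness of the relevant boundary data to gain the extra power of $\eps$ — otherwise the off-diagonal and correction terms are only $O(\eps^2)$, not $O(\eps^3)$, and the error bound claimed in the lemma would fail. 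Carefully using that $u(t^*)$ and $\de_n(\phi_{i,t^*}^1-\chek\phi_{t^*})$ integrate to zero on $\de B_i$ (so one may subtract the mean before applying Cauchy–Schwarz, and use a Poincaré-type inequality on $\de B$ whose constant scales like $\eps$) is the key quantitative point, together with the elementary but sign-sensitive evaluation of the model integral $\int_{\de B_\rho}(t^*\cdot n)\chek\phi_{s^*}\dx$.
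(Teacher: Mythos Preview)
Your proposal is correct and follows essentially the same route as the paper's proof: reduce $\mathcal{M}$ to boundary integrals via partial integration, kill the off-diagonal blocks with Corollary~\ref{MultPotSpec}~a), and on the diagonal compare $\phi_{i,t^*}$ to the explicit flat potential $\chek\phi_{t^*}$ (via Corollaries~\ref{Potential1} and~\ref{MultPotSpec}~b)) while replacing $r\,u(t^*)$ by $R_i\,t^*\cdot n$, then evaluate $\int_{\de B_i}(t^*\cdot n)\chek\phi_{s^*}\dx=-\pi\rho_i^2\,t^*\cdot s^*$. Your emphasis on mean-freeness of $r\,u(t^*)$ as the mechanism that gains the extra power of $\eps$ is exactly what the paper uses (it invokes it tersely), and your slightly more explicit two-step reduction $k$-body $\to$ single body $\to$ flat is just an unpacking of what the paper compresses into one citation.
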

\begin{proof}
By partial integration we have for $t^*\in T_{q_i}M$ and $s^*\in T_{q_j}M$ that \begin{align}
(\mathcal{M}t^*)\cdot s^*\eq-\int_{\de B_j}r \de_n\phi_{i,t^*}\phi_{j,s^*}\dx.
\end{align}

If $i\neq j$ this is $\lesssim \eps^3|\log\eps|^\ell|s^*||t^*|$ by Corollary \ref{MultPotSpec} a). If $i=j$, then it holds that \begin{equation}\begin{aligned}
&\left|\int_{\de B_i} r\de_n\phi_{i,t^*}\phi_{j,s^*}\dx-R_i\int_{\de B_i}t^*\cdot n\chek{\phi}_{s^*}\dx\right|\\
&\lesssim\, \norm{ru(t^*)}_{L^2}\norm{\phi_{i,s^*}-\chek{\phi}_{s^*}}_{L^2(\de B_i)/constants}+\norm{ru(t^*)-R_it^*\cdot n}_{L^2}\norm{\chek{\phi}_{s^*}}_{L^2(\de B_i)},
\end{aligned}\end{equation}

where we used the fact that $ru(t^*)$ is mean-free and $\chek{\phi}$ was defined in \ref{def flat func}. Now by the Corollaries \ref{Potential1} and \ref{MultPotSpec} and the definition of $u(t^*)$ this is $\lesssim \eps^3|\log\eps|^\ell|s^*||t^*|$.

Now observe that if $t^*=e_1$ and $s^*=e_2$ then $t^*\cdot n$ is orthogonal to $\chek{\phi}_{s^*}$ as the former is symmetric with respect to the $e_2$-direction while the latter is antisymmetric in that direction.

If $t^*=s^*$ then it follows from the explicit form of $\chek{\phi}_{s^*}$ that \begin{align}
\int_{\de B_i}t^*\cdot n\chek{\phi}_{s^*}\dx\eq -\pi|t^*|^2\rho_i^2.
\end{align}

\end{proof}

\begin{lemma}\phantomsection\label{conv Gamma}
It holds that \begin{align}
|\Gamma|\,\lesssim\,\eps^3|\log\eps|^\ell,
\end{align}

where the Christoffel symbol $\Gamma$ was defined in \ref{main def} and the implicit constant is bounded locally uniformly in $\tilde{q}$.
\end{lemma}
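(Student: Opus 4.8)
The plan is to go back to the definition of $\Gamma$ in Definition \ref{main def}, which expresses $\langle\Gamma(q),t^*,s^*\rangle\cdot w^*$ as a symmetrized combination of terms $((\de_q\mathcal{M}_{ij}\cdot a)b)\cdot c$. So it suffices to bound $|\de_q\mathcal{M}_{ij}|$ by $\eps^3|\log\eps|^\ell$, since the Christoffel symbol is a fixed linear combination of such quantities (with numerical coefficients depending only on $k$). First I would rewrite $(\mathcal{M}_{ij}t_i^*)\cdot s_j^* = \int_{\mathcal{F}} r\nabla\phi_{i,t_i^*}\cdot\nabla\phi_{j,s_j^*}\dx$ using partial integration (justified by the decay estimates in Lemmata \ref{ExistencePhi}, \ref{SmoothnessInq}) as a boundary integral $-\int_{\de B_j} r\,\de_n\phi_{i,t_i^*}\,\phi_{j,s_j^*}\dx$, exactly as at the start of the proof of Lemma \ref{conv Inertia}. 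The point of this reduction is that every factor is then supported on a single $\de B_j$, where I have sharp control of the $q$-derivative.

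Next I would differentiate this boundary integral in $q$. To handle the moving domain I pull back to a fixed reference body via the affine maps $c$ and $d$ from \eqref{def c}, \eqref{def d}, writing $\de_q$ as $\de_s$ of the pulled-back quantities at $s=0$; the product rule then produces three kinds of terms: (i) the derivative of the (explicit, smooth, $\lesssim 1$) Jacobian/geometric factors hitting $\de_n\phi_{i,t_i^*}\,\phi_{j,s_j^*}$, (ii) $\de_s$ hitting the tangential/normal trace of $\phi_{i,t_i^*}$, i.e.\ $\de_s((\de_\tau\phi_{i,t_i^*,R_j+s})\circ c)$ and the analogous normal-derivative version, and (iii) $\de_s$ hitting $\phi_{j,s_j^*}$ itself. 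For the type-(ii) terms Proposition \ref{prop deri} gives $\lesssim \eps^{3/2}|\log\eps|^\ell$ in $L^2(\de B_l)$ (after rescaling, $L^2$-norms on $\de B_l$ of $\de_\tau$ of a potential carry a $\eps^{1/2}$, so together with the $L^2(\de B_l)$ bound $\lesssim\eps^{1/2}|\log\eps|^\ell$ on the undifferentiated companion factor from Lemma \ref{basic est}/Corollary \ref{MultPotSpec}, and the extra factor $r\approx R_j$, one lands at $\eps^{3/2+1/2}|\log\eps|^\ell = \eps^2|\log\eps|^\ell$ — I should double-check the bookkeeping here, see below). The type-(iii) terms are controlled the same way, using that $\de_q$ of a potential again solves a $\div(r\nabla\cdot)=0$ equation with controlled Neumann data (Lemma \ref{SmoothnessInq}) and hence satisfies the same kind of trace bound; and the type-(i) terms are bounded by the product of the two plain trace bounds from Corollary \ref{MultPotSpec} times the $\lesssim 1$ Jacobian derivative. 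Combining, each contribution to $\de_q\mathcal{M}_{ij}$ is $\lesssim \eps^3|\log\eps|^\ell$ (with an extra power of $\eps$ to spare when $i\neq j$, by Corollary \ref{MultPotSpec} a)), which is exactly what is needed.

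The main obstacle I anticipate is the careful $\eps$-power accounting in the type-(ii) terms: one must keep straight the rescaling factor $\eps$ in $\rho_i=\eps\tilde\rho_i$, the $\eps^{1/2}$ from $\dx$-measure on the tiny circle $\de B_i$, the factor $r\approx R_i$ (order $1$, not $\eps$) in the integrand, and the normalization $|t^*|,|s^*|=1$; getting these right is what turns the $\eps^{3/2}$ of Proposition \ref{prop deri} into the claimed $\eps^3$ after pairing two such factors and integrating over $\de B_i$ (whose length is $\approx\eps$). A secondary point requiring care is justifying the differentiation under the integral sign and the absence of boundary contributions at infinity when partially integrating; both follow from the locally-uniform decay estimates of Lemmata \ref{ExistencePhi}, \ref{SmoothnessInq}, \ref{PsiDeri}, but should be invoked explicitly. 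Once $|\de_q\mathcal{M}_{ij}|\lesssim\eps^3|\log\eps|^\ell$ is in hand, the bound on $|\Gamma|$ is immediate from its definition. I would leave the purely mechanical verification of the symmetrized combination to the reader.
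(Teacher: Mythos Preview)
Your approach is the same as the paper's, and the reduction to bounding $\de_q\mathcal M_{ij}$ via the boundary integral $-\int_{\de B_j} r\,\de_n\phi_{i,t^*}\,\phi_{j,s^*}\dx$ is exactly right. The gap is precisely the $\eps$-accounting you flag: as written, your type-(ii)/(iii) estimate gives only $\eps^2|\log\eps|^\ell$, which is not enough.

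The missing observation is the following. In the hardest case $i=j=l$, the factor $r\,\de_n\phi_{i,t^*}=ru(t^*)$ is \emph{mean-free} on $\de B_i$. Hence when $\de_s$ falls on $\phi_{i,s^*}$, you only need to control $\de_s(\phi_{i,s^*}\circ c)$ in $L^2(\de B_i)/\text{constants}$, not in $L^2$. By the fundamental theorem of calculus around a circle of circumference $\approx\eps$, this is bounded by $\eps\cdot\norm{\de_s(\de_\tau\phi_{i,s^*}\circ c)}_{L^2}\lesssim \eps\cdot\eps^{3/2}|\log\eps|^\ell=\eps^{5/2}|\log\eps|^\ell$ (this is the paper's \eqref{M deri 2}). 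Paired with $\norm{ru(t^*)}_{L^2}\lesssim\eps^{1/2}$ you land at $\eps^3|\log\eps|^\ell$. Your invocation of Lemma~\ref{SmoothnessInq} for type-(iii) is not quantitative in $\eps$ and cannot replace this step.

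For the term where $\de_s$ hits $\de_n\phi_{i,t^*}=u(t^*)$, note that this derivative is explicit (rescale \eqref{s deri 1}) with $\norm{\de_s(u(t^*)\circ c)}_{L^2}\lesssim\eps^{3/2}$, and the companion $\phi_{i,s^*}$ satisfies $\norm{\phi_{i,s^*}}_{L^2(\de B_i)}\lesssim\eps^{3/2}$ (since $|\chek\phi_{s^*}|\approx\rho_i\approx\eps$ on a circle of length $\approx\eps$), not $\eps^{1/2}$ as you wrote; the product is again $\eps^3$. Once these two pairings are corrected, the remaining Jacobian term and the cases $l\neq i$ go through exactly as you describe.
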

\begin{proof} 
By the definition of $\Gamma$, it suffices to estimate the derivative of $\mathcal{M}$. If we are differentiating $\mathcal{M}$ with respect to $q_l$ for $l\neq i$, then it holds that \begin{align}
\left|-\de_{q_l}\int_{\de B_i}r\de_n\phi_{i,t^*}\phi_{j,t^*}\dx\right|\,\lesssim\, \eps\norm{ru(t^*)}_{L^2}\norm{\de_{q_l}\de_\tau\phi_{j,s^*}}_{L^2}\,\lesssim\,\eps^3|\log\eps|^\ell|t^*||s^*|,
\end{align}

where we used that $u(t^*)$ does not depend on $q_l$, and used the mean-freeness of $ru(t^*)$ to estimate $\de_{q_l}\phi_{j,s^*}$ with its derivative, and in the last step we used Proposition \ref{prop deri}. If $i=l$ we can switch the roles of $i$ and $j$ unless $i=j=l$. For simplicity we only consider the derivative with respect to $R_i$ in this case, the other derivative is easier. We again use the diffeomorphism $c$, defined at the beginning of Subsection \ref{sec q deri}.

Setting $\mathcal{M}_{R_i+s}$ for $\mathcal{M}$ defined with respect to $R_i+s$ we have \begin{align}\label{inertia fixed}
(\mathcal{M}_{R_i+s}t^*)\cdot s^*\eq - \int_{\de B_i}c(x,s)_R\frac{\rho_i(R_i+s)}{\rho_i(R_i)}u(t^*)\circ c\phi_{i,s^*,R_i+s}\circ c\dx,
\end{align}

where $\rho_i(R_i)$ refers to $\rho_i$ as a function of $R_i$ and $c(x,s)_R$ is the $R$-component of $c$. Using the fundamental theorem of calculus and Proposition \ref{prop deri}, we see that \begin{align}
\norm{\de_s \left(\frac{\rho_i(R_i)}{\rho_i(R_i+s)}\phi_{i,s^*,R_i+s}\circ c\right)}_{L^2/constants}\,\lesssim\, \eps^\frac{5}{2}|\log\eps|^\ell|s^*|.\label{M deri 2}
\end{align}

Furthermore, we have \begin{align}
\norm{\de_su(t^*)\circ c}_{L^2}\,\lesssim\, \eps^\frac{3}{2},\label{M deri 1}
\end{align}

as one sees by rescaling \eqref{s deri 1}.

It is easy to see that \begin{align}
\left|\de_s\left( c(x,s)_R\frac{\rho_i(R_i+s)^2}{\rho_i(R_i)^2}\right)\right|\,\lesssim\, \eps,\label{M deri 3}
\end{align}

uniformly in $x$.

We can now use this to estimate the derivative of \eqref{inertia fixed} by the product rule: \begin{equation}\begin{aligned}
&\left|\de_s \int_{\de B_i}c(x,s)_R\frac{\rho_i(R_i+s)}{\rho_i(R_i)}u(t^*)\circ c\phi_{i,s^*,R_i+s}\circ c\dx\bigg|_{s=0}\right|\\
&\leq\, \norm{ru(t^*)}_{L^2}\norm{\de_s \left(\frac{\rho_i(R_i)}{\rho_i(R_i+s)}\phi_{i,s^*,R_i+s}\circ c\right)\bigg|_{s=0}}_{L^2/constants}\\
&\:\:\:+\norm{\de_s \left(c(x,s)_R\frac{\rho_i(R_i+s)^2}{\rho_i(R_i)^2}\right)}_{L^\infty}\norm{u(t^*)}_{L^2}\norm{\phi_{i,s^*}}_{L^2(\de B_i)}\\
&\:\:\:+\norm{r\phi_{i,s^*}}_{L^2(\de B_i)}\norm{\de_s(u(t^*)\circ c)\big|_{s=0}}_{L^2}\,\lesssim\, \eps^3|\log\eps|^\ell.
\end{aligned}\end{equation}

Here we used \eqref{M deri 1},\eqref{M deri 2},\eqref{M deri 3} and used Lemma \ref{Lem:Decay1} a) (after rescaling) to estimate $\norm{\phi_{i,s^*}}_{L^2(\de B_i)}\leq \eps^{\frac{3}{2}}$.

\end{proof}

\section{The stream function}\label{Section4}
In the section, we want to compute the asymptotic of $G$ and $A$ and their derivative with respect to $q$, which requires us to compute the asymptotic of the streamfunction. The streamfunction will, up to lower order terms, resemble the asymptotic of the Biot-Savart law \eqref{asymptotic velocity}. Plugged in the definition of $G$, the leading order term of the stream function gives $0$, so for a direct computation, one would need a higher order expansion of the stream function. 

As we need to compute a derivative of the streamfunction anyway we take the alternative approach of expressing $G$ as the derivative of the energy of the streamfunction, which gives the asymptotic of $G$ just from the highest order term of the stream function at the expense of requiring an estimate on the second derivative, which is not much more complicated than just computing the first derivative.

Unlike for the potential function, the interaction between the different bodies will matter and we will obtain an interaction term in $G$, which can be computed from the highest order parts alone.

The computation of $A$ on the other hand will be more straightforward.

Another difficulty is that the limiting object $\ln|x|$ does not lie in $H^1$, so we can not expect $L^2$-based estimates to work for the streamfunction. As we are only interested in the boundary data anyway, we will directly characterize it in terms of an integral equation on the boundary.\\

In this section, we will make massive use of the fundamental solution $K$ of the operator $\div(\frac{1}{r}\nabla\cdot)$, as introduced in \eqref{definition K}. By an abuse of notation, we will also denote the linear operator \begin{align}f\rightarrow \int_{\bigcup_i\de B_i}K(x,y)f(x)\dx\end{align} by $K$. Similarly, we will write \begin{align}\overline{K}_R(x,y)=\frac{R}{2\pi}(\log(|x-y|)-\log(8)+2-\log(R))\end{align} and also write $\overline{K}_R$ for the associated integral operator.

Recall that in Lemma \ref{rep psi}, we showed that:

The function $\frac{1}{r}\de_n \psi_i$ is a solution of the system \begin{align}
&\int_{\bigcup_j\de B_j}K(x,y)\mu(x)\dx \text{ is constant on each $B_j$}\\
&\int_{\de B_j}\mu(x)\dx\eq\delta_{ij}.
\end{align}

Our goal is to show that for a single body $\frac{1}{r}\de_n\psi$ converges to a constant by showing that the kernel $K$ converges to $\overline{K}_R$ (for which the solution of the analogous system is constant).

For multiple bodies we will show that the ``cross-terms'' in $K$ are an order lower and that the corresponding lower order terms in $\frac{1}{r}\de_n\psi_i$ are essentially given through the derivatives of $K$ itself.

Recall that the kernel $K$ can be written as \begin{align}\label{exp K}
K(x,y)\eq\frac{-1}{2\pi}\sqrt{x_Ry_R}F\left(\frac{|x-y|^2}{x_Ry_R}\right),
\end{align}

where $x_R$ and $y_R$ stand for the $R$-component and \begin{align}
F(s)\eq\int_0^\pi \frac{\cos(t)}{\sqrt{2(1-\cos(t))+s}}\dt
\end{align}

(see \cite{GallaySverak}[Section 2]). This integral cannot be elementarily evaluated, however it has a series expansion at $0$, which we will make use of:

\begin{lemma}\phantomsection\label{Series F}
For small enough $s>0$ there is an expansion \begin{align}
F(s)=-\frac{1}{2}\log(s)+\log(8)-2+\sum_{j\geq 1} a_js^j+b_js^j\log(s).\label{exp F}
\end{align}

This series has a positive radius of convergence, in particular, we also have the corresponding asymptotic for the derivatives of $F$.
\end{lemma}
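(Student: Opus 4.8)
The plan is to derive the expansion from the classical expansion of the complete elliptic integrals, to which $F$ is directly related. First I would observe that the substitution $t=\pi-2\varphi$ turns $F(s)$ into an integral over $\varphi\in[0,\pi/2]$ of $\frac{\cos(\pi-2\varphi)}{\sqrt{4\sin^2\varphi+s}}=\frac{2\sin^2\varphi-1}{\sqrt{4\sin^2\varphi+s}}$. Writing $m=\frac{4}{4+s}$ (so $s\to 0$ corresponds to $m\to 1^-$, the logarithmic endpoint), one sees $4\sin^2\varphi+s=(4+s)(1-m\cos^2\varphi)$ after a further reflection $\varphi\mapsto\pi/2-\varphi$, and hence $F(s)$ becomes a fixed linear combination (with coefficients that are algebraic in $s$, in fact in $\sqrt{4+s}$) of the complete elliptic integrals $K(m)=\int_0^{\pi/2}(1-m\cos^2\varphi)^{-1/2}\,\dd\varphi$ and $E(m)=\int_0^{\pi/2}(1-m\cos^2\varphi)^{1/2}\,\dd\varphi$. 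Concretely $\int_0^{\pi/2}\frac{2\sin^2\varphi-1}{\sqrt{1-m\cos^2\varphi}}\,\dd\varphi = \frac{1}{m}\big((2-m)K(m)-2E(m)\big)$ up to a routine reindexing, which I would verify with one short computation.

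Next I would invoke the standard expansions of $K$ and $E$ near $m=1$ in terms of the complementary parameter $m_1:=1-m$: it is classical (e.g.\ from the hypergeometric representation $K(m)=\frac{\pi}{2}\,{}_2F_1(\tfrac12,\tfrac12;1;m)$ and the connection formula at the logarithmic singularity $c=a+b$) that
\begin{align}
K(m)&=-\tfrac12\log(m_1)\cdot P(m_1)+Q(m_1),\\
E(m)&=\phantom{-\tfrac12\log(m_1)\cdot{}}R(m_1)-\tfrac12\log(m_1)\cdot S(m_1),
\end{align}
where $P,Q,R,S$ are power series in $m_1$ with positive radius of convergence, $P(0)=1$, $R(0)=1$, $S(0)=0$. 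Since $m_1=1-\frac{4}{4+s}=\frac{s}{4+s}=\frac{s}{4}(1+O(s))$, we have $\log(m_1)=\log s-\log 4+\log(1+s/4)$, and $\log(1+s/4)$ is an analytic function of $s$ near $0$. Substituting everything back into the linear combination expressing $F(s)$, and using that $\sqrt{4+s}$, $1/(4+s)$, etc.\ are all analytic in $s$ near $0$, collects $F(s)$ into the form $A(s)+B(s)\log s$ with $A,B$ convergent power series in $s$ near $0$. This is exactly \eqref{exp F} once one reads off the constant and $\log$-free-order-zero terms; the radius of convergence statement is inherited from that of $P,Q,R,S$ and the analyticity of the elementary factors.

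Finally I would pin down the leading coefficients to match the claimed $-\tfrac12\log s+\log 8-2$. The coefficient of $\log s$ comes only from $-\tfrac12\log(m_1)$ times the $m_1^0$-coefficients of $P$ and $S$ inside the linear combination; a direct evaluation gives exactly $-\tfrac12$. For the constant term one evaluates the $m_1\to 0$ limit of the non-logarithmic part; this is where the specific constants $\log 8$ and $-2$ appear, and it reduces to the well-known values $Q(0)=\log 4$ (i.e.\ $K(m)\sim\tfrac12\log\frac{16}{m_1}$) and $R(0)=1$ (i.e.\ $E(1)=1$), after accounting for the $-\log 4$ coming from $m_1=s/4\cdot(1+O(s))$. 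The statement about derivatives of $F$ is then immediate, since a convergent power-plus-log series may be differentiated term by term inside its disc of convergence. The main obstacle here is purely bookkeeping: correctly tracking the algebraic prefactors $s$, $4+s$, $\sqrt{4+s}$ and the shift $\log m_1=\log s-\log4+O(s)$ through the linear combination so that the advertised constants $\log 8-2$ come out precisely; there is no analytic difficulty beyond the classical elliptic-integral expansions, which I would simply cite (e.g.\ from a standard reference on elliptic integrals or the hypergeometric connection formulae).
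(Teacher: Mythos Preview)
Your approach is essentially the same as the paper's: reduce $F(s)$ to a linear combination of the complete elliptic integrals $K$ and $E$ with coefficients analytic in $s$, then invoke their known logarithmic expansions near the degenerate modulus. The paper carries out the same reduction (writing $F(s)=(1+\tfrac s2)(1+\tfrac s4)^{-1/2}K_{\mathrm{ell}}(\sqrt{1-\tfrac{s}{4+s}})-2(1+\tfrac s4)^{1/2}E_{\mathrm{ell}}(\sqrt{1-\tfrac{s}{4+s}})$) and then, rather than citing the hypergeometric connection formula, quotes Schwarz's explicit series for $K_{\mathrm{ell}}(\sqrt{1-t^2})$ and derives the one for $E_{\mathrm{ell}}$ from the algebraic identity $E=m(1-m^2)K'+mK$; your route via the ${}_2F_1$ connection formula at $c=a+b$ is an equivalent justification of the same expansions.

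One minor slip: after $t=\pi-2\varphi$ you get $2(1-\cos t)=4\cos^2\varphi$, not $4\sin^2\varphi$, in the denominator (and a Jacobian factor $2$); your ``further reflection $\varphi\mapsto\pi/2-\varphi$'' then fixes this, but as written the first displayed integrand is off. This is pure bookkeeping and does not affect the argument.
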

\begin{proof}
The statement is known, see e.g.\ \cite{Sverak}[Footnote 101] and the computation of the explicit terms can be found there. We provide the proof of the convergence of the expansion here as we were not able to find it in the literature.

By elementary manipulations, one sees that \begin{align}
F(s)\eq(1+\frac{s}{2})\int_0^\frac{\pi}{2}\frac{1}{\sqrt{1+\frac{s}{4}-\sin^2(t)}}\dt-2\int_0^\frac{\pi}{2}\sqrt{1+\frac{s}{4}-\sin^2(t)}\dt.
\end{align}

These integrals can be rewritten by using complete elliptic integrals of the first and second kind \cite{byrd2013handbook}. These are defined as \begin{align}
&K_{elliptic}(m)=\int_0^\frac{\pi}{2}(1-m^2\sin^2(t))^{-\frac{1}{2}}\dt,\\
&E_{elliptic}(m)=\int_0^\frac{\pi}{2}(1-m^2\sin^2(t))^{\frac{1}{2}}\dt.
\end{align}

With this it holds that \begin{align}
F(s)\eq(1+\frac{s}{2})\frac{1}{\sqrt{1+\frac{s}{4}}}K_{elliptic}(\sqrt{1-\frac{s}{4+s}})-2\sqrt{1+\frac{s}{4}}E_{elliptic}(\sqrt{1-\frac{s}{4+s}}).
\end{align}

It suffices to show that the functions $K_{elliptic}(\sqrt{1-t^2})$ and $E_{elliptic}(\sqrt{1-t^2})$ have an expansion of the type \begin{align}
\sum_{j\geq 0}c_jt^{2j}+d_jt^{2j}\log (t)\label{log analytic}
\end{align}

for small enough $t$, because close to $s=0$ the functions $\frac{1}{\sqrt{1+\frac{s}{4}}}$; $\sqrt{1+\frac{s}{4}}$ and $\sqrt{\frac{s}{4+s}}$ are analytic and one easily sees that the class of functions with an expansion of the type \eqref{log analytic} with a positive radius of convergence are stable under composition and multiplication with analytic functions.

It is known \cite{schwarz1893formeln}[Page 53] that there is an expansion \begin{align}
K_{elliptic}(\sqrt{1-t^2})\eq\log\left(\frac{4}{t}\right)-2\left(\sum_{j\geq 1}\frac{1}{2j(2j-1)}\sum_{l=j}^\infty \frac{(2l)!}{2^{2l}(l!)^2}t^{2l}\right)\label{expansion K}
\end{align}

By the facts that $\sum_j\frac{1}{2j(2j-1)}<\infty$ and $\frac{(2l)!}{2^{2l}(l!)^2}<1$, we see that this converges for $|t|<1$.

By using the definition and elementary calculations, one can see that \begin{align}
E_{elliptic}(m)\eq m(1-m^2)K_{elliptic}'(m)+mK_{elliptic}(m).
\end{align}

This implies \begin{align}
E_{elliptic}(\sqrt{1-t^2})\eq t^2\sqrt{1-t^2}(K_{elliptic}(\sqrt{1-t^2}))'\frac{\sqrt{1-t^2}}{t}+\sqrt{1-t^2}K_{elliptic}(\sqrt{1-t^2})\label{identity E}
\end{align}

One then obtains the desired expansion by combining \eqref{expansion K}, \eqref{identity E} and using a binomial series for the prefactor $\sqrt{1-t^2}$.\end{proof}

We set \begin{align}
&h(s)\,:=\,F(s)+\frac{1}{2}\log(s)-\log 8+2\\
&g(x,y)\,:=\,- h\left(\frac{|x-y|^2}{x_Ry_R}\right)
\end{align}

for the remainder. For all $n$ and small enough $|x-y|$ it holds that \begin{align}\label{est g}
|\nabla^ng(x,y)|\,\lesssim_n\, |x-y|^{2-n}|\log|x-y||,
\end{align}

locally uniformly in $x_R$ and $y_R$ by Lemma \ref{Series F} above.

\subsubsection{The case of a single body}
In this subsection we drop the index $i$.

\begin{lemma}\phantomsection\label{K bd}
\begin{itemize}
\item[a)]The linear map $K$ is invertible from $L_0^2(\de B)$ to $\dot{H}^1(\de B)$ with operator norm $\lesssim 1$  for small enough $\eps$.
\item[b)] We have \begin{align}
\norm{K}_{L^2(\de B)\rightarrow L^2(\de B)}\,\lesssim\, \eps|\log\eps|.
\end{align}
\item[c)] We have that \begin{align}
\norm{K-\overline{K}_R}_{L^2(\de B)\rightarrow H^1(\de B)}\,\lesssim\, \eps|\log\eps|.
\end{align} 
and
\begin{align}
\norm{K-\overline{K}_R}_{L^2(\de B)\rightarrow L^2(\de B)}\,\lesssim\, \eps^2|\log\eps|.
\end{align}
\end{itemize}

All these estimates are locally uniform in $q$.
\end{lemma}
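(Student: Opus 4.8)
The plan is to work with the explicit representation $K(x,y)=-\tfrac{1}{2\pi}\sqrt{x_Ry_R}\,F(|x-y|^2/(x_Ry_R))$ and the expansion from Lemma \ref{Series F}, splitting $K$ into a ``logarithmic main part'' and a remainder. Concretely, using $F(s)=-\tfrac12\log s+\log 8-2+\big(\text{an }O(s|\log s|)\text{ tail}\big)$, one writes
\begin{align*}
K(x,y)=-\frac{\sqrt{x_Ry_R}}{2\pi}\Big(\log(8)-2-\log|x-y|+\tfrac12\log(x_Ry_R)\Big)+\frac{\sqrt{x_Ry_R}}{2\pi}h\!\left(\frac{|x-y|^2}{x_Ry_R}\right),
\end{align*}
so that on $\de B=\de B_\rho(q)$, where $\rho=\eps\tilde\rho$ and every point has $R$-coordinate within $O(\eps)$ of $R=q_R$, one has $\sqrt{x_Ry_R}=R+O(\eps)$ and $\tfrac12\log(x_Ry_R)=\log R+O(\eps)$. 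Substituting these into the logarithmic part produces exactly $\overline{K}_R(x,y)=\tfrac{R}{2\pi}(\log|x-y|-\log 8+2-\log R)$ up to an error of relative size $O(\eps)$ in the coefficient; the $h$-term is controlled by the estimate \eqref{est g}, $|\nabla^n g(x,y)|\lesssim_n |x-y|^{2-n}|\log|x-y||$, which on $\de B$ gives $|h(|x-y|^2/(x_Ry_R))|\lesssim \eps^2|\log\eps|$ pointwise (since $|x-y|\lesssim\eps$ there) together with the corresponding tangential-derivative bounds.

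**Key steps.** First I would establish b): since $\de B$ has length $\lesssim\eps$, and the kernel $K(x,y)$ on $\de B\times\de B$ is bounded by $\lesssim\eps|\log\eps|$ (the leading $\log|x-y|$ part dominates, with $|x-y|\lesssim\eps$), a crude Schur/Young estimate $\|Kf\|_{L^2(\de B)}\le \|K\|_{L^\infty}\,(\text{length }\de B)^{1/2}\,\cdot(\text{length }\de B)^{1/2}\|f\|_{L^2}$ — more carefully, using $\int_{\de B}|K(x,y)|\,dx\lesssim\eps|\log\eps|$ uniformly in $y$ and symmetrically — yields $\|K\|_{L^2\to L^2}\lesssim\eps|\log\eps|$. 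Second, for c) I would apply the same Schur-type bound to the \emph{difference} $K-\overline{K}_R$. On $\de B$ this difference has two contributions: the replacement error $\big(\sqrt{x_Ry_R}-R\big)\cdot(\dots)$ and $\tfrac{\sqrt{x_Ry_R}}{2\pi}(\tfrac12\log(x_Ry_R)-\log R)$, each of which is $O(\eps)\cdot O(|\log\eps|)=O(\eps|\log\eps|)$ pointwise but integrates against $dx$ over $\de B$ to gain another factor $\eps$, giving $\int_{\de B}|K-\overline K_R|\,dx\lesssim\eps^2|\log\eps|$; plus the $h$-term, which is even smaller, $\lesssim\eps^2|\log\eps|\cdot\eps=\eps^3|\log\eps|$. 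Hence $\|K-\overline K_R\|_{L^2\to L^2}\lesssim\eps^2|\log\eps|$. For the $H^1$ bound one differentiates tangentially: $\de_\tau$ hitting $\log|x-y|$ produces a $1/|x-y|$ singularity, which is still integrable on a curve and contributes $\int_{\de B}|x-y|^{-1}dx\lesssim|\log\eps|$; combined with the $O(\eps)$ coefficient errors this gives the stated $\|K-\overline K_R\|_{L^2\to H^1}\lesssim\eps|\log\eps|$ (one order worse than the $L^2$ bound, as expected since differentiation costs a factor $\eps^{-1}$ and the integrable-singularity contributes an extra $|\log\eps|$ rather than a power gain). For a) I would use that $\overline K_R$ is (up to rescaling) the single-layer potential of the two-dimensional Laplacian on the circle $\de B$, which is a classical isomorphism $L^2_0(\de B)\to \dot H^1(\de B)$; by scaling its operator norm and that of its inverse are $\lesssim 1$ uniformly in $\eps$ (the $\log$ in $\overline K_R$ is the standard $2$D single-layer kernel, and its mapping properties are scale-invariant after accounting for the curve length). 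Then $K=\overline K_R+(K-\overline K_R)$ with $\|K-\overline K_R\|_{L^2\to H^1}\lesssim\eps|\log\eps|\to 0$, so $K$ is invertible for small $\eps$ by a Neumann series, with operator norm $\lesssim 1$.

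**Main obstacle.** The delicate point is part a), specifically pinning down that the single-layer operator with kernel $\tfrac{R}{2\pi}\log|x-y|$ is boundedly invertible from $L^2_0$ to $\dot H^1$ on the small circle $\de B_\rho(q)$ \emph{with constants independent of $\eps$}. The $2$D single-layer potential has a logarithmic-capacity subtlety: on a circle of radius $\rho$ the operator has a one-dimensional ``almost-kernel'' direction (constants map to a function that is constant plus $\log\rho$ times the mass), and the restriction to mean-zero data must be handled so that no $\eps$-dependent constant creeps in. I would resolve this by rescaling $\de B_\rho(q)$ to the unit circle, where the single-layer operator becomes $\frac{R}{2\pi}(\log|\cdot-\cdot|+\log\rho)$; on mean-zero data the $\log\rho$ term drops out, and the unit-circle single-layer operator restricted to $L^2_0$ is a fixed isomorphism onto $\dot H^1$ of the unit circle. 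Tracking the Jacobian factors from rescaling (length element scales by $\rho$, $\dot H^1$-seminorm by $\rho^{-1/2}$ vs. $L^2$ by $\rho^{1/2}$) shows the operator norms of $\overline K_R$ and its inverse between the \emph{unrescaled} spaces are $\lesssim 1$, uniformly in $\eps$ and locally uniformly in $q$ (through $R$). Once this is in hand the perturbation argument for b) and c) is routine Schur-test bookkeeping, with the only care being to keep the powers of $\eps$ (from curve length) and powers of $|\log\eps|$ (from the logarithmic kernel and its tangential derivative) straight.
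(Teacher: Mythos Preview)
Your overall strategy coincides with the paper's: decompose $K=\overline K_R+(K-\overline K_R)$ via the expansion of $F$, prove the mapping properties of $\overline K_R$ directly (the paper does this by computing its Fourier action on $\de B$, which is equivalent to your rescaling-to-the-unit-circle argument), bound the remainder by Schur-type estimates, and then get a) by a Neumann series. Parts a), b), and the $L^2\!\to\!L^2$ half of c) go through exactly as you describe.

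There is, however, a genuine gap in your argument for the $L^2\!\to\!H^1$ bound in c). You write that $\de_\tau$ acting on $\log|x-y|$ produces a $1/|x-y|$ singularity ``which is still integrable on a curve and contributes $\int_{\de B}|x-y|^{-1}\,dx\lesssim|\log\eps|$.'' This is false: on a one-dimensional curve $\int_{\de B}|x-y|^{-1}\,dx$ diverges logarithmically at $x=y$, so Schur's test gives no bound for the term $(\sqrt{x_Ry_R}-R)\,\de_y\log|x-y|$. The paper handles exactly this term separately (Lemma~\ref{perturbed log}): one writes the coefficient as $j(x,y)=p_1(x)+p_2(x,y)|x-y|$ with $\|p_1\|_\infty\lesssim\eps$ and $\|p_2\|_\infty\lesssim 1$. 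The second piece is then pointwise bounded and handled by Schur; for the first piece one uses that the operator with kernel $\de_y\log|x-y|$ is bounded on $L^2(\de B)$ with norm $\lesssim 1$ (it is essentially the Hilbert transform on the circle, as one sees from the explicit Fourier action in Lemma~\ref{action log}), so that
\[
\Big\|\int_{\de B}p_1(x)\,\de_y\log|x-y|\,f(x)\,dx\Big\|_{L^2_y}\;\lesssim\;\|p_1 f\|_{L^2}\;\lesssim\;\eps\|f\|_{L^2}.
\]
With this correction your argument matches the paper's and yields the stated $\eps|\log\eps|$ bound.
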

\begin{proof}
a) and b) Observe that for a) it is enough to show c) and to show that $\overline{K}_R$ is invertible with operator norm $\lesssim 1$, as one sees e.g.\ by using a geometric series. Similarly, for b) it is enough to show that \begin{align}
\norm{\overline{K}_R}_{L^2(\de B)\rightarrow L^2(\de B)}\,\lesssim\, \eps|\log\eps|.
\end{align}

Let $\theta\in\T:= [0,1)$ parametrize $\de B$, then we claim that the kernel  $\overline{K}_R$ acts as \begin{equation}\begin{aligned}\label{action K flat}
e^{2\pi\theta in}&\,\rightarrow\, -\frac{R}{2|n|}\eps\tilde{\rho} e^{2\pi\theta in} \text{ for $n\neq 0$}\\
1&\,\rightarrow\, -R\eps\tilde{\rho}(-\log(\eps\tilde{\rho})+\log(8)-2+\log(R)),
\end{aligned}\end{equation}

which clearly has the desired operator norm and is invertible with norm $\lesssim 1$ from $L^2(\de B)$ to $H^1(\de B)$, as one loses the factor $\eps$ again due to the derivative. 

To show the Claim \eqref{action K flat}, we first observe that the constants $\frac{1}{2\pi}(\log(8)-2+\log(R))$ act as multiplication with $\eps\tilde{\rho}(\log(8)-2+\log(R))$ on constants and maps other frequencies to $0$. The Claim \eqref{action K flat} then follows from the Lemma below.

\begin{lemma}\phantomsection\label{action log}
The kernel $\log|x-y|$ acts as the Fourier multiplier \begin{align}
&1\rightarrow 2\pi\eps\tilde{\rho}\log(\eps\tilde{\rho})\\
&e^{2\pi in\theta}\rightarrow -\frac{\eps\tilde{\rho}\pi}{|n|}e^{2\pi in\theta},
\end{align}

here $\theta\in \T=[0,1)$ is a constant speed parametrization of the boundary $\de B$.
\end{lemma}
\begin{proof}
Note that when parametrizing the boundary with $\theta\in\T$, the action of the kernel corresponds to convolution with \begin{align}
2\pi\eps\tilde{\rho}\left(\log(\eps\tilde{\rho})+\log\left|1-e^{2\pi i\theta}\right|\right).
\end{align}

We have that \begin{align}
\log|1-e^{2\pi i\theta}|\eq \Re \log1-e^{2\pi i\theta}
\end{align}

and this can be approximated in $L^2$ by $\Re\log(1-(1-\delta)e^{2\pi i\theta})$ for $\delta\searrow 0$ by e.g.\ dominated convergence. Now it holds that \begin{align}
\Re\log(1-(1-\delta)e^{2\pi i\theta})\eq -\Re\sum_{j=1}^\infty\frac{(1-\delta)^j}{j}e^{2\pi ij\theta}\eq-\sum_{j=1}^\infty\frac{(1-\delta)^j}{j}\cos\left(2\pi j\theta\right),
\end{align}

where we used the Taylor series of the logarithm around $1$.

By Plancherel, we can take the limit $\delta\searrow 0$ in this Fourier series and obtain the statement by the wellknown formula $\widehat{f_1*f_2}=\widehat{f_2}\widehat{f}_2$.

\end{proof}

Proof of part c) of Lemma \ref{K bd}: It suffices to show that the kernels $K-\overline{K}_R$ and $\de_y(K-\overline{K}_R)$ are bounded on $L^2(\de B)$. We can write \begin{equation}\begin{aligned}\label{Exp diff}
&2\pi(K-\overline{K}_R)(x,y)\eq -(\sqrt{x_Ry_R}-R)(-\log(|x-y|)+\log(8)-2+\log(R)) \\
&-\frac{1}{2}\sqrt{x_Ry_R}\log\left(\frac{x_Ry_R}{R^2}\right)+\sqrt{x_Ry_R}g(x,y),
\end{aligned}\end{equation}  

where we used the Expansion \eqref{exp F} and the definition of $\overline{K}_R$. It is easy to see that \begin{align}
\left|\sqrt{x_Ry_R}-R\right|\,\lesssim\, \eps\quad\text{and}\quad \left|\log\frac{x_Ry_R}{R^2}\right|\,\lesssim\, \eps,
\end{align}

hence one obtains $L^2$-boundedness from Schur's Lemma (cf.\ \cite{Grafakos}[Apprendix A.1]).

 Taking a $y$-derivative in \eqref{Exp diff}, we get \begin{equation}\begin{aligned}
&2\pi\de_y (K-\overline{K}_R)(x,y)\eq-\de_y (\sqrt{x_Ry_R}-R) \left(-\log(|x-y|)+\log(8)-2+\log(R)\right)\\
&+(\sqrt{x_Ry_R}-R)\de_y\log(|x-y|)-\frac{1}{2}\de_y\left(\sqrt{x_Ry_R}\log \frac{x_Ry_R}{R^2}\right)+O(|x-y||\log|x-y|).
\end{aligned}\end{equation}

Clearly, the $O$-term is bounded from $L^2$ to $L^2$ and of the desired order. It is easy to check that \begin{align}
\left|\de_y (\sqrt{x_Ry_R}-R)\right|\,\lesssim\, 1\quad\text{and}\quad\left|\de_y\sqrt{x_Ry_R}\log \frac{x_Ry_R}{R^2}\right|\,\lesssim\, 1.
\end{align}

This shows boundedness of all terms by Schur's Lemma except for \begin{align*}(\sqrt{x_Ry_R}-R)\de_y\log|x-y|\end{align*} by direct estimates.  For this we use the Lemma below to conclude.

\end{proof}

\begin{lemma}\phantomsection\label{perturbed log}
Let $j\in C^1(\de B\times \de B)$, then for all $f\in L^2(\de B)$ it holds that \begin{align}
\norm{\int_{\de B} j(x,y)f(x)\de_y\log|x-y|\dx}_{L_y^2(\de B)}\,\lesssim\, \left(\norm{j}_{\sup}+\eps\norm{j}_{C^1}\right)\norm{f}_{L^2}.
\end{align}

This estimate holds locally uniformly in $q$. 
\end{lemma}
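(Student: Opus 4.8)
The plan is to reduce the estimate to the boundedness of the single operator with kernel $\de_y\log|x-y|$ on $L^2(\de B)$, which is (after the usual constant-speed parametrization $\theta\in\T=[0,1)$) essentially the periodic Hilbert transform and hence bounded with a $\eps$-independent operator norm (note that $\de_y\log|x-y|$ on a circle of radius $\eps\tilde\rho$ scales so that the $L^2\to L^2$ norm stays bounded: the kernel has size $\sim 1/|x-y|$, the circle has length $\sim\eps$, and the Calderón–Zygmund/Fourier-multiplier constant is scale invariant). Concretely, parametrizing $\de B$ by $\theta$, one has $\de_{n_y}\log|x-y|$ and $\de_{\tau_y}\log|x-y|$; the tangential part is, up to a smooth bounded perturbation, the convolution kernel $\tfrac{d}{d\theta}\log|1-e^{2\pi i(\theta-\theta')}|$, whose Fourier multiplier is $\mathrm{sgn}(n)\cdot(\text{bounded})$ by Lemma \ref{action log} (differentiating that Fourier series termwise), so it is $L^2$-bounded uniformly in $\eps$; the normal derivative of $\log|x-y|$ restricted to $\de B\times\de B$ is in fact a bounded kernel (the classical double-layer kernel of a circle is constant), so it contributes only an $O(1)$ term.

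With that building block in hand, the main step is to handle the variable coefficient $j(x,y)$. Write $j(x,y)=j(y,y)+\big(j(x,y)-j(y,y)\big)$. For the first piece, $\int_{\de B} j(y,y)f(x)\de_y\log|x-y|\dx = j(y,y)\,\big(T f\big)(y)$ where $T$ is the fixed operator above; since $\norm{j(\cdot,\cdot)|_{\mathrm{diag}}}_{\sup}\le \norm{j}_{\sup}$ and $T$ is $L^2$-bounded with constant $\lesssim 1$, this term is $\lesssim \norm{j}_{\sup}\norm{f}_{L^2}$. For the second piece, $|j(x,y)-j(y,y)|\lesssim \norm{j}_{C^1}|x-y|$ and $|x-y|\lesssim\eps$ on $\de B$, so $|j(x,y)-j(y,y)|\cdot|\de_y\log|x-y||\lesssim \eps\norm{j}_{C^1}\cdot\frac{|x-y|}{|x-y|}=\eps\norm{j}_{C^1}$ pointwise — wait, more carefully $|x-y|\,|\de_y\log|x-y||\lesssim 1$, and then one gains an extra $\eps$ only after integrating in $x$ over the length-$\lesssim\eps$ curve; so the remainder operator has a kernel bounded by $\lesssim\norm{j}_{C^1}$ supported on a set of measure $\lesssim\eps$ in each variable, and Schur's Lemma gives operator norm $\lesssim\eps\norm{j}_{C^1}$. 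Adding the two contributions yields the claimed bound.

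Alternatively, and perhaps cleaner to write up: split $\de_y\log|x-y|=\de_{\tau_y}\log|x-y|+\de_{n_y}\log|x-y|$. For the normal part the kernel $j(x,y)\de_{n_y}\log|x-y|$ is bounded by $\lesssim\norm{j}_{\sup}$ on a set of measure $\lesssim\eps$ in $x$ for each $y$ (and symmetrically), so Schur gives $\lesssim\eps\norm{j}_{\sup}$, even better than needed. For the tangential part, integrate by parts in $y$ along $\de B$: $\int j(x,y)f(x)\de_{\tau_y}\log|x-y|\dx = \de_{\tau_y}\!\int j(x,y)f(x)\log|x-y|\dx - \int \de_{\tau_y}j(x,y)\,f(x)\log|x-y|\dx$; but this reintroduces a derivative landing on the output, so the cleaner route really is the decomposition $j=j|_{\mathrm{diag}}+(\text{rest})$ described above. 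I will use that.

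The main obstacle is the uniform-in-$\eps$ $L^2$-boundedness of the model operator $T$ with kernel $\de_y\log|x-y|$ on the small circle $\de B$: one must verify that the natural scaling leaves the operator norm bounded, which is exactly the content one extracts by differentiating the Fourier series in Lemma \ref{action log} (the $n$-th Fourier coefficient of $\log|1-e^{2\pi i\theta}|$ is $-1/(2|n|)$, so after a $\theta$-derivative it becomes $\mp \pi i\,\mathrm{sgn}(n)$, bounded); combined with the scale-invariance of this computation under $\theta\mapsto$ arclength$/(\eps\tilde\rho)$, one gets $\norm{T}_{L^2(\de B)\to L^2(\de B)}\lesssim 1$ uniformly in $\eps$, locally uniformly in $q$. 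Everything else is Schur's Lemma and the elementary bound $|x-y|\le 2\eps\tilde\rho$ on $\de B$.
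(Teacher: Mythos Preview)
Your proposal is correct and follows essentially the same approach as the paper: decompose $j$ by freezing on the diagonal, handle the main term via the $L^2$-boundedness of the model operator with kernel $\de_y\log|x-y|$ (extracted from Lemma~\ref{action log}), and bound the remainder by Schur using $|x-y|\,|\de_y\log|x-y||\lesssim 1$ together with $|\de B|\lesssim\eps$. The only cosmetic difference is that the paper freezes in the first variable, writing $j(x,y)=p_1(x)+p_2(x,y)|x-y|$ with $p_1(x)=j(x,x)$, so that the main term becomes the model operator applied directly to $p_1f\in L^2$; you freeze in $y$ and pull $j(y,y)$ out as a pointwise multiplier afterward, which requires you to state the $L^2\to L^2$ boundedness of $T$ as a separate fact rather than getting it for free from Lemma~\ref{action log}. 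Both variants work equally well.
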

\begin{proof}
We can write $j(x,y)=p_1(x)+p_2(x,y)|x-y|$ with $\norm{p_1}_{\sup}\lesssim \norm{j}_{\sup}$ and $\norm{p_2}_{\sup}\lesssim \norm{j}_{C^1}$. Then the kernel $p_1(x)\de_y\log|x-y|$ has operator norm $\lesssim \norm{p_1}_{\sup}$  by applying the Lemma \ref{action log} to the function $p_1f$. The other part has operator norm $\lesssim \eps\norm{p_2}_{\sup}$, as one can easily check that \begin{align}
|(x-y)\de_y\log|x-y||\,\lesssim\, 1.
\end{align}
\end{proof}

\begin{proposition}\phantomsection\label{est single body}
It holds that \begin{align}
\norm{\frac{1}{r}\de_n\psi-\frac{1}{2\pi \eps\tilde{\rho}}}_{L^2(\de B)}\,\lesssim\, \eps^\frac{1}{2}|\log\eps|,
\end{align}
where the implicit constant is bounded locally uniformly in $q$.
\end{proposition}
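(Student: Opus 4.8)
The plan is to compare $\mu:=\frac1r\de_n\psi$ directly with the constant function $\bar\mu:=\frac{1}{2\pi\eps\tilde\rho}$ on $\de B$, exploiting the integral characterization of $\mu$ recalled above (a consequence of Lemma \ref{rep psi}) together with the mapping properties of $K$ and $\overline{K}_R$ from Lemma \ref{K bd}. Recall that $\mu$ solves the system: $K\mu$ is constant on $\de B$ and $\int_{\de B}\mu\dx=1$. The first observation is that $\bar\mu$ solves the corresponding system with $K$ replaced by $\overline{K}_R$: indeed $\int_{\de B}\bar\mu\dx=\bar\mu\,|\de B|=1$ since $|\de B|=2\pi\rho=2\pi\eps\tilde\rho$, and by the multiplier action \eqref{action K flat} the operator $\overline{K}_R$ maps the constant function to a constant. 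In particular, $\bar\mu$ is exactly the ``flat'' analogue of $\mu$, which is why $\frac{1}{2\pi\eps\tilde\rho}$ is the right target.

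Next I would set $w:=\mu-\bar\mu$, so that $\int_{\de B}w\dx=1-1=0$, i.e.\ $w\in L_0^2(\de B)$. Writing $C_1$ for the constant value of $K\mu$ on $\de B$ and $C_2:=\overline{K}_R\bar\mu$, we obtain
\begin{align*}
Kw=K\mu-K\bar\mu=C_1-\overline{K}_R\bar\mu-(K-\overline{K}_R)\bar\mu=(C_1-C_2)-(K-\overline{K}_R)\bar\mu.
\end{align*}
Since additive constants are annihilated by the $\dot{H}^1(\de B)$ seminorm, this gives $\norm{Kw}_{\dot{H}^1(\de B)}=\norm{(K-\overline{K}_R)\bar\mu}_{\dot{H}^1(\de B)}$. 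By Lemma \ref{K bd} a), $K$ is an isomorphism from $L_0^2(\de B)$ onto $\dot{H}^1(\de B)$ whose inverse has operator norm $\lesssim 1$ locally uniformly in $q$, and $w=K^{-1}(Kw)$, hence
\begin{align*}
\norm{\mu-\bar\mu}_{L^2(\de B)}=\norm{w}_{L^2(\de B)}\lesssim\norm{Kw}_{\dot{H}^1(\de B)}=\norm{(K-\overline{K}_R)\bar\mu}_{\dot{H}^1(\de B)}.
\end{align*}

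Finally I would bound the right-hand side using Lemma \ref{K bd} c). Since $\norm{\bar\mu}_{L^2(\de B)}^2=\bar\mu^2|\de B|=\frac{1}{2\pi\eps\tilde\rho}$, we have $\norm{\bar\mu}_{L^2(\de B)}\lesssim\eps^{-1/2}$ locally uniformly in $q$, and therefore
\begin{align*}
\norm{(K-\overline{K}_R)\bar\mu}_{\dot{H}^1(\de B)}\le\norm{(K-\overline{K}_R)\bar\mu}_{H^1(\de B)}\lesssim\eps|\log\eps|\,\norm{\bar\mu}_{L^2(\de B)}\lesssim\eps^{1/2}|\log\eps|,
\end{align*}
which together with the previous display proves the claim. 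The proof is short because the substantive work --- the Fourier-multiplier description of $\overline{K}_R$ and the perturbative estimate $\norm{K-\overline{K}_R}_{L^2\to H^1}\lesssim\eps|\log\eps|$ --- is already contained in Lemma \ref{K bd}; within this proposition the only points to watch are the bookkeeping of the additive constants (harmless, since they lie in the kernel of the $\dot{H}^1$ seminorm) and the identification of $\frac{1}{2\pi\eps\tilde\rho}$ as the exact flat solution, so no real obstacle is expected here.
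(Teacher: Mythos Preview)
Your proof is correct and follows essentially the same approach as the paper: both exploit that $K\mu$ and $\overline{K}_R\bar\mu$ are constant, that the difference $\mu-\bar\mu$ is mean-free, and then apply Lemma~\ref{K bd} a) and c) in succession. The paper's version is simply a terser rendering of the same argument.
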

\begin{proof}
We have that \begin{align}
K\frac{1}{r}\de_n\psi\quad\text{and}\quad\overline K_R\frac{1}{2\pi\eps\tilde{\rho}}
\end{align} are constant on $\de B$ and $\frac{1}{r}\de_n\psi-\frac{1}{2\pi \eps\tilde{\rho}}$ is mean-free by the definition of $\psi$, hence we may subtract these two identities from each other and obtain that \begin{align}
\norm{\frac{1}{r}\de_n\psi-\frac{1}{2\pi \eps\tilde{\rho}}}_{L_0^2(\de B)}\,\lesssim\, \norm{(K-\overline{K}_R)\frac{1}{2\pi \eps\tilde{\rho}}}_{\dot{H}^1(\de B)}\,\lesssim\, \eps^\frac{1}{2}|\log\eps|,
\end{align}

here we made use of Lemma \ref{K bd} a) in the first estimate and of c) in the second.

\end{proof}

\subsubsection{Multiple bodies}
Next we consider multiple bodies again. Recall that we defined \begin{align}
L_0^2(\bigcup_i \de B_i)\,:=\,\left\{f\in L^2(\bigcup_i\de B_i)\,|\,\int_{\de B_i} f\dx\eq 0\:\:\:\forall i\right\}.
 \end{align}
 
We denote the space $H^1(\bigcup_i \de B_i)$ modulo \emph{locally} constant functions with $\dot{H}^1(\bigcup_i\de B_i)$ with the norm $\norm{\de_\tau\cdot}_{L^2(\bigcup_i\de B_i)}$ where $\tau=n^\perp$.

We set \begin{align}
\tilde{K}(x,y)\eq K(x,y)I_{\{\exists i \text{ with } x,y\in \de B_i\}}.
\end{align}

and also denote the associated linear operator with $\tilde{K}$.

\begin{lemma}\phantomsection\label{comp tilde K}
We have \begin{align}
\norm{K-\tilde{K}}_{L^2(\de B_i)\rightarrow \dot{H}^1(\de B_j)}\,\lesssim\, \eps|q_i-q_j|^{-1}
\end{align}

locally uniformly in $\tilde{q}$ (in both regimes \eqref{def regime1} and \eqref{def regime2}).
\end{lemma}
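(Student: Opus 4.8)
The operator $K - \tilde{K}$ has kernel $K(x,y)\,I_{\{x\in\de B_i,\,y\in\de B_j,\,i\neq j\}}$, so for $x\in\de B_i$ and $y\in\de B_j$ with $i\neq j$ we are simply looking at the smooth kernel $K(x,y)$ where $|x-y|\gtrsim |q_i-q_j|$ (the bodies have radii $\rho_\ell = \eps\tilde\rho_\ell\sqrt{R_\ell}$, which are $\lesssim\eps \ll |q_i-q_j|$, so off-diagonal points are genuinely separated). The plan is to estimate the $\dot H^1(\de B_j)$ norm of $(K-\tilde K)f$, i.e.\ the $L^2(\de B_j)$ norm of $\de_\tau \int_{\de B_i} K(x,y) f(x)\dx = \int_{\de B_i}\de_{\tau_y} K(x,y) f(x)\dx$, directly from pointwise bounds on $\nabla_y K$ and Cauchy--Schwarz over $\de B_i$, whose length is $\lesssim \eps$.

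First I would record the pointwise estimate on $\nabla_y K(x,y)$ for $|x-y|\gtrsim |q_i - q_j|$. From the explicit formula \eqref{exp K}, $K(x,y) = \frac{-1}{2\pi}\sqrt{x_Ry_R}\,F(|x-y|^2/(x_Ry_R))$, together with the expansion of Lemma \ref{Series F} (which shows $F(s) = -\tfrac12\log s + \log 8 - 2 + O(s|\log s|)$ with analogous derivative bounds), one gets that for $|x-y|$ bounded away from $0$ and $x_R, y_R$ in a compact range, $|\nabla_y K(x,y)| \lesssim 1 + |\log|x-y||$; more precisely $\de_{\tau_y} K(x,y)$ involves a term $\sqrt{x_Ry_R}\,F'\cdot \de_{\tau_y}(|x-y|^2/(x_Ry_R))$ whose dominant piece behaves like $\frac{1}{|x-y|}$ for moderate separations. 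Since here $|x-y|\gtrsim |q_i-q_j|$, we obtain $|\nabla_y K(x,y)| \lesssim |q_i-q_j|^{-1}$, locally uniformly in $\tilde q$ (one should note $|q_i-q_j|\to 0$ like $1/|\log\eps|$ or $1/\sqrt{|\log\eps|}$, so the logarithmic terms are harmless against the $|q_i-q_j|^{-1}$ factor — or one just keeps them and they get absorbed since $|\log|q_i-q_j|| \lesssim |q_i-q_j|^{-1}$).

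Then, for $f\in L^2(\de B_i)$ and $y\in \de B_j$,
\begin{align}
\left|\de_{\tau_y}\int_{\de B_i} K(x,y) f(x)\dx\right| \,\leq\, \int_{\de B_i} |\nabla_y K(x,y)|\,|f(x)|\dx \,\lesssim\, |q_i-q_j|^{-1}\,|\de B_i|^{1/2}\,\norm{f}_{L^2(\de B_i)},
\end{align}
using Cauchy--Schwarz on $\de B_i$. Since $|\de B_i| = 2\pi\rho_i \lesssim \eps$, integrating the square of this over $\de B_j$ (whose length is also $\lesssim \eps$, but one power of $\eps^{1/2}$ from the $x$-integration already suffices — actually one needs to be a touch careful: $\norm{\de_{\tau}(K-\tilde K)f}_{L^2(\de B_j)}^2 \lesssim |q_i-q_j|^{-2}\,|\de B_i|\,\norm{f}_{L^2(\de B_i)}^2\,|\de B_j|$, giving a factor $\eps^2$ and hence $\norm{(K-\tilde K)f}_{\dot H^1(\de B_j)} \lesssim \eps |q_i-q_j|^{-1}\norm{f}_{L^2(\de B_i)}$). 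Summing over $i\neq j$ (only finitely many, $k$ fixed) yields the stated bound.

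**Main obstacle.** The only real subtlety is getting the pointwise derivative bound on $K$ in the right form — specifically, making sure that the $\sqrt{x_Ry_R}\,F'(\cdot)\de_{\tau_y}(\cdot)$ term is controlled by $|q_i-q_j|^{-1}$ rather than by something worse when $|x-y|$ is of the same order as $|q_i-q_j|$ (which itself is going to $0$). Here one uses that $F'(s) \sim -\frac{1}{2s}$ for small $s$ by Lemma \ref{Series F}, that $s = |x-y|^2/(x_Ry_R) \sim |q_i-q_j|^2$, and that $|\de_{\tau_y}(|x-y|^2/(x_Ry_R))| \lesssim |x-y| \sim |q_i-q_j|$, so the product is $\lesssim |q_i-q_j|^{-1}$ as needed; the remainder $g$ contributes only $O(|x-y||\log|x-y||)$ by \eqref{est g}, which is even smaller. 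One also has to check the logarithmic boundary terms ($\de_{\tau_y}(\sqrt{x_Ry_R})\cdot\log|x-y|$-type contributions) are bounded, which they are since $|\log|q_i-q_j|| \ll |q_i-q_j|^{-1}$. Everything is locally uniform in $\tilde q$ because the separations and the $R$-coordinates vary continuously and stay in compact ranges. Apart from that, the argument is routine.
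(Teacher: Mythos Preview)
Your proposal is correct and follows essentially the same approach as the paper: both arguments reduce to the pointwise bound $|\nabla_y K(x,y)|\lesssim |x-y|^{-1}\lesssim |q_i-q_j|^{-1}$ obtained from the expansion of $F$, combined with the fact that $|\de B_i|,|\de B_j|\lesssim\eps$. The paper's proof is more terse (it just says ``the statement immediately follows''), whereas you spell out the Cauchy--Schwarz step and the handling of the logarithmic terms explicitly, but the content is the same.
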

\begin{proof}
The statement is nontrivial only for $i\neq j$.
By the Expansions \eqref{exp K} and \eqref{exp F}, we have \begin{align}
|\de_yK(x,y)|\eq\left|\de_y\sqrt{x_Ry_R}F\left(\frac{|x-y|^2}{x_Ry_R}\right)+\sqrt{x_Ry_R}F'\left(\frac{|x-y|^2}{x_Ry_R}\right)\de_y\frac{|x-y|^2}{x_Ry_R}\right|\,\lesssim\, \frac{1}{|x-y|}.
\end{align} 

The statement immediately follows since the bodies have pairwise distance $\approx |q_i-q_j|$.
\end{proof}

\begin{corollary}\phantomsection\label{K inv mult}
The operator $K$ is invertible from $L_0^2(\bigcup_i \de B_i)$ to $\dot{H}^1(\bigcup_i \de B_i)$ for small enough $\eps$ with operator norm $\lesssim 1$, where the implicit constant and the smallness requirement for $\eps$ are locally uniform in  $\tilde{q}$. Furthermore, for $i\neq j$, it holds that \begin{align}
\norm{K^{-1}}_{\dot{H}^1(\de B_j)\rightarrow L_0^2(\de B_i)}\,\lesssim\, \eps|q_i-q_j|^{-1}.\label{double bd}
\end{align}
\end{corollary}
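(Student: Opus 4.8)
\textbf{Proof proposal for Corollary \ref{K inv mult}.}

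The plan is to build the inverse of $K$ as a perturbation of the block-diagonal operator $\tilde{K}$, which decouples into the single-body operators $K_i := K|_{\de B_i}$ already treated in Lemma \ref{K bd}. First I would record that $\tilde{K}$ maps $L_0^2(\bigcup_i \de B_i) = \bigoplus_i L_0^2(\de B_i)$ to $\dot{H}^1(\bigcup_i\de B_i) = \bigoplus_i \dot{H}^1(\de B_i)$ and, by Lemma \ref{K bd} a) applied on each body (with implicit constants locally uniform in $\tilde q$ since the rescaled bodies have comparable geometry in both regimes), $\tilde{K}$ is invertible with operator norm $\lesssim 1$ and $\tilde{K}^{-1}$ of operator norm $\lesssim 1$; note also $\tilde K^{-1}$ is block diagonal, so it vanishes between different $\de B_i$. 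Then I would write $K = \tilde K + (K - \tilde K)$ and use Lemma \ref{comp tilde K}, which gives $\norm{K-\tilde K}_{L^2(\de B_i)\to \dot H^1(\de B_j)} \lesssim \eps|q_i-q_j|^{-1}$, and in particular $\norm{K - \tilde K}_{L_0^2 \to \dot H^1} \lesssim \eps \max_{i\neq j}|q_i - q_j|^{-1} \lesssim \eps$ locally uniformly in $\tilde q$ (the distances are bounded below on compact sets of $\tilde q$). Hence $\tilde K^{-1}(K-\tilde K)$ has operator norm $\lesssim \eps$, which is $<\tfrac12$ for $\eps$ small enough (the smallness threshold being locally uniform in $\tilde q$), so $K = \tilde K(\Id + \tilde K^{-1}(K-\tilde K))$ is invertible with
\begin{align}
K^{-1} \eq \left(\Id + \tilde K^{-1}(K-\tilde K)\right)^{-1}\tilde K^{-1} \eq \sum_{m\geq 0}\left(-\tilde K^{-1}(K-\tilde K)\right)^m \tilde K^{-1},
\end{align}
a Neumann series converging in operator norm, giving $\norm{K^{-1}}_{\dot H^1 \to L_0^2}\lesssim 1$.

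For the off-diagonal bound \eqref{double bd} I would extract the block structure of the Neumann series. Write $P_i$ for the projection onto $L_0^2(\de B_i)$ (and, abusing notation, also the restriction to $\dot H^1(\de B_i)$). Since $\tilde K^{-1}$ is block diagonal, $P_i K^{-1} P_j = \sum_{m\geq 0} P_i(-\tilde K^{-1}(K-\tilde K))^m \tilde K^{-1} P_j$, and for $i\neq j$ the $m=0$ term vanishes. The $m=1$ term is $P_i \tilde K^{-1}(K-\tilde K)\tilde K^{-1}P_j = \tilde K_i^{-1} P_i(K-\tilde K) P_j \tilde K_j^{-1}$, whose operator norm from $\dot H^1(\de B_j)$ to $L_0^2(\de B_i)$ is $\lesssim 1 \cdot \eps|q_i-q_j|^{-1}\cdot 1$ by Lemma \ref{comp tilde K} and the single-body bounds. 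Every term with $m\geq 2$ contains at least one factor $P_a(K-\tilde K)P_b$ with $a\neq b$ (because consecutive $\tilde K^{-1}$'s are diagonal, so a string of diagonal operators can never connect $\de B_j$ to $\de B_i$), hence carries an extra factor $\lesssim \eps$ beyond the geometric-series bookkeeping; summing the tail gives a contribution $\lesssim \eps^2 \lesssim \eps|q_i-q_j|^{-1}$ on compact $\tilde q$-sets. Combining, $\norm{P_i K^{-1} P_j}_{\dot H^1(\de B_j)\to L_0^2(\de B_i)}\lesssim \eps|q_i-q_j|^{-1}$, which is exactly \eqref{double bd}.

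The only genuinely delicate point is making sure the block-structure argument for the tail of the Neumann series is carried out cleanly: one must track that in a product $(\tilde K^{-1}(K-\tilde K))^m\tilde K^{-1}$ any route from block $j$ to block $i$ must use at least one off-diagonal $(K-\tilde K)$ factor, and then bound the resulting sum over intermediate indices using $\norm{K-\tilde K}_{L^2\to \dot H^1}\lesssim \eps$ together with the already-established $\norm{K^{-1}}\lesssim 1$ to absorb everything except one explicit off-diagonal factor $\eps|q_i-q_j|^{-1}$. This is routine but is where the local uniformity in $\tilde q$ (via the lower bound on pairwise distances and the uniform single-body estimates from Lemma \ref{K bd}) really gets used, and it is the step I would write out most carefully.
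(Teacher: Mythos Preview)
Your proof is correct and follows essentially the same approach as the paper: perturb the block-diagonal operator $\tilde K$ (invertible by Lemma~\ref{K bd}~a)) via a Neumann series using the smallness of $K-\tilde K$ from Lemma~\ref{comp tilde K}, and exploit that $\tilde K^{-1}$ is block diagonal to extract the off-diagonal bound. The paper is terser, phrasing the second step as the resolvent identity $K^{-1}-\tilde K^{-1}=-\tilde K^{-1}(K-\tilde K)K^{-1}$ together with $P_i\tilde K^{-1}P_j=0$, but your explicit block analysis of the Neumann series is the same argument unwound and is arguably cleaner in isolating the leading $\eps|q_i-q_j|^{-1}$ contribution from the $m=1$ term.
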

\begin{proof}
By Lemma \ref{K bd} a), invertibility holds for the operator $\tilde{K}$. By using e.g.\ a geometric series, this implies invertibility and by Lemma  \ref{comp tilde K}, we have that \begin{align}
\norm{K^{-1}-\tilde{K}^{-1}}_{\dot{H}^1(\bigcup_i\de B_i)\rightarrow L_0^2(\bigcup_i\de B_i)}\,\lesssim\, \eps|q_i-q_j|^{-1}.
\end{align}

This shows the statement, since $\norm{\tilde{K}^{-1}}_{\dot{H}^1(\de B_j)\rightarrow L_0^2(\de B_i)}=0$ by definition. 
\end{proof}

Let $\psi_i^1$ denote $\psi_i$ in case $B_i$ is the only body present.

\begin{proposition}\phantomsection\label{comp mult}
For all $i$ we have \begin{align}
\norm{\frac{1}{r}\de_n(\psi_i^1-\psi_i)}_{L^2(\de B_i)}\,\lesssim\,  \eps^\frac{3}{2}|\log\eps|^\ell,
\end{align}

where the implicit constant is bounded locally uniformly in $\tilde{q}$.
In particular the estimate from the single body case in Proposition \ref{est single body} still holds.
\end{proposition}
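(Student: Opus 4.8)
The plan is to compare the boundary integral equations for $\mu_i := \frac1r\de_n\psi_i$ and for $\mu_i^1 := \frac1r\de_n\psi_i^1$, the latter extended by zero to $\bigcup_{j\neq i}\de B_j$, using the invertibility of $K$ from Corollary \ref{K inv mult} together with the off-diagonal smallness of $K-\tilde K$ from Lemma \ref{comp tilde K}. Recall from Lemma \ref{rep psi} that $\mu_i$ is the unique element of $L^2(\bigcup_l\de B_l)$ with $\int_{\de B_l}\mu_i\dx=\delta_{il}$ for all $l$ and $K\mu_i$ locally constant, while $\mu_i^1$ satisfies the single-body version, i.e.\ $\int_{\de B_i}\mu_i^1\dx=1$ and $\tilde K\mu_i^1$ constant on $\de B_i$. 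Since $\tilde K$ coincides with $K$ on $\de B_i\times\de B_i$ and $\mu_i^1$ is supported on $\de B_i$, one has $(K-\tilde K)\mu_i^1=0$ on $\de B_i$; hence $K\mu_i^1$ is \emph{exactly} constant on $\de B_i$, and on $\de B_j$ with $j\neq i$ it equals $(K-\tilde K)\mu_i^1$, which by Lemma \ref{comp tilde K} has $\dot H^1(\de B_j)$-norm $\lesssim \eps\,|q_i-q_j|^{-1}\norm{\mu_i^1}_{L^2(\de B_i)}$.

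Next I would set $\nu := \mu_i-\mu_i^1$, which lies in $L_0^2(\bigcup_l\de B_l)$ since $\int_{\de B_l}\nu\dx = \delta_{il}-\delta_{il}=0$ for every $l$. Working in $\dot H^1(\bigcup_l\de B_l)$, the quotient by locally constant functions, the class of $K\mu_i$ is $0$, so $K\nu$ and $-K\mu_i^1$ define the same class, whose tangential derivative is supported on $\bigcup_{j\neq i}\de B_j$ and bounded as above. Applying $K^{-1}$, which has operator norm $\lesssim 1$ by Corollary \ref{K inv mult}, already gives $\norm{\nu}_{L^2}\lesssim\eps^{1/2}|\log\eps|^\ell$, one power of $\eps$ short of the claim. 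The missing power is recovered by observing that one only needs $\nu$ on $\de B_i$: writing the class of $K\mu_i^1$ as $\sum_{j\neq i}w_j$ with $w_j\in\dot H^1$ supported on $\de B_j$, the off-diagonal bound $\norm{K^{-1}}_{\dot H^1(\de B_j)\to L_0^2(\de B_i)}\lesssim\eps|q_i-q_j|^{-1}$ of Corollary \ref{K inv mult}, combined with $\norm{w_j}_{\dot H^1(\de B_j)}\lesssim\eps|q_i-q_j|^{-1}\norm{\mu_i^1}_{L^2(\de B_i)}$, yields
\[
\norm{\tfrac1r\de_n(\psi_i^1-\psi_i)}_{L^2(\de B_i)} = \norm{\nu}_{L^2(\de B_i)} \lesssim \sum_{j\neq i}\eps^2|q_i-q_j|^{-2}\,\norm{\mu_i^1}_{L^2(\de B_i)}.
\]
Since the constant $\frac{1}{2\pi\eps\tilde\rho_i}$ has $L^2(\de B_i)$-norm $\simeq\eps^{-1/2}$, Proposition \ref{est single body} gives $\norm{\mu_i^1}_{L^2(\de B_i)}\lesssim\eps^{-1/2}$, and $|q_i-q_j|^{-1}\lesssim|\log\eps|$ in both regimes \eqref{def regime1} and \eqref{def regime2}. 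This produces the bound $\eps^{3/2}|\log\eps|^\ell$, and the final assertion that the single-body estimate persists follows by the triangle inequality with Proposition \ref{est single body}.

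The main point — and essentially the only obstacle — is the bookkeeping of the two independent powers of $\eps$: one must notice that $K\mu_i^1$ is locally constant away from the other bodies (because $\tilde K=K$ on the diagonal blocks and $\mu_i^1$ is the \emph{exact} single-body density), so that the defect lives only on the far-away components, and consequently that $\nu$, though not small in $L^2(\bigcup_l\de B_l)$, is small on $\de B_i$ precisely because $K^{-1}$ decays off the diagonal. One should also be careful with the quotient-by-locally-constant structure of $\dot H^1(\bigcup_l\de B_l)$ and check that $\mu_i^1$, extended by zero, genuinely lies in $L^2$, which follows from the boundary regularity of $\psi_i^1$ established in Lemma \ref{ExistencePsi}; all implicit constants are locally uniform in $\tilde q$ by the corresponding uniformity in Lemma \ref{comp tilde K} and Corollary \ref{K inv mult}.
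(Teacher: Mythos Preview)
Your proposal is correct and follows essentially the same route as the paper: both argue that $K(\mu_i-\mu_i^1)$ equals $-(K-\tilde K)\mu_i^1$ modulo locally constant functions, note that this defect vanishes on $\de B_i$ and is of size $\eps|q_i-q_j|^{-1}\norm{\mu_i^1}_{L^2}$ in $\dot H^1(\de B_j)$ by Lemma \ref{comp tilde K}, and then apply the off-diagonal bound $\norm{K^{-1}}_{\dot H^1(\de B_j)\to L_0^2(\de B_i)}\lesssim\eps|q_i-q_j|^{-1}$ from Corollary \ref{K inv mult} together with $\norm{\mu_i^1}_{L^2(\de B_i)}\lesssim\eps^{-1/2}$. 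The paper's write-up is more compressed but the two arguments coincide step for step.
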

\begin{proof}
We have that \begin{align}
K\frac{1}{r}\de_n\psi_i\eq const \text{ on all $\de B_j$}
\end{align}

and \begin{align}
&\tilde{K}\left(\frac{1}{r}\de_n\psi_i^1\big|_{\de B_i}\right)\eq const \text{ on all $\de B_j$}.
\end{align}

By subtracting the two equations, we see that \begin{align}
K\left(\frac{1}{r}\left(\de_n\psi_i-\de_n\psi_i^1\big|_{\de B_i}\right)\right)+\left(K-\tilde{K}\right)\left(\frac{1}{r}\de_n\psi_i^1\big|_{\de B_i}\right)\eq const.
\end{align}

Now we can use Corollary \ref{K inv mult} and that $\frac{1}{r}\left(\de_n\psi-\de_n\psi_i^1\big|_{\de B_i}\right)$ is mean-free

and \eqref{double bd} and that \begin{align}(K-\tilde{K}) \left(\frac{1}{r}\de_n\psi_i^1\big|_{\de B_i}\right)\eq 0\end{align} on $\de B_i$ by definition to obtain that \begin{align}
\norm{\frac{1}{r}\left(\de_n\psi_i-\de_n\psi_i^1\big|_{\de B_i}\right)}_{L_0^2(\de B_i)}\,\lesssim\,\eps|\log\eps|^\ell\norm{(K-\tilde{K}) \left(\frac{1}{r}\de_n\psi_i^1\big|_{\de B_i}\right)}_{\dot{H}^1(\bigcup_l\de B_l)}\,\lesssim\, \eps^\frac{3}{2}|\log\eps|^\ell
\end{align}
\end{proof}

\begin{proposition}\phantomsection\label{1st order exp}
For $i\neq j$ we have that \begin{align}
\norm{\frac{1}{r}\de_n\psi_i-\frac{2}{r}n\cdot\nabla_yK(q_i,q_j)}_{L^2(\de B_j)}\,\lesssim\,\eps^\frac{3}{2}|\log\eps|^\ell,
\end{align}

where $''\nabla_y''$ refers to the gradient in the second variable and the implicit constant is bounded locally uniformly in $\tilde{q}$.

In particular, it follows from the Asymptotics \eqref{exp K} and \eqref{exp F} that $\norm{\de_n\psi_i}_{L^2(\de B_j)}\,\lesssim\, \eps^\frac{1}{2}|q_i-q_j|^{-1}$ for $i\neq j$, locally uniformly in $\tilde{q}$.
\end{proposition}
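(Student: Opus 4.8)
The plan is to establish this estimate by the same ``subtract the corresponding leading-order integral equation'' strategy used in Proposition \ref{comp mult}, but now tracking the first-order term coming from the cross-kernel rather than just bounding it. I would write $\mu_i := \frac{1}{r}\de_n\psi_i$, which by Lemma \ref{rep psi} satisfies $K\mu_i = \text{const on each }\de B_l$ with $\int_{\de B_l}\mu_i\dx = \delta_{il}$, and I want to understand $\mu_i|_{\de B_j}$ for $j\neq i$. Splitting $K = \tilde K + (K-\tilde K)$ and using $\tilde K(\mu_i|_{\de B_j}) = $ const on $\de B_j$, the equation restricted to $\de B_j$ reads $\tilde K(\mu_i|_{\de B_j}) + \big((K-\tilde K)\mu_i\big)|_{\de B_j} = \text{const}$, so $\mu_i|_{\de B_j}$ is (up to the constant, which is harmless since $\tilde K^{-1}$ is applied to a function in $\dot H^1$ and $\mu_i|_{\de B_j}$ is forced mean-free on $\de B_j$) determined by $-\tilde K^{-1}$ applied to the off-diagonal contribution $\sum_{l\neq j}\int_{\de B_l}K(x,\cdot)\mu_i(x)\dx$ on $\de B_j$.

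The next step is to Taylor-expand this off-diagonal contribution. For $x\in\de B_l$ and $y\in\de B_j$ with $l\neq j$, write $K(x,y) = K(q_l,q_j) + (x-q_l)\cdot\nabla_x K(q_l,q_j) + (y-q_j)\cdot\nabla_y K(q_l,q_j) + O(\eps^2|q_l-q_j|^{-1}|\log|q_l-q_j||)$ in $H^1(\de B_j)$, using \eqref{exp K}, \eqref{exp F} and the estimate \eqref{est g} for the remainder; the constant terms and the $x$-linear term integrate against $\mu_i$ over $\de B_l$ to give something $O(\eps^{\ell})$ smaller (the constant term because $\int_{\de B_l}\mu_i = \delta_{il}$ only when $l=i$, and when $l=i$ the constant $K(q_i,q_j)$ is killed by the mean-freeness of $\mu_i|_{\de B_j}$ once we apply $\tilde K^{-1}$; the $x$-linear term because $\int_{\de B_l}(x-q_l)\mu_i(x)\dx$ is a higher moment which one controls using Proposition \ref{est single body} and Proposition \ref{comp mult} — for $l=i$ it is $O(\eps^{3/2})$-ish, for $l\neq i$ it is even smaller by the to-be-proven statement, so one should be a little careful and perhaps run a bootstrap). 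The surviving term is $\big(\sum_{l\neq j}\int_{\de B_l}\mu_i\dx\big)\,(y-q_j)\cdot\nabla_y K(q_l,q_j)$; since $\sum_{l\neq j}\int_{\de B_l}\mu_i\dx = \delta_{ij}\cdot 0 + \mathbf{1}_{i\neq j} = 1$ (as $j\neq i$), and since $(y-q_j)\cdot\nabla_y K(q_i,q_j)$ is, up to $O(\eps)$-replacement of $q_l$ by $q_i$ in the other $l$, exactly $\de_n(\text{of a translation-type harmonic function})$ whose Neumann-to-something map under $\tilde K^{-1}$ gives back $\frac{1}{r}n\cdot(\text{gradient direction})$, one recovers the claimed main term $\frac{2}{r}n\cdot\nabla_y K(q_i,q_j)$. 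Concretely: $\tilde K$ restricted to $\de B_j$ is, by Lemma \ref{K bd} c), $\overline{K}_{R_j}$ up to $O(\eps^2|\log\eps|)$, and one checks by the explicit Fourier action \eqref{action K flat} — equivalently, because $\overline K_{R_j}$ composed with ``$\frac1r\de_n$ of the exterior harmonic extension'' is the identity on mean-free boundary data — that $\overline K_{R_j}^{-1}$ applied to the linear function $y\mapsto v\cdot(y-q_j)$ on $\de B_j$ equals $\frac{1}{r}\,\de_n$ of the exterior solution with that Dirichlet datum, which is $\frac{2}{R_j}\,n\cdot v$ (the factor $2$ is the $\rho^2/\rho^2$ dipole normalization seen already in Definition \ref{def flat func} and the computation $\int_{\de B}t^*\cdot n\,\chek\phi_{t^*} = -\pi|t^*|^2\rho^2$ in Lemma \ref{conv Inertia}), with $v = \nabla_y K(q_i,q_j)$. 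Replacing $R_j$ by $r$ on $\de B_j$ costs only $O(\eps)$ relative to the main term.

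Assembling the error budget: the $\tilde K^{-1}$ (resp.\ $K^{-1}$) operator norm is $\lesssim 1$ from $\dot H^1(\de B_j)$ to $L^2_0(\de B_j)$ by Corollary \ref{K inv mult}; the Taylor remainder of the off-diagonal kernel is $O(\eps^2|\log\eps|^\ell)$ in $H^1(\de B_j)$ after integrating against $\mu_i$ (whose $L^2(\bigcup\de B_l)$ norm is $\lesssim \eps^{-1/2}$ by Proposition \ref{est single body}, so $\int_{\de B_l}(x-q_l)\mu_i\,\lesssim \eps^{1/2}\cdot\eps\cdot\|\mu_i\|\ldots$ — let me just say it comes out $\lesssim\eps^{3/2}|\log\eps|^\ell$); the difference between using $K$ versus $\tilde K$ on the diagonal block is absorbed by Proposition \ref{comp mult}; and replacing all $q_l$ ($l\neq i,j$) by $q_i$ and $R_j$ by $r$ costs $O(\eps)$ times the main term, hence $O(\eps^{3/2}|q_i-q_j|^{-1})\subseteq O(\eps^{3/2}|\log\eps|^\ell)$ locally uniformly in $\tilde q$. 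The final ``in particular'' bound $\|\de_n\psi_i\|_{L^2(\de B_j)}\lesssim\eps^{1/2}|q_i-q_j|^{-1}$ is then immediate: multiply through by $r\approx R_0$, note $|\nabla_y K(q_i,q_j)|\lesssim |q_i-q_j|^{-1}$ from \eqref{exp K}–\eqref{exp F}, and the length of $\de B_j$ is $\approx \eps$ so the $L^2(\de B_j)$ norm of the bounded-modulo-$\eps$ function $\frac{2}{r}n\cdot\nabla_y K(q_i,q_j)$ is $\lesssim\eps^{1/2}|q_i-q_j|^{-1}$, dominating the $\eps^{3/2}|\log\eps|^\ell$ error. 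The main obstacle I anticipate is the bookkeeping in the Taylor expansion of the off-diagonal contribution — specifically making rigorous that the constant and $x$-linear pieces really are negligible (the constant piece requires the mean-freeness of $\mu_i|_{\de B_j}$ to survive application of $\tilde K^{-1}$, and the $x$-linear piece requires a moment bound on $\mu_i$ over $\de B_l$ that may itself need the $l\neq i$ case, i.e.\ a short bootstrap or induction on the number of bodies) — together with pinning down the exact constant $2$ via the explicit inversion of $\overline K_R$ on linear boundary data rather than on Fourier modes.
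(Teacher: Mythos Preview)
Your approach is essentially the same strategy as the paper's: isolate the off-diagonal contribution of the integral equation on $\de B_j$, approximate it by the linear function $y\mapsto (y-q_j)\cdot\nabla_y K(q_i,q_j)$, and invert the diagonal block via $\overline K_{R_j}^{-1}$ using the explicit Fourier action \eqref{action K flat} to produce the factor $\tfrac{2}{R_j}n\cdot\nabla_y K(q_i,q_j)$.

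The paper's execution differs in one streamlining move that dissolves your bootstrap concern: instead of working with $\mu_i=\tfrac{1}{r}\de_n\psi_i$ on all boundaries at once, it introduces an auxiliary $\mu\in L_0^2(\de B_j)$ determined by $K\mu+K\bigl(\tfrac{1}{r}\de_n\psi_i^1|_{\de B_i}\bigr)=\text{const}$ on $\de B_j$, using the \emph{single-body} stream function $\psi_i^1$. Since $\tfrac{1}{r}\de_n\psi_i^1\ge 0$ on $\de B_i$ by the maximum principle and integrates to $1$, the map $y\mapsto\int_{\de B_i}K(x,y)\tfrac{1}{r}\de_n\psi_i^1(x)\,\dx=\psi_i^1(y)$ is handled by a direct mean value theorem step, giving $\|\psi_i^1-x\cdot\nabla_yK(q_i,q_j)\|_{\dot H^1(\de B_j)}\lesssim\eps^{3/2}|\log\eps|^\ell$ without any moment computation. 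The comparison $\|\mu-\tfrac{1}{r}\de_n\psi_i\|_{L_0^2(\de B_j)}\lesssim\eps^{3/2}|\log\eps|^\ell$ then follows from the off-diagonal bound \eqref{double bd}, since the defect of $K\mu+\psi_i^1$ from constancy on the other $\de B_m$ is only $O(\eps^{1/2}|q_i-q_m|^{-1})$ in $\dot H^1$ and gets multiplied by $\eps|q_j-q_m|^{-1}$ when pulled back to $\de B_j$. Your Taylor-expand-the-kernel route also works, but you would indeed need the preliminary crude bound $\|\mu_i\|_{L^2(\de B_l)}\lesssim\eps^{1/2}|\log\eps|^\ell$ for $l\neq i$ (which follows from Proposition~\ref{comp mult} and Corollary~\ref{K inv mult} exactly as you suspect) to close the contribution from $l\neq i,j$; the paper's use of $\psi_i^1$ sidesteps this.
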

\begin{proof}
Let $\psi_i^1$ be the potential in case there is only a single body $B_i$.

Let $\mu\in L_0^2(\de B_j)$ be such that \begin{align}
K\mu+K\left(\frac{1}{r}\de_n\psi_i^1|_{\de B_i}\right)\eq const \text{ on $\de B_j$}.
\end{align} 

This is well-defined by the Lemmata \ref{K bd} and \ref{comp tilde K} and it holds that $\norm{\mu}_{L^2(\de B_j)}\lesssim \eps^\frac{1}{2}|q_i-q_j|^{-1}$.

Then for $m\neq j$ by Lemma \ref{comp tilde K}, it holds that \begin{align}
\norm{K\mu}_{\dot{H}^1(\de B_m)}\,\lesssim\, \eps^\frac{3}{2}|\log\eps|^\ell
\end{align}

and hence \begin{align}
\norm{K\mu+K\left(\frac{1}{r}\de_n\psi_i^1|_{\de B_i}\right)}_{\dot{H}^1(\de B_m)}\,\lesssim\, \eps^\frac{1}{2}|q_i-q_j|^{-1}.
\end{align}

Hence by \eqref{double bd} we conclude that \begin{align}\label{comp mu}
\norm{\mu-\frac{1}{r}\de_n\psi_i}_{L_0^2(\de B_j)}\,\lesssim\, \eps^\frac{3}{2}|\log\eps|^\ell,
\end{align}

and hence it suffices to compute $\mu$.

We first estimate $\nabla\psi_i^1$. We know from the definition of $\psi_i^1$ and the maximum principle that $\frac{1}{r}\de_n\psi_i^1\geq 0$ on $\de B_i$. Hence we can use the mean value theorem to estimate \begin{align}\label{est psi 1}
\norm{\nabla K\left(\frac{1}{r}\de_n\psi_i^1|_{\de B_i}\right)-\nabla_y K(q_i,\cdot)}_{C^0(\de B_j)}\,\lesssim\,\eps \sup_{z\in B_i,y\in \de B_j}\nabla_{z,y}^2 K(z,y)\,\lesssim\, \eps|\log\eps|^\ell
\end{align}

where we used the Asymptotics \eqref{exp K} and \eqref{exp F}. Similarly we have \begin{align}
\norm{\nabla K(q_i,q_j)-\nabla_yK\left(q_i,\cdot\right)}_{C^0(\de B_j)}\,\lesssim\, \eps \sup_{y\in B_j}|\nabla_{y}^2K(q_i,y)|\,\lesssim\,\eps|\log\eps|^\ell.
\end{align}

Hence we have that \begin{align}\label{inv psi 1}
\norm{\psi_i^1-x\cdot\nabla_y K(q_i,q_j)}_{\dot{H}^1(\de B_j)}\,\lesssim\, \eps^{\frac{3}{2}}|\log\eps|^\ell.
\end{align}

We have \begin{align}\label{inv x}
\overline{K}_{R_j}^{(-1)}x\eq-\frac{2}{R_j}n+const\end{align} by \eqref{action K flat} (where $n$ denotes the normal as usual).

By Lemma \ref{K bd} and the definition of $\mu$ we have
\begin{align}\label{mu est}
\norm{\mu+\overline{K}_{R_j}^{(-1)}\left(\psi_i^1\right)}_{L_0^2(\de B_j)}\eq \norm{\left(\overline{K}_{R_j}^{(-1)}-K^{-1}\right)\left(\psi_i^1\right)}\,\lesssim\,\eps^\frac{3}{2}|\log\eps|^\ell.
\end{align}

Together \eqref{inv psi 1},\eqref{inv x} and \eqref{mu est} imply that \begin{align}
\norm{\mu-\frac{2}{R_j}n\cdot\nabla_y K(q_i,q_j)}_{L_0^2(\de B_j)}\,\lesssim\, \eps^\frac{3}{2}|\log\eps|^\ell.
\end{align}

Finally we can replace the $R_j$ in the denominator by $r$ as $\norm{n\cdot \nabla_y K(q_i,q_j)}_{L^2(\de B_j)}\lesssim \eps^\frac{1}{2}|q_i-q_j|^{-1}$ by \eqref{exp K} and \eqref{exp F}.

\end{proof}

\subsubsection{The derivative with respect to $q$}
To compute derivatives with respect to $q$, we only need to consider partial derivatives with respect to a single $R_i$ or $Z_i$, as everything is smooth by Lemma \ref{PsiDeri}. 
In the following we write $\psi_{i,R_j+s}$ instead of $\psi_i$ to emphasize with respect to which $R_j$ the function $\psi_i$ is defined and analogously write $\psi_{i,Z_j+s}$. For mixed derivatives we write $\psi_{i,R_j+s_1,R_m+s_2}$ if we want to indicate multiple positions.

We set \begin{align}
\delta_{j}^s(x)\,:=\,\begin{cases}
    \frac{\rho_j(R_j+s)}{\rho_j(R_j)}& \text{if } x\in \de B_j\\
    1              & \text{else,}
\end{cases}
\end{align}

where we again write $\rho_j(\cdot)$ for $\rho_j$ as a function of $R_j$.

\begin{proposition}\phantomsection\label{deri mu}
For all $i,j,l$ it holds that \begin{align}
\norm{\de_s\left(\delta_j^s\left(\frac{1}{r}\de_n\psi_{i,R_j+s}\right)\circ c\right)}_{L^2(\de B_l)}\,\lesssim\, \eps^\frac{1}{2}\left(\min_{a\neq b}|q_a-q_b|\right)^{-2},
\end{align}

where the diffeomorphism $c$ was introduced in the beginning of Subsection \ref{sec q deri} and corresponds to the change of $R_j$.

Similarly it holds that \begin{align}
\norm{\de_s\left(\left(\frac{1}{r}\de_n\psi_{i,Z_j+s}\right)\circ d\right)}_{L^2(\de B_l)}\,\lesssim\, \eps^\frac{1}{2}\left(\min_{a\neq b}|q_a-q_b|\right)^{-2},
\end{align}

where the diffeomorphism $d$ was introduced in the beginning of Subsection \ref{sec q deri} and corresponds to the change of $Z_j$.

The implicit constant in both estimates is locally uniform in $\tilde{q}$.
\end{proposition}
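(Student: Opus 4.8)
The plan is to mimic the structure of the proof of Proposition \ref{prop deri} (the $q$-derivative estimate for the potential), using the integral-equation characterization $K(\frac{1}{r}\de_n\psi_i) = \mathrm{const}$ on each $\de B_j$ together with $\int_{\de B_j}\frac{1}{r}\de_n\psi_i = \delta_{ij}$ rather than an $L^2$-based PDE estimate, since $\log|x|$ does not lie in $H^1$. First I would rescale by $\frac{1}{\eps}$ as in the earlier proofs, reducing to the rescaled bodies $\grave B_j$, and reduce to the $R_j$-derivative (the $Z_j$-derivative being identical with $c$ replaced by $d$ and the correction terms coming from $\delta_j^s$ absent). The key object is $\mu_i := \frac{1}{r}\de_n\psi_i$, which solves $K\mu_i = \mathrm{const}$ on every $\de B_l$ with $\int_{\de B_l}\mu_i = \delta_{il}$. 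Differentiating in $s$ the pulled-back relation, using that $K$ is itself a smooth function of $q$ away from the diagonal and a translation-plus-scaling-covariant kernel near the diagonal, produces an integral equation of the form $K(\de_s(\delta_j^s(\mu_{i,R_j+s}\circ c))\big|_{s=0}) + (\de_s K\text{-terms})(\mu_i) = \mathrm{const}$ on each $\de B_l$, with the differentiated right-hand constraint giving $\int_{\de B_l}\de_s(\delta_j^s(\mu_{i,R_j+s}\circ c)) = 0$ for all $l$. This lets me invert $K$ on $L_0^2(\bigcup_l\de B_l)$ via Corollary \ref{K inv mult}, so the whole estimate reduces to bounding the $\dot H^1(\bigcup_l\de B_l)$-norm of the inhomogeneity coming from differentiating the kernel.

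The heart of the argument is therefore computing $\de_s K_{R_j+s}$ (where the subscript records that moving $B_j$ changes both the domain of integration and, through the prefactor $\sqrt{x_Ry_R}$ in \eqref{exp K}, the kernel itself). For the diagonal block $\de B_j\times\de B_j$ I expect, exactly as in the single-body Lemma \ref{K bd}, that after subtracting the flat kernel $\overline K_{R_j}$ the $s$-derivative of the remainder is $\lesssim\eps|\log\eps|^\ell$ in the relevant operator norm; the contributions from the changing prefactor and from the scaling map $c$ are each $O(\eps)$ by the explicit estimates already used (e.g. $|\de_s c|\lesssim\eps^{-1}$ in rescaled variables but with the matching factors of $r\approx\eps^{-1}$, and $|\de_s(\sqrt{x_Ry_R}-R_j)|\lesssim\eps$). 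For the off-diagonal blocks $\de B_l\times\de B_j$ with $l\neq j$, Lemma \ref{comp tilde K} and its proof show $|\de_y K(x,y)|\lesssim|x-y|^{-1}$, and differentiating once more in the position variable gives $|\de_q\de_y K|\lesssim|x-y|^{-2}$; combined with the body separations $|q_a-q_b|$ this yields the claimed $(\min_{a\neq b}|q_a-q_b|)^{-2}$ scaling. Feeding these bounds through $\|K^{-1}\|_{\dot H^1(\bigcup\de B_l)\to L_0^2(\bigcup\de B_l)}\lesssim 1$ and the refined off-diagonal bound \eqref{double bd}, then rescaling back to the original $B$'s (which introduces the $\eps^{1/2}$), gives the proposition.

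The main obstacle I anticipate is the careful bookkeeping of the kernel differentiation near the diagonal: $K$ is only log-singular and its $s$-derivative acquires a term $(\sqrt{x_Ry_R}-R_j)\de_y\log|x-y|$ of Calderón–Zygmund type, which must be controlled in $L^2(\de B)$ by Lemma \ref{perturbed log} (as in the proof of Lemma \ref{K bd}c)), together with the analogous $\de_s$-differentiated version of that term. One has to verify that after differentiating, the worst singularity is still of the form $j(x,y)\de_y\log|x-y|$ with $j\in C^1$ and $\|j\|_{\sup}\lesssim\eps$, so that Lemma \ref{perturbed log} applies with the right power of $\eps$; the scaling map $c$ and the prefactor $\delta_j^s$ contribute only smooth, $O(1)$ (and $O(\eps)$ for their deviations from identity) multiplicative factors, so they do not worsen the singularity but do need to be tracked. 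The elliptic-regularity step to pass from the $L^2_0$ bound on $\de_s\mu$ to the full $L^2(\de B_l)$ statement is routine, exactly as in Proposition \ref{prop deri}, since the relevant boundary integral equation has smooth coefficients away from the diagonal and the near-diagonal part is handled by the mapping properties of $K$ established in Lemma \ref{K bd} and Corollary \ref{K inv mult}.
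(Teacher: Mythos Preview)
Your proposal is essentially the paper's own proof: differentiate the pulled-back integral equation $K_s^{R_j}\bigl(\delta_j^s(\tfrac{1}{r}\de_n\psi_{i,R_j+s})\circ c\bigr)=\text{const}$, observe that the $s$-derivative of the density is mean-free on every $\de B_l$, and invert $K$ via Corollary \ref{K inv mult}, reducing everything to operator-norm bounds on $\de_s K_s^{R_j}$ (diagonal and off-diagonal blocks), which the paper packages as the two separate Lemmata \ref{deri K} and \ref{deri K 2} stated just before the proposition. Two minor remarks: the paper does \emph{not} rescale by $\eps^{-1}$ in Section~4 (the $\eps^{1/2}$ arises instead from $\|\de_s K_s^{R_j}\|\lesssim\eps$ times $\|\tfrac{1}{r}\de_n\psi_i\|_{L^2(\de B_i)}\approx\eps^{-1/2}$), and there is no ``elliptic-regularity step'' needed to pass from $L_0^2$ to $L^2$, since mean-freeness of $\de_s(\delta_j^s\mu_i\circ c)$ is automatic from the constraint $\int_{\de B_l}\delta_j^s\mu_{i,R_j+s}\circ c=\delta_{il}$.
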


\begin{proposition}\phantomsection\label{2nd deri mu}
\begin{itemize}
\item[a)] For all $i,j,l$ we have \begin{align}
\norm{\de_s^2\left(\delta_j^s\left(\frac{1}{r}\de_n\psi_{i,R_j+s}\right)\circ c\right)}_{L^2(\de B_l)}\,\lesssim\, \eps^\frac{1}{2}|\log\eps|^\ell
\end{align}

and the same holds for the second derivative with respect to $Z_j$ and the mixed second derivative. The implicit constant is bounded locally uniformly in $\tilde{q}$.
\item[b)] For all $i,j,l,m$ with $j\neq m$ we have \begin{align}
\norm{\de_{s_1}\de_{s_2}\left(\delta_j^{s_1}\delta_m^{s_2}\left(\frac{1}{r}\de_n\psi_{i,R_j+s_1,R_m+s_2}\right)\circ c_{jm}(s_1,s_2)\right)}_{L^2(\de B_l)}\,\lesssim\, \eps^\frac{1}{2}|\log\eps|^\ell
\end{align}

where $c_{jm}$ is the composition of the map $c$ defined for $j$ and $m$ with arguments $s_1$ and $s_2$ respectively. The same estimate holds for the derivatives with respect to $Z$'s or mixed derivatives. The implicit constant is bounded locally uniformly in $\tilde{q}$.
\end{itemize}
\end{proposition}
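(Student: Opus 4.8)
The plan is to differentiate the integral-equation characterisation of $\tfrac1r\de_n\psi_i$ from Lemma~\ref{rep psi} and invert $K$ via Corollary~\ref{K inv mult}, in complete analogy with Proposition~\ref{deri mu}, which is established by the same scheme and which I would use below. Fix a reference configuration $q$; by Lemma~\ref{rep psi} the function $\mu_i:=\tfrac1r\de_n\psi_i$ is the unique element of $L^2(\bigcup_l\de B_l)$ with $K\mu_i$ locally constant and $\int_{\de B_l}\mu_i\dx=\delta_{il}$, and I write $\mu_{i,R_j+s}$ for the same object defined at the shifted position. Using the diffeomorphism $c$ of \eqref{def c} (the $Z$-derivative uses $d$ of \eqref{def d}, the mixed one the composition $c_{jm}$), pull this back to the fixed bodies by setting
\[
\nu_i^s\,:=\,\delta_j^s\,\bigl(\mu_{i,R_j+s}\bigr)\circ c(\cdot,s)\ \in\ L^2\bigl(\textstyle\bigcup_l\de B_l(q)\bigr).
\]
A change of variables in $\int_{\de B_l}K\mu_{i,R_j+s}\dx$, together with the fact that on $\bigcup_l\de B_l$ the tangential Jacobian of $c(\cdot,s)$ equals $\delta_j^s$ (there $c$ is exactly $c_0$ or the identity), shows that $K^s\nu_i^s$ is locally constant and $\int_{\de B_l}\nu_i^s\dx=\delta_{il}$, where $K^s(x,y):=K(c(x,s),c(y,s))$ and $K^0=K$; the factor $\delta_j^s$ is chosen precisely so that this normalisation is independent of $s$.

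Next I would differentiate these relations in $s$ (legitimate by Lemma~\ref{PsiDeri} and the smoothness of $c$). At $s=0$ this yields
\[
K\bigl(\de_s\nu_i^s\big|_{s=0}\bigr)\,=\,-\bigl(\de_sK^s\big|_{s=0}\bigr)\nu_i^0+(\text{loc.\ const.}),\qquad \de_s\nu_i^s\big|_{s=0}\in L_0^2,
\]
and, differentiating once more,
\[
K\bigl(\de_s^2\nu_i^s\big|_{s=0}\bigr)\,=\,-2\bigl(\de_sK^s\big|_{s=0}\bigr)\bigl(\de_s\nu_i^s\big|_{s=0}\bigr)-\bigl(\de_s^2K^s\big|_{s=0}\bigr)\nu_i^0+(\text{loc.\ const.}),
\]
with $\de_s^2\nu_i^s\big|_{s=0}\in L_0^2(\bigcup_l\de B_l)$ (the normalisation differentiates to zero since it is $s$-independent). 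Here $\de_sK^s\big|_{s=0}$ has kernel $\nabla_xK\cdot\de_sc(x,0)+\nabla_yK\cdot\de_sc(y,0)$, and $\de_s^2K^s\big|_{s=0}$ a kernel built from $\nabla^2K$ contracted with $\de_sc$ plus the lower-order terms $\nabla K\cdot\de_s^2c$; by \eqref{def c} the fields $\de_sc(\cdot,0),\de_s^2c(\cdot,0)$ are $O(1)$ and supported near $\de B_j$. Since $K^{-1}\colon L_0^2\to\dot{H}^1$ has norm $\lesssim1$ by Corollary~\ref{K inv mult}, it then suffices to bound these right-hand sides in $\dot{H}^1$ on each $\de B_l$.

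To do that I would estimate the operator norms of $\de_sK^s\big|_{s=0}$ and $\de_s^2K^s\big|_{s=0}$ using the expansion \eqref{exp K}--\eqref{est g}: on a single body $\nabla_{x,y}^mK$ equals $\nabla_{x,y}^m\bigl(\tfrac{R}{2\pi}\log|x-y|\bigr)$ plus the smoother remainder $g$ and the factor $\sqrt{x_Ry_R}-R=O(\eps)$, so by Lemmata~\ref{action log} and \ref{perturbed log} (writing the $\nabla^2K$-part as $\de_\tau$ composed with the operator of Lemma~\ref{perturbed log} plus curvature corrections) these operators map $L^2$ into $\dot{H}^1$ with norms controlled up to $\eps$-powers, while on the off-diagonal blocks $\nabla^mK\lesssim|q_a-q_b|^{-m}$ and the leading part integrates to zero by symmetry, so Schur's lemma applies. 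The essential point — mirroring the cancellation used in the proof of Proposition~\ref{prop deri} — is that, because $K\mu_i$ is \emph{exactly} locally constant, the large leading part $\approx\tfrac{1}{2\pi\eps\tilde{\rho}_i}$ of $\mu_i$ on $\de B_i$ (Proposition~\ref{est single body}) together with the unit-mass normalisation contributes, after being hit with $\de_s^kK^s$, only a term of admissible size in $\dot{H}^1$; I would make this visible by keeping $\de_s^kK^s$ in the form $K\circ(\text{bounded})+(\text{bounded})\circ K+(\text{genuinely small})$. Feeding in the bounds for $\mu_i$ from Propositions~\ref{est single body} and \ref{1st order exp} (constant $\lesssim\eps^{-1}$ on $\de B_i$, and $\lesssim\eps^{1/2}|q_i-q_l|^{-1}$ in $L^2(\de B_l)$ for $l\neq i$), and Proposition~\ref{deri mu} for the term $\bigl(\de_sK^s\big|_{s=0}\bigr)\bigl(\de_s\nu_i^s\big|_{s=0}\bigr)$, one obtains on each $\de B_l$ a bound of the shape $\eps^{1/2}$ times negative powers of $\min_{a\neq b}|q_a-q_b|$; since $|q_a-q_b|^{-1}\lesssim|\log\eps|$ in the regimes \eqref{def regime1}--\eqref{def regime2}, this is $\lesssim\eps^{1/2}|\log\eps|^\ell$, which after inverting $K$ and transferring back through $c$ gives part a). Part b) follows by the same scheme with $c_{jm}$: for $j\neq m$ the operator $\de_{s_1}\de_{s_2}K^{s_1,s_2}\big|_{0}$ is purely off-diagonal (the two cutoffs act in disjoint neighbourhoods), so only the harmless $|q_j-q_m|^{-2}$-type kernels occur and the argument simplifies.

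The hard part will be the operator-norm bookkeeping for $\de_s^2K^s\big|_{s=0}$, whose diagonal blocks carry the genuinely singular Calder\'on--Zygmund kernel $\nabla^2K\sim|x-y|^{-2}$ on each curve $\de B_l$, together with the verification that the $O(\eps^{-1})$ leading part of $\mu_i$ drops out of the final estimate — i.e.\ producing the cancellation in the form indicated above. This is the exact analogue of the auxiliary-function argument in the proof of Proposition~\ref{prop deri}; here the locally-constant boundary behaviour of $\psi_i$ is what makes the cancellation accessible, once the whole problem is phrased through $K$ and the space $\dot{H}^1$ modulo locally constant functions.
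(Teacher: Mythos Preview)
Your scheme is essentially the paper's: differentiate the identity ``$K_s$ applied to the pulled-back density is locally constant'' twice, bound the terms with $\de_sK_s$ and $\de_s^2K_s$ in $\dot H^1$, and invert $K$ via Corollary~\ref{K inv mult}. The paper isolates the operator-norm estimates as Lemmata~\ref{deri K} and \ref{deri K 2} (for its variant $K_s^{R_j}$, which differs from your $K^s$ only by a smooth scalar prefactor), after which the proof of Proposition~\ref{2nd deri mu} is three lines.

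The ``hard part'' you flag largely evaporates, and no cancellation from the structure of $\mu_i$ is needed: on $\de B_j$ the map $c(\cdot,s)$ is a homothety, so $|c(x,s)-c(y,s)|=\tfrac{\rho_j(R_j+s)}{\rho_j(R_j)}\,|x-y|$ and hence $\de_s\log|c(x,s)-c(y,s)|=-\tfrac{1}{2(R_j+s)}$ is \emph{constant} in $(x,y)$ (this is \eqref{log c}). Consequently the would-be $|x-y|^{-2}$ piece never appears in $\de_y\de_sK_s$ or $\de_y\de_s^2K_s$; what remains is handled by Schur's lemma, Lemma~\ref{perturbed log}, and the remainder bound \eqref{est g}, yielding $\norm{\de_s^kK_s^{R_j}}_{L^2(\de B_j)\to\dot H^1(\de B_j)}\lesssim\eps|\log\eps|$ directly. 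Applied to $\mu_i$ (of size $\eps^{-1/2}$ in $L^2(\de B_i)$) and to $\de_s\nu_i^s$ (Proposition~\ref{deri mu}), this already gives $\eps^{1/2}|\log\eps|^\ell$ without any auxiliary-function argument in the spirit of Proposition~\ref{prop deri}.
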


We shall only prove the statements for the $R_j$-derivative and focus on the first derivative and occasionally comment on the slight changes needed for the $Z_j$-derivative, which is generally easier. The second derivatives can be handled with the same technique, but the involved calculations become a lot more tedious, so we omit them most of the calculations for them.

 We set \begin{align}
K_s^{R_j}(x,y)\eq\left(1+I_{\de B_j}(x)\left(\frac{R_j}{R_j+s}-1\right)\right)K(c(x,s),c(y,s)),
\end{align}

and similarly \begin{align}
K_s^{Z_j}\,:=\,K(d(x,s),d(y,s)).
\end{align}

We also write $K_s^{R_j}$ for the associated linear map. Note that for $f$ supported on $\de B_j$ it holds that \begin{align}
K_s^{R_j}f(y)\eq\frac{R_j\rho_j}{(R_j+s)\rho_j(R_j+s)}K(f\circ c^{-1})(c(y,s)),
\end{align}

where we again write $\rho_j(\cdot)$ to denote $\rho_j$ as a function of $R_j$ and where the additional prefactor comes from the change of the inner radius. For $f$ supported on any other $\de B_i$ it holds \begin{align}
K_s^{R_j}f(y)\eq K(f)(c(y,s)).
\end{align}

Similar for mixed second derivatives with respect to different indices, one would use the kernel \begin{align}
&K_{s_1,s_2}^{R_j,R_m}(x,y)\eq\left(1+I_{\de B_j}(x)\left(\frac{R_j}{R_j+s}-1\right)+I_{\de B_m}(x)\left(\frac{R_m}{R_m+s}-1\right)\right)\\
&\times K(c_{jm}(x,s_1,s_2),c_{jm}(y,s_1,s_2)).\nonumber
\end{align}

\begin{lemma}\phantomsection\label{deri K}
The linear operator $K_s^{R_j}$ is Fr\'echet differentiable in $s$ as a map from $L^2(\de B_j)$ to $\dot{H}^1(\de B_j)$ for all $i$ and we  have \begin{align}
\norm{\de_sK_s^{R_j}}_{L^2(\de B_j)\rightarrow \dot{H}^1(\de B_j)},\norm{\de_s^2K_s^{R_j}}_{L^2(\de B_j)\rightarrow \dot{H}^1(\de B_j)}\,\lesssim\, \eps|\log\eps|,
\end{align}

furthermore, the Fr\'echet derivative is given by integration against the pointwise derivative in $s$.
\end{lemma}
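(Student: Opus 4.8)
The claim is that $K_s^{R_j}$ depends smoothly on $s$ with the stated operator bounds, and that the Fréchet derivative is obtained by differentiating the kernel pointwise. My plan is to reduce everything to properties of the fixed kernel $K$ already established in Lemma \ref{K bd} and Lemma \ref{perturbed log}, using the explicit formula for $K_s^{R_j}$ in terms of $K$ and the diffeomorphism $c$. Recall that for $f$ supported on $\de B_j$ we have $K_s^{R_j}f(y) = \frac{R_j\rho_j}{(R_j+s)\rho_j(R_j+s)}K(f\circ c^{-1})(c(y,s))$. So the $s$-dependence enters through three places: (i) the scalar prefactor, which is smooth in $s$ with derivatives $O(1)$ locally uniformly since $\rho_j(\cdot)$ is smooth and bounded away from $0$; (ii) the precomposition $f \mapsto f\circ c^{-1}$, which by the affine form of $c$ (see \eqref{def c}) amounts to a rescaling-plus-translation of the boundary, an isometry up to a factor $1 + O(s)$ on $\de B_j$; and (iii) the postcomposition with $c(\cdot,s)$, i.e.\ evaluating $K(\cdot,\cdot)$ at shifted, rescaled arguments.

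\textbf{Carrying out the estimates.} First I would differentiate the explicit formula in $s$ formally, i.e.\ write down the pointwise derivative of the kernel $K_s^{R_j}(x,y)$ using the chain rule and the expansion \eqref{exp K}, \eqref{exp F}: one gets terms where the $\de_s$ hits the prefactor (bounded $\times$ $K$), terms where it hits the first argument $c(x,s)$, and terms where it hits the second argument $c(y,s)$. The key point is that $\de_s c(x,s) = \frac{1}{\eps}e_R - \frac{1}{2R_j}(x-q_j)$ near $\de B_j$ (after the relevant rescaling, or without it $\de_s c(x,s)$ is bounded by a constant on $\de B_j$), and that the worst singular behaviour of $\nabla_x K$, $\nabla_y K$ near the diagonal is $|x-y|^{-1}$ by the asymptotics. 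Hence $\de_s K_s^{R_j}(x,y)$ is a sum of: a multiple of $K(x,y)$ itself; a smooth bounded coefficient times $\log|x-y|$; and a smooth bounded coefficient times $(x-y)\cdot\nabla\log|x-y|$-type terms, i.e.\ kernels of the type handled by Lemma \ref{perturbed log}. Each of these maps $L^2(\de B_j)$ to $L^2(\de B_j)$ with norm $\lesssim \eps|\log\eps|$ by Lemma \ref{K bd} b) and Lemma \ref{perturbed log}; the $\dot{H}^1$ bound follows by the same argument applied to a further $y$-derivative (as in the proof of Lemma \ref{K bd} c)), picking up the factor $\eps$ that one loses from the derivative. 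The second derivative $\de_s^2 K_s^{R_j}$ is handled identically: differentiating once more produces the same three families of terms (products of smooth $O(1)$ coefficients with $K$, $\log|x-y|$, and Lemma-\ref{perturbed log}-type kernels, plus harmless $O(|x-y|\log|x-y|)$ remainders), again bounded by $\eps|\log\eps|$.

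\textbf{Justifying Fréchet differentiability.} To upgrade the pointwise/formal derivative to a genuine Fréchet derivative of the operator-valued map, I would estimate the difference quotient: $\frac{1}{h}(K_{s+h}^{R_j} - K_s^{R_j}) - (\text{pointwise }\de_s K_s^{R_j})$ has a kernel given by the second-order Taylor remainder of $K_{\bullet}^{R_j}(x,y)$ in $s$, which by the computation above is again a sum of the same three families of kernels times coefficients that are $O(h)$ uniformly in $x,y$; applying Lemma \ref{K bd} b) and Lemma \ref{perturbed log} once more shows this difference has operator norm $\lesssim h\,\eps|\log\eps| \to 0$ as $h\to 0$. This gives Fréchet differentiability from $L^2(\de B_j)$ to $\dot{H}^1(\de B_j)$ and identifies the derivative with the pointwise one. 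Local uniformity in $\tilde q$ follows because all the constants above (the smoothness of $\rho_j$, the bounds on $c$ and its $s$-derivatives, the implicit constants in Lemmata \ref{K bd} and \ref{perturbed log}) are locally uniform in $q$, and the rescaling relating $\tilde q$ to $q$ only changes things by bounded factors.

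\textbf{Expected main obstacle.} The only genuinely delicate point is bookkeeping the singular kernels: one must check that after differentiating in $s$ the near-diagonal singularity never gets worse than $|x-y|^{-1}$ (with a possible $\log$), so that the decomposition into $K$-type, $\log$-type and $(x-y)\de\log$-type kernels actually closes and Lemma \ref{perturbed log} applies — in particular the term where $\de_s$ hits the argument $c(x,s)$, producing $\de_s c(x,s)\cdot \nabla_x K(c(x,s),c(y,s))$, must be split as in the proof of Lemma \ref{perturbed log} into a bounded-coefficient piece and an $O(|x-y|)\cdot\nabla\log$ piece. This is a routine but somewhat lengthy calculation, which is why the proposition is stated with the proof deferred; the second-derivative bound requires the same calculation one order further and I would only sketch it.
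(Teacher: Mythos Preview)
Your proposal is correct and follows essentially the same approach as the paper: differentiate the kernel pointwise in $s$, expand using \eqref{exp K}--\eqref{exp F}, and bound the resulting pieces via Schur's lemma and Lemma~\ref{perturbed log}; Fr\'echet differentiability is then justified by bounding one more derivative and invoking the mean value theorem. One simplification the paper exploits that you do not mention: because $c$ is affine near $\de B_j$, one has $|c(x,s)-c(y,s)|=\lambda(s)|x-y|$ and hence $\de_s\log|c(x,s)-c(y,s)|=-\tfrac{1}{2(R_j+s)}$ is a \emph{constant} (see \eqref{log c}), so the most singular $\log$-part of the kernel contributes no $|x-y|^{-1}$ term under $\de_s$---the $\nabla\log$-type pieces you anticipate arise only after the further $\de_y$, from the $s$-derivative of the prefactor $\sqrt{c(x,s)_R c(y,s)_R}$, which is where Lemma~\ref{perturbed log} is actually invoked.
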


Note that the corresponding derivatives of $K_s^{Z_j}$ are trivially $0$ by the explicit form of $K$ in \eqref{exp K}. The statement for mixed second derivatives also trivially reduces to the derivative with respect to a single index, as only the change of $R_j$ matters.

\begin{proof}
It is easy to see that the kernel is pointwise smooth in $s$ for $x\neq y$ by using the Expansions \eqref{exp K} and \eqref{exp F} and the differentiability of $c$. We shall estimate the operator norm of the first pointwise derivative. A similar, but tedious calculation, which we omit here can be made to show that the second and third pointwise derivatives are bounded, which by the mean value theorem justifies that the first two pointwise derivatives agree with the Fr\'echet derivatives.\\

Let us estimate the first derivative w.r.t.\ $s$ of the different parts of the kernel: \begingroup\allowdisplaybreaks\begin{align}
&\de_s \log |c(x,s)-c(y,s)|\eq\de_s\log\left(\frac{\rho_j(R_j+s)}{\rho_j}\right)\eq-\frac{1}{2(R_j+s)}\label{log c}\\
&\left|\de_s\frac{R_j}{R_j+s}\sqrt{c(x,s)_Rc(y,s)_R}\right|\,\lesssim\, \eps\label{deri prefactor 0}\\
&\norm{\de_s\frac{R_j}{R_j+s}\sqrt{c(x,s)_Rc(y,s)_R}}_{C_{x,y}^1(\de B_j\times \de B_j)}\,\lesssim\, 1\label{deri prefactor}\\
&\de_s\log(R_j+s)\eq\frac{1}{R_j+s}\label{deri log R 0}\\
&\left|\de_s\log \frac{(R_j+s)^2}{c(x,s)_Rc(y,s)_R}\right|\,\lesssim\, \eps\label{deri log R 1}\\
&\norm{\de_s\log \frac{(R_j+s)^2}{c(x,s)_Rc(y,s)_R}}_{C_{x,y}^1(\de B_j\times \de B_j)}\,\lesssim\,1\label{deri log R}
\end{align}\endgroup

Furthermore we have \begin{align}
\left|\de_s\frac{(c(x,s)-c(y,s))^2}{c(x,s)_Rc(y,s)_R}\big|_{s=0}\right| \eq \left|\de_s|x-y|^2\frac{\rho_j(R_j+s)^2}{\rho_j^2c(x,s)_Rc(y,s)_R}\big|_{s=0}\right|\,\lesssim\, |x-y|^2\label{s deri bd}
\end{align}

and similarly it holds  \begin{align}
&\left|\de_y\de_s\frac{(c(x,s)-c(y,s))^2}{c(x,s)_Rc(y,s)_R}\big|_{s=0}\right|\,\lesssim\, |x-y|.\label{s deri bd 2}
\end{align}

\begingroup
\allowdisplaybreaks

Now we may use the Expansions \eqref{exp K} and \eqref{exp F} to write \begin{align}
&2\pi\de_y\de_sK_s^{R_j}(x,y)\big|_{s=0}\eq\notag\\
& \de_s\de_y\left(\frac{R_j}{R_j+s}\sqrt{c(x,s)_Rc(y,s)_R}\right)\left(\log(|x-y|)-\log(8)+2-\frac{1}{2}\log(x_Ry_R)+g(x,y)\right)\notag\\
&+\de_s\left(\frac{R_j}{R_j+s}\sqrt{c(x,s)_Rc(y,s)_R}\right)\de_y\left(\log(|x-y|)-\frac{1}{2}\log\left(\frac{x_Ry_R}{R_j^2}\right)+g(x,y)\right)\notag\\
&+\de_y\left(\sqrt{x_Ry_R}\right)\de_s\left(\log(|c(x,s)-c(y,s)|)-\frac{1}{2}\log\left(\frac{c(x,s)_Rc(y,s)_R}{(R_j+s)^2}\right)+\log( R_j+s)\right)\notag\\
&+\sqrt{x_Ry_R}\de_y\de_s\left(\log(|c(x,s)-c(y,s)|)-\frac{1}{2}\log\frac{c(x,s)_Rc(y,s)_R}{(R_j+s)^2}\right)\notag\\
&-\de_y\left(\sqrt{x_Ry_R}\right)\de_sh\left(\frac{(c(x,s)-c(y,s))^2}{c(x,s)_Rc(y,s)_R}\right)\notag\\
&-\sqrt{x_Ry_R}\de_s\de_yh\left(\frac{(c(x,s)-c(y,s))^2}{c(x,s)_Rc(y,s)_R}\right).\notag\\
&=\,I+II+III+IV+V+VI.
\end{align}

\endgroup

Here $I-VI$ stand for the obvious terms and we dropped some constants whose derivative vanishes.

The boundedness of the terms $I$ and $II$ follows from the estimates \eqref{deri prefactor 0} and \eqref{deri prefactor} above and Schur's Lemma and further from using Lemma \ref{perturbed log} for the derivative of the logarithm.

The boundedness of $III$ follows directly  from \eqref{log c} and \eqref{deri log R 1} by using Schur's Lemma.

The boundedness of $IV$ follows from \eqref{log c} and \eqref{deri log R}.

The boundedness of $V$ and $VI$ follows from \eqref{s deri bd} resp.\ \eqref{s deri bd 2} and the estimate \eqref{est g} on $g$.\\

\end{proof}

\begin{lemma}\phantomsection\label{deri K 2}
The kernel $K_s^{R_j}$ is Fr\'echet-differentiable in $s$ as a map from $L^2(\de B_i)$ to $\dot{H}^1(\de B_l)$ for $i\neq l$ and, locally uniformly in $\tilde{q}$, we have that \begin{align}
\norm{\de_s K_s^{R_j}}_{L_0^2(\de B_i)\rightarrow \dot{H}^1(\de B_l)}\,\lesssim\,\eps|q_i-q_l|^{-2}\quad\text{and}\quad\norm{\de_s^2 K_s^{R_j}}_{L_0^2(\de B_i)\rightarrow \dot{H}^1(\de B_l)}\,\lesssim\, \eps|q_i-q_l|^{-3}.
\end{align}

 Furthermore, the Fr\'echet derivative is given by integration against the pointwise derivative in $s$. The same estimates also hold for the Kernel $K_s^{Z_j}$ and the second derivatives of $K_{s_1,s_2}^{R_j,R_m}$.
\end{lemma}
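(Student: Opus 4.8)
The plan is to mirror the proof of Lemma \ref{deri K} as closely as possible, exploiting that the only extra ingredient needed here is good pointwise control of the kernel and its $s$-derivatives \emph{away} from the diagonal, together with the fact that $B_i$ and $B_l$ have pairwise distance $\approx |q_i-q_l|\gtrsim (\eps|\log\eps|)^{-1}$ resp.\ $(\eps\sqrt{|\log\eps|})^{-1}$ in the two regimes. First I would record, from the expansions \eqref{exp K} and \eqref{exp F} and the smoothness and explicit form of the diffeomorphism $c$ (and $d$), the pointwise bounds
\begin{align}
|\de_s K_s^{R_j}(x,y)|\,\lesssim\, \eps\,|x-y|^{-1},\qquad |\de_y\de_s K_s^{R_j}(x,y)|\,\lesssim\, \eps\,|x-y|^{-2},\notag
\end{align}
and the analogous bounds for the second $s$-derivative, valid for $x\in\de B_i$, $y\in\de B_l$ with $i\neq l$; here the extra factor $\eps$ over the naive bound comes from the fact that $\de_s c(x,s)$ itself only moves the single body $B_j$, producing a factor $\de_s\rho_j\lesssim\eps$ in the prefactor and in the argument of $F$ (exactly as in \eqref{deri prefactor 0}, \eqref{s deri bd}). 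Since $x,y$ range over boundaries at distance $\approx|q_i-q_l|$, each such kernel is bounded by $\eps|q_i-q_l|^{-1}$ (resp.\ $\eps|q_i-q_l|^{-2}$ for the $y$-derivative, and one more power for the second $s$-derivative).

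Next I would pass from these pointwise bounds to operator bounds. Because $\de \grave B_i$ and $\de\grave B_l$ are compact boundaries of total measure $\approx\eps^{-1}\cdot$(bounded), and the kernels are bounded by a constant times $\eps|q_i-q_l|^{-2}$ (for the $y$-derivative), Schur's test — or simply Cauchy--Schwarz against the finite measure $\mathcal H^1|_{\de B_i}$ — gives
\begin{align}
\norm{\de_s K_s^{R_j}}_{L^2(\de B_i)\to \dot H^1(\de B_l)}\,\lesssim\, \eps\,|q_i-q_l|^{-2},\notag
\end{align}
after rescaling back to the original (size-$\eps$) bodies the powers of $\eps$ work out as claimed, and similarly $\lesssim\eps|q_i-q_l|^{-3}$ for $\de_s^2$; here one gains the decay in $|q_i-q_l|$ by moving the $y$-derivative onto the kernel, which is legitimate since we are mapping into $\dot H^1$ whose norm is $\norm{\de_\tau\cdot}_{L^2}$. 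The mean-freeness hypothesis $f\in L_0^2(\de B_i)$ is not even needed for this estimate (it would only buy one further power of $\eps$, which we do not claim), so I would just note it is harmless. The Fréchet-differentiability and the identification of the Fréchet derivative with integration against the pointwise $s$-derivative follows exactly as in Lemma \ref{deri K}: one bounds the second (and third) pointwise $s$-derivatives by the same scheme and invokes the mean value theorem on the difference quotients in the operator norm. For $K_s^{Z_j}$ the kernel depends on $s$ only through $d(x,s)-d(y,s)=x-y$ and through no prefactor, so $\de_s K_s^{Z_j}=0$ identically and the estimate is trivial; for $K_{s_1,s_2}^{R_j,R_m}$ with $j\neq m$, differentiating in $s_1$ only sees the change of $R_j$ and differentiating in $s_2$ only sees the change of $R_m$, so the mixed derivative reduces to the single-index computation applied twice, with no new terms.

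The main obstacle, such as it is, is bookkeeping rather than conceptual: one must track all the terms arising when $\de_s$ and $\de_y$ hit the product structure $\sqrt{x_Ry_R}\,F(|x-y|^2/(x_Ry_R))$ after the change of variables $c$, exactly the expansion into terms $I$--$VI$ in the proof of Lemma \ref{deri K}, and check that in the off-diagonal regime each term is controlled by the stated power of $|q_i-q_l|$ — this is where one must be careful that the logarithmic term $\log|x-y|$ and the remainder $g$ from \eqref{est g} both behave well when $|x-y|\approx|q_i-q_l|$ is large compared to the bodies but possibly small compared to $1$; the bound \eqref{est g} is only stated for small $|x-y|$, so for $|x-y|\gtrsim 1$ one uses instead the crude global bounds on $F$ and its derivatives coming from the integral definition. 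I expect this to be entirely routine given the machinery already in place, so I would state the bounds, indicate the Schur/rescaling argument, point to the proof of Lemma \ref{deri K} for the term-by-term estimates, and note the trivial cases $K_s^{Z_j}$ and the mixed-index kernel. I would leave the tedious second-derivative computation to the reader, as the paper does elsewhere.
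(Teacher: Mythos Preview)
Your overall strategy---pointwise bounds on $\de_y\de_s K$ away from the diagonal, then Schur's test---is exactly what the paper does. But two of your claimed bookkeeping shortcuts are wrong, and without fixing them the argument does not go through.

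First, the pointwise bound $|\de_y\de_s K_s^{R_j}(x,y)|\lesssim\eps|x-y|^{-2}$ is false: there is no extra $\eps$ in the kernel. Recall that on $\de B_j$ one has $\de_s c(x,s)\big|_{s=0}=-\tfrac{1}{2R_j}(x-q_j)+e_R$; the translation term $e_R$ is of order $1$, so $|\de_s c|\lesssim 1$ (not $\lesssim\eps$), and the paper's expansion correctly gives $|\de_y\de_s K_s^{R_j}|\lesssim|x-y|^{-2}$. The factor $\eps$ in the operator norm comes not from the kernel but from the measure $|\de B_i|\approx\eps$ in Schur's test (in the unrescaled bodies of radius $\rho_i\approx\eps$). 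Your ``measure $\approx\eps^{-1}$'' does not match either the rescaled or the unrescaled setting.

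Second, for the cross-terms the claim $\de_s K_s^{Z_j}\equiv 0$ is false. The triviality you cite holds only for $x,y\in\de B_j$ (this is the remark after Lemma~\ref{deri K}). Here $i\neq l$, so at most one of $x,y$ lies on $\de B_j$; if, say, $y\in\de B_j$ and $x\notin\de B_j$, then $d(x,s)-d(y,s)=x-y-se_Z$, which \emph{does} depend on $s$. This is precisely why the paper explicitly includes $K_s^{Z_j}$ in the statement. The same pointwise-bound-plus-Schur argument handles it, but it is not trivially zero.
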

\begin{proof}
We only consider the case $l=j$ and the first derivative, the other cases and the second derivative are very similar. In this case, the kernel is smooth in $(x,y,s)$ and hence the Fr\'echet- and pointwise derivative agree by e.g.\ the mean value theorem.

By \eqref{exp K} and \eqref{exp F}, we have that \begin{align}
&-2\pi\de_s\de_yK_s^{R_j}(x,y)\eq \de_s\de_y\sqrt{x_Rc(y,s)_R}F\left(\frac{|x-y|^2}{x_Ry_R}\right)+\de_s\sqrt{x_Rc(y,s)_R}F'\left(\frac{|x-y|^2}{x_Ry_R}\right)\de_y\frac{|x-y|^2}{x_Ry_R}\notag\\
&+\de_y\sqrt{x_Ry_R}F'\left(\frac{|x-c(y,s)|^2}{x_Rc(y,s)_R}\right)\de_s\frac{|x-c(y,s)|^2}{x_Rc(y,s)_R}+\\
&\sqrt{x_Ry_R}\biggl(F'\left(\frac{|x-c(y,s)|^2}{x_Rc(y,s)_R}\right)\de_y\de_s\frac{|x-c(y,s)|^2}{x_Rc(y,s)_R}\notag\\
&+F''\left(\frac{|x-c(y,s)|^2}{x_Rc(y,s)_R}\right)\de_s\frac{|x-c(y,s)|^2}{x_Rc(y,s)_R}\de_y\frac{|x-y|^2}{x_Ry_R}\biggr).\notag
\end{align}

It is easy to see that all relevant derivatives of the prefactor are $\lesssim 1$ and that $|\de_s c|\lesssim 1$. Hence the absolute value of this is \begin{align}
\lesssim\, \left|F\left(\frac{|x-y|^2}{x_Ry_R}\right)+F'\left(\frac{|x-y|^2}{x_Ry_R}\right)(1+|x-y|)+F''\left(\frac{|x-y|^2}{x_Ry_R}\right)|x-y|^2\right|.
\end{align}

From the asymptotic of $F$ (see \eqref{exp F}), we see that for $|x-y|$ small this is \begin{align}
\lesssim\, \frac{1}{|x-y|^2}.
\end{align}

Hence we see that \begin{align}
\norm{\de_sK_s^{R_j}}_{L^2(\de B_i)\rightarrow \dot{H}^1(\de B_j)}\,\lesssim\,\eps |q_i-q_l|^{-2}.
\end{align}

\end{proof}

\begin{proof}[Proof of Propsition \ref{deri mu}]
We know from Lemma \ref{PsiDeri} that $\frac{1}{r}\psi_{j,R_j+s}$ is differentiable, hence we may differentiate the equation \begin{align}
K_s^{R_j}\delta_j^s\frac{1}{r}\de_n\psi_{i,R_j+s}\circ c\eq const \text{ on each $\de B_m$}
\end{align}

with respect to $s$, as $K_s^{R_j}$ is differentiable by Lemma \ref{deri K}. This yields that \begin{align}
\de_sK_s^{R_j}\left(\frac{1}{r}\de_n\psi_{i}\right)+K\left(\de_s\left(\delta_j^s\left(\frac{1}{r}\de_n\psi_{i,R_j+s}\right)\circ c\right)\right)\eq const \text{ on each $\de B_m$}.
\end{align}

Note that $\de_s\delta_j^s\frac{1}{r}\de_n\psi_{i,R_j+s}\circ c$ is mean-free on each $\de B_m$, because the integral of $\delta_j^s\frac{1}{r}\de_n\psi_{i,R_j+s}\circ c$ over $\de B_m$ is either $1$ or $0$ for all $s$ by definition. Furthermore by Lemma \ref{deri K} and Lemma \ref{deri K 2}, we have \begin{align}
\norm{\de_sK_s^{R_j}\left(\frac{1}{r}\de_n\psi_{i}\right)}_{\dot{H}^1(\bigcup_m \de B_m)}\,\lesssim\,\eps^\frac{1}{2}\left(|\log\eps|+\min_{a\neq b}|q_a-q_b|^{-2}\right).
\end{align}

We can absorb the logarithm into the second summand by definition of the regimes.

By Corollary \ref{K inv mult}, we conclude.

\end{proof}

\begin{proof}[Proof of Proposition \ref{2nd deri mu}]
We only consider the second derivative with respect to $R_j$, all others work the same because one has the same or better estimates.

We have \begin{align}
\de_s^2K_s^{R_j}\left(\frac{1}{r}\de_n\psi_i\right)+2\de_sK_s^{R_j}\left(\de_s\delta_j^s\left(\frac{1}{r}\de_n\psi_{i,R_j+s}\right)\circ c\right)+K\left(\de_s^2\delta_j^s\left(\frac{1}{r}\de_n\psi_{i,R_j+s}\right)\circ c\right)\eq const 
\end{align}

on all $\de B_m$.

Note that by the Lemmata \ref{deri K}, \ref{deri K 2} and Proposition \ref{deri mu} it holds that \begin{align}
\norm{\de_s^2K_s^{R_j}\frac{1}{r}\de_n\psi_i}_{\dot{H}^1(\bigcup_m\de B_m)}+\norm{\de_sK_s^{R_j}\left(\de_s\delta_j^s\left(\frac{1}{r}\de_n\psi_{i,R_j+s}\right)\circ c\right)}_{\dot{H}^1(\bigcup_m\de B_m)}\,\lesssim \,\eps^\frac{1}{2}|\log\eps|^\ell.
\end{align}

Hence from Corollary \ref{K inv mult} we conclude the statement.\end{proof}

\subsubsection{Further estimates on the derivative of $K$}
To compute the force $G$, we will also need estimates in the $L^2\rightarrow L^2$-topology.

\begin{lemma}\phantomsection\label{deri K L2}
For all $j$ it holds that \begin{align}
\norm{\de_s K_s^{R_j}}_{L^2(\de B_j)\rightarrow L^2(\de B_j)}\,\lesssim\,\eps
\end{align} and \begin{align}
\norm{\de_s^2 K_s^{R_j}}_{L^2(\de B_j)\rightarrow L^2(\de B_j)}\,\lesssim\, \eps,
\end{align}

locally uniformly in $\tilde{q}$ and furthermore, the pointwise and Fr\'echet derivatives agree. 
\end{lemma}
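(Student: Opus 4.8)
The plan is to run the scheme of the proof of Lemma \ref{deri K}, using the explicit representation \eqref{exp K} of $K$ together with the expansion \eqref{exp F}, but to extract one extra power of $\eps$ that is invisible in the $L^2\to\dot H^1$ setting. The mechanism is that for $x,y\in\de B_j$ the diffeomorphism $c$ acts affinely, $c(x,s)-c(y,s)=\frac{\rho_j(R_j+s)}{\rho_j(R_j)}(x-y)$, so the only genuinely singular ingredient of the kernel, $\log|x-y|$, is \emph{independent of $s$}: in $\log|c(x,s)-c(y,s)|=\log\frac{\rho_j(R_j+s)}{\rho_j(R_j)}+\log|x-y|$ the entire $s$-dependence is a quantity not depending on $(x,y)$. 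First I would write, for $x,y\in\de B_j$,
\[
K_s^{R_j}(x,y)\eq-\tfrac1{2\pi}\,P_s(x,y)\Bigl(-\log|x-y|+W_s(x,y)\Bigr),\qquad P_s(x,y)=\tfrac{R_j}{R_j+s}\sqrt{c(x,s)_R\,c(y,s)_R},
\]
where $W_s$ gathers the remaining, non-singular terms of \eqref{exp F}, i.e.\ $-\log\frac{\rho_j(R_j+s)}{\rho_j(R_j)}+\frac12\log(c(x,s)_Rc(y,s)_R)+\log8-2$ together with the convergent remainder $h$ obeying \eqref{est g}.

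Second, I would record the elementary bounds on the ingredients, uniformly for $s$ in a neighbourhood of $0$: using $c(x,s)_R=(R_j+s)+(\tfrac{R_j}{R_j+s})^{1/2}(x_R-R_j)$ one checks that $P_s\equiv R_j$ on the axis $r=R_j$, whence $|P_s-R_j|\lesssim\eps$ and, crucially, $|\de_s^kP_s|\lesssim\eps$ for every $k\geq1$ (the quantitative form of \eqref{deri prefactor 0}); and $\de_s^kW_s$ is a bounded function of $(x,y)\in\de B_j\times\de B_j$ for every $k\geq1$, because the $\log|x-y|$-free part of $W_s$ is smooth in $s$ with bounded derivatives and the argument of $h$ is $O(\eps^2)$ together with all its $s$-derivatives, so that by \eqref{est g} the $h$-term and each of its $s$-derivatives is $O(\eps^2|\log\eps|)$.

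Third, I would differentiate the displayed identity once, resp.\ twice, in $s$ by the Leibniz rule and estimate the resulting kernel on $L^2(\de B_j)$. The $\log|x-y|$ singularity can only appear multiplied by some $\de_s^kP_s$ with $k\geq1$; such a contribution is bounded in absolute value by $C\eps\,(-\log|x-y|)$ on $\de B_j\times\de B_j$, and since the kernel $-\log|x-y|$ has operator norm $\lesssim\eps|\log\eps|$ on $L^2(\de B_j)$ — by Lemma \ref{action log}, equivalently Lemma \ref{K bd} b) — a pointwise domination argument gives operator norm $\lesssim\eps\cdot\eps|\log\eps|$. Every remaining piece ($(\de_s^kP_s)W_s$, or $P_s\de_s^kW_s$ with $k\geq1$) is a bounded function of $(x,y)$ on $\de B_j\times\de B_j$ — the $\de_s$-derivatives of $W_s$ are bounded precisely because differentiation never touches the $s$-independent $-\log|x-y|$, and $P_s$ and its $\geq1$-st $s$-derivatives are $\lesssim1$ resp.\ $\lesssim\eps$ — so by the Cauchy--Schwarz inequality its operator norm is $\lesssim|\de B_j|\lesssim\eps$. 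Summing gives $\norm{\de_sK_s^{R_j}}_{L^2(\de B_j)\to L^2(\de B_j)}\lesssim\eps$ and likewise for $\de_s^2K_s^{R_j}$; the second-derivative bookkeeping is strictly longer but identical in structure, so I would carry out only a representative term, exactly as in Lemma \ref{deri K}. For the identification of the Fr\'echet derivatives with integration against the pointwise $s$-derivatives, I would note that $\de_s^jK_s^{R_j}(x,y)$ for $j=0,1,2,3$ is dominated, uniformly for $s$ near $0$, by $C(-\log|x-y|)+C$, which defines a bounded operator on $L^2(\de B_j)$; applying Taylor's theorem with integral remainder pointwise and then taking operator norms upgrades pointwise differentiability to operator-norm Fr\'echet differentiability. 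All constants depend only on upper and lower bounds for $R_j$, hence are locally uniform in $\tilde q$.

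The step I expect to be the main obstacle is purely organizational: after differentiating $P_sh(\cdots)$ and the several logarithms one must track which of the many terms actually carry the $\log|x-y|$ singularity (only those coming through $P_s$, never through $W_s$) and verify, from the explicit formula for $c(x,s)_R$, the precise cancellation that makes $\de_s^kP_s=O(\eps)$ rather than merely $O(1)$; without it the estimate degrades to $\eps|\log\eps|$, as in Lemma \ref{deri K}. Once these two points are settled the operator-norm bounds are routine uses of Cauchy--Schwarz and of the already-established mapping property of the logarithmic kernel.
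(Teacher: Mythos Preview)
Your proposal is correct and follows essentially the same route as the paper: both exploit that $c(x,s)-c(y,s)$ is a scalar multiple of $x-y$ (so the $\log|x-y|$ singularity is $s$-independent, the paper's \eqref{log c}), together with $|\de_s^kP_s|\lesssim\eps$ (the paper's \eqref{deri prefactor 0}), and then use that $|\de B_j|\approx\eps$ to pass from pointwise kernel bounds to the $L^2\to L^2$ estimate. One small correction: your ``pointwise domination argument'' for the term $(\de_s^kP_s)(-\log|x-y|)$ is not valid as written, since pointwise domination of kernels by a nonnegative kernel does \emph{not} in general imply $L^2\to L^2$ operator-norm domination; you should instead invoke Schur's test directly on $|(\de_s^kP_s)\log|x-y||\lesssim\eps\,|\log|x-y||$, using $\sup_y\int_{\de B_j}|\log|x-y||\,dx\lesssim\eps|\log\eps|$, exactly as the paper does.
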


Here we only need the estimate in the $R_j$-direction, because the derivative in the $Z_j$-direction is $0$. Also note that changes of the other $R_m$'s trivially give a derivative of zero here.

\begin{proof}
We only show this for the first derivative with respect to $R_j$, the second derivative is very similar.
 We may expand the kernel as \begin{align}
&2\pi\left(\de_sK_s^{R_j}(x,y)\big|_{s=0}\right)\\
&\eq \de_s\left(\frac{R_j}{R_j+s}\sqrt{c(x,s)_Rc(y,s)_R}\right)\left(\log(|x-y|)-\log(8)+2-\frac{1}{2}\log\left(x_Ry_R\right)+g(x,y)\right)\notag\\
&+\sqrt{x_Ry_R}\de_s\left(\log\left(|c(x,s)-c(y,s)|\right)-\frac{1}{2}\log(c(x,s)_Rc(y,s)_R)-h\left(\frac{|c(x,s)-c(y,s)|^2}{c(x,s)_Rc(y,s)_R}\right)\right).\notag
\end{align}

It is easy to see that \begin{align}
\left|\de_s\left(\frac{R_j}{R_j+s}\sqrt{c(x,s)_Rc(y,s)_R}\right)\right|\,\lesssim\, \eps
\end{align}

and by reusing \eqref{log c} and \eqref{deri log R 1} and \eqref{deri log R 0} one sees that \begin{align}\label{bd sqrt}
\left|\de_s\left(\log(|c(x,s)-c(y,s)|)-\frac{1}{2}\log(c(x,s)_Rc(y,s)_R)-h\left(\frac{|c(x,s)-c(y,s)|^2}{c(x,s)_Rc(y,s)_R}\right)\right)\right|\,\lesssim \,1.
\end{align}

The boundedness follows by Schur's Lemma. The Fr\'echet differentiability follows from the boundedness of the second derivative and the mean value theorem.

\end{proof}

\subsection{The force $G$}
We compute the asymptotic of $G$, defined in \ref{main def}. The crucial Lemma for the ``self-interaction'' terms is the following:

\begin{lemma}\phantomsection\label{Lem:EnergyDerivative}
We have \begin{align}
\int_{\mathcal{F}} \frac{1}{2}r\scalar{\frac{1}{r}\nabla^\perp\psi_i}{\frac{1}{r}\nabla^\perp\psi_j}\dx\eq-\frac{1}{2}C_{ij}
\end{align}  and for any $t_l^*$ associated to $B_l$ it holds that \begin{align}
\de_{q}\frac{1}{2}C_{ij}\cdot t_l^*\eq\frac{1}{2}\int_{\de B_l}\frac{1}{r}\de_n\phi_{l,t_l^*}\scalar{\nabla^\perp\psi_i}{\nabla^\perp\psi_j}\dx,\end{align}
where the $C$'s were defined in \ref{def psi}.
\end{lemma}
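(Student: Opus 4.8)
The plan is to prove both identities by reducing them to the structure already established in Subsection \ref{repU}, namely the representation of $\psi_i$ via its boundary data and the constancy of $\psi_i$ on each $\de B_j$.

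For the first identity, I would compute the integral $\int_{\mathcal{F}}\frac{1}{2}r\langle\frac{1}{r}\nabla^\perp\psi_i,\frac{1}{r}\nabla^\perp\psi_j\rangle\dx=\int_{\mathcal{F}}\frac{1}{2r}\nabla\psi_i\cdot\nabla\psi_j\dx$ by partial integration, pushing all derivatives onto $\psi_j$. The decay estimates from Lemma \ref{ExistencePsi} (together with Lemma \ref{rep psi}a) for the decay of $\psi_i$ itself) ensure there are no boundary contributions from infinity and no contribution from $\de\mathbb{H}$ since $\psi_i=0$ there. Using $\div(\frac{1}{r}\nabla\psi_j)=0$ in $\mathcal{F}$, the bulk term vanishes and one is left with $-\sum_l\int_{\de B_l}\frac{1}{r}\psi_i\de_n\psi_j\dx$ where $n$ is the outer normal of the fluid domain, i.e.\ pointing into the $B_l$'s. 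Since $\psi_i$ is constant on $\de B_l$ with value $C_{il}$, this equals $-\sum_l C_{il}\int_{\de B_l}\frac{1}{r}\de_n\psi_j\dx=-\sum_l C_{il}(-\delta_{jl})=C_{ij}$ by the flux normalization \eqref{Flow} (with the sign coming from the orientation of $n$). Hence the integral is $\tfrac12 C_{ij}$; a symmetric computation pushing derivatives onto $\psi_i$ gives $\tfrac12 C_{ji}$, so in particular $C_{ij}=C_{ji}$, and one records the common value as $-\tfrac12 C_{ij}$ after fixing the overall sign convention consistently with how $n$ and $\de_n$ are oriented in Definition \ref{def psi}.

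For the second identity I would use the Reynolds/Hadamard shape-derivative machinery, exactly as in the proof of Proposition \ref{Coefficients}c). Write the energy $\tfrac12 C_{ij}$ as the domain integral just computed and differentiate with respect to $q$ in the direction $t_l^*$. The differentiability of $\psi_i,\psi_j$ and $C_{ij}$ in $q$ is guaranteed by Lemma \ref{PsiDeri}. Differentiating $\int_{\mathcal{F}}\frac{1}{2r}\langle\nabla^\perp\psi_i,\nabla^\perp\psi_j\rangle\dx$ produces a boundary term $\int_{\de B_l}\frac{1}{2r}\langle\nabla^\perp\psi_i,\nabla^\perp\psi_j\rangle\, V_n\dx$ from the moving boundary $\de B_l$, where the normal velocity $V_n$ of $\de B_l$ in the direction $t_l^*$ is precisely $u(t_l^*)=\de_n\phi_{l,t_l^*}$ (this is the content of \eqref{normal velo} combined with the defining Neumann condition for $\phi_{l,t^*}$), plus interior terms involving $\de_q\psi_i$ and $\de_q\psi_j$. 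The interior terms combine to $\int_{\mathcal{F}}\frac{1}{r}\langle\nabla^\perp\de_q\psi_i\cdot t_l^*,\nabla^\perp\psi_j\rangle\dx$ plus the symmetric term; partially integrating each of these (justified by Lemma \ref{PsiDeri}c)) and using $\div(\frac{1}{r}\nabla\psi_j)=0$, $\div(\frac{1}{r}\nabla\de_q\psi_i)=0$, together with the facts that $\de_q C_{ij}$ is constant on each $\de B_m$ and the fluxes $\int_{\de B_m}\frac{1}{r}\de_n\psi_j\dx$ are fixed constants (so their $q$-derivative vanishes), shows these interior contributions cancel. What survives is exactly the boundary term $\frac{1}{2}\int_{\de B_l}\frac{1}{r}\de_n\phi_{l,t_l^*}\langle\nabla^\perp\psi_i,\nabla^\perp\psi_j\rangle\dx$, which is the claim.

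The main obstacle I anticipate is bookkeeping the shape-derivative carefully: getting the right boundary velocity (including the $-\frac{\rho_i}{2R_i}$ correction in \eqref{normal velo} and whether it contributes), tracking the sign conventions of $n$ and $\de_n$ consistently between Definition \ref{def psi}, Lemma \ref{rep psi} and the Reynolds transport formula, and confirming that the several interior terms generated by differentiating really do cancel rather than leaving a residual $\de_q C$ term. As in Proposition \ref{Coefficients}c), the cleanest route is probably to first rewrite $\tfrac12 C_{ij}=\tfrac12\int_{\mathcal{F}}\frac{1}{r}\nabla\psi_i\cdot\nabla\psi_j\dx$ and then apply the shape-derivative identity for bilinear Dirichlet-type energies (the analogue of Lemma \ref{deriForm}), so that the only new input needed is that $\psi_i,\psi_j$ have constant boundary values and the flux normalization is $q$-independent; both the decay estimates (Lemmata \ref{ExistencePsi}, \ref{rep psi}, \ref{PsiDeri}) and the pullback-diffeomorphism argument of Lemma \ref{SmoothnessInq} handle all the justifications of differentiating under the integral sign and of the partial integrations.
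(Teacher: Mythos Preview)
Your approach to the first identity is essentially the paper's: integrate by parts, use $\div(\frac{1}{r}\nabla\psi_j)=0$, $\psi_i|_{\de B_l}=C_{il}$, and the flux normalisation. The sign hand-waving is unnecessary: with the paper's convention that $n$ is the outer normal of $\bigcup_l B_l$, the boundary term from Gauss in $\mathcal{F}$ picks up a minus sign (since $n_{\mathcal{F}}=-n$ on $\de B_l$), giving $-\sum_l C_{il}\int_{\de B_l}\frac{1}{r}\de_n\psi_j=-\sum_l C_{il}\delta_{jl}=-C_{ij}$ directly. No ``fixing the convention'' is needed.

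For the second identity your strategy (Reynolds plus partial integration of the bulk terms) is the paper's, but your claim that the interior contributions \emph{cancel} is wrong, and this is a genuine gap. After integrating by parts the term $\int_{\mathcal{F}}\frac{1}{r}\nabla^\perp\psi_i\cdot\nabla^\perp(\de_q\psi_j\cdot t_l^*)$ you get $\sum_m\int_{\de B_m}\frac{1}{r}(\de_q\psi_j\cdot t_l^*)\de_n\psi_i$. The function $\de_q\psi_j\cdot t_l^*$ is \emph{not} constant on $\de B_m$: the correct boundary identity, obtained by differentiating $C_{jm}(q)=\psi_j(x_q,q)$ along a point $x_q\in\de B_m$ moving with normal velocity $u(t_l^*)$, is
\[
\de_q\psi_j\cdot t_l^*\;=\;\de_q C_{jm}\cdot t_l^*\;-\;\de_n\psi_j\,u(t_l^*)\quad\text{on }\de B_m.
\]
Plugging this in, the constant part produces $\de_q C_{ji}\cdot t_l^*$ (not zero), and the second part produces $-\int_{\de B_l}\frac{1}{r}\de_n\psi_i\de_n\psi_j\,u(t_l^*)$, which combines with the Reynolds boundary term $+\frac{1}{2}\int_{\de B_l}\frac{1}{r}\langle\nabla^\perp\psi_i,\nabla^\perp\psi_j\rangle u(t_l^*)$ (recall $\nabla^\perp\psi=\de_n\psi\,\tau$ on $\de B_l$) to leave $-\frac{1}{2}\int_{\de B_l}(\dots)$. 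The surviving $\de_q C_{ji}$ on the right-hand side is then moved to the left using the symmetry $C_{ij}=C_{ji}$ established in the first part, turning $\frac{1}{2}\de_q C_{ij}$ into the claimed boundary integral. Your argument that ``the fluxes are $q$-independent so their derivative vanishes'' would only help if you integrated by parts the other way (pushing $\de_n$ onto $\de_q\psi_i$), but then $\de B_l$ moves with $q$ and you cannot simply commute $\de_q$ with $\int_{\de B_l}$; the shape-derivative correction you would pick up there is exactly the term you are missing.
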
 
\begin{proof}
 We have that \begin{align}
\frac{1}{2}\int_\mathcal{F} r\frac{1}{r^2}\scalar{\nabla^\perp\psi_i}{\nabla^\perp\psi_j}\dx\eq-\sum_l\frac{1}{2}\int_{\de B_l}\frac{1}{r}C_{il}\de_n\psi_j\dx\eq-\frac{1}{2}C_{ij}.
\end{align}

Here the partial integration is justified by the Lemmata \ref{ExistencePsi} and \ref{rep psi}.

Note that it holds \begin{align}\label{eq 6}
\de_q\psi_j\cdot t_l^*\eq\de_q C_{jm}\cdot t_l^*-\de_n\psi_j u(t_l^*) 
\end{align}

on $\de B_m$ as one can see by differentiating $C_{jm}=\psi_j(x_l)$ by $q$ for some point $x_l$ on $\de B_m$ moving with normal velocity $u(t_l^*)$.

Now by Reynolds (which can be used by the integrability statement in Lemma \ref{PsiDeri}) the derivative of this with respect to $q$ in direction $t_l^*\in T_{q_l}M$ equals \begin{align}
&\frac{1}{2}\de_q C_{ij}\cdot t_l^*\eq -\int_\mathcal{F} \frac{1}{r}\scalar{\nabla^\perp\psi_i}{\nabla^\perp\de_q\psi_j\cdot t_l^*}\dx+\frac{1}{2}\int_{\de B_l}\frac{1}{r}\scalar{\nabla^\perp\psi_i}{\nabla^\perp\psi_j}u(t_l^*)\dx\\
&\eq\sum_{m}\int_{\de B_m}\frac{1}{r}\de_q\psi_j\cdot t_l^*\de_n\psi_i\dx+\frac{1}{2}\int_{\de B_l}\frac{1}{r}\scalar{\nabla^\perp\psi_i}{\nabla^\perp\psi_j}u(t_l^*)\dx\\
&\eq\sum_m\int_{\de B_m}\frac{1}{r}(\de_q C_{jm}\cdot t_l^*-\de_n\psi_j u(t_l^*))\de_n\psi_i\dx+\frac{1}{2}\int_{\de B_l}\frac{1}{r}\scalar{\nabla^\perp\psi_i}{\nabla^\perp\psi_j}u(t_l^*)\dx\\
&\eq\de_qC_{ji}\cdot t_l^*-\int_{\de B_l}\frac{1}{r}\scalar{\nabla^\perp\psi_i}{\nabla^\perp\psi_j} u(t_l^*)\dx+\frac{1}{2}\int_{\de B_l}\frac{1}{r}\scalar{\nabla^\perp\psi_i}{\nabla^\perp\psi_j}u(t_l^*)\dx,
\end{align}

where we have made use of equation \eqref{eq 6} in the third line and of the facts that $\de_q C_{jm}$ is a constant function and that the matrix $C$ is symmetric by the first statement.
\end{proof}

\begin{proposition}\phantomsection\label{diagonal A}
For every tangent vector $t^*$, we have that \begin{align}
\left|G(q,e_i)\cdot t^*-\frac{1}{4\pi}\log(\eps\tilde{\rho})(t_i^*)\cdot e_R\right|\,\lesssim\, 1
\end{align}

and \begin{align}
\left|\de_qG(q,e_i)\right|\,\lesssim\, 1.
\end{align}

These estimate are locally uniform in $\tilde{q}$.
\end{proposition}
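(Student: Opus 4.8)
The plan is to use the energy representation of $G$ instead of computing directly from the definition in Definition \ref{main def}, exactly as advertised in the introduction to Section \ref{Section4}. Let $\mathcal{E}_{\psi}:=\tfrac12\int_{\mathcal{F}}r|u_2|^2\dx$; by Lemma \ref{EnergySplitting} this splits into $\tfrac12\sum_{ij}\gamma_i\gamma_j\int_{\mathcal F}\tfrac1r\langle\nabla^\perp\psi_i,\nabla^\perp\psi_j\rangle\dx$, and by Lemma \ref{Lem:EnergyDerivative} the integral equals $-\tfrac12 C_{ij}$. Setting $\gamma=e_i$ (so only $\psi_i$ is present), we get $\mathcal{E}_{\psi}=-\tfrac12 C_{ii}$, and the same lemma gives $\de_q(-\tfrac12 C_{ii})\cdot t_l^*=-\tfrac12\int_{\de B_l}\tfrac1r\de_n\phi_{l,t_l^*}|\nabla^\perp\psi_i|^2\dx$, whose right-hand side (up to sign) is precisely $G(q,e_i)\cdot t_l^*$ by the definition of $G$. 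So $G(q,e_i)\cdot t^*=-\de_q\mathcal{E}_\psi\cdot t^*$ and the task reduces to: (i) compute the asymptotics of $C_{ii}$ up to $O(1)$, and (ii) bound $\de_q C_{ii}$ and $\de_q^2 C_{ii}$ by $O(1)$, locally uniformly in $\tilde q$.

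For step (i), use the representation $C_{ii}=\psi_i|_{\de B_i}=\int_{\de B_i}\tfrac1r K(x,y)\de_n\psi_i(x)\dx$ from Lemma \ref{rep psi}(b) together with Proposition \ref{est single body} (which survives for multiple bodies by Proposition \ref{comp mult}), giving $\tfrac1r\de_n\psi_i=\tfrac1{2\pi\eps\tilde\rho_i}+O(\eps^{1/2}|\log\eps|)$ in $L^2(\de B_i)$, and the contributions from the other bodies are $O(\eps^{1/2}|q_i-q_j|^{-1})$ by Proposition \ref{1st order exp}. Plug in the splitting $K=\overline K_{R_i}+(K-\overline K_{R_i})$; Lemma \ref{K bd}(c) controls the error term, and applying $\overline K_{R_i}$ to the constant $\tfrac1{2\pi\eps\tilde\rho_i}$ is explicit via \eqref{action K flat}, yielding $C_{ii}=-R_i(-\log(\eps\tilde\rho_i)+\log 8-2+\log R_i)+O(\eps^{1/2}|\log\eps|^\ell)$. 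Then $-\de_q C_{ii}\cdot t^*$ — where the derivative is only in the $R_i$-direction for the leading term, since $C_{ii}$ depends on $q_i$ only for a single body — produces $\tfrac12(\log(\eps\tilde\rho_i))(t_i^*\cdot e_R)$ plus $O(1)$ terms (the $\de_{R_i}$ of $-R_i(\dots)$ gives a $\log(\eps\tilde\rho_i)$ with the right constant after accounting for $\de_{R_i}\tilde\rho_i$, the $\log R_i$ and $\log 8 -2$ contributions, and the chain-rule factor; note $\tilde\rho_i=\sqrt{v_i/(\pi R_i)}$ contributes only an $R_i^{-1}$, i.e.\ bounded, correction to the coefficient of $t_i^*\cdot e_R$, so the stated $\tfrac1{4\pi}\log(\eps\tilde\rho_i)$ emerges with everything else absorbed into the $O(1)$). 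The factor $\tfrac1{4\pi}$ versus the $\tfrac1{2\pi}$ of \eqref{action K flat} comes from the $\tfrac12$ in the energy and in the definition of $G$.

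For step (ii), differentiating $C_{ii}$ once or twice in $q$ requires differentiating the integral equation $K\big(\tfrac1r\de_n\psi_i\big)=C_{ii}$ (suitably transported by the diffeomorphisms $c,d$ so all the bodies stay fixed). This is exactly what Propositions \ref{deri mu} and \ref{2nd deri mu} are for: they give $\|\de_s(\delta_j^s(\tfrac1r\de_n\psi_{i,\cdot})\circ c)\|_{L^2}\lesssim\eps^{1/2}(\min|q_a-q_b|)^{-2}$ and $\eps^{1/2}|\log\eps|^\ell$ for the second derivative. Combined with the $L^2\to L^2$ estimate on $\de_sK_s$ (Lemma \ref{deri K L2}) and $\|\tfrac1r\de_n\psi_i\|_{L^2(\de B_i)}\lesssim\eps^{-1/2}$ (from Proposition \ref{est single body}, since $|\de B_i|\approx\eps$), one differentiates $C_{ii}=\int_{\de B_i}\tfrac1r(K\tfrac1r\de_n\psi_i)\cdot(\text{stuff})$ — or more cleanly, uses the $L^2\to L^2$ continuity of $K$ from Lemma \ref{K bd}(b) ($\|K\|\lesssim\eps|\log\eps|$) — to see that $\de_q C_{ii}$ is a sum of products of an $O(\eps)$ kernel with $O(\eps^{-1/2})$-size densities transported with $O(\eps^{-1/2})$-size $s$-derivatives (the $\eps^{-1/2}$ arising because $\|\cdot\|_{L^2(\de B_i)}$ counts the $\eps$-length of the boundary): bookkeeping the powers gives $|\de_q C_{ii}|\lesssim 1$, and similarly $|\de_q^2 C_{ii}|\lesssim|\log\eps|^\ell$, which is fine since $G$ involves only the first derivative of this in the estimate we want and $\de_q G$ involves the second. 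Transcribing carefully: $\de_q G(q,e_i)\cdot(t^*,s^*)=-\de_q^2\mathcal E_\psi$, which by Lemma \ref{Lem:EnergyDerivative} and one more application of Reynolds/Lemma \ref{PsiDeri} is a boundary integral of $\de_n\phi$, $\de_q\de_n\phi$ (controlled by Proposition \ref{prop deri}), $\nabla^\perp\psi_i$ and $\de_q\nabla^\perp\psi_i$ (controlled by Propositions \ref{est single body}, \ref{deri mu}), all of which are small enough that the product is $O(1)$.

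The main obstacle is the careful power-counting in step (ii): one must keep track of the $\eps^{1/2}$ factors coming from the $O(\eps)$-size of $\de B_i$ in the $L^2$ norms, the factors $|\log\eps|$ appearing in $\|K\|_{L^2\to L^2}$ and in Lemma \ref{K bd}, and the places where the $q$-derivative is taken "with the bodies held fixed" (via $c$, $d$) versus "freely" — these differ by lower-order terms involving $\de_s c$, which is of size $\eps^{-1}$ but always hits a mean-free density, gaining back the decay. The expansion of $\overline K_{R_i}$ of a constant (equation \eqref{action K flat}) has to be differentiated in $R_i$ honestly to extract the precise coefficient $\tfrac1{4\pi}$, and one should double-check that none of the terms $\log 8-2$, $\log R_i$, $\de_{R_i}\tilde\rho_i$ secretly contribute another $\log\eps$ — they do not, since only the explicit $\log(\eps\tilde\rho_i)$ depends on $\eps$. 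Everything else is routine given the machinery of Sections \ref{Section3} and \ref{Section4}.
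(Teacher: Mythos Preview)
Your overall strategy---reduce $G(q,e_i)$ to $\tfrac12\,\partial_q C_{ii}$ via Lemma~\ref{Lem:EnergyDerivative} and then analyse $C_{ii}$ through the integral representation $K\mu=C_{ii}$---is exactly the paper's, and your step~(i) computation of $C_{ii}$ up to $O(1)$ (modulo a missing $\tfrac1{2\pi}$) is fine. The problem is that steps~(i) and~(ii) are mutually inconsistent: in step~(i) you differentiate the explicit formula and correctly get a leading term $\de_{R_i}C_{ii}\sim\log\eps$, while in step~(ii) you claim ``bookkeeping the powers gives $|\partial_q C_{ii}|\lesssim 1$''. Both cannot hold, since $G=\tfrac12\partial_q C_{ii}$.

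The error in step~(ii) is that when you transport the bilinear form to the fixed body you drop a Jacobian prefactor. Writing $\nu_s:=\delta_i^s(\tfrac1r\partial_n\psi_{i,R_i+s})\circ c$, the correct transported identity is
\[
C_{ii}(R_i+s)=\frac{R_i+s}{R_i}\,\langle K_s^{R_i}\nu_s,\nu_s\rangle,
\]
because $K(c(x,s),c(y,s))=\tfrac{R_i+s}{R_i}K_s^{R_i}(x,y)$ on $(\de B_i)^2$ while the two change-of-variable factors $(\delta_i^s)^2$ are absorbed into the $\nu_s$'s. Your Leibniz estimate correctly bounds $\partial_s\langle K_s\nu_s,\nu_s\rangle$ by $O(1)$ (via Lemma~\ref{deri K L2} and Proposition~\ref{deri mu}), but the derivative of the prefactor contributes $\tfrac1{R_i}\langle K\mu,\mu\rangle=\tfrac1{R_i}C_{ii}\sim|\log\eps|$, which is precisely the main term you found in step~(i). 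The paper separates this prefactor out explicitly; once that is done, your power-counting for the remaining terms is right, and for $\partial_q^2 C_{ii}$ the prefactor is linear in $s$ so its second derivative vanishes and one genuinely gets $\lesssim 1$ (not merely $|\log\eps|^\ell$). Your fallback at the end---estimating $\partial_q G$ directly as a boundary integral via Hölder on $\partial_n\phi$, $\partial_q\partial_n\phi$, $\partial_n\psi_i$, $\partial_q\partial_n\psi_i$---does not work as stated either: the naive product of norms is $\|\mu\|_{L^2}^2\cdot\|u(t^*)\|_{L^\infty}\sim\eps^{-1}$, and one again needs the cancellation that the energy representation provides.
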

\begin{proof}
By Lemma \ref{Lem:EnergyDerivative} and the definition of $G(q,e_i)$ in \ref{main def}, it equals $\frac{1}{2}$ times the derivative of the energy \begin{align}
\int_{(\bigcup_l \de B_l)^2}K(x,y)\frac{1}{r^2}\de_n\psi_i(x)\de_n\psi_i(y)\dx\dy
\end{align}

with respect to $q$. We first consider the partial derivative in the direction $R_i$.

Note that $K\frac{1}{r}\de_n\psi_i$ is constant and that $\frac{1}{r}\de_n\psi_i$ is mean-free on all boundaries except $\de B_i$, hence the integral over all boundaries except $(\de B_i)^2$ is zero.

Using the diffeomorphism $c$, we can rewrite the energy with respect to $R_i+s$ as \begin{align}
\int_{(\de B_i)^2}\frac{R_i+s}{R_i}K_s^{R_i}(\delta_i^s)^2\left(\frac{1}{r}\de_n\psi_{i,R_i+s}\right)\circ c\left(\frac{1}{r}\de_n\psi_{i,R_i+s}\right)\circ c\dx\dy,
\end{align}

here the factor $\delta_i^s$ is the determinant due to the change of coordinates.

We can differentiate this under the integral as everything is smooth by Lemma \ref{PsiDeri}. We first show that the parts where a derivative falls on $\frac{1}{r}\de_n\psi_i$ are small. Indeed by Lemma \ref{K bd} and Proposition \ref{deri mu} we have \begin{align}
\norm{K\de_s\left(\delta_i^s\left(\frac{1}{r}\de_n\psi_{i,R_i+s}\right)\circ c\right)}_{L^2(\de B_i)}\,\lesssim\, \eps|\log\eps|^\ell \norm{\de_s\left(\delta_i^s\left(\frac{1}{r}\de_n\psi_{i,R_i+s}\right)\circ c\right)}_{L^2(\de B_i)}\,\lesssim\, \eps^{\frac{3}{2}}|\log\eps|^\ell.
\end{align}

Hence  \begin{align}
&\left|\int_{(\de B_i)^2}K(x,y)\de_s\left(\delta_i^s\left(\frac{1}{r}\de_n\psi_{i,R_i+s}\right)\circ c\right)\frac{1}{r}\de_n\psi_i(y)\dx\dy\right|\,\lesssim\, \eps^{\frac{3}{2}}|\log\eps|^\ell\norm{\frac{1}{r}\de_n\psi_i}_{L^2(\de B_i)}\\
&\lesssim\, \eps|\log\eps|^\ell.\notag
\end{align}

The same argument can also be made for all terms involving one or two derivatives of $\frac{1}{r}\de_n\psi_i$ or $K_s^{R_j}$ by the estimates in the Lemmata \ref{K bd}, \ref{deri K L2} and Propositions \ref{deri mu} and \ref{2nd deri mu} and also for derivatives with respect to other $R_j$'s or $Z_j$'s. As the second derivative of $\frac{R_i+s}{R_i}$ vanishes, this shows the estimate for the derivative, in the direction $R_j$.

Hence we are left with the main contribution where the derivative falls on $\frac{R_i+s}{R_i}$, which is \begin{align}
\int_{(\de B_i)^2}\frac{1}{R_i}K(x,y)\frac{1}{r^2}\de_n\psi_i(x)\de_n\psi_i(y)\dx\dy.
\end{align}

It can be rewritten as \begin{align}
&\strokedint_{(\de B_i)}\frac{1}{R_i}\overline{K}_{R_i}(x,y)\dx\dy+\\
&O\left(\eps^{-1}\norm{K-\overline{K}_{R_i}}_{L^2\rightarrow L^2}+\norm{K}_{L^2\rightarrow L^2}\norm{\frac{1}{r}\de_n\psi_i-\frac{1}{2\pi\eps\tilde{\rho_i}}}_{L^2}\norm{\frac{1}{r}\de_n\psi_i}_{L^2}\right)\notag.
\end{align}

The $O$-term is $\lesssim \eps|\log\eps|^\ell$ by Lemma \ref{K bd} and Proposition \ref{comp mult}. We computed in the Claim \eqref{action K flat} that the main integral equals  \begin{align}
\frac{1}{2\pi}\left(\log\left(\eps\tilde{\rho}_i\right)-\log(8)-2-\log R_i\right).
\end{align}

\end{proof}

Next, we consider the ``cross-terms'' in $G$, given by the interaction between $\psi_i$ and $\psi_j$.

\begin{proposition}\phantomsection\label{crossterms G}
Let $t^*=(t_1^*,\dots t_k^*)$ be a tangent vector, identified with a vector in $\R^{2k}$ as usual, then for $i\neq j$ we have that \begin{align}
\left|\int_{\bigcup_m\de B_m}\frac{1}{r}u(t^*)\de_n\psi_i\de_n\psi_j\dx-t_j^*\cdot\nabla_yK(q_i,q_j)-t_i^*\cdot\nabla_yK(q_j,q_i)\right|\,\lesssim\, \eps|\log\eps|^\ell,
\end{align}

where the normal velocity $u(t^*)$ was defined in \eqref{normal velo}. Furthermore it holds that \begin{align}
\left|\de_q G(q,\gamma)\right|\,\lesssim\, \max_{i,j}|q_i-q_j|^{-2}|\gamma|^2.
\end{align}

Both of these estimates are locally uniform in $\tilde{q}$.

\end{proposition}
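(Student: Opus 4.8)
The plan is to compute the left-hand side of the first estimate boundary by boundary, and then to deduce the derivative bound from Lemma~\ref{Lem:EnergyDerivative}. Observe first that Lemma~\ref{Lem:EnergyDerivative}, summed over the pieces of the tangent vector (using that $\de_n\phi_{l,t_l^*}$ restricts to $u(t^*)$ on $\de B_l$ and to $0$ on the other boundaries, and that $\langle\nabla^\perp\psi_i,\nabla^\perp\psi_j\rangle=\de_n\psi_i\de_n\psi_j$ on every $\de B_m$ since $\psi_i,\psi_j$ are locally constant there), shows that $\int_{\bigcup_m\de B_m}\frac1r u(t^*)\de_n\psi_i\de_n\psi_j\dx$ equals $\de_q C_{ij}\cdot t^*$ exactly. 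I will not need this for the first estimate, but it is what makes the second one tractable.

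For the first estimate I would split $\int_{\bigcup_m\de B_m}=\int_{\de B_i}+\int_{\de B_j}+\sum_{m\neq i,j}\int_{\de B_m}$ and rewrite the integrand as $\frac1r u(t^*)\de_n\psi_i\de_n\psi_j=r\,u(t^*)\,(\frac1r\de_n\psi_i)(\frac1r\de_n\psi_j)$, which is the form in which the stream function estimates are available. For $m\neq i,j$, Proposition~\ref{1st order exp} gives $\norm{\frac1r\de_n\psi_i}_{L^2(\de B_m)},\norm{\frac1r\de_n\psi_j}_{L^2(\de B_m)}\lesssim\eps^{1/2}|\log\eps|^\ell$, while $r\,u(t^*)$ is bounded, so $\int_{\de B_m}=O(\eps|\log\eps|^\ell)$. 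On $\de B_i$ I would insert $\frac1r\de_n\psi_i=\frac{1}{2\pi\eps\tilde\rho_i}+e_1$ with $\norm{e_1}_{L^2(\de B_i)}\lesssim\eps^{1/2}|\log\eps|$ (Propositions~\ref{est single body} and~\ref{comp mult}) and $\frac1r\de_n\psi_j=\frac2r n\cdot\nabla_yK(q_j,q_i)+e_2$ with $\norm{e_2}_{L^2(\de B_i)}\lesssim\eps^{3/2}|\log\eps|^\ell$ (Proposition~\ref{1st order exp}); by Cauchy--Schwarz and $|\de B_i|\approx\eps$, every term containing $e_1$ or $e_2$ is $O(\eps|\log\eps|^\ell)$. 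For the surviving term one uses $u(t^*)|_{\de B_i}=t_i^*\cdot n-\frac{t_{R_i}^*\rho_i}{2R_i}$; since $\rho_i=\eps\tilde\rho_i$, the shape-correction piece and all other $O(1)$-times-length-$\eps$ remainders are $O(\eps)$, and the leading contribution $\frac{1}{\pi\eps\tilde\rho_i}\int_{\de B_i}(t_i^*\cdot n)(n\cdot\nabla_yK(q_j,q_i))\dx$ equals $t_i^*\cdot\nabla_yK(q_j,q_i)$ by the elementary identity $\int_{\de B_i}n_an_b\dx=\pi\rho_i\delta_{ab}$. The same computation with $i,j$ exchanged gives $\int_{\de B_j}=t_j^*\cdot\nabla_yK(q_i,q_j)+O(\eps|\log\eps|^\ell)$, which proves the first claim.

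For the derivative bound I would use the definitions together with the identity above to write $G(q,\gamma)\cdot t^*=\sum_{l,m}\gamma_l\gamma_m\,\de_q(\tfrac12 C_{lm})\cdot t^*$ (expanding $(\de_n\psi)^2=\sum_{l,m}\gamma_l\gamma_m\de_n\psi_l\de_n\psi_m$), so that $|\de_q G(q,\gamma)|\lesssim|\gamma|^2\max_{l,m}|\de_q^2 C_{lm}|$. For $l=m$ we have $\de_q^2 C_{ll}=2\de_q G(q,e_l)$, which is $O(1)$ by Proposition~\ref{diagonal A}. For $l\neq m$ I would differentiate the exact identity $\de_q C_{lm}\cdot t^*=\int_{\bigcup_n\de B_n}\frac1r u(t^*)\de_n\psi_l\de_n\psi_m\dx$ once more in $q$, using the diffeomorphisms $c,d$ of Subsection~\ref{sec q deri} together with the Jacobian factors $\delta^s$ to freeze the integration domain and divide out the change of inner radius; then the $s$-derivatives of the pulled-back $\frac1r\de_n\psi_l$ and $\frac1r\de_n\psi_m$ are $O(\eps^{1/2}d^{-2})$ in $L^2$ of each boundary by Proposition~\ref{deri mu}, with $d:=\min_{a\neq b}|q_a-q_b|$, while the Jacobian and $u(t^*)$ have bounded $s$-derivatives. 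Tracking which factor each $\de_s$ lands on: on $\de B_l$ the factor $\frac1r\de_n\psi_l$ is of pointwise size $\eps^{-1}$ but is only ever multiplied by a derivative of pointwise size $O(d^{-2})\ll\eps^{-1}$, so integration over the length-$\eps$ curve $\de B_l$ yields $O(d^{-2})$, and similarly for $\de B_m$; the remaining boundaries contribute $O(\eps d^{-2})$. Hence $|\de_q^2 C_{lm}|\lesssim d^{-2}=\max_{i,j}|q_i-q_j|^{-2}$ and the second claim follows.

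The main obstacle is this last derivative estimate: the pointwise size of $\frac1r\de_n\psi_l$ near its own body is $\eps^{-1}$, far too large to differentiate naively, and the bound closes only because the $q$-derivative of $\frac1r\de_n\psi_l$ taken in the pulled-back frame — with the inner-radius Jacobian $\delta^s$ divided out as in Proposition~\ref{deri mu} — is of the much smaller order $d^{-2}$; the bookkeeping of which factor each $\de_s$ hits, together with all the $r$- and $\eps$-powers, is where the care is needed. The single-body analysis underlying Proposition~\ref{diagonal A} is what guarantees that the $l=m$ contribution carries no spurious logarithm.
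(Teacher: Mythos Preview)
Your proof is correct and follows essentially the same route as the paper: split the integral over $\bigcup_m\de B_m$, insert the asymptotics from Propositions~\ref{est single body}, \ref{comp mult} and \ref{1st order exp}, and compute the surviving main term via $\int_{\de B_i}n_an_b\dx=\pi\rho_i\delta_{ab}$; for the derivative bound, pull back by the maps $c,d$ and apply Proposition~\ref{deri mu} together with Cauchy--Schwarz. Your framing of the second estimate through the identity $G(q,\gamma)\cdot t^*=\tfrac12\sum_{l,m}\gamma_l\gamma_m\,\de_q C_{lm}\cdot t^*$ (from Lemma~\ref{Lem:EnergyDerivative}) is a clean way to organize the quadratic structure, but the actual estimate then reduces to exactly the same differentiation of the boundary integrals that the paper carries out. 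One remark: in the final paragraph your ``pointwise size'' heuristics should really be read as $L^2$ bounds combined with Cauchy--Schwarz (the worst term on $\de B_l$ is $\|\tfrac1r\de_n\psi_l\|_{L^2}\cdot\|\de_s(\tfrac1r\de_n\psi_m)\|_{L^2}\lesssim\eps^{-1/2}\cdot\eps^{1/2}d^{-2}=d^{-2}$), which is how the paper phrases it; the numerics are the same.
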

\begin{proof}
We first prove the first statement for the contribution of $\de B_j$, which also covers the contribution of $\de B_i$ by symmetry.
We have\begin{align}\label{1st step}
&\int_{\de B_j}\frac{1}{r}\left(t_j^*\cdot n-\frac{\rho_j}{2R_j}t_j^*\cdot e_R\right)\de_n\psi_j\de_n\psi_i\dx\notag\\
&\eq \int_{\de B_j} \frac{1}{r}t_j^*\cdot n\de_n \psi_i\de_n\psi_j\dx+O\left(\eps\norm{\de_n\psi_i}_{L^2(\de B_j)}\norm{\de_n\psi_j}_{L^2(\de B_j)}\right).
\end{align}

By Propositions \ref{comp mult} and \ref{1st order exp} the error term here is $\lesssim \eps|\log\eps|^\ell$.

We can now use Proposition \ref{1st order exp} and \eqref{1st step} to obtain that \begin{align}
\int_{\de B_j}\frac{1}{r}u(t^*)\de_n\psi_i\de_n\psi_j\dx\eq 2\int_{\de B_j}(t^*\cdot n)n\cdot\nabla_yK(q_i,q_j)\frac{1}{r}\de_n\psi_j\dx+O\left(\eps|\log\eps|^\ell\right).
\end{align}

We further have \begin{align}
&2\int_{\de B_j}(t^*\cdot n)n\cdot\nabla_yK(q_i,q_j)\frac{1}{r}\de_n\psi_j\dx\eq2\strokedint_{\de B_j} (t^*\cdot n)n\cdot\nabla_yK(q_i,q_j)\dx\\
&+O\left(\eps^\frac{1}{2}|\nabla_yK(q_i,q_j)|\norm{\frac{1}{r}\de_n\psi_j-\frac{1}{2\pi\eps\tilde{\rho}_j}}_{L^2(\de B_j)}\right).\notag
\end{align}

By Proposition \ref{comp mult}, the error term is $\lesssim \eps|\log\eps|^\ell$. The main integral equals $t^*\cdot\nabla_yK(q_i,q_j)$.\\

For $m$ with $m\neq i,j$, one can directly see by Proposition \ref{1st order exp} that the integral is $\lesssim \eps|\log\eps|^\ell$.

It remains to estimate the derivative. Note that $G$ is a quadratic form in $\gamma$ and that we have already shown the statement for $\gamma=e_l$ in Proposition \ref{diagonal A}, and that the ``off-diagonal'' coefficients in $G$ are exactly the integrals we estimated in the first step, so we need to estimate their derivatives.

For notational simplicity we only consider the derivative of the integral on $\de B_j$ with respect to $R_l$, as the derivative with respect to $Z_l$ enjoys the same estimates, this is not restrictive. We begin with the derivative with respect to $R_l$ for $l\neq j$.

By Propositions \ref{deri mu} it holds that \begin{align}
\norm{\de_s\frac{1}{r}\de_n\psi_{i,R_l+s}}_{L^2(\de B_j)}\,\lesssim\, \eps^\frac{1}{2}|q_i-q_j|^{-2}\quad\text{and}\quad \norm{\de_s\frac{1}{r}\de_n\psi_{j,R_i+s}}_{L^2(\de B_j)}\,\lesssim\, \eps^\frac{1}{2}|\log\eps|^\ell,
\end{align}

which by the Cauchy-Schwarz inequality and Propositions \ref{comp mult} and \ref{1st order exp} implies the statement.\\

Finally, consider the derivative with respect to $R_j$.

We can rewrite the integral as \begin{align}
\int_{\de B_j(R_j)}\left(t^*\cdot n-\frac{\rho_j(R_j+s)}{2(R_j+s)}t^*\cdot e_R\right)\left(\delta_j^s\frac{1}{r}\de_n\psi_{j,R_j+s}\right)\circ c\left(\de_n\psi_{i,R_j+s}\right)\circ c\dx.
\end{align}

Here the factor $\delta_j^s$ is the Jacobian due to the coordinate change.

Using Proposition \ref{deri mu}, we see that \begin{align}
\norm{\de_s\left(\delta_j^s\left(\frac{1}{r}\de_n\psi_{j,R_j+s}\right)\circ c\right)}_{L^2(\de B_j)}\,\lesssim\, \eps^{\frac{1}{2}}|\log\eps|^\ell.
\end{align}

Furthermore by Proposition \ref{deri mu}  we have \begin{align}
\norm{\de_s\left(\left(\frac{1}{r}\de_n\psi_{i,R_j+s}\right)\circ c\right)}_{L^2(\de B_j)}\,\lesssim\, \eps^\frac{1}{2}|q_i-q_j|^{-2}
\end{align}

Hence by the Cauchy-Schwarz inequality we conclude.

\end{proof}

\subsection{The mixed term $A$}\label{subsec4.2}
We estimate the force $A$ (defined in \ref{main def}), which contains both the stream function and the potentials.

\begin{proposition}\phantomsection\label{conv A}
For all $s^*,t^*\in T_qM$ we have that \begin{align}(A(q,\gamma)t^*)\cdot s^*\rightarrow (t^*)^T\begin{pmatrix} 0 & R_1\gamma_1  &0 &\dots\\
-R_1\gamma_1 & 0 & \dots\\
\dots\\
& & \dots & 0 & R_k\gamma_k\\
& &\dots & -R_k\gamma_k & 0\end{pmatrix}s^*
\end{align}

with a rate of $O(\eps|\log\eps|^\ell)$ locally uniformly in $\tilde{q}$.

 Furthermore, it holds that \begin{align}
\left|\de_q A\right|\,\lesssim\,\eps|\log\eps|^\ell,
\end{align}

locally uniformly in $\tilde{q}$.
\end{proposition}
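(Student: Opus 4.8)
The starting point is the representation of $A$ in Definition \ref{main def}: $(A(q,\gamma)t^*)\cdot s^*=\sum_i\int_{\de B_i}\bigl(-\de_\tau\phi(s^*)\de_n\phi(t^*)+\de_\tau\phi(t^*)\de_n\phi(s^*)\bigr)\de_n\psi\dx$, with $\phi(t^*)=\sum_m\phi_{m,t_m^*}$ and $\psi=\sum_m\gamma_m\psi_m$. On each $\de B_i$ I would replace the fluid quantities by their leading parts: drop $\phi_{m,\cdot}$ for $m\neq i$, replace $\phi_{i,\cdot}$ by the explicit flat potential $\chek\phi_\cdot$ of Definition \ref{def flat func}, and keep $\de_n\psi$ for now. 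By Corollary \ref{MultPotSpec} a) one has $\norm{\nabla(\phi_{m,t_m^*})}_{L^2(\de B_i)}\lesssim\eps^{5/2}|\log\eps|^\ell$ for $m\neq i$, and by Corollary \ref{Potential1} one has $\norm{\nabla(\phi_{i,t_i^*}-\chek\phi_{t_i^*})}_{L^2(\de B_i)}\lesssim\eps^{3/2}\sqrt{|\log\eps|}$; using a higher value of $m$ together with the one-dimensional Sobolev embedding on the circle $\de B_i$ (which costs a factor $\eps^{-1/2}$) also gives the corresponding $L^\infty(\de B_i)$-bounds, while $\de_\tau\chek\phi_\cdot,\de_n\chek\phi_\cdot$ are $O(1)$ in $L^\infty(\de B_i)$ and, by the remark after Lemma \ref{basic est}, $O(\eps^{1/2})$ in $L^2(\de B_i)$. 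Finally $\de_n\psi_i=r\cdot\tfrac1r\de_n\psi_i$ with $\norm{\tfrac1r\de_n\psi_i}_{L^2(\de B_i)}\lesssim\eps^{-1/2}$ by Propositions \ref{est single body} and \ref{comp mult}, and $\norm{\de_n\psi_m}_{L^2(\de B_i)}\lesssim\eps^{1/2}|q_i-q_m|^{-1}$ for $m\neq i$ by Proposition \ref{1st order exp}, so $\norm{\de_n\psi}_{L^2(\de B_i)}\lesssim\eps^{-1/2}$. Estimating the triple product with Hölder's inequality — the perturbed factor in $L^2(\de B_i)$, one of the remaining factors in $L^\infty(\de B_i)$, and $\de_n\psi$ in $L^2(\de B_i)$ — each replacement produces an error of size $O(\eps|\log\eps|^\ell)$.

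After the replacements the leading contribution over $\de B_i$ is $\int_{\de B_i}\bigl(-\de_\tau\chek\phi_{s_i^*}\de_n\chek\phi_{t_i^*}+\de_\tau\chek\phi_{t_i^*}\de_n\chek\phi_{s_i^*}\bigr)\de_n\psi\dx$. From Definition \ref{def flat func} one reads off, on $\de B_i$, that $\de_n\chek\phi_{t^*}=t^*\cdot n$ and $\de_\tau\chek\phi_{t^*}=-t^*\cdot\tau$, hence the bracket equals $(t_i^*\cdot n)(s_i^*\cdot\tau)-(t_i^*\cdot\tau)(s_i^*\cdot n)=t^*_{R_i}s^*_{Z_i}-t^*_{Z_i}s^*_{R_i}$, which is \emph{constant} on $\de B_i$. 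Therefore the leading contribution is $\bigl(t^*_{R_i}s^*_{Z_i}-t^*_{Z_i}s^*_{R_i}\bigr)\int_{\de B_i}\de_n\psi\dx$, and
\[
\int_{\de B_i}\de_n\psi\dx=\gamma_i\int_{\de B_i}r\cdot\tfrac1r\de_n\psi_i\dx+\sum_{m\neq i}\gamma_m\int_{\de B_i}r\cdot\tfrac1r\de_n\psi_m\dx=\gamma_i\,\frac{1}{2\pi\eps\tilde\rho_i}\int_{\de B_i}r\dx+O(\eps|\log\eps|^\ell)=\gamma_iR_i+O(\eps|\log\eps|^\ell),
\]
where I used Propositions \ref{est single body}, \ref{comp mult} and \ref{1st order exp} for the error terms and $\int_{\de B_i}r\dx=2\pi R_i\rho_i=2\pi R_i\eps\tilde\rho_i$. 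Summing over $i$ gives $(A(q,\gamma)t^*)\cdot s^*=\sum_i\gamma_iR_i\bigl(t^*_{R_i}s^*_{Z_i}-t^*_{Z_i}s^*_{R_i}\bigr)+O(\eps|\log\eps|^\ell)$, which is exactly $(t^*)^T$ times the stated block-diagonal matrix plus $O(\eps|\log\eps|^\ell)$.

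For the derivative estimate I would differentiate the representation of Definition \ref{main def} term by term, using the diffeomorphisms $c$ and $d$ from the beginning of Subsection \ref{sec q deri} to freeze each integration domain, so that the $q$-derivative only hits the integrands (plus the bounded Jacobians). The $q$-derivatives of $\de_\tau\phi(\cdot)$ and $\de_n\phi(\cdot)$ along the $\de B_l$ are $O(\eps^{3/2}|\log\eps|^\ell)$ by Proposition \ref{prop deri}, and those of $\tfrac1r\de_n\psi_i$ are $O(\eps^{1/2}|\log\eps|^\ell)$ by Proposition \ref{deri mu} (after absorbing the powers of $\min_{a\neq b}|q_a-q_b|^{-1}$ into the definition of the regime); feeding these into the same Hölder estimates as above makes every term in which the derivative lands on a fluid quantity $O(\eps|\log\eps|^\ell)$, and the one remaining term, where the derivative falls on the geometric factor $r/(2\pi\eps\tilde\rho_i)$ on $\de B_i$, is handled by the explicit computation of the previous paragraph.

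\textbf{Main obstacle.} The technical heart is the bookkeeping in the error analysis: on the small circles $\de B_i$ the trace and Sobolev constants scale like $\eps^{\pm1/2}$ and $\tfrac1r\de_n\psi_i$ is as large as $\eps^{-1}$, so one must assign each of the three factors in the triple product to the right norm ($L^\infty(\de B_i)$ versus $L^2(\de B_i)$) for the powers of $\eps$ to close up at exactly $O(\eps|\log\eps|^\ell)$ and not merely, say, $O(\sqrt\eps\,|\log\eps|^\ell)$; the same care is needed in the derivative estimate, where one must verify that every $q$-derivative that generates an extra factor $|q_i-q_j|^{-1}$ (from differentiating the far-field interaction) can be absorbed into the regime-dependent constants.
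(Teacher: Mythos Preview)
Your convergence argument is correct and follows the paper's line closely; the observation that the flat bracket $-\de_\tau\chek\phi_{s_i^*}\de_n\chek\phi_{t_i^*}+\de_\tau\chek\phi_{t_i^*}\de_n\chek\phi_{s_i^*}$ is the \emph{constant} $t^*_{R_i}s^*_{Z_i}-t^*_{Z_i}s^*_{R_i}$ on $\de B_i$ is a nice shortcut that lets you treat all $\psi_m$ at once via $\int_{\de B_i}\tfrac1r\de_n\psi\dx=\gamma_i$ (the paper instead splits into $m=i$ and $m\neq i$ and replaces $\tfrac1r\de_n\psi_i$ by its constant leading value before integrating).

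The derivative argument, however, has a gap precisely at the point you flag as ``the one remaining term''. After pulling back via $c$, the $\psi$-factor on $\de B_i$ reads $(r\circ c)\cdot\bigl[\delta_i^s(\tfrac1r\de_n\psi)\circ c\bigr]$, and Proposition~\ref{deri mu} only controls the $s$-derivative of the bracketed part. When $\de_s$ lands on $r\circ c$ (whose $s$-derivative is $1+O(\eps)$ on $\de B_i$) one picks up, to leading order,
\[
\int_{\de B_i}\bigl(-\de_\tau\chek\phi_{s_i^*}\de_n\chek\phi_{t_i^*}+\de_\tau\chek\phi_{t_i^*}\de_n\chek\phi_{s_i^*}\bigr)\,\tfrac1r\de_n\psi\dx
=(t^*_{R_i}s^*_{Z_i}-t^*_{Z_i}s^*_{R_i})\!\int_{\de B_i}\tfrac1r\de_n\psi\dx
=\gamma_i\,(t^*_{R_i}s^*_{Z_i}-t^*_{Z_i}s^*_{R_i}),
\]
which is $O(1)$, not $O(\eps|\log\eps|^\ell)$. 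This is exactly $\de_{R_i}$ of the limiting block $R_i\gamma_i$, so what your argument (and the paper's, which asserts smallness without addressing this geometric factor) actually establishes is $\de_q(A-A_\infty)=O(\eps|\log\eps|^\ell)$ with $A_\infty$ the limit matrix, hence $|\de_q A|\lesssim 1$. The stated bound $|\de_q A|\lesssim\eps|\log\eps|^\ell$ cannot hold as written, since already $\de_{R_i}A_\infty=\gamma_i$; fortunately only $|\de_q A|\lesssim 1$ is used downstream (cf.\ \eqref{bd A} and the analogous line in the proof of Theorem~\ref{Limit2}).
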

 In particular, $A$ is invertible for small enough $\eps$ with an inverse of order $\lesssim 1$ by the assumption that all $\gamma_i$'s are $\neq 0$.
\begin{proof} Recall that $A$ was defined as \begin{align}
(A(q,\gamma)t^*)\cdot s^*\eq\sum_l\int_{\de B_l}\Bigl(-\de_\tau\phi(s^*)\de_n\phi(t^*)+\de_\tau\phi(t^*)\de_n\phi(s^*)\Bigr)\de_n\sum_j\gamma_j\psi_j\dx,
\end{align}

where $\phi(s^*)$ and $\phi(t^*)$ are the summed up potentials.

Without loss of generality, we may assume $|t^*|=|s^*|=1$, $t^*\in T_{q_i}M$, $s^*\in T_{q_j}M$, and that only $\gamma_l$ is nonzero.
We first show the convergence for $i\neq j$. By definition we have \begin{align}
\norm{\de_n\phi_{i,t^*}}_{L^\infty(\de B_i)}\,\lesssim\, 1\quad\text{and}\quad \norm{\de_n\phi_{j,s^*}}_{L^\infty(\de B_j)}\,\lesssim\, 1
\end{align}

and on all other boundaries the normal derivatives vanish. By Corollary \ref{MultPotSpec}, we have \begin{align}
\norm{\de_\tau\phi_{i,t^*}}_{L^2(\de B_j)}\,\lesssim\, \eps^\frac{5}{2}|\log\eps|^\ell
\end{align}

and vice versa. Furthermore we have \begin{align}
\norm{\de_n\psi_l}_{L^2(\de B_i)},\norm{\de_n\psi_l}_{L^2(\de B_j)}\,\lesssim\, \eps^{-\frac{1}{2}}
\end{align}

for all $l$ by the Propositions \ref{comp mult} and \ref{1st order exp}. By the Cauchy-Schwarz inequality we conclude convergence to $0$. Similarly, we can directly estimate the derivative. By the Propositions \ref{prop deri} and \ref{deri mu}, we know that all derivatives of the boundary values enjoy estimates which are at worst an order $|\log\eps|^\ell$ worse, hence these derivatives are small by the Cauchy-Schwarz inequality and the product rule. 

Next, consider the case $i=j\neq l$. Here we again have \begin{align}
\norm{\de_\tau\phi_{i,t^*}}_{L^2(\de B_i)}\,\lesssim\, \eps^\frac{1}{2}
\end{align}

by Corollary \ref{MultPotSpec} and the same holds for the potential with respect to $s^*$. On the other hand we also have \begin{align}
\norm{\de_n\psi_l}_{L^2(\de B_i)}\,\lesssim\,\eps^\frac{1}{2}|\log\eps|^\ell
\end{align}

by Proposition \ref{1st order exp}.

By the Cauchy-Schwarz inequality, this implies that \begin{align}
\left|\int_{\de B_i}\de_n\psi_l\left(-\de_\tau\phi_{i,s^*}\de_n\phi_{i,t^*}+\de_\tau\phi_{i,t^*}\de_n\phi_{i,s^*}\right)\dx\right|\,\lesssim\,\eps|\log\eps|^\ell.
\end{align}

The smallness of the derivative of this term again follows from the fact that all derivatives have estimates which are at worst an order $|\log\eps|^\ell$ worse by the Propositions \ref{prop deri} and \ref{deri mu}.\\

It remains to consider the case $i=j=l$. In this case we have the same estimates as above for the tangential derivatives and furthermore by Corollary \ref{MultPotSpec} we have \begin{align}
\norm{\de_\tau(\phi_{i,t^*}-\chek{\phi}_{t^*})}_{L^2(\de B_i)}\,\lesssim\, \eps^\frac{3}{2}|\log\eps|^\ell\quad\text{and}\quad  \norm{\de_n(\phi_{i,t^*}-\chek{\phi}_{t^*})}_{L^2(\de B_i)}\,\lesssim\, \eps^\frac{3}{2}
\end{align}

 where the ``two-dimensional'' potential $\chek{\phi}$ was defined in \ref{def flat func} and the same holds for the potentials with respect to $s^*$.

Also by Proposition \ref{comp mult}, we have \begin{align}
\norm{\frac{1}{r}\de_n\psi_i-\frac{1}{2\pi\tilde{\rho}_i\eps}}_{L^2(\de B_i)}\,\lesssim\,\eps^\frac{1}{2}|\log\eps|^\ell.
\end{align}

Finally, we clearly have $\norm{r-R_i}_{L^2(\de B_i)}\lesssim \eps^\frac{3}{2}$. Hence by the Cauchy-Schwarz inequality we see that \begin{align}
&\int_{\de B_i}\de_n\psi_i\left(-\de_\tau\phi_{i,s^*}\de_n\phi_{i,t^*}+\de_\tau\phi_{i,t^*}\de_n\phi_{i,s^*}\right)\dx\notag\\ &\eq R_i\strokedint_{\de B_i}\left(-\de_\tau\chek{\phi}_{s^*}\de_n\chek{\phi}_{t^*}+\de_\tau\chek{\phi}_{t^*}\de_n\chek{\phi}_{s^*}\right)\dx+O\left(\eps|\log\eps|^\ell\right).
\end{align}

By the antisymmetry of these integrals with respect to $t^*$ and $s^*$ it suffices to consider the case $t^*=e_1$ and $s^*=e_2$. In this case, we can use the explicit form of $\chek{\phi}_{t^*}$ in \ref{def flat func} to see that \begin{align}
\strokedint_{\de B_i}\left(-\de_\tau\chek{\phi}_{s^*}\de_n\chek{\phi}_{t^*}+\de_\tau\chek{\phi}_{t^*}\de_n\chek{\phi}_{s^*}\right)\dx\eq \strokedint \tau\cdot e_2n\cdot e_1-\tau\cdot e_1n\cdot e_2\dx\eq 1.
\end{align}

The smallness of the derivative follows again from the fact that all the derivatives of the boundary values have estimates which are an order $\eps|\log\eps|^\ell$ better by Propositions \ref{prop deri} and \ref{deri mu}.

Finally, all these estimates are locally uniform in $\tilde{q}$ because all the used estimates for the boundary values are.\end{proof}

\begin{definition}
We let $J_\gamma^1$ and $J_\gamma^2$ be the velocities in \eqref{PointSystem2} and \eqref{PointSystem1}, i.e. \begin{align}
&(J_\gamma^1(\tilde{q}))_i\eq \frac{1}{2\pi}\sum_{j\neq i}\gamma_j\frac{(\tilde{q}_i-\tilde{q}_j)^\perp}{|\tilde{q}_i-\tilde{q}_j|^2}-\frac{\gamma_i}{4\pi R_0}e_Z\\
&(J_\gamma^2(\tilde{q}))_i\eq \frac{1}{2\pi}\sum_{j\neq i}\gamma_j\frac{(\tilde{q}_i-\tilde{q}_j)^\perp}{|\tilde{q}_i-\tilde{q}_j|^2}+\frac{\tilde{q}_{R_i}\gamma_i}{4\pi R_0^2}e_Z
\end{align}
\end{definition}

\begin{corollary}\phantomsection\label{conv J1}
In the Regime \eqref{def regime2} (=distances $\approx|\log\eps|$), we have that \begin{align}
\frac{A^{-1}G}{|\log\eps|}\rightarrow -J_\gamma^1(\tilde{q})
\end{align}

locally uniformly in $\tilde{q}$.
\end{corollary}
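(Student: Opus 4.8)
The plan is to compute $G$ and $A$ separately in the regime \eqref{def regime2}, where the rescaled bodies have pairwise distances $|q_i-q_j| \approx |\log\eps|^{-1}$, and then combine the asymptotics. Recall from Proposition \ref{diagonal A} that the ``self-interaction'' part of $G$ contributes $\frac{1}{4\pi}\log(\eps\tilde\rho_i)\gamma_i^2 (t_i^*)\cdot e_R$ up to $O(\gamma_i^2)$, and from Proposition \ref{crossterms G} that the ``cross-terms'' contribute $\sum_{j\neq i}\gamma_i\gamma_j\left(t_j^*\cdot\nabla_yK(q_i,q_j) + t_i^*\cdot\nabla_yK(q_j,q_i)\right)$ up to $O(\eps|\log\eps|^\ell)$. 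Using the expansions \eqref{exp K} and \eqref{exp F}, one has $\nabla_y K(q_i,q_j) = \frac{R_0}{2\pi}\frac{(q_j-q_i)}{|q_i-q_j|^2} + O(1) + O(|\log|q_i-q_j||)$; since in this regime $|q_i-q_j|\approx|\log\eps|^{-1}$, the leading term is of order $|\log\eps|$ and the error is of order $\log|\log\eps| = o(|\log\eps|)$. Hence dividing by $|\log\eps|$, the cross-terms of $G$ converge (after tracking signs and the factor $R_0$) to the Biot--Savart contribution in $J_\gamma^1$, while the self-interaction part, being $O(|\log\eps|^{1/2}\cdot|\log\eps|^{1/2})$... wait — here I must be careful: $\log(\eps\tilde\rho_i) = \log\eps + O(1)$, so the self-term is of order $|\log\eps|$ and survives the division, yielding the $-\frac{\gamma_i}{4\pi R_0}e_Z$ term once one also Taylor-expands $R_i = R_0 + O(|\log\eps|^{-1})$ and collects the $e_R$ versus $e_Z$ components correctly. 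So $\frac{G}{|\log\eps|}$ converges to an explicit vector.

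Next I would handle $A$. By Proposition \ref{conv A}, $A(q,\gamma)$ converges (with rate $O(\eps|\log\eps|^\ell)$) to the block-diagonal matrix with $2\times 2$ blocks $\begin{pmatrix} 0 & R_i\gamma_i \\ -R_i\gamma_i & 0\end{pmatrix}$; since each $R_i \to R_0$ and all $\gamma_i\neq 0$, this limit is invertible with $\|A^{-1}\|\lesssim 1$, and $A^{-1}$ converges to the block-diagonal matrix with blocks $\frac{1}{R_0\gamma_i}\begin{pmatrix} 0 & -1 \\ 1 & 0\end{pmatrix}$. Then $\frac{A^{-1}G}{|\log\eps|} = A^{-1}\cdot\frac{G}{|\log\eps|}$ converges to the product of these two limits. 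The final computational step is to verify that applying the limiting $A^{-1}$ to the limiting $\frac{G}{|\log\eps|}$ gives exactly $-J_\gamma^1(\tilde q)$: the $90^\circ$ rotation $\begin{pmatrix}0&-1\\1&0\end{pmatrix}$ converts the $e_R$-directed self-term $\frac{\log\eps}{4\pi}\gamma_i^2 e_R$ (divided appropriately) into an $e_Z$-directed term $-\frac{\gamma_i}{4\pi R_0}e_Z$, matching the self-induced velocity, and converts the cross-term gradients $\frac{R_0}{2\pi}\frac{q_j-q_i}{|q_i-q_j|^2}$ into the perpendicular Biot--Savart terms $\frac{1}{2\pi}\frac{(\tilde q_i-\tilde q_j)^\perp}{|\tilde q_i-\tilde q_j|^2}$, where the scaling $q_i - q_j = |\log\eps|^{-1}(\tilde q_i - \tilde q_j)$ from \eqref{def regime2} makes the $|q_i-q_j|^2$ in the denominator produce exactly one more factor of $|\log\eps|$, which cancels against the explicit $|\log\eps|$ we divided by. One also needs $\gamma_i^{-1}$ from $A^{-1}$ against the $\gamma_i\gamma_j$ in the cross-term of $G$ to leave a single $\gamma_j$, matching $J_\gamma^1$.

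I would organize the proof as: (1) invoke Propositions \ref{diagonal A} and \ref{crossterms G} to write $G = G^{\mathrm{self}} + G^{\mathrm{cross}} + O(|\gamma|^2)$ with explicit leading terms; (2) substitute the expansion of $\nabla_y K$ and the regime scaling $q_i - q_j = |\log\eps|^{-1}(\tilde q_i - \tilde q_j)$, $R_i = R_0 + o(1)$, and divide by $|\log\eps|$ to identify $\lim \frac{G}{|\log\eps|}$ componentwise; (3) invoke Proposition \ref{conv A} for $\lim A$ and hence $\lim A^{-1}$; (4) multiply and simplify using the antisymmetry of the limiting $A^{-1}$ block, checking that the $e_R$/$e_Z$ bookkeeping produces exactly $-J_\gamma^1(\tilde q)$. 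The local uniformity in $\tilde q$ is inherited from all cited propositions. The main obstacle — really the only nontrivial point, since all hard analytic estimates are already done — is the careful bookkeeping in step (2)--(4): keeping track of the factor $R_0$ (and the difference between $\log(\eps\tilde\rho_i)$ and $\log\eps$), the sign conventions of $\tau = n^\perp$ and of the rotation in $A^{-1}$, and confirming that exactly one power of $|\log\eps|$ survives in the cross-term after the regime substitution, so that nothing blows up or vanishes spuriously. I would double-check the sign by comparing against the heuristic derivation of \eqref{PointSystem2} given in the introduction.
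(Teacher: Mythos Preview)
Your approach is correct and essentially the same as the paper's: both split $G$ via its quadratic-form structure in $\gamma$ into the diagonal self-interaction part (handled by Proposition \ref{diagonal A}) and the off-diagonal cross-terms (handled by Proposition \ref{crossterms G}), combine with the limit of $A^{-1}$ from Proposition \ref{conv A}, and then use the expansion of $\nabla_y K$ together with the regime scaling $q_i-q_j=|\log\eps|^{-1}(\tilde q_i-\tilde q_j)$ to recover $-J_\gamma^1$. The only organizational difference is that the paper computes $A^{-1}G(q,e_i)/|\log\eps|$ and $A^{-1}(G(q,e_i+e_j)-G(q,e_i)-G(q,e_j))/|\log\eps|$ directly rather than first taking the limit of $G/|\log\eps|$ and then multiplying by $\lim A^{-1}$, but this is immaterial.
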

\begin{proof}
By the Propositions \ref{diagonal A} and \ref{conv A} have that \begin{align}
\frac{{A(q,\gamma)}^{-1}G(q,e_i)}{|\log\eps|}\rightarrow \frac{1}{4\pi R_0\gamma_i}e_{Z,i},
\end{align}

where $e_{Z,i}\in (\R^2)^k\simeq \R^{2k}$ denotes the vector which has an $e_Z$ in the $i$-th component and no other entries.

As $G$ is a quadratic form in $\gamma$ by definition, it remains to show the statement for the ``off-diagonal'' terms in $G$. By the Propositions \ref{crossterms G} and \ref{conv A} we have that \begin{align}
&\lim_{\eps\rightarrow 0}\frac{A(q,\gamma)^{-1}\left(G(q,e_i+e_j)-G(q,e_i)-G(q,e_j)\right)}{|\log\eps|}\\
&=\,- \lim_{\eps\rightarrow 0}\frac{1}{R_0|\log\eps|} \left(\frac{P_j}{\gamma_j}\nabla_y^\perp K(q_i,q_j)+\frac{P_i}{\gamma_i}\nabla_y^\perp K(q_j,q_i)\right)\\
&=\, \frac{1}{2\pi}\left(\frac{P_i(\tilde{q}_j-\tilde{q}_i)^\perp}{\gamma_i|\tilde{q}_i-\tilde{q}_j|^2}+\frac{P_j(\tilde{q}_i-\tilde{q}_j)^\perp}{\gamma_j|\tilde{q}_i-\tilde{q}_j|^2}\right).
\end{align}

Here $P_l:\R^2\rightarrow (\R^2)^k$ is the map to the $l$-th coordinate and in the last step we used the Asymptotics \eqref{exp K} and \eqref{exp F}.
\end{proof}

\begin{corollary}\phantomsection\label{conv J2}
In the Regime \eqref{def regime1} (=distances $\approx|\log\eps|^\frac{1}{2}$), we have that \begin{align}
\frac{A^{-1}G}{|\log\eps|^\frac{1}{2}}-\frac{|\log\eps|^\frac{1}{2}}{4\pi R_0}v_Z\rightarrow -J_\gamma^2(\tilde{q})
\end{align}

locally uniformly in $\tilde{q}$, where $v_Z\in (\R^2)^k$ is the vector $(e_Z,e_Z,\dots)$.
\end{corollary}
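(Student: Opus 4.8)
The plan is to follow exactly the structure of the proof of Corollary \ref{conv J1}, but keeping track of the fact that in Regime \eqref{def regime1} the rescaling of $q_i$ around $\binom{R_0}{Z_0}$ is of order $|\log\eps|^{-\frac12}$ rather than $|\log\eps|^{-1}$, and the time rescaling is by $|\log\eps|$ rather than $|\log\eps|^2$. First I would record the two facts coming from the previous subsections that are uniform in $\eps$: by Propositions \ref{diagonal A} and \ref{conv A},
\begin{align}
A(q,\gamma)^{-1}G(q,e_i)\eq \frac{1}{4\pi R_i\gamma_i}\log(\eps\tilde\rho_i)^{-1}\,\cdot\,(\text{something})\dots
\end{align}
— more precisely $A(q,\gamma)^{-1}G(q,e_i)=\bigl(\tfrac{1}{4\pi}\log(\eps\tilde\rho_i)+O(1)\bigr)\cdot\bigl(A(q,\gamma)^{-1}(e_R)_i\bigr)$, and since $A^{-1}\to \diag$-type block matrix with blocks $\begin{pmatrix}0 & -\frac{1}{R_i\gamma_i}\\ \frac{1}{R_i\gamma_i} & 0\end{pmatrix}$ up to $O(\eps|\log\eps|^\ell)$, one gets $A(q,\gamma)^{-1}G(q,e_i)=\bigl(\tfrac{1}{4\pi}\log(\eps\tilde\rho_i)+O(1)\bigr)\cdot\bigl(-\tfrac{1}{R_i\gamma_i}e_{Z,i}\bigr)+O(\eps|\log\eps|^\ell)$. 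Here the key new point, compared to Corollary \ref{conv J1}, is that $R_i=R_0+\tilde q_{R_i}|\log\eps|^{-1/2}$ is \emph{not} constant, so $\tfrac{1}{R_i}=\tfrac{1}{R_0}-\tfrac{\tilde q_{R_i}}{R_0^2}|\log\eps|^{-1/2}+O(|\log\eps|^{-1})$ by Taylor, and $\log(\eps\tilde\rho_i)=\log\eps+O(1)$; multiplying, the $\log\eps\cdot|\log\eps|^{-1/2}$ cross term survives.

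Next I would divide by $|\log\eps|^{1/2}$ and isolate the divergent piece. Writing $\gamma_i=\gamma_i$ and using $\log(\eps\tilde\rho_i)=-|\log\eps|+O(1)$ (for small $\eps$), the self-interaction contribution to $\tfrac{A^{-1}G}{|\log\eps|^{1/2}}$ is
\begin{align}
\frac{1}{4\pi}\Bigl(-|\log\eps|+O(1)\Bigr)\Bigl(-\frac{1}{R_0\gamma_i}+\frac{\tilde q_{R_i}}{R_0^2\gamma_i}|\log\eps|^{-1/2}+O(|\log\eps|^{-1})\Bigr)\frac{\gamma_i}{|\log\eps|^{1/2}}e_{Z,i}+O(\eps|\log\eps|^\ell),
\end{align}
which after expanding equals $\tfrac{|\log\eps|^{1/2}}{4\pi R_0}e_{Z,i}-\tfrac{\tilde q_{R_i}}{4\pi R_0^2}e_{Z,i}+o(1)$ locally uniformly in $\tilde q$. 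Summing over $i$ produces exactly $\tfrac{|\log\eps|^{1/2}}{4\pi R_0}v_Z$ plus $-\tfrac{1}{4\pi R_0^2}\sum_i \tilde q_{R_i}e_{Z,i}$, so that the self-interaction part of $\tfrac{A^{-1}G}{|\log\eps|^{1/2}}-\tfrac{|\log\eps|^{1/2}}{4\pi R_0}v_Z$ converges to $\bigl(\tfrac{\tilde q_{R_i}\gamma_i}{4\pi R_0^2}e_Z\bigr)_i / \gamma_i$ — i.e. to $-$ the self-induced term of $J_\gamma^2$, which has $i$-th component $\tfrac{\tilde q_{R_i}\gamma_i}{4\pi R_0^2}e_Z$; note here all $\gamma_i=1$ in this regime by \eqref{def regime1}, so in fact one need not worry about $\gamma$-dependence, but keeping it costs nothing.

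For the off-diagonal (cross) terms I would argue as in Corollary \ref{conv J1}: by Propositions \ref{crossterms G} and \ref{conv A},
\begin{align}
A(q,\gamma)^{-1}\bigl(G(q,e_i+e_j)-G(q,e_i)-G(q,e_j)\bigr)\eq -\frac{1}{R_0}\Bigl(\frac{P_j}{\gamma_j}\nabla_y^\perp K(q_i,q_j)+\frac{P_i}{\gamma_i}\nabla_y^\perp K(q_j,q_i)\Bigr)+O(\eps|\log\eps|^\ell),
\end{align}
and by the asymptotics \eqref{exp K}, \eqref{exp F} one has $\nabla_y K(q_i,q_j)=\tfrac{R_0}{2\pi}\tfrac{q_j-q_i}{|q_i-q_j|^2}+O(1)$; since $q_i-q_j=(\tilde q_i-\tilde q_j)|\log\eps|^{-1/2}$ we get $\nabla_y^\perp K(q_i,q_j)=\tfrac{R_0|\log\eps|^{1/2}}{2\pi}\tfrac{(\tilde q_j-\tilde q_i)^\perp}{|\tilde q_i-\tilde q_j|^2}+O(1)$. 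Dividing by $|\log\eps|^{1/2}$ and letting $\eps\to0$ yields $\tfrac{1}{2\pi}\bigl(\tfrac{P_i(\tilde q_j-\tilde q_i)^\perp}{\gamma_i|\tilde q_i-\tilde q_j|^2}+\tfrac{P_j(\tilde q_i-\tilde q_j)^\perp}{\gamma_j|\tilde q_i-\tilde q_j|^2}\bigr)$, which is precisely $-$ the interaction part of $J_\gamma^2$; this step is identical to Corollary \ref{conv J1} because the constant shift $v_Z$ only affects the diagonal terms. The main obstacle — really the only subtlety beyond bookkeeping — is making sure the two divergent quantities of order $|\log\eps|^{1/2}$ (from the self-interaction, after the $\log\eps\cdot|\log\eps|^{-1/2}$ product) cancel exactly against the subtracted $\tfrac{|\log\eps|^{1/2}}{4\pi R_0}v_Z$, leaving the finite Taylor-correction term $\tfrac{\tilde q_{R_i}}{4\pi R_0^2}e_Z$; for this one must use that the $O(1)$ error in $\log(\eps\tilde\rho_i)$ (which depends on $\tilde\rho_i$ and $R_i$) contributes only at order $|\log\eps|^{-1/2}\to0$ after division, and that all error terms from Propositions \ref{diagonal A}, \ref{conv A}, \ref{comp mult} are $O(1)$ or better, hence $o(|\log\eps|^{1/2})$. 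All estimates used are locally uniform in $\tilde q$, so the convergence is as well. \hfill$\square$
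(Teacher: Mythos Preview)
Your proposal is correct and follows essentially the same route as the paper's own proof: the off-diagonal terms are handled identically to Corollary \ref{conv J1}, and for the diagonal terms you use Propositions \ref{diagonal A} and \ref{conv A} together with a Taylor expansion of $\tfrac{1}{R_i}$ about $\tfrac{1}{R_0}$ to see that the divergent $|\log\eps|^{1/2}$ piece is exactly the subtracted $\tfrac{|\log\eps|^{1/2}}{4\pi R_0}v_Z$ and that the surviving finite correction yields the self-induced term of $J_\gamma^2$. The paper states this more tersely (recording $\tfrac{A^{-1}G(q,e_i)}{|\log\eps|^{1/2}}-\tfrac{|\log\eps|^{1/2}}{4\gamma_i\pi R_i}\to 0$ and then invoking the mean value theorem for $\tfrac{1}{R_i}-\tfrac{1}{R_0}$), but the content is the same.
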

\begin{proof}
The calculation of the ``off-diagonal'' terms is the same as in the previous proof. For the diagonal terms, we have by the Propositions \ref{diagonal A} and \ref{conv A} that \begin{align}
\frac{A(q,\gamma)^{-1}G(q,e_i)}{|\log\eps|^\frac{1}{2}}-\frac{|\log\eps|^\frac{1}{2}}{4\gamma_i\pi R_i}\rightarrow 0.
\end{align}

The statement then follows from the definition of $\tilde{q}$ and the fact that e.g.\ by the mean value theorem we have \begin{align}
\frac{|\log\eps|^\frac{1}{2}}{4\pi R_i}-\frac{|\log\eps|^\frac{1}{2}}{4\pi R_0}+\frac{(R_i-R_0)|\log\eps|^\frac{1}{2}}{4\pi R_0^2}\rightarrow 0.
\end{align}

\end{proof}

\section{Passage to the limit}\label{Section5}

In this section, we write $\tilde{q}_\eps$ instead of $\tilde{q}$ to emphasize the $\eps$-dependence.

\begin{proof}[Proof of Thm.\ \ref{Limit1}]
We write the system \eqref{MainODE} in the rescaled time $s=t|\log\eps|^2$ and the rescaled position $\tilde{q}_\eps$, defined as in \eqref{def regime2}, it then reads as \begin{align}\label{rescaled ODE}
&|\log\eps|^3\left(E(q)\tilde{q}_{\eps}''+\frac{1}{2}\tilde{q}_{\eps}'(\nabla_{\tilde{q}_\eps}E(q)\cdot\tilde{q}_{\eps}')+\mathcal{M}(q)\tilde{q}_\eps''+|\log\eps|^{-1}\langle\Gamma(q),\tilde{q}_{\eps}',\tilde{q}_{\eps}'\rangle\right)\\
&\eq G(q,\gamma)+|\log\eps|(A(q,\gamma)\tilde{q}'),\notag
\end{align}

where the time derivatives are denoted with a $'$ and all derivatives taken with respect to the rescaled time and space.

We first show that the velocity in rescaled time and space is bounded, until either we approach the boundary or some component of $\tilde{q}$ goes to infinity.

We take $C$ as some large compact subset of $M_\eps$ containing $\tilde{q}_\eps(0)$. 
If $\tilde{q}_\eps\in C$, then this implies that each $q_i$ lies in a compact subset of $\mathbb{H}$. It follows from the definition of $M_\eps$ that it we view the $M_\eps$'s as subsets of $\R^{2k}$ that for small enough $\eps$ such a set $C$ is also a subset of $M_{\eps'}$ for $\eps'<\eps$. Hence $C$ can be chosen as the same set for all small enough $\eps$.

Recall further that the matrix $A$ is invertible by Proposition \ref{conv A} and that its inverse is $\lesssim 1$ as long as $\tilde{q}_\eps\in C$. We may hence rewrite the equation as \begin{equation}\begin{aligned}
&|\log\eps|^3\bigg(\left(E(q)+\mathcal{M}(q)\right)\left(\frac{\text{d}}{\text{d}s}\left(\tilde{q}_\eps'+\frac{A^{-1}G}{|\log\eps|}\right)\right)+\frac{1}{2}\left(\nabla_{\tilde{q}_\eps}E\cdot\tilde{q}_\eps'\right)\left(\tilde{q}_\eps'+\frac{A^{-1}G}{|\log\eps|}\right)\\
&\quad\quad+\frac{1}{2}\left(\nabla_{\tilde{q}_\eps}\mathcal{M}\cdot\tilde{q}_\eps'\right)\left(\tilde{q}_\eps'+\frac{A^{-1}G}{|\log\eps|}\right)\bigg)\\
&-|\log\eps|^3\bigg(\left(E\left(q\right)+\mathcal{M}\left(q\right)\right)\frac{\text{d}}{\text{d}s}\frac{A^{-1}G}{|\log\eps|}+\frac{1}{2}\left(\nabla_{\tilde{q}_\eps}E\cdot\tilde{q}_\eps'\right)\frac{A^{-1}G}{|\log\eps|}+|\log\eps|^{-1}\left\langle \Gamma\left(q\right),\tilde{q}_\eps',\frac{A^{-1}G}{|\log\eps|}\right\rangle\\
&\quad\quad-\left\langle |\log\eps|^{-1}\Gamma\left(q\right)-\frac{1}{2}\nabla_{\tilde{q}_\eps}\mathcal{M}\left(q\right),\tilde{q}_\eps',\tilde{q}_\eps'+\frac{A^{-1}G}{|\log\eps|}\right\rangle\bigg)\\
&=\,|\log\eps|A\left(\tilde{q}_\eps'+\frac{A^{-1}G}{|\log\eps|}\right)
\end{aligned}\end{equation}

where we used the notation $\langle N,a,b\rangle=Na\cdot b$ in the penultimate line.

By testing against $\tilde{q}_\eps'+\frac{A^{-1}G}{|\log\eps|}$ and dividing out the $|\log\eps|^3$ we obtain that from the antisymmetry of $A$ that \begingroup
\allowdisplaybreaks \begin{align}\label{mod energy}
&\frac{\text{d}}{\text{d}s}\frac{1}{2}\left(\left(\tilde{q}_\eps'+\frac{A^{-1}G}{|\log\eps|}\right)^T\left(E(q)+\mathcal{M}(q)\right)\left(\tilde{q}_\eps'+\frac{A^{-1}G}{|\log\eps|}\right)\right)\notag\\
&=\,\left(\tilde{q}_\eps'+\frac{A^{-1}G}{|\log\eps|}\right)^T\left(E\left(q\right)+\mathcal{M}\left(q\right)\right)\frac{\text{d}}{\text{d}s}\frac{A^{-1}G}{|\log\eps|}\notag\\
&+\frac{1}{2}\left(\tilde{q}_\eps'+\frac{A^{-1}G}{|\log\eps|}\right)^T\left(\nabla_{\tilde{q}_\eps}E\cdot\tilde{q}_\eps'\right)\frac{A^{-1}G}{|\log\eps|}\\
&+|\log\eps|^{-1}\left\langle \Gamma\left(q\right),\tilde{q}_\eps',\frac{A^{-1}G}{|\log\eps|}\right\rangle\left(\tilde{q}_\eps'+\frac{A^{-1}G}{|\log\eps|}\right)\notag\\
&-\left\langle|\log\eps|^{-1} \Gamma\left(q\right)-\frac{1}{2}\nabla_{\tilde{q}_\eps}\mathcal{M}\left(q\right),\tilde{q}_\eps',\tilde{q}_\eps'+\frac{A^{-1}G}{|\log\eps|}\right\rangle\left(\tilde{q}_\eps'+\frac{A^{-1}G}{|\log\eps|}\right)\notag\\
&=:\, I+II+III+IV\notag
\end{align}

\endgroup

where $I-IV$ stand for the terms in each line. Our goal is to show that each of these terms is $\lesssim \eps^2(1+|\tilde{q}_\eps'|^2+\eps|\log\eps|^\ell|\tilde{q}_\eps'|^3)$ as long as $\tilde{q}_\eps\in C$.

For $\tilde{q}_\eps\in C$ we have the following estimates:

By the Lemmata \ref{InteriorField} and \ref{conv Inertia}, we have \begin{align}\label{bd EM}
\eps^2\,\lesssim\, E+\mathcal{M}\,\lesssim\, \eps^2
\end{align} 

(in the sense that the smallest and highest eigenvalues have these bounds). Furthermore by Proposition \ref{conv A}  we have \begin{align}\label{bd A}
|A|,|\nabla_{\tilde{q}}A|, |A^{-1}|,|\nabla_{\tilde{q}}A^{-1}|\,\lesssim\, 1.
\end{align}

By the Propositions \ref{diagonal A} and \ref{crossterms G}, we have \begin{align}\label{bd G}
|G|, |\nabla_{\tilde{q}} G|\,\lesssim\, |\log\eps|,
\end{align}

finally by the Propositions \ref{InteriorField} b) and \ref{conv Gamma}, we have \begin{align}\label{bd deri M}
|\nabla_{\tilde{q}_\eps}E|,|\Gamma|\,\lesssim\, \eps^3|\log\eps|^\ell.
\end{align}

Hence we conclude for $\tilde{q}_\eps\in C$ that \begin{align}
&|I|\,\lesssim\,\eps^2(1+|\tilde{q}_\eps'|^2)\\
&|II|,|III|\,\lesssim\,\eps^3|\log\eps|^\ell(1+|\tilde{q}_\eps'|^3).
\end{align}

We have that $IV=0$. Indeed if we set $p:=\tilde{q}_\eps'+\frac{A^{-1}G}{|\log\eps|}$ then, by the definition of $\Gamma$ (\ref{main def}), it holds  \begin{align}
&\langle\Gamma(q),\dot{q},p\rangle p\eq\sum_{i,j,k}\frac{1}{2}(\de_j\mathcal{M}(q)_{ik}+\de_i\mathcal{M}(q)_{jk}-\de_k\mathcal{M}(q)_{ij})\dot{q}_ip_jp_k\\
&=\,\sum_{i,j,k}\frac{1}{2}\de_i\mathcal{M}(q)_{jk}\dot{q}_ip_jp_k\eq\frac{1}{2|\log\eps|}(\nabla_{\tilde{q}_\eps}\mathcal{M}\cdot\dot{q})p\cdot p.
\end{align}

Hence, plugging these estimates into \eqref{mod energy}, we obtain by \eqref{bd EM} that as long as $\tilde{q}_\eps\in C$, we have \begin{align}\label{1st Gronwall}
&\frac{\text{d}}{\text{d}s}\left(\left(\tilde{q}_\eps'+\frac{A^{-1}G}{|\log\eps|}\right)^T\left(E(q)+\mathcal{M}(q)\right)\left(\tilde{q}_\eps'+\frac{A^{-1}G}{|\log\eps|}\right)\right)\,\lesssim\\
&\eps^2+\left(\left(\tilde{q}_\eps'+\frac{A^{-1}G}{|\log\eps|}\right)^T\left(E(q)+\mathcal{M}(q)\right)\left(\tilde{q}_\eps'+\frac{A^{-1}G}{|\log\eps|}\right)\right)\notag\\
&+|\log\eps|^\ell\left(\left(\tilde{q}_\eps'+\frac{A^{-1}G}{|\log\eps|}\right)^T\left(E(q)+\mathcal{M}(q)\right)\left(\tilde{q}_\eps'+\frac{A^{-1}G}{|\log\eps|}\right)\right)^\frac{3}{2}\notag.
\end{align}

As long as we have \begin{align}
\left(\left(\tilde{q}_\eps'+\frac{A^{-1}G}{|\log\eps|}\right)^T\left(E(q)+\mathcal{M}(q)\right)\left(\tilde{q}_\eps'+\frac{A^{-1}G}{|\log\eps|}\right)\right)\leq \eps^\frac{1}{2}
\end{align}

the last term in \eqref{1st Gronwall} can be absorbed in the first two and by Gronwall we obtain that \begin{align}
&\left(\left(\tilde{q}_\eps'+\frac{A^{-1}G}{|\log\eps|}\right)^T\left(E(q)+\mathcal{M}(q)\right)\left(\tilde{q}_\eps'+\frac{A^{-1}G}{|\log\eps|}\right)\right)(s)\\
&\lesssim\, e^s\left(\eps^2+\left(\tilde{q}_\eps'+\frac{A^{-1}G}{|\log\eps|}\right)^T(E(q)+\mathcal{M}(q))\left(\tilde{q}_\eps'+\frac{A^{-1}G}{|\log\eps|}\right)(0)\right).
\end{align}

By \eqref{bd EM} and the assumption about the initial velocities, this implies that \begin{align}
\left|\tilde{q}_\eps'+\frac{A^{-1}G}{|\log\eps|}\right|\,\lesssim\, e^s
\end{align}

and hence \begin{align}
|\tilde{q}_\eps'|\,\lesssim\, e^s
\end{align}

until either $\tilde{q}_\eps$ leaves the set $C$ or up to a time of order $|\log\eps|$. By \eqref{bd A}, \eqref{bd G} and \eqref{rescaled ODE}, this implies that \begin{align}
|\log\eps|^2\left(E(q)\tilde{q}_{\eps}''+\frac{1}{2}\tilde{q}_{\eps}'(\nabla_{\tilde{q}_\eps}E(q)\cdot\tilde{q}_{\eps}')+\mathcal{M}(q)\tilde{q}_{\eps}''+|\log\eps|^{-1}\langle\Gamma(q),\tilde{q}_{\eps}',\tilde{q}_{\eps}'\rangle\right)\rightarrow 0 
\end{align}

in $W^{-1,\infty}$ up to a time of order $|\log\eps|$ or until $\tilde{q}_\eps$ leaves $C$. Hence we obtain that \begin{align}
A\tilde{q}_\eps'+\frac{G}{|\log\eps|}\overset{\ast}{\rightharpoonup} 0 \text{ in $L^\infty$}.
\end{align}

Because $A^{-1}$ and $\frac{G}{|\log\eps|}$ converge strongly, we see by Corollary \ref{conv J1} that \begin{align}
\tilde{q}_\eps'- J_\gamma^1(\tilde{q}_\eps)\overset{\ast}{\rightharpoonup}0 \text{ in $L^\infty$}
\end{align}

until $\tilde{q}_\eps$ leaves $C$, which takes at least $\gtrsim 1$ time, as $\tilde{q}_\eps'$ is bounded.

Hence we have that \begin{align}
\tilde{q}_\eps(v)-\tilde{q}_\eps(0)-\int_0^v J_\gamma^1(\tilde{q}_\eps(s))\ds\rightarrow 0
\end{align}

for all times $v\lesssim 1$ and $\tilde{q}_\eps$ converges locally uniformly to some $\tilde{q}_0$ by compactness, as long as long as $\tilde{q}_\eps$ lies in $C$. Therefore we see that  $\tilde{q}_0$ must be a solution of $\tilde{q}_0'=J_\gamma^1(\tilde{q}_0)$ because $J_\gamma^1$ is locally uniformly continuous.\\

Finally, we may remove the condition that $\tilde{q}_\eps$ lies in a compact set $C$, by taking $C$ so large that the solution of $q'=J_\gamma^1(q)$ lies in the interior of $C$ until some time $T$, which is possible for small enough $\eps$ whenever $q$ does not blow until time $T$. Then we have uniform convergence of $\tilde{q}_\eps$ as long as it lies in $C$. As the limit lies in the interior of $C$, the solution $\tilde{q}_\eps$ also lies in $C$ for small enough $\eps$ up to time $T$.

Hence we have convergence, as long as the limiting solution does not blow up.

\end{proof}

\begin{proof}[Proof of Thm.\ \ref{Limit2}]
The proof is quite similar to the previous one. In the rescaled time $s=|\log\eps|t$, and the rescaled spatial variable $\tilde{q}_\eps$, defined as in \eqref{def regime1} the System \eqref{MainODE} reads as 
\begin{align}\label{rescaled ODE 2}
&|\log\eps|^\frac{3}{2}\left(E(q)\tilde{q}_{\eps}''+\frac{1}{2}\tilde{q}_{\eps}'(\nabla_{\tilde{q}_\eps}E(q)\cdot\tilde{q}_{\eps}')+\mathcal{M}(q)\tilde{q}_\eps''+|\log\eps|^{-\frac{1}{2}}\langle\Gamma(q),\tilde{q}_{\eps}',\tilde{q}_{\eps}'\rangle\right)\\
&\eq G(q,\gamma)+|\log\eps|^\frac{1}{2}(A(q,\gamma)\tilde{q}_{\eps}').\notag
\end{align}

Similarly as in the previous proof we can rewrite the equation, tested against $\tilde{q}_\eps'+\frac{A^{-1}G}{|\log\eps|^\frac{1}{2}}$ as \begingroup
\allowdisplaybreaks
 \begin{align}
&\frac{\text{d}}{\text{d}s}\frac{1}{2}\left(\left(\tilde{q}_\eps'+\frac{A^{-1}G}{|\log\eps|^\frac{1}{2}}\right)^T\left(E(q)+\mathcal{M}(q)\right)\left(\tilde{q}_\eps'+\frac{A^{-1}G}{|\log\eps|^\frac{1}{2}}\right)\right)\notag\\
&=\,\left(\tilde{q}_\eps'+\frac{A^{-1}G}{|\log\eps|^\frac{1}{2}}\right)^T\left(E\left(q\right)+\mathcal{M}\left(q\right)\right)\frac{\text{d}}{\text{d}s}\left(\frac{A^{-1}G}{|\log\eps|^\frac{1}{2}}\right)\notag\\
&+\frac{1}{2}\left(\tilde{q}_\eps'+\frac{A^{-1}G}{|\log\eps|^\frac{1}{2}}\right)^T\left(\nabla_{\tilde{q}_\eps}E\cdot\tilde{q}_\eps'\right)\left(\frac{A^{-1}G}{|\log\eps|^\frac{1}{2}}\right)\\
&+|\log\eps|^{-\frac{1}{2}}\left\langle \Gamma\left(q\right),\tilde{q}_\eps',\frac{A^{-1}G}{|\log\eps|^\frac{1}{2}}\right\rangle\left(\tilde{q}_\eps'+\frac{A^{-1}G}{|\log\eps|^\frac{1}{2}}\right)\notag\\
&-\left\langle |\log\eps|^{-\frac{1}{2}}\Gamma\left(q\right)-\frac{1}{2}\nabla_{\tilde{q}_\eps}\mathcal{M}\left(q\right),\tilde{q}_\eps',\tilde{q}_\eps'+\frac{A^{-1}G}{|\log\eps|^\frac{1}{2}}\right\rangle\left(\tilde{q}_\eps'+\frac{A^{-1}G}{|\log\eps|^\frac{1}{2}}\right)\notag\\
&=:\, I+II+III+IV\notag.
\end{align}

\endgroup

Let $v_Z$ denote the vector $(e_Z,e_Z,\dots)\in \R^{2k}$. We would like to estimate the shifted velocity $\tilde{q}_\eps'+\frac{|\log\eps|^\frac{1}{2}}{4\pi R_0}v_Z$.

We again use a compact set $C\subset M_\eps$ containing $\tilde{q}_\eps(0)$. Then we let $\tilde{C}:=C+v_Z\R$. On this set we still have uniform estimates because the system is invariant in the $v_Z$ direction. If $\tilde{q}_\eps\in \tilde{C}$, then we clearly we still have the estimates \eqref{bd EM} and \eqref{bd deri M}. Furthermore by the Propositions \ref{diagonal A} and \ref{crossterms G} we then also have \begin{align}
|G|\,\lesssim\, |\log\eps|
\end{align}

and \begin{align}
|\nabla_{\tilde{q}_\eps}G|\,\lesssim\, |\log\eps|^\frac{1}{2}.
\end{align}

Furthermore by Proposition \ref{conv A} we have \begin{align}
|A|\,\lesssim\, 1,\quad |\nabla_{\tilde{q}_\eps}A|\,\lesssim\, |\log\eps|^{-\frac{1}{2}}
\end{align}

and from Corollary \ref{conv J2} one sees that \begin{align}
\left|\frac{A^{-1}G}{|\log\eps|^\frac{1}{2}}-\frac{|\log\eps|^\frac{1}{2}}{4\pi R_0}v_Z\right|\,\lesssim\, 1.
\end{align}

Hence, we directly see that \begin{align}
|II|,|III|\,\lesssim\, \eps^3|\log\eps|^\ell\left(1+\left|\tilde{q}_\eps'+\frac{|\log\eps|^\frac{1}{2}}{4\pi R_0}v_Z\right|^3\right).
\end{align}

The term $IV$ again drops out by the same calculation as in the previous proof. Note that we have \begin{align}
\frac{\text{d}}{\text{d}s}\left(\frac{A^{-1}G}{|\log\eps|^\frac{1}{2}}\right)\eq\frac{1}{|\log\eps|^\frac{1}{2}}\left(\left(\nabla_{\tilde{q}_\eps}A^{-1}\right)G+A^{-1}\nabla_{\tilde{q}_\eps}G\right)\left(\tilde{q}_\eps'+\frac{|\log\eps|^\frac{1}{2}}{4\pi R_0}v_Z\right)
\end{align}

because the derivative of $A^{-1}G$ in the $v_Z$ direction is $0$, as the system is invariant in that direction.

Hence we see that
 \begin{align}
&|I|\,\lesssim\,\frac{\eps^2}{|\log\eps|^\frac{1}{2}}\left(\left|\tilde{q}_\eps'+\frac{|\log\eps|^\frac{1}{2}}{4\pi R_0}v_Z\right|+\left|\frac{A^{-1}G}{|\log\eps|^\frac{1}{2}}-\frac{|\log\eps|^\frac{1}{2}}{4\pi R_0}v_Z\right|\right)\\
&\quad\times\left(|A^{-1}|^2|\nabla_{\tilde{q}_\eps}A||G|+|A^{-1}||\nabla_{\tilde{q}_\eps}G|\right)\left|\tilde{q}_\eps'+\frac{|\log\eps|^\frac{1}{2}}{4\pi R_0}v_Z\right|\\
&\lesssim\,\eps^2\left(1+\left|\tilde{q}_\eps'+\frac{|\log\eps|^\frac{1}{2}}{4\pi R_0}v_Z\right|^2\right).
\end{align}

From the assumption that \begin{align}
\left|\tilde{q}_\eps'(0)+\frac{|\log\eps|^\frac{1}{2}}{4\pi R_0}v_Z\right|\,\lesssim\, 1
\end{align}

we see by the same Gronwall argument as in the previous proof that \begin{align}
\left|\tilde{q}_\eps'(s)+\frac{|\log\eps|^\frac{1}{2}}{4\pi R_0}v_Z\right|\,\lesssim\, e^s
\end{align}

until $\tilde{q}_\eps$ leaves $\tilde{C}$ or until a time of order $|\log\eps|$.

From this, we conclude that $\tilde{q}_\eps''$ is bounded in $W^{-1,\infty}$ and by \eqref{bd EM} and \eqref{bd deri M} we see that \begin{align}
|\log\eps|^\frac{3}{2}\left(E(q)\tilde{q}_{\eps}''+\frac{1}{2}\tilde{q}_{\eps}'(\nabla_{\tilde{q}_\eps}E(q)\cdot\tilde{q}_{\eps}')+\mathcal{M}(q)\tilde{q}_\eps''+|\log\eps|^{-\frac{1}{2}}\langle\Gamma(q),\tilde{q}_{\eps}',\tilde{q}_{\eps}'\rangle\right)\rightarrow 0
\end{align}

in $W^{-1,\infty}$ until $\tilde{q}_\eps$ leaves $\tilde{C}$. Hence we see again that \begin{align}
\tilde{q}_\eps'+\frac{|\log\eps|^\frac{1}{2}}{4\pi R_0}v_Z+\left( \frac{A^{-1}G}{|\log\eps|^\frac{1}{2}}-\frac{|\log\eps|^\frac{1}{2}}{4\pi R_0}v_Z\right)\,\overset{\ast}{\rightharpoonup}\, 0 \text{ in $L^\infty$}.
\end{align}

This implies the statement by the same argument as in the previous proof and Corollary \ref{conv J2}.
\end{proof}

\textbf{Acknowledgment:} This work is funded by the Deutsche Forschungsgemeinschaft (DFG, German Research Foundation) under Germany’s Excellence Strategy EXC 2044 –390685587, Mathematics Münster: Dynamics–Geometry–Structure.

The author would like to thank Christian Seis for introducing him to the problem and both Christian Seis and Franck Sueur for some useful discussions and advice.

\bibliography{rings}
\bibliographystyle{abbrv}

\end{document}